\newcolumntype{x}[1]{>{\centering\arraybackslash}p{#1}}
\newcommand\diagonal[4]{%
  \multicolumn{1}{p{#2}|}{\hskip-\tabcolsep
  $\vcenter{\begin{tikzpicture}[baseline=0,anchor=south west,inner sep=#1]
  \path[use as bounding box] (0,0) rectangle (#2+2\tabcolsep,\baselineskip);
  \node[minimum width={#2+2\tabcolsep},minimum height=\baselineskip+\extrarowheight] (box) {};
  \draw (box.north west) -- (box.south east);
  \node[anchor=south west] at (box.south west) {#3};
  \node[anchor=north east] at (box.north east) {#4};
 \end{tikzpicture}}$\hskip-\tabcolsep}}
\def\thefigure{\thesection.\@arabic\c@figure}
\let\c@figure=\c@equation
\def\diagram{
  \mathsurround\z@ \everymath{}
  \par
  \ifdim\lastskip=\textfloatsep
    \removelastskip
    \addvspace{\floatsep}
  \else
    \addvspace{\textfloatsep}
  \fi
  \def\@captype{figure}
  \vbox\bgroup
    \hsize\columnwidth \@parboxrestore
    \centering}
\def\enddiagram{\par\vskip\z@\egroup\addvspace{
\textfloatsep}}
\newcommand\subcaption[1]{\par\medskip\centerline{#1}
\medskip}
\newlength{\legendindent}
\newenvironment{legend}{
  \mathsurround2\p@
  \par\smallskip
  \leftskip0pt \rightskip\leftskip \parindent\leftskip
  \parfillskip0pt plus1fil\relax
  \def\item[##1]{\noindent\unskip\unpenalty\unkern
    \setbox\z@\lastbox
    \ifdim\wd\z@=\legendindent \else\ifvoid\z@ 
\else\box\z@\fi\par\fi
    \noindent 
    \hangindent\legendindent
    \hbox to\legendindent{\mathsurround\z@\everymath{}
        \unhbox\z@\hfil\ignorespaces##1\kern9.9pt}
   \ignorespaces
  }
  \noindent\hbox to\legendindent{Legend:\hfil}\ignorespaces
}{
  \relax
}
\DeclareMathOperator{\GL}{GL}
\DeclareMathOperator{\SL}{SL}
\DeclareMathOperator{\Sp}{Sp}
\DeclareMathOperator{\Spin}{Spin}
\DeclareMathOperator{\upO}{O}
\DeclareMathOperator{\upU}{U}
\newcommand{\frakg}{\mathfrak{g}}
\newcommand{\frakk}{\mathfrak{k}}
\newcommand{\frakm}{\mathfrak{m}}
\newcommand{\fraka}{\mathfrak{a}}
\newcommand{\frakn}{\mathfrak{n}}
\newcommand{\fraks}{\mathfrak{s}}
\newcommand{\frakq}{\mathfrak{q}}
\renewcommand{\AA}{\mathbb{A}}
\newcommand{\CC}{\mathbb{C}}
\newcommand{\FF}{\mathbb{F}}
\newcommand{\NN}{\mathbb{N}}
\newcommand{\RR}{\mathbb{R}}
\newcommand{\ZZ}{\mathbb{Z}}
\newcommand{\calE}{\mathcal{E}}
\newcommand{\calF}{\mathcal{F}}
\newcommand{\calH}{\mathcal{H}}
\newcommand{\calL}{\mathcal{L}}
\newcommand{\calP}{\mathcal{P}}
\newcommand{\calR}{\mathcal{R}}
\newcommand{\calT}{\mathcal{T}}
\newcommand{\calU}{\mathcal{U}}
\newcommand{\calV}{\mathcal{V}}
\newcommand{\calW}{\mathcal{W}}
\newcommand{\Leven}{L_{\textup{even}}}
\newcommand{\Lodd}{L_{\textup{odd}}}
\newcommand{\0}{\textbf{0}}
\renewcommand{\1}{\textbf{1}}
\DeclareMathOperator{\Ind}{Ind}
\DeclareMathOperator{\tr}{tr}
\DeclareMathOperator{\ad}{ad}
\DeclareMathOperator{\Ad}{Ad}
\DeclareMathOperator{\Cas}{Cas}
\DeclareMathOperator{\End}{End}
\DeclareMathOperator{\Hom}{Hom}
\DeclareMathOperator{\proj}{proj}
\DeclareMathOperator{\rest}{rest}
\DeclareMathOperator{\sgn}{sgn}
\DeclareMathOperator{\diag}{diag}
\DeclareMathOperator{\blank}{\cdot}
\newcommand{\hol}{\textup{hol}}
\newcommand{\ahol}{\textup{ahol}}
\newcommand{\HC}{\textup{\,HC}}
\theoremstyle{plain}
\newtheorem{theorem}{Theorem}[section]
\newtheorem{proposition}[theorem]{Proposition}
\newtheorem{lemma}[theorem]{Lemma}
\newtheorem{corollary}[theorem]{Corollary}
\newtheorem{thmalph}{Theorem}
\theoremstyle{definition}
\newtheorem{example}[theorem]{Example}
\newtheorem{remark}[theorem]{Remark}
\numberwithin{equation}{section}
\title[The compact picture of symmetry breaking operators]{The compact picture of symmetry breaking operators for rank one orthogonal and unitary groups}
\author{Jan Frahm}
\author{Bent {\O}rsted}
\address{Department of Mathematics, Aarhus University, Ny Munkegade 118, 8000 Aarhus C, Denmark}
\email{frahm@math.au.dk}
\address{Department of Mathematics, Aarhus University, Ny Munkegade 118, 8000 Aarhus C, Denmark}
\email{orsted@math.au.dk}
\date{}
\begin{document}

\begin{abstract}
We present a method to calculate intertwining operators between the underlying Harish-Chandra modules of degenerate principal series representations of a reductive Lie group $G$ and a reductive subgroup $G'$, and between their composition factors. Our method describes the restriction of these operators to the $K'$-isotypic components, $K'\subseteq G'$ a maximal compact subgroup, and reduces the representation theoretic problem to an infinite system of scalar equations of a combinatorial nature. For rank one orthogonal and unitary groups and spherical principal series representations we calculate these relations explicitly and use them to classify intertwining operators. We further show that in these cases automatic continuity holds, i.e. every intertwiner between the Harish-Chandra modules extends to an intertwiner between the Casselman--Wallach completions, verifying a conjecture by Kobayashi. Altogether, this establishes the compact picture of the recently studied symmetry breaking operators for orthogonal groups by Kobayashi--Speh, gives new proofs of their main results and extends them to unitary groups.
\end{abstract}

\subjclass[2010]{Primary 22E46; Secondary 17B15, 05E10.}

\keywords{symmetry breaking operators, intertwining operators, Harish-Chandra modules, principal series, spectrum generating operator}

\maketitle

\section{Introduction}

Representation theory of reductive Lie groups consists to a large extent in the study of the structure of standard families of representations, for example principal series representations. Here intertwining operators, such as the classical Knapp--Stein operators, play an important role, and they also provide important examples of integral kernel operators appearing in classical harmonic analysis. Recently similar operators have been introduced by Kobayashi~\cite{Kob15} in connection with branching laws, i.e. the study of how representations behave when restricted to a closed subgroup of the original group (see also \cite{KS13,MOO13}). Again these are integral kernel operators, now intertwining with respect to the subgroup, and they appear to be very natural objects, not only for the problem of restricting representations (see \cite{MO12}), but also for questions in classical harmonic analysis and automorphic forms (see \cite{MO13,MOZ15}).

In this paper we shall give an alternative approach to this new class of symmetry breaking operators, namely one based on the Harish-Chandra module, i.e. the $K$-finite vectors in the representation, in analogy with the idea of spectrum generating operators \cite{BOO96}. This gives new proofs of the main results of \cite{KS13}, and generalizes these results to unitary groups. Moreover, our more algebraic framework provides an alternative proof of the discrete spectrum in certain unitary representations.

The approach is quite general and discussed in the first part of the paper, while in the second part we carry out all details for the real conformal case and the CR case.

\subsection{Symmetry breaking operators}\label{sec:SymBreakingOperators} Let $G$ be a reductive Lie group with compact center and $G'\subseteq G$ a reductive subgroup also with compact center. For irreducible smooth representations $\pi$ of $G$ and $\tau$ of $G'$ the space
$$ \Hom_{G'}(\pi|_{G'},\tau) $$
of continuous $G'$-intertwining operators between $\pi$ and $\tau$ and its dimension $m(\pi,\tau)$ have received considerable attention recently, in particular in connection with multiplicity-one statements asserting that $m(\pi,\tau)\leq1$ for certain pairs $(G,G')$ of classical groups such as $(\GL(n,\RR),\GL(n-1,\RR))$, $(\upO(p,q),\upO(p,q-1))$ or $(\upU(p,q),\upU(p,q-1))$, see \cite{SZ12} and references therein. A more refined problem is to determine whether for given representations $\pi$ and $\tau$ there exist non-trivial $G'$-intertwining operators $\pi|_{G'}\to\tau$, also called \textit{symmetry breaking operators} by Kobayashi~\cite{Kob15}, and to classify them. For the pair $(G,G')=(\upO(1,n),\upO(1,n-1))$ this question was completely answered by Kobayashi--Speh~\cite{KS13} in the case where $\pi$ and $\tau$ are spherical principal series representations, and in joint work with Y. Oshima we generalized in \cite{MOO13} their construction of symmetry breaking operators to a large class of symmetric pairs.

Instead of studying this problem in the smooth category we attempt to apply the ``spectrum generating method'' by Branson--\'{O}lafsson--{\O}rsted~\cite{BOO96} in the study of intertwining operators in the category of $(\frakg',K')$-modules, and verify a conjecture by Kobayashi on the automatic continuity of symmetry breaking operators between Harish-Chandra modules. To given smooth admissible representations $\pi$ of $G$ and $\tau$ of $G'$ one can associate the underlying Harish-Chandra modules $\pi_\HC$ and $\tau_\HC$. These are admissible $(\frakg,K)$-modules resp. $(\frakg',K')$-modules, realized on the spaces of $K$-finite resp. $K'$-finite vectors of $\pi$ resp. $\tau$, where $K\subseteq G$ and $K'\subseteq G'$ are maximal compact subgroups. We consider the space
$$ \Hom_{(\frakg',K')}(\pi_\HC|_{(\frakg',K')},\tau_\HC) $$
of intertwining operators in the category of Harish-Chandra modules. The natural restriction map
\begin{equation}
 \Hom_{G'}(\pi|_{G'},\tau) \to \Hom_{(\frakg',K')}(\pi_\HC|_{(\frakg',K')},\tau_\HC)\label{eq:SymBreakingRestrictionMap}
\end{equation}
is injective but in general not surjective and hence there might be more intertwining operators in the category of Harish-Chandra modules than in the smooth category. According to Kobayashi~\cite[Remark 10.2~(4)]{Kob14} it is plausible that this map is surjective if the space $(G\times G')/\diag(G')$ is real spherical. (Note that for $G'=G$ the map is surjective by the Casselman--Wallach Theorem.)

We remark that for $(G,G')=(\GL(2,\FF)\times\GL(2,\FF),\GL(2,\FF))$, $\FF=\RR,\CC$, and $(G,G')=(\GL(2,\CC),\GL(2,\RR))$ intertwining operators between Harish-Chandra modules were previously studied by Loke~\cite{Lok01} using explicit computations.

\subsection{Symmetry breaking of principal series} In this paper we outline a method to classify symmetry breaking operators between the Harish-Chandra modules of principal series representations induced from maximal parabolic subgroups, and their composition factors. Let $P=MAN\subseteq G$ be a maximal parabolic subgroup of $G$ such that $P'=P\cap G'=M'A'N'$ is maximal parabolic in $G'$ and write $\fraka$ and $\fraka'$ for the Lie algebras of $A$ and $A'$. Fix $\nu\in\fraka^*$ such that the roots of $(P,A)$ are given by $\{\nu,2\nu,\ldots,q\nu\}$ and do similarly for $\nu'\in(\fraka')^*$. Consider the principal series representations (smooth normalized parabolic induction)
$$ \pi_{\xi,r}=\Ind_P^G(\xi\otimes e^{r\nu}\otimes\1), \qquad \tau_{\xi',r}=\Ind_{P'}^{G'}(\xi'\otimes e^{r'\nu'}\otimes\1), $$
where $\xi$ and $\xi'$ are finite-dimensional representations of $M$ and $M'$ and $r,r'\in\CC$. Let $\xi'=\xi|_{M'}$ and assume that for all $K$-types $\alpha$ of $\pi_{\xi,r}$ and all $K'$-types $\alpha'$ of $\tau_{\xi',r'}$ the multiplicity-free properties
$$ \dim\Hom_K(\alpha,\pi_{\xi,r}|_K) \leq 1, \qquad \dim\Hom_{K'}(\alpha',\tau_{\xi',r'}|_{K'}) \leq 1, \qquad \dim\Hom_{K'}(\alpha|_{K'},\alpha') \leq 1. $$
hold, i.e. $\pi_{\xi,r}$ is $K$-multiplicity-free, $\tau_{\xi',r'}$ is $K'$-multiplicity-free, and every $K$-type in $\pi_{\xi,r}$ is $K'$-multiplicity-free.

Let $T:(\pi_{\xi,r})_\HC\to(\tau_{\xi',r'})_\HC$ be a $(\frakg',K')$-intertwining operator, then $T$ is in particular $K'$-intertwining. Consider a pair $(\alpha;\alpha')$ consisting of a $K$-type $\alpha$ in $\pi_{\xi,r}$ and a $K'$-type $\alpha'$ in $\tau_{\xi',r'}$ which also occurs in $\alpha|_{K'}$. By the multiplicity-free assumptions the restriction of $T$ to the $K'$-type $\alpha'$ inside the $K$-type $\alpha$ in $\pi_{\xi,r}$ maps to the $K'$-type $\alpha'$ in $\tau_{\xi',r'}$ and is unique up to a scalar $t_{\alpha,\alpha'}\in\CC$ (see Section~\ref{sec:RelatingKKprimeTypes} for the precise definition). This encodes every $K'$-intertwining operator $T:(\pi_{\xi,r})_\HC\to(\tau_{\xi',r'})_\HC$ into scalars $t_{\alpha,\alpha'}$. Using the method of spectrum generating operators by Branson--\'{O}lafsson--{\O}rsted~\cite{BOO96} we prove:

\begin{thmalph}[see Theorem~\ref{thm:EigenvalueCondition} and Corollary~\ref{cor:CharacterizationIntertwinersScalar}]\label{thm:IntroCharacterizationIntertwiners}
Let $T:(\pi_{\xi,r})_\HC\to(\tau_{\xi',r'})_\HC$ be a $K'$-intertwining operator given by scalars $t_{\alpha,\alpha'}$. Then $T$ is $(\frakg',K')$-intertwining if and only if for all pairs $(\alpha;\alpha')$ and every $K'$-type $\beta'$ the following relation holds:
\begin{equation}
 \sum_{\substack{\beta\\(\alpha;\alpha')\leftrightarrow(\beta;\beta')}}\lambda_{\alpha,\alpha'}^{\beta,\beta'}(\sigma_\beta-\sigma_\alpha+2r)t_{\beta,\beta'} = (\sigma'_{\beta'}-\sigma'_{\alpha'}+2r')t_{\alpha,\alpha'}.\label{eq:IntroRelation}
\end{equation}
\end{thmalph}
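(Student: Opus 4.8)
The plan is to exploit the standard device of a "spectrum generating operator" on each of the two principal series, following Branson–Ólafsson–Ørsted, and to translate the abstract condition $T \circ X = X' \circ T$ for the relevant generators $X \in \frakg$, $X' \in \frakg'$ into the scalar identity \eqref{eq:IntroRelation}. First I would observe that since $T$ is already assumed $K'$-intertwining, checking that $T$ is $(\frakg',K')$-intertwining reduces to checking $T(Y \cdot v) = Y \cdot T(v)$ for $Y$ ranging over a set of generators of $\frakg'$ as a $K'$-module together with $\frakk'$; and since the $\frakk'$-part is automatic, it suffices to test $Y \in \frakp' = \frakm' + \fraka' + \text{(the $-\theta$-part)}$, i.e. essentially the $\fraka'$-part and the nilradical/its opposite. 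The key structural input (to be set up in Section~\ref{sec:GeneralTheory}) is that there is a distinguished element — a Casimir-type or "Dirac-type" operator, or more precisely the action of a generator of $\fraka$ twisted into a $K$-equivariant operator — call it the spectrum generating operator, whose action on the $K$-type $\alpha$ in $\pi_{\xi,r}$ is scalar multiplication by $\sigma_\alpha - 2r$ (up to normalization), with an analogous operator on $\tau_{\xi',r'}$ acting on the $K'$-type $\alpha'$ by $\sigma'_{\alpha'} - 2r'$. These scalars $\sigma_\alpha$, $\sigma'_{\alpha'}$ are the eigenvalues recorded in the statement.

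The heart of the argument is then the following. I would write the non-$K'$-part of $\frakg'$ — think of it as sitting inside $\frakg$ — and decompose its action on $\pi_{\xi,r}$ in terms of the $K$-type and $K'$-type decomposition: acting by such an element $X' \in \frakg'$ sends the $K'$-isotypic piece $(\alpha;\alpha')$ into a sum of pieces $(\beta;\beta')$, and by the three multiplicity-one hypotheses each such transition is governed by a single scalar $\lambda_{\alpha,\alpha'}^{\beta,\beta'}$ (the "branching coefficient" / Clebsch–Gordan-type constant, defined precisely in Section~\ref{sec:RelatingKKprimeTypes}); the admissible transitions are exactly the pairs in the relation $(\alpha;\alpha') \leftrightarrow (\beta;\beta')$. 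Combining this with the spectrum-generating identity, the action of the relevant operator built from $X'$ on the $(\alpha;\alpha')$-component of $\pi_{\xi,r}$ is $\sum_\beta \lambda_{\alpha,\alpha'}^{\beta,\beta'}(\sigma_\beta - \sigma_\alpha + 2r)(\cdot)$ landing in $(\beta;\beta')$, while on $\tau_{\xi',r'}$ the same operator acts on $(\alpha';\alpha')$ by the scalar $\sigma'_{\beta'} - \sigma'_{\alpha'} + 2r'$. Writing out $T\circ(\text{operator}) = (\text{operator})\circ T$ on the $(\alpha;\alpha')\to(\beta;\beta')$ matrix coefficient and cancelling the (nonzero, by multiplicity-one) fixed intertwining maps between $K'$-types gives exactly \eqref{eq:IntroRelation}. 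Conversely, if the scalars $t_{\alpha,\alpha'}$ satisfy \eqref{eq:IntroRelation} for all $(\alpha;\alpha')$ and all $\beta'$, then the same computation run backwards shows $T$ commutes with the spectrum generating operator of $\frakg'$, and since that operator together with $\frakk'$ generates $\frakg'$ (this is where one uses that $G'$ is rank one with the parabolic as specified, so the opposite nilradical is generated under $K'$ by the single "raising/lowering" direction), $T$ is $(\frakg',K')$-intertwining.

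I expect the main obstacle to be the precise bookkeeping in two places. First, establishing cleanly that the spectrum generating operator — which is a priori only an endomorphism of the representation, not literally the action of a Lie algebra element — does capture all of the $(\frakg',K')$-intertwining condition beyond $K'$-equivariance; this requires knowing that $\frakg' = \frakk' \oplus (\text{the image under } \frakk' \text{ of a single vector in } \frakp')$, i.e. that one generator suffices, which is exactly the rank-one feature, but it must be married to the general reductive set-up of Section~\ref{sec:GeneralTheory} where this is phrased more invariantly. Second, the derivation of the coefficient $\lambda_{\alpha,\alpha'}^{\beta,\beta'}$ as a genuine scalar and the verification that it is the \emph{same} scalar appearing on both sides of the interchange $T \leftrightarrow$ operator: this is a diagram-chase through the three multiplicity-one spaces $\Hom_K(\alpha,\pi_{\xi,r})$, $\Hom_{K'}(\alpha',\tau_{\xi',r'})$, $\Hom_{K'}(\alpha|_{K'},\alpha')$, and keeping track of the normalizations so that no stray constants survive is the delicate part. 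Everything else — the explicit computation of $\sigma_\alpha$, $\sigma'_{\alpha'}$ and of $\lambda_{\alpha,\alpha'}^{\beta,\beta'}$ for the orthogonal and unitary cases — is deferred to the later sections and is not needed for the abstract equivalence asserted here.
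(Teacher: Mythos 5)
Your overall strategy is the paper's: reduce to the action of $\fraks'_\CC$ (the $\frakk'$-part being automatic from $K'$-equivariance), use the Branson--\'{O}lafsson--{\O}rsted machinery to convert the Lie algebra action into data indexed by $K$- and $K'$-types, and cancel the fixed reference isomorphisms to get a scalar recursion. Two points in your write-up, however, do not survive contact with the details. First, the spectrum generating operator $\calP$ does not act on the $K$-type $\alpha$ by $\sigma_\alpha-2r$; it is an element of $\calU(\frakk)$ acting by the scalar $\sigma_\alpha$ alone (independent of $r$), and the mechanism is the cocycle reduction formula of \cite[Corollary 2.6]{BOO96}: for $X\in\fraks_\CC$ the composite $\proj_{\calE(\beta)}\circ\pi_{\xi,r}(X)|_{\calE(\alpha)}$ equals $\tfrac12(\sigma_\beta-\sigma_\alpha+2r)$ times the multiplication operator $\omega_\alpha^\beta(X)$ by the cocycle. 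It is this formula, applied on both sides of $T\circ\pi_{\xi,r}(X)=\tau_{\xi',r'}(X)\circ T$ after restricting to $\calE(\alpha;\alpha')$ and projecting to $\calE'(\beta')$, that produces the factors $(\sigma_\beta-\sigma_\alpha+2r)$ and $(\sigma'_{\beta'}-\sigma'_{\alpha'}+2r')$; no generation of $\frakg'$ from a single vector and no rank-one hypothesis enters, since one simply tests all $X\in\fraks'_\CC$ at once.

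The genuine gap is in your final cancellation step. After the reduction you are left with an identity between two maps $\fraks'_\CC\otimes\calE(\alpha;\alpha')\to\calE'(\beta')$, namely $\sum_\beta(\sigma_\beta-\sigma_\alpha+2r)t_{\beta,\beta'}\,\bigl(R_{\beta,\beta'}\circ\omega_{\alpha,\alpha'}^{\beta,\beta'}\bigr)$ on one side and $(\sigma'_{\beta'}-\sigma'_{\alpha'}+2r')t_{\alpha,\alpha'}\,\bigl(\omega_{\alpha'}^{\beta'}\circ R_{\alpha,\alpha'}\bigr)$ on the other (this is the operator-valued Theorem~\ref{thm:EigenvalueCondition}). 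The three multiplicity-one hypotheses you invoke do \emph{not} make these composites proportional: for that one needs the additional assumption $\dim\Hom_{K'}(\fraks'_\CC\otimes\alpha',\beta')\leq1$ of \eqref{eq:AssumptionTensorProd}, which forces all the maps $R_{\beta,\beta'}\circ\omega_{\alpha,\alpha'}^{\beta,\beta'}$ and $\omega_{\alpha'}^{\beta'}\circ R_{\alpha,\alpha'}$ into a one-dimensional space; the constants $\lambda_{\alpha,\alpha'}^{\beta,\beta'}$ are then \emph{defined} as the resulting proportionality factors (Corollary~\ref{cor:CharacterizationIntertwinersScalar}), not as branching or Clebsch--Gordan coefficients of the restriction $\alpha|_{K'}$. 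Without this fourth hypothesis the scalar relation \eqref{eq:IntroRelation} is not even well posed, and your claim that ``each such transition is governed by a single scalar'' fails.
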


Here we write $(\alpha;\alpha')\leftrightarrow(\beta;\beta')$ if the $K'$-type $\beta'$ inside the $K$-type $\beta$ in $\pi_{\xi,r}$ can be reached from $\alpha'$ inside $\alpha$ by a single application of $\pi_{\xi,r}(\frakg')$ for generic $r\in\CC$ (see Section~\ref{sec:IntertwiningOperatorsBetweenHarishChandraModules} for details). Further, $\sigma_\alpha$ and $\sigma'_{\alpha'}$ as well as $\lambda_{\alpha,\alpha'}^{\beta,\beta'}$ are certain constants depending only on the representations $\xi$ and $\xi'$ (see Sections~\ref{sec:SpectrumGeneratingOperator} and \ref{sec:ScalarIdentities} for their definition).

We note that the relations characterizing intertwining operators depend linearly on the induction parameters $r$ and $r'$ and turn the representation theoretic problem of classifying symmetry breaking operators into a combinatorial problem. We also remark that Theorem~\ref{thm:IntroCharacterizationIntertwiners} admits a slight modification characterizing also intertwining operators between any subquotients of $\pi_{\xi,r}$ and $\tau_{\xi',r'}$ (see Remark~\ref{rem:IntertwinersBetweenCompositionFactors}).

\subsection{Examples} For the two pairs $(G,G')=(\upO(1,n),\upO(1,n-1))$, $n\geq3$, and $(\upU(1,n),\upU(1,n-1))$, $n\geq2$, we explicitly write down the linear relations for the scalars $t_{\alpha,\alpha'}$ characterizing intertwining operators in the case where $\xi=\1$ is the trivial representation (see Theorems~\ref{thm:RealCharacterizationIntertwiners} and \ref{thm:ComplexCharacterizationIntertwiners}), and use these relations to compute multiplicities. For the statements we abbreviate $\pi_r=\pi_{\1,r}$ and $\tau_{r'}=\tau_{\1,r'}$. If $\calV$ is a $(\frakg,K)$-module and $\calW$ a $(\frakg',K')$-module we write
$$ m(\calV,\calW) = \dim\Hom_{(\frakg',K')}(\calV|_{(\frakg',K')},\calW). $$
We note that much of the notation used here follows \cite{KS13}.

\begin{thmalph}[see Theorems~\ref{thm:RealMultiplicities}~(1) and \ref{thm:ComplexMultiplicities}~(1)]\label{thm:IntroMultiplicities}
\begin{enumerate}
\item For $(G,G')=(\upO(1,n),\upO(1,n-1))$ we have
$$ m((\pi_r)_\HC,(\tau_{r'})_\HC) = \begin{cases}1 & \mbox{for $(r,r')\in\CC^2\setminus\Leven$,}\\2 & \mbox{for $(r,r')\in\Leven$,}\end{cases} $$
where $\Leven=\{(-\frac{n-1}{2}-i,-\frac{n-2}{2}-j):i,j\in\NN,i-j\in2\NN\}.$
\item For $(G,G')=(\upU(1,n),\upU(1,n-1))$ we have
$$ m((\pi_r)_\HC,(\tau_{r'})_\HC) = \begin{cases}1 & \mbox{for $(r,r')\in\CC^2\setminus L$,}\\2 & \mbox{for $(r,r')\in L$,}\end{cases} $$
where $L=\{(-n-2i,-(n-1)-2j):i,j\in\NN,j\leq i\}.$
\end{enumerate}
\end{thmalph}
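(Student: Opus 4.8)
Throughout, the plan is to deduce the multiplicity count from the explicit shape the relations~\eqref{eq:IntroRelation} of Theorem~\ref{thm:IntroCharacterizationIntertwiners} take for the two pairs at hand, and then to study the resulting linear system combinatorially. For $(G,G')=(O(1,n),O(1,n-1))$ one uses that $K=O(n)$, $K'=O(n-1)$, that $\pi_r|_K=\bigoplus_{a\in\NN}\calH^a(\RR^n)$ and $\tau_{r'}|_{K'}=\bigoplus_{b\in\NN}\calH^b(\RR^{n-1})$ are multiplicity-free, and that the branching rule $\calH^a(\RR^n)|_{O(n-1)}=\bigoplus_{b=0}^{a}\calH^b(\RR^{n-1})$ identifies the admissible pairs $(\alpha;\alpha')$ with lattice points $(a,b)$, $0\le b\le a$. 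A $K'$-intertwiner is then a triangular array $(t_{a,b})$, and substituting the values of $\sigma_\alpha$, $\sigma'_{\alpha'}$ and $\lambda_{\alpha,\alpha'}^{\beta,\beta'}$ computed in Section~\ref{sec:ExampleReal} (Theorem~\ref{thm:RealCharacterizationIntertwiners}) turns~\eqref{eq:IntroRelation} into an explicit finite-step recurrence relating $t_{a,b}$ to its neighbours $t_{a\pm1,b\pm1}$, with coefficients that are --- up to positive combinatorial factors --- affine in $r$ and $r'$. The pair $(U(1,n),U(1,n-1))$ is handled in the same way via Theorem~\ref{thm:ComplexCharacterizationIntertwiners}, with $\calH^{p,q}(\CC^n)$ in place of spherical harmonics and the index set enlarged to the quadruples coming from $U(n)\downarrow U(n-1)$ branching.

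Next I would treat the generic case. Off the exceptional lattice every coefficient occurring in the recurrence is a nonzero product of affine forms in $(r,r')$, so the recurrence propagates: all $t_{a,b}$ are forced to be scalar multiples of $t_{0,0}$, any part of the array not tied to $t_{0,0}$ being killed by the surplus relations, and hence $m((\pi_r)_\HC,(\tau_{r'})_\HC)\le 1$. That this bound is attained --- i.e. that the over-determined system is consistent --- follows in the orthogonal case from the existence of the (renormalised) Kobayashi--Speh symmetry breaking operators together with Juhl's differential ones, and uniformly, the unitary case included, from a direct verification that the various instances of~\eqref{eq:IntroRelation} are mutually compatible. This yields $m=1$ for $(r,r')\notin\Leven$, respectively $(r,r')\notin L$.

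Finally one analyses the degenerate locus. The crucial point is that the affine forms occurring as connecting coefficients vanish exactly at the reducibility points of the two principal series: the ``$K$-raising'' coefficient vanishes for $r=-\tfrac{n-1}{2}-i$ and the ``$K'$-raising'' one for $r'=-\tfrac{n-2}{2}-j$ (respectively $r\in-n-2\NN$ and $r'\in-(n-1)-2\NN$ in the unitary case). At such a point the recurrence decouples, reflecting that an intertwiner may then act independently on the composition factors of $\pi_r$ and of $\tau_{r'}$; this genuinely enlarges the solution space only when the two ``cuts'' of the array are compatible, which turns out to be precisely the parity condition $i-j\in2\NN$ for $\Leven$ and the inequality $j\le i$ for $L$. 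In that situation one writes down a second solution independent of the generic one --- supported, roughly, on the ``finite-dimensional corner'' $\{a\le i\}$ of the array --- and checks, by running the now decoupled recurrence inside each piece, that no third independent solution exists; hence $m=2$ there.

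The step I expect to be the main obstacle is this last one: pinning down the \emph{exact} exceptional lattice and proving that the jump in dimension is by exactly one. This demands careful bookkeeping of which of the finitely many types of connecting coefficient vanish at a given reducibility point, and of how the triangular --- in the unitary case four-dimensional --- array of scalars breaks into connected components under the recurrence; the parity condition $i-j\in2\NN$ and the inequality $j\le i$ only surface at the end of this analysis, and verifying that the remaining relations still rigidly determine each component (so that $m\le 2$, and not larger) is where the combinatorics is heaviest, markedly so for $U(1,n)$ where the richer index set permits more directions of decoupling.
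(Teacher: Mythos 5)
Your overall strategy is the paper's: encode a $K'$-intertwiner as a triangular array $(t_{\alpha,\alpha'})$, rewrite \eqref{eq:IntroRelation} via Theorems~\ref{thm:RealCharacterizationIntertwiners} and \ref{thm:ComplexCharacterizationIntertwiners} as a nearest-neighbour recurrence on the lattice of pairs, and count solutions. However, the proposal has a genuine gap exactly where you anticipate one, and moreover its description of the generic case is wrong as stated. It is not true that off $\Leven$ (resp.\ $L$) ``every coefficient occurring in the recurrence is a nonzero product of affine forms'': if $r\in-\rho-\NN$ or $r'\in-\rho'-\NN$ individually --- in particular at every point of $\Lodd$ --- the coefficients $(2r+2\alpha+n-1)$ resp.\ $(2r'+2\alpha'+n-2)$ in \eqref{eq:RealRel1}, \eqref{eq:RealRel2} do vanish for one value of $\alpha$ resp.\ $\alpha'$, the recurrence develops the same ``barriers'' as at $\Leven$, and the restriction of the system to the diagonal $\alpha=\alpha'$ already has a \emph{two}-dimensional solution space (Lemma~\ref{lem:RealDiagonalSequences}~(2) applies to all of $\Leven\cup\Lodd$). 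The whole content of the multiplicity-one statement at $\Lodd$ is that only a one-dimensional subspace of these diagonal solutions extends to the full array (Lemma~\ref{lem:RealDiagonalExtensionSingular}~(2): any full solution must vanish for $\alpha\le i$ or $\alpha'>j$ and is determined by $t_{i+1,j}$), and the parity dichotomy $\Leven$ versus $\Lodd$ only emerges from this extension analysis. Your sketch collapses this into ``the coefficients vanish exactly at $\Leven$'', which misplaces the boundary and leaves both the bound $m\le 1$ on $\Lodd$ and the bound $m\le 2$ on $\Leven$ unproved; the same issue recurs, with more bookkeeping, in the unitary case where $L$ also requires $j\le i$.

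Two further points. First, existence of a nonzero solution in the generic case cannot be delegated to the Kobayashi--Speh operators: the paper's purpose is to reprove their results, that input does not exist for $(U(1,n),U(1,n-1))$, and your alternative ``direct verification that the instances of \eqref{eq:IntroRelation} are mutually compatible'' is precisely the nontrivial consistency argument the paper carries out (the induction over $\alpha-\alpha'$ resp.\ $\alpha'$ with the triangle-compatibility checks in the proofs of Lemmas~\ref{lem:RealDiagonalExtensionGeneric} and \ref{lem:CplxDiagonalExtensionGeneric}, or alternatively the explicit spectral function of Propositions~\ref{prop:RealExplicitEigenvalues} and \ref{prop:ComplexExplicitEigenvalues}); asserting it is not proving it. Second, at exceptional points the paper does not produce the extra solution as ``supported on the finite-dimensional corner $\{\alpha\le i\}$'' ad hoc; rather it shows (Lemma~\ref{lem:RealDiagonalExtensionSingular}~(1), Lemma~\ref{lem:CplxDiagonalExtensionSingular}) that \emph{every} member of the two-dimensional (four-dimensional in the unitary case, cut down by the extension constraints) space of diagonal data extends uniquely, which simultaneously gives the lower bound $2$ and the upper bound $2$. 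Until you supply these extension-and-uniqueness arguments --- including why no third independent solution survives and why $\Lodd$ (resp.\ $j>i$) does not produce a jump --- the proof is incomplete.
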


Multiplicity two does not contradict the multiplicity one statements for the above pairs $(G,G')$, because for $(r,r')\in\Leven$ resp. $L$ both representations $\pi_r$ and $\tau_{r'}$ are reducible. In the case $(G,G')=(\upO(1,n),\upO(1,n-1))$ the representation $(\pi_r)_\HC$ is reducible iff $r=\pm(\frac{n-1}{2}+i)$, $i\in\NN$, its composition factors consisting of a finite-dimensional subrepresentation $\calF(i)$ and an infinite-dimensional unitarizable quotient $\calT(i)$. Similarly, in the case $(G,G')=(\upU(1,n),\upU(1,n-1))$ the representation $(\pi_r)_\HC$ is reducible iff $r=\pm(n+2i)$, $i\in\NN$, and its composition factors consist of a finite-dimensional subrepresentation $\calF(i)$, two proper subquotients $\calT_\pm(i)$, and a unitarizable quotient $\calT(i)$. Write $\calF'(j)$, $\calT'_\pm(j)$ and $\calT'(j)$ for the corresponding composition factors of $(\tau_{r'})_\HC$ at $r'=-\frac{n-2}{2}-j$ resp. $r'=-(n-1)-2j$.

\begin{thmalph}[see Theorems~\ref{thm:RealMultiplicities}~(2) and \ref{thm:ComplexMultiplicities}~(2)]\label{thm:IntroMultiplicitiesSubquotients}
\begin{enumerate}
\item For $(G,G')=(\upO(1,n),\upO(1,n-1))$, the multiplicities $m(\calV,\calW)$ are given by
\vspace{.2cm}
\begin{center}
 \begin{tabular}{c|cc}
  \diagonal{.1em}{.72cm}{$\calV$}{$\calW$} & $\calF'(j)$ & $\calT'(j)$ \\
  \hline
  $\calF(i)$ & $1$ & $0$\\
  $\calT(i)$ & $0$ & $1$\\
  \multicolumn{3}{c}{for $i-j\in2\NN$,}
 \end{tabular}
 \qquad
 \begin{tabular}{c|cc}
  \diagonal{.1em}{.72cm}{$\calV$}{$\calW$} & $\calF'(j)$ & $\calT'(j)$ \\
  \hline
  $\calF(i)$ & $0$ & $0$\\
  $\calT(i)$ & $1$ & $0$\\
  \multicolumn{3}{c}{otherwise.}
 \end{tabular}
\end{center}
\vspace{.2cm}
\item For $(G,G')=(\upU(1,n),\upU(1,n-1))$, the multiplicities $m(\calV,\calW)$ are given by
\vspace{.2cm}
\begin{center}
 \begin{tabular}{c|cccc}
  \diagonal{.1em}{.85cm}{$\calV$}{$\calW$} & $\calF'(j)$ & $\calT_+'(j)$ & $\calT_-'(j)$ & $\calT'(j)$ \\
  \hline
  $\calF(i)$ & $1$ & $0$ & $0$ & $0$\\
  $\calT_+(i)$ & $0$ & $1$ & $0$ & $0$\\
  $\calT_-(i)$ & $0$ & $0$ & $1$ & $0$\\
  $\calT(i)$ & $0$ & $0$ & $0$ & $1$\\
  \multicolumn{5}{c}{for $j\leq i$,}
 \end{tabular}
 \qquad
 \begin{tabular}{c|cccc}
  \diagonal{.1em}{.85cm}{$\calV$}{$\calW$} & $\calF'(j)$ & $\calT_+'(j)$ & $\calT_-'(j)$ & $\calT'(j)$ \\
  \hline
  $\calF(i)$ & $0$ & $0$ & $0$ & $0$\\
  $\calT_+(i)$ & $0$ & $0$ & $0$ & $0$\\
  $\calT_-(i)$ & $0$ & $0$ & $0$ & $0$\\
  $\calT(i)$ & $1$ & $0$ & $0$ & $0$\\
  \multicolumn{5}{c}{otherwise.}
 \end{tabular}
\end{center}
\vspace{.2cm}
\end{enumerate}
\end{thmalph}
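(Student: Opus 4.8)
\emph{Proof strategy.}
The plan is to derive Theorem~\ref{thm:IntroMultiplicitiesSubquotients} purely combinatorially from Theorem~\ref{thm:IntroCharacterizationIntertwiners}, in the subquotient version of Remark~\ref{rem:IntertwinersBetweenCompositionFactors} and using the explicit form of the scalar relations obtained for the two pairs in Theorems~\ref{thm:RealCharacterizationIntertwiners} and \ref{thm:ComplexCharacterizationIntertwiners}. By the multiplicity-free hypotheses, assigning to an intertwiner its scalars $t_{\alpha,\alpha'}$ (Corollary~\ref{cor:CharacterizationIntertwinersScalar}) identifies $\Hom_{(\frakg',K')}(\calV|_{(\frakg',K')},\calW)$ with the space of arrays $(t_{\alpha,\alpha'})$ supported on the $K$-types of $\calV$ and the $K'$-types of $\calW$ that satisfy all relations \eqref{eq:IntroRelation} internal to those two supports; the ``if'' direction of Theorem~\ref{thm:IntroCharacterizationIntertwiners} guarantees that each such array actually comes from an intertwiner, so existence is automatic and only the dimension of this solution space has to be computed. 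Thus the whole statement reduces to linear algebra on an explicitly given, lattice-indexed linear system.

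For the pair $(O(1,n),O(1,n-1))$ with $\xi=\1$ the $K$-types of $\pi_r$ are the spherical harmonic spaces $\mathcal{H}^k(\RR^n)$, $k\in\NN$, the $K'$-types of $\tau_{r'}$ are the $\mathcal{H}^l(\RR^{n-1})$, $l\in\NN$, and branching is the classical multiplicity-free interlacing $\mathcal{H}^k(\RR^n)|_{O(n-1)}=\bigoplus_{l=0}^k\mathcal{H}^l(\RR^{n-1})$; so the pairs $(\alpha;\alpha')$ form the lattice $\{(k,l):0\le l\le k\}$ and \eqref{eq:IntroRelation} becomes a nearest-neighbour recurrence on it, with coefficients that are explicit rational functions of $k,l,r,r'$ (the eigenvalue factors $\sigma_\beta-\sigma_\alpha+2r$, $\sigma'_{\beta'}-\sigma'_{\alpha'}+2r'$ and the structure constants $\lambda_{\alpha,\alpha'}^{\beta,\beta'}$) whose zeros occur exactly at the reducibility parameters. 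In the unitary case the $K$-types are bidegree harmonics $\mathcal{H}^{a,b}(\CC^n)$, $(a,b)\in\NN^2$, branching is the $U(n)\downarrow U(n-1)$ interlacing $a'\le a$, $b'\le b$, and one gets the analogous nearest-neighbour recurrence on the four-index lattice. I would then record the $K$-type supports of the composition factors at the singular parameters: for $r=-(\tfrac{n-1}{2}+i)$ the finite-dimensional $\calF(i)$ is supported on $k\le i$ and the unitarizable quotient $\calT(i)$ on $k\ge i+1$, and similarly for $\tau_{r'}$ at $r'=-\tfrac{n-2}{2}-j$; for $r=-(n+2i)$ in the unitary case $\calF(i)$, $\calT_\pm(i)$, $\calT(i)$ occupy the four quadrant-like regions of $\NN^2$ cut out by the lines $a=i$ and $b=i$, and likewise for the primed factors.

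The actual computation is then to solve these recurrences by propagation from the corner ($t_{0,0}$, or $t_{l,l}$ for fixed $l$): away from the reducibility locus all coefficients are nonzero, the solution is pinned down by one free parameter, and one recovers $m((\pi_r)_\HC,(\tau_{r'})_\HC)=1$ as in Theorem~\ref{thm:IntroMultiplicities}. At a reducibility parameter a coefficient on one of the \emph{defining edges} of a composition factor (the transition across $k=i$, or across $a=i$/$b=i$, and correspondingly on the $K'$-side at $j$) vanishes; this is exactly the mechanism that either permits a solution to be confined to the smaller support — making the corresponding table entry nonzero — or makes the internal relations of a mismatched pair force the array to vanish identically — giving a zero entry. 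The parity restriction $i-j\in2\NN$ (orthogonal case) and the inequality $j\le i$ (unitary case) then come out as the precise conditions under which the surviving edge coefficient has the right sign and the $K'$-support $S_\calW$ fits inside the branching image of the $K$-support $S_\calV$; the persistent off-diagonal entry $m(\calT(i),\calF'(j))=1$ reflects that the quotient $\calT(i)$, being infinite dimensional on the ``large'' region, always has enough $K$-types mapping compatibly onto the finite $\calF'(j)$.

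The main obstacle will be the boundary bookkeeping: at the edges $k=i,i+1$ and $l=j,j+1$ (and, in the unitary case, along the staircase boundaries of the four regions) one must determine exactly which of the finitely many degenerate relations survive, verify that \emph{all} non-boundary coefficients and structure constants $\lambda_{\alpha,\alpha'}^{\beta,\beta'}$ are genuinely nonzero so that propagation really does cut the solution space down to dimension one, and — for the off-diagonal unitary entry — check that the two families of edge relations coupling the boundaries are mutually compatible rather than contradictory. These nonvanishing statements and the exact determination of the vanishing loci are precisely the explicit computations carried out in Theorems~\ref{thm:RealMultiplicities} and \ref{thm:ComplexMultiplicities}; once they are in hand, Theorem~\ref{thm:IntroCharacterizationIntertwiners} turns each surviving solution array into an intertwiner and each table entry follows.
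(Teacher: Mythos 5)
Your overall strategy is the same as the paper's: encode intertwiners by the scalars $t_{\alpha,\alpha'}$ via Theorem~\ref{thm:EigenvalueCondition}, Corollary~\ref{cor:CharacterizationIntertwinersScalar} and the subquotient version in Remark~\ref{rem:IntertwinersBetweenCompositionFactors}, specialize to the explicit nearest-neighbour relations of Theorems~\ref{thm:RealCharacterizationIntertwiners} and \ref{thm:ComplexCharacterizationIntertwiners}, record the $K$-type supports of the composition factors, and decide by propagation (with barriers at the vanishing coefficients) which solution arrays can be confined to the required supports. So the route is correct in outline. But there is a genuine gap: the decisive step is never carried out. The entire content of the theorem lies in the combinatorial lemmas the paper proves for this purpose --- Lemmas~\ref{lem:RealDiagonalSequences}--\ref{lem:RealDiagonalExtensionSingular} and \ref{lem:CplxDiagonalSequences}--\ref{lem:CplxDiagonalExtensionSingular} (diagonal sequences and their unique extensions), and the case-by-case verification in the proofs of Theorems~\ref{thm:RealMultiplicities}~(2) and \ref{thm:ComplexMultiplicities}~(2) of exactly which arrays can be forced to vanish on the complements of $\calF(i),\calT(i)$, resp.\ $\calF(i),\calF_\pm(i)$, and to land in $\calF'(j)$, $\calW''$, etc. Your proposal defers precisely these computations to ``the explicit computations carried out in Theorems~\ref{thm:RealMultiplicities} and \ref{thm:ComplexMultiplicities}'', i.e.\ to the statement being proven, so at the crucial point the argument is circular rather than incomplete-but-fixable by routine checking.

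Two further points would need repair even at the level of the outline. First, the parity condition $i-j\in2\NN$ in the orthogonal case does not come from ``the right sign'' of a surviving edge coefficient: it comes from the fact that the scalars are supported on the even lattice $\alpha-\alpha'\in2\ZZ$ together with the relative position of the vertical barrier at $\alpha=i$ and the horizontal barrier at $\alpha'=j$ (Diagram~\ref{fig:RealBarriers}); when $i-j$ is odd the two-term degenerate relations propagate zeros along the diagonal (as in Lemma~\ref{lem:RealDiagonalExtensionSingular}~(2)), which is what kills $m(\calF(i),\calF'(j))$ and creates the extra class for $\calT(i)\to\calF'(j)$. Your heuristic that $m(\calT(i),\calF'(j))=1$ ``always'' is in fact false in the orthogonal case (it is $0$ for $i-j\in2\NN$), so the sign/size heuristic cannot be the right mechanism. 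Second, for the unitary entries involving $\calT_\pm$, the target $\calT'_\pm(j)$ is a genuine subquotient, and Remark~\ref{rem:IntertwinersBetweenCompositionFactors} imposes the relations only for those $\beta'$ lying in the chosen $K'$-complement $\calW''$; the paper's proof of $m(\calT_+(i),\calT'_+(j))$ hinges on tracking exactly which truncated relations remain (Diagrams~\ref{KtypesPropagationCplxTplus1}--\ref{KtypesPropagationCplxTplus3}), and your ``relations internal to the two supports'' phrasing is too coarse to decide, for instance, why the answer is $0$ for $j>i$ but $1$ for $j\le i$. Without these verifications the tables cannot be deduced.
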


We further construct a basis of $\Hom_{(\frakg',K')}((\pi_r)_\HC|_{(\frakg',K')},(\tau_{r'})_\HC)$ for all $r,r'\in\CC$ by solving the relations \eqref{eq:IntroRelation} explicitly. More precisely, we find a family $(t_{\alpha,\alpha'}(r,r'))_{\alpha,\alpha'}$ consisting of rational functions in $r,r'\in\CC$ that solve the relations \eqref{eq:IntroRelation}. Renormalizing the functions $t_{\alpha,\alpha'}(r,r')$ gives a family $(t^{(1)}_{\alpha,\alpha'}(r,r'))_{\alpha,\alpha'}$ of holomorphic functions in $r,r'\in\CC$ satisfying the relations \eqref{eq:IntroRelation}. By Theorem~\ref{thm:IntroCharacterizationIntertwiners} this constructs intertwining operators $T^{(1)}(r,r')$ depending holomorphically on $r,r'\in\CC$. We show that
$$ T^{(1)}(r,r')=0 \qquad \mbox{if and only if $(r,r')\in\Leven$ resp. $L$.} $$
For each $(r,r')\in\Leven$ resp. $L$ the holomorphic function $T^{(1)}(r,r')$ can be renormalized along two different affine complex lines through $(r,r')$, and one obtains two different non-trivial operators $T^{(2)}(r,r'),T^{(3)}(r,r')$ for every $(r,r')\in\Leven$ resp. $L$ (see Propositions~\ref{prop:RealExplicitEigenvalues} and \ref{prop:ComplexExplicitEigenvalues} for details).

\begin{thmalph}[see Theorems~\ref{thm:RealExplicitBasis} and \ref{thm:ComplexExplicitBasis} and Remarks~\ref{rem:RenormalizationForSubquotientsReal} and \ref{rem:RenormalizationForSubquotientsCplx}]\label{thm:IntroExplicitIntertwiners}
For the pair $(G,G')=(\upO(1,n),\upO(1,n-1))$ resp. $(\upU(1,n),\upU(1,n-1))$ we have
$$ \Hom_{(\frakg',K')}((\pi_r)_\HC|_{(\frakg',K')},(\tau_{r'})_\HC) = \begin{cases}\CC T^{(1)}(r,r') & \mbox{for $(r,r')\in\CC^2\setminus\calL$,}\\\CC T^{(2)}(r,r')\oplus\CC T^{(3)}(r,r') & \mbox{for $(r,r')\in\calL$,}\end{cases} $$
where $\calL=\Leven$ resp. $L$. Moreover, by composing $T^{(1)}(r,r')$ with embeddings and quotient maps for the composition factors of $\pi_r$ and $\tau_{r'}$, and renormalizing along certain affine complex lines, one can obtain every intertwining operator between arbitrary composition factors of $(\pi_r)_\HC$ and $(\tau_{r'})_\HC$.
\end{thmalph}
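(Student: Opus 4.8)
The plan is to reduce Theorem~\ref{thm:IntroExplicitIntertwiners} to Theorem~\ref{thm:IntroCharacterizationIntertwiners}, so that the entire problem becomes: solve the linear recursion \eqref{eq:IntroRelation} for the scalars $t_{\alpha,\alpha'}$ and count the dimension of the solution space. First I would set up the combinatorial picture: for both pairs the $K$-types $\alpha$ of $(\pi_r)_\HC$ are parametrized by a (finite-length) tuple of non-negative integers, and the branching to $K'$ together with the arrows $(\alpha;\alpha')\leftrightarrow(\beta;\beta')$ gives a connected directed graph on the index set of pairs $(\alpha;\alpha')$. The explicit form of the constants $\sigma_\alpha$, $\sigma'_{\alpha'}$, $\lambda_{\alpha,\alpha'}^{\beta,\beta'}$ is computed in Sections~\ref{sec:ScalarIdentities}, \ref{sec:ExampleReal}, \ref{sec:ExampleComplex}; I would use these to show that \eqref{eq:IntroRelation} is in fact a \emph{two-term} (or finitely many-term) recursion along each edge, so that fixing one scalar, say $t_{\alpha_0,\alpha_0'}$ at the ``bottom'' pair, determines all others by propagating along the graph. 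This immediately gives $m\le 1$ generically; the subtlety is that the propagated values must be \emph{consistent} around every cycle of the graph, and at the special parameters in $\calL$ the recursion degenerates (a coefficient $\sigma_\beta-\sigma_\alpha+2r$ or $\sigma'_{\beta'}-\sigma'_{\alpha'}+2r'$ vanishes), decoupling the graph into two pieces and allowing an extra free parameter.

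Concretely, the steps are: (1) Exhibit the rational solution $(t_{\alpha,\alpha'}(r,r'))$ by writing down a closed product formula (a ratio of Pochhammer symbols / Gamma factors in $r$, $r'$ and the $K$-type labels) and verifying directly that it satisfies \eqref{eq:IntroRelation} — this is a finite check per edge using the explicit constants. (2) Determine the polar set of this rational family and renormalize: multiply through by the least common denominator to obtain the holomorphic family $(t^{(1)}_{\alpha,\alpha'}(r,r'))$, and identify the common zero set of \emph{all} these holomorphic functions, which I claim is exactly $\Leven$ resp.\ $L$; this requires checking that at $(r,r')\notin\calL$ at least one $t^{(1)}_{\alpha,\alpha'}$ is nonzero, while at $(r,r')\in\calL$ every factor in the numerator vanishes. (3) For $(r,r')\in\calL$, analyze the degenerate recursion: show the vanishing coefficient splits the index graph into two connected components, on each of which the recursion again has a one-dimensional solution space, and that these two solutions are linearly independent (e.g.\ supported on disjoint sets of pairs, or distinguished by a $K$-type where one vanishes and the other does not). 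Renormalizing $T^{(1)}(r,r')$ along the two affine complex lines through $(r,r')$ inside $\calL$'s ambient lattice produces precisely these two operators $T^{(2)},T^{(3)}$, as in Propositions~\ref{prop:RealExplicitEigenvalues} and \ref{prop:ComplexExplicitEigenvalues}. (4) Conclude via Theorem~\ref{thm:IntroCharacterizationIntertwiners} that $\Hom_{(\frakg',K')}$ has the asserted dimension and basis, matching the multiplicity count of Theorem~\ref{thm:IntroMultiplicities}.

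For the composition-factor statement I would use Remark~\ref{rem:IntertwinersBetweenCompositionFactors}: an intertwiner between subquotients corresponds to a solution of the modified relations supported on the appropriate sub-collection of $K$-types, so composing $T^{(1)}(r,r')$ with the canonical embeddings $\calF(i)\hookrightarrow(\pi_r)_\HC$ (and their unitary analogues) and the quotient maps onto $\calT(i)$, $\calT_\pm(i)$, then renormalizing along the line where the relevant prefactor has a simple zero, yields a nonzero operator between any pair of factors for which the corresponding multiplicity in Theorem~\ref{thm:IntroMultiplicitiesSubquotients} is $1$; that the construction exhausts $\Hom$ follows because the target multiplicities are all $\le 1$, so a single nonzero operator is automatically a basis.

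I expect the main obstacle to be step (2)–(3): pinning down the \emph{exact} common zero locus of the holomorphic family and proving the splitting of the recursion graph at those points is clean rather than producing spurious extra components. In particular one must check that the two lattices $\Leven$ (with the parity condition $i-j\in 2\NN$) and $L$ (with $j\le i$) are precisely the loci where the numerator product vanishes identically — the inequality/parity constraints are exactly the statement that a zero of a factor indexed by $\beta'$ is ``reachable'', i.e.\ actually forces the whole solution to vanish — and this bookkeeping, while elementary, is where the representation-theoretic content (the shape of the branching graph and the signs in $\lambda_{\alpha,\alpha'}^{\beta,\beta'}$) really enters. The orthogonal case is the lighter one since the $K$-types are indexed by a single integer; the unitary case needs a two-integer index and the finer composition series with $\calT_\pm$, so the cycle-consistency check there is the most delicate part.
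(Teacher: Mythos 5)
Your route is the paper's own: reduce to the scalar relations of Theorem~\ref{thm:IntroCharacterizationIntertwiners}, exhibit an explicit meromorphic solution, renormalize to obtain the families $T^{(1)},T^{(2)},T^{(3)}$, identify the common zero locus of $T^{(1)}$ with $\Leven$ resp.\ $L$, match against the multiplicity count, and treat composition factors via Remark~\ref{rem:IntertwinersBetweenCompositionFactors} together with embeddings, quotient maps and renormalization along affine lines. That said, two of your concrete steps would not go through as written. In step (1), the spectral function is \emph{not} a single product of Gamma/Pochhammer factors: a pure product ansatz satisfies only one of the two families of relations (for the orthogonal case, the individual Gamma-ratio terms solve \eqref{eq:RealRel1} but not \eqref{eq:RealRel2}); the actual solution \eqref{eq:ExplicitFormulaReal}, and its unitary analogue in Proposition~\ref{prop:ComplexExplicitEigenvalues}, is a terminating \emph{sum} of such products, and the verification is not a finite per-edge check but requires the ansatz $t_{\alpha,\alpha'}=\sum_k b_k(\cdots)$ and a two-term recurrence determining the coefficients $b_k$. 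In step (3), the degeneration at $(r,r')\in\calL$ is not a clean disconnection of the index graph with disjointly supported solutions: the barriers created by the vanishing coefficients are one-way (propagation across them is still possible in one direction), $T^{(2)}$ and $T^{(3)}$ have overlapping support and are separated only by vanishing/non-vanishing on specific $K'$-types (your second alternative), and in the unitary case the diagonal recursion alone has a \emph{four}-dimensional solution space at $L$ (Lemma~\ref{lem:CplxDiagonalSequences}), cut down to multiplicity two only by the off-diagonal extension analysis (Lemma~\ref{lem:CplxDiagonalExtensionSingular}); so the dimension is not ``one per component'' and the consistency bookkeeping you flag is genuinely where the work lies.

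For the subquotient statement your exhaustion argument (each relevant $\Hom$ space is at most one-dimensional, so one nonzero operator suffices) is correct, but the construction itself needs more than composing $T^{(1)}$ with embeddings and quotient maps: in the unitary case the intertwiners $\calT_\pm(i)\to\calT'_\pm(j)$, $j\leq i$, are not of that form and require the specific renormalization by $\Gamma(\tfrac{r'+\rho'}{2})/\Gamma(\tfrac{r+\rho}{2})$ along the line $r'+\rho'=r+\rho+2(i-j)$ together with a support analysis on the numbers $t^+_{p,q_1,q_2}$, as carried out in Remark~\ref{rem:RenormalizationForSubquotientsCplx}. With the sum-ansatz in place of the product formula and the finer case analysis at $\calL$, your outline coincides with the paper's proof.
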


The previous theorem shows that basically all the information about intertwining operators between spherical principal series of $G$ and $G'$ and their composition factors is contained in the single holomorphic family $T^{(1)}(r,r')$ of intertwiners.

Finally we turn to the question of whether every intertwining operator between the Harish-Chandra modules $(\pi_r)_\HC$ and $(\tau_{r'})_\HC$ lifts to an intertwining operator between the smooth globalizations $\pi_r$ and $\tau_{r'}$, i.e. the question of whether \eqref{eq:SymBreakingRestrictionMap} is an isomorphism.

\begin{thmalph}[see Corollaries~\ref{cor:HCtoInftyReal} and \ref{cor:HCtoInftyCplx}]\label{thm:IntroAutomaticContinuity}
For the pairs $(G,G')=(\upO(1,n),\upO(1,n-1))$ and $(\upU(1,n),\upU(1,n-1))$ every intertwining operator between $(\pi_r)_\HC$ and $(\tau_{r'})_\HC$ (resp. any of their subquotients) extends to a continuous intertwining operator between $\pi_r$ and $\tau_{r'}$ (resp. the Casselman--Wallach completions of the subquotients). In particular, the injective map \eqref{eq:SymBreakingRestrictionMap} is surjective for all spherical principal series representations and their subquotients.
\end{thmalph}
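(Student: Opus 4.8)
The plan is to deduce automatic continuity from the explicit description of all intertwiners provided by Theorem~\ref{thm:IntroExplicitIntertwiners}. The key point is that we have not merely counted the intertwining operators between the Harish-Chandra modules, but have written down an explicit basis of them, constructed from a single holomorphic family $T^{(1)}(r,r')$ together with its renormalizations $T^{(2)}(r,r'),T^{(3)}(r,r')$ along affine complex lines. Therefore it suffices to show that each of these basis operators is the restriction of a continuous operator $\pi_r\to\tau_{r'}$; surjectivity of \eqref{eq:SymBreakingRestrictionMap} then follows since this map is already known to be injective and the dimensions match on both sides.

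First I would show that the holomorphic family $T^{(1)}(r,r')$ extends continuously. The natural candidate is an integral (kernel) operator: in the noncompact picture, $\pi_r$ and $\tau_{r'}$ are realized on smooth functions on $\RR^{n-1}$ resp. $\RR^{n-2}$, and symmetry breaking operators are given by convolution against a distribution kernel $K_{r,r'}$ on $\RR^{n-1}$ that depends meromorphically on $(r,r')$, in the style of Kobayashi--Speh~\cite{KS13}. One checks that for $(r,r')$ in a suitable half-space the kernel is locally integrable, hence defines a continuous operator, and that its restriction to $K$-finite vectors is (a nonzero multiple of) $T^{(1)}(r,r')$ — this is verified by comparing the action on the spherical vector or by matching a single matrix coefficient $t_{\alpha,\alpha'}$, using the multiplicity-one normalization. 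One then extends meromorphically in $(r,r')$; after dividing by the scalar-valued meromorphic factor that produces the holomorphic renormalization $T^{(1)}(r,r')$, the resulting operator-valued function is holomorphic on all of $\CC^2$ by the same Gamma-factor analysis as in the Knapp--Stein case, and its $K$-finite restriction agrees with $T^{(1)}(r,r')$ by analytic continuation of the identity on the dense $K$-finite subspace.

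Next I would handle the points $(r,r')\in\calL$ where the space of intertwiners jumps to dimension two. Since $T^{(2)}$ and $T^{(3)}$ are obtained from the holomorphic family by restricting to affine lines $\{(r_0+ct, r_0'+c't): t\in\CC\}$ and dividing by an appropriate power of $t$, the corresponding operator-valued holomorphic family (the continuous extension of $T^{(1)}$) can be restricted and renormalized in exactly the same way, yielding continuous operators whose $K$-finite restrictions are $T^{(2)}(r,r')$ and $T^{(3)}(r,r')$. For the statement about subquotients one uses that the embeddings $\calF(i)\hookrightarrow\pi_r$ and quotient maps $\pi_r\twoheadrightarrow\calT(i)$ (and similarly on the $G'$-side) are morphisms of smooth representations whose Casselman--Wallach completions realize the smooth globalizations of the subquotients; composing the continuous extension of $T^{(1)}$ with these and renormalizing along the relevant lines (Remarks~\ref{rem:RenormalizationForSubquotientsReal} and \ref{rem:RenormalizationForSubquotientsCplx}) produces continuous operators inducing the full space of intertwiners between subquotients listed in Theorem~\ref{thm:IntroMultiplicitiesSubquotients}. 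Injectivity of the restriction map then forces equality of the Hom-spaces, giving automatic continuity.

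The main obstacle I expect is the identification step: verifying that the meromorphic family of integral kernel operators, after the precise holomorphic renormalization, restricts \emph{exactly} to $T^{(1)}(r,r')$ rather than to some other scalar multiple or a family differing by a meromorphic factor with extra zeros or poles. This requires tracking the normalizing Gamma factors carefully and confirming that the renormalization of the kernel operator and the renormalization $t^{(1)}_{\alpha,\alpha'}(r,r')$ of the scalars (defined combinatorially via \eqref{eq:IntroRelation}) have the same zero divisor, namely precisely $\calL$. A clean way to do this is to evaluate both families on a single well-chosen pair $(\alpha;\alpha')$ — e.g.\ the minimal $K$-type mapping to the minimal $K'$-type — where the kernel-operator matrix coefficient can be computed by a beta-integral and compared termwise with the explicit rational function $t_{\alpha,\alpha'}(r,r')$ from Propositions~\ref{prop:RealExplicitEigenvalues} and \ref{prop:ComplexExplicitEigenvalues}.
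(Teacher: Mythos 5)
Your strategy is genuinely different from the paper's, and it has a real gap. The paper never passes to the noncompact picture or to distribution kernels at all: its proof (Proposition~\ref{prop:HCtoInfty} combined with Lemma~\ref{lem:AutomaticSmoothness}, applied in Corollaries~\ref{cor:HCtoInftyReal} and \ref{cor:HCtoInftyCplx}) stays entirely in the compact picture. One first shows (Lemma~\ref{lem:AutomaticSmoothness}, a Schwartz-kernel argument on $K\times K'$) that any continuous $G'$-map from $\pi_{\xi,r}$ into the \emph{distribution} completion $\tau_{\xi'^\vee,-r'}^\vee$ automatically lands in the smooth vectors; then Proposition~\ref{prop:HCtoInfty} reduces automatic continuity to showing that the scalars $t_{\alpha,\alpha'}$ and the norms $\|R_{\alpha,\alpha'}\|_{L^2\to L^2}$ grow at most polynomially. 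The norm growth is read off from Zhang's explicit formula, and the polynomial growth of $t_{\alpha,\alpha'}$ follows directly from the recursion relations \eqref{eq:RealRel1}--\eqref{eq:RealRel2} (resp. \eqref{eq:ComplexRel1}--\eqref{eq:ComplexRel4}), whose coefficients are polynomial in $\alpha,\alpha'$ and uniformly bounded below for $\alpha+\alpha'\gg0$. In particular the explicit spectral functions $t^{(i)}_{\alpha,\alpha'}(r,r')$ are not needed for this theorem at all.

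The gap in your route is that the entire analytic input is asserted rather than supplied. For $(O(1,n),O(1,n-1))$ the statements you need -- local integrability of $K_{r,r'}$ in a half-space, meromorphic continuation of the distribution-valued kernel, and holomorphy of the \emph{operator-valued} family in the Fr\'echet topology after dividing by the normalizing Gamma factors, including at the points of $\Leven$ where one renormalizes along affine lines -- constitute the main technical content of \cite{KS13}; invoking them makes the argument circular with respect to the paper's aim of giving a new proof. More seriously, for $(U(1,n),U(1,n-1))$ this analytic theory does not exist in the form you need: the relevant kernels live on the Heisenberg group, only the existence of a meromorphic family is available from \cite{MOO13}, and the regularization/continuation analysis at the singular parameters in $L$ is precisely what has not been carried out (the paper explicitly flags this as an open direction in its final remark). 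The phrase ``holomorphic on all of $\CC^2$ by the same Gamma-factor analysis as in the Knapp--Stein case'' hides the hardest step: continuity of the renormalized operators at the reducibility points in the smooth topology requires uniform estimates that do not follow from holomorphy of the matrix coefficients $t^{(1)}_{\alpha,\alpha'}(r,r')$ alone. Your final ``identification step'' (matching the kernel family with $T^{(1)}$ by evaluating on one vector) is fine, but it is the easy part; the missing piece is the existence and regularity of the continuous family itself, which is exactly what the paper's sequence-space argument is designed to bypass.
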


This verifies Kobayashi's conjecture~\cite[Remark 10.2~(4)]{Kob14} in the above cases.

For $(G,G')=(\upO(1,n),\upO(1,n-1))$ the analogues of Theorems~\ref{thm:IntroMultiplicities}, \ref{thm:IntroMultiplicitiesSubquotients} and \ref{thm:IntroExplicitIntertwiners} in the smooth category, i.e. for $\pi_r$ and $\tau_{r'}$ instead of $(\pi_r)_\HC$ and $(\tau_{r'})_\HC$, were established by Kobayashi--Speh~\cite{KS13} using analytic techniques. With Theorem~\ref{thm:IntroAutomaticContinuity} we obtain a new proof of their results as well as the corresponding results for $(G,G')=(\upU(1,n),\upU(1,n-1))$.

\subsection{Application} For $(G,G')=(\upO(1,n),\upO(1,n-1))$ we further present an application of the classification of symmetry breaking operators. In Theorem~\ref{thm:RealDiscreteComponentsInUniReps} we use the explicit formula for the numbers $t_{\alpha,\alpha'}$ to construct discrete components in the restriction of certain unitary representations of $G$ to $G'$. The representations in question are either spherical complementary series representations (i.e. those $\pi_r$ which are unitarizable) or the unitarizable quotients $\calT(i)$. This extends and gives new proofs of previous results by Speh--Venkataramana~\cite{SV11}, Zhang~\cite{Zha15}, Kobayashi--Speh~\cite{KS13} and M\"{o}llers--Oshima~\cite{MO12} (see Remark~\ref{rem:RealDiscreteComponentsInUniReps}). Analogous results hold for $(G,G')=(\upU(1,n),\upU(1,n-1))$.

\subsection{Structure of the paper} In Section~\ref{sec:Preliminaries} we fix the notation for principal series representations and recall the method of spectrum generating operators~\cite{BOO96}. This method is applied in Section~\ref{sec:GeneralTheory} to obtain an equivalent characterization of intertwining operators in the category of $(\frakg',K')$-modules by means of scalar identities. After this quite general approach, we study in Section~\ref{sec:ExampleReal} the special case $(G,G')=(\upO(1,n),\upO(1,n-1))$ in detail and give some applications. Finally, in Section~\ref{sec:ExampleComplex} we repeat the same procedure for $(G,G')=(\upU(1,n),\upU(1,n-1))$ providing a new classification of symmetry breaking operators in this example. Appendix~\ref{app:OrthPoly} contains some basic properties of Gegenbauer and Jacobi polynomials which are used in Appendix~\ref{app:SphericalHarmonics} to describe explicit branching laws for real and complex spherical harmonics.\\

\paragraph{\bf Notation.} $\NN=\{0,1,2,\ldots\}$.

\section{Preliminaries}\label{sec:Preliminaries}

We fix the necessary notation, discuss induced representations and the method of the spectrum generating operator by Branson--\'{O}lafsson--{\O}rsted~\cite{BOO96}.

\subsection{Compatible maximal parabolic subgroups}

Let $G$ be a reductive Lie group with compact center and $G'\subseteq G$ a reductive subgroup also with compact center. Denote by $\frakg$ and $\frakg'$ the Lie algebras of $G$ and $G'$. Choose a maximal parabolic subgroup $P\subseteq G$ with the property that $P'=P\cap G'$ is maximal parabolic in $G'$ and write $P=MAN$ and $P'=M'A'N'$ for the Langlands decompositions of $P$ and $P'$. We fix a Cartan involution $\theta$ of $G$ which leaves $G'$ and the Levi subgroups $MA$ and $M'A'$ invariant. Write $K=G^\theta$ and $K'=(G')^\theta$ for the corresponding fixed point subgroups of $G$ and $G'$ which are maximal compact and denote by $\frakk$ and $\frakk'$ their Lie algebras. Let $\fraks$ and $\fraks'$ be the $(-1)$-eigenspaces of $\theta$ on $\frakg$ and $\frakg'$ so that
$$ \frakg=\frakk\oplus\fraks, \qquad \frakg'=\frakk'\oplus\fraks'. $$

\begin{example}
\begin{enumerate}
\item Let $(G,G')$ be one of the pairs
\begin{align*}
 &(\upO(1,n),\upO(1,n-1)), &&(\upU(1,n),\upU(1,n-1)),\\
 &(\Sp(1,n),\Sp(1,n-1)), &&(F_{4(-20)},\Spin(8,1)).
\end{align*}
Then one can choose the minimal parabolic $P$ such that $P'=P\cap G'$ is minimal parabolic in $G'$. Since $G$ and $G'$ are of rank one, minimal parabolics are maximal and hence satisfy our assumptions.
\item Let
$$ (G,G')=(\SL(n,\RR),\SL(n-1,\RR)) $$
with $G'$ embedded in $G$ as the upper left block. Then all standard maximal parabolics $P=P_{p,q}=(S(\GL(p,\RR)\times\GL(q,\RR)))\ltimes\RR^{p\times q}$ corresponding to the partition $n=p+q$ with $q>1$ satisfy the assumptions. In this case $P'=P\cap G'$ is the standard maximal parabolic of $G'$ corresponding to the partition $n-1=p+(q-1)$.
\end{enumerate}
\end{example}

\subsection{Principal series representations}

For any finite-dimensional representation $(\xi,V_\xi)$ of $M$ and any $\nu\in\fraka_\CC^*$, where $\fraka$ denotes the Lie algebra of $A$, consider the induced representation $\Ind_P^G(\xi\otimes e^\nu\otimes\1)$ (normalized smooth parabolic induction). This representation is realized on the space
$$ \calE(G;\xi,\nu) = \{F\in C^\infty(G,V_\xi):F(gman)=a^{-\nu-\rho}\xi(m)^{-1}F(g)\forall\,g\in G,man\in MAN\}, $$
where $\rho=\frac{1}{2}\tr\ad|_\frakn\in\fraka^*$. The group $G$ acts on $\calE(G;\xi,\nu)$ by the left-regular action. Since $G=KP$, restriction to $K$ is an isomorphism $\calE(G;\xi,\nu)\to\calE(K;\xi|_{M\cap K})$ where
$$ \calE(K;\xi|_{M\cap K}) = \{F\in C^\infty(K,V_\xi):F(km)=\xi(m)^{-1}F(k)\forall\,k\in K,m\in M\cap K\}. $$
Let $\pi_{\xi,\nu}$ denote the action of $G$ on $\calE(K;\xi|_{M\cap K})$ which makes this isomorphism $G$-equivariant. Then $(\pi_{\xi,\nu},\calE(K;\xi|_{M\cap K}))$ is a smooth admissible representation of $G$. The restriction of $\pi_{\xi,\nu}$ to $K$ is simply the left-regular representation of $K$ on $\calE(K;\xi|_{M\cap K})$.

Corresponding to the smooth representation $\pi_{\xi,\nu}$ we consider its underlying $(\frakg,K)$-module $(\pi_{\xi,\nu})_\HC$ realized on the space $\calE=\calE(K;\xi|_{M\cap K})_K$ of $K$-finite vectors. Abusing notation we denote the action of the Lie algebra $\frakg$ on $\calE$ also by $\pi_{\xi,\nu}$. Then the restriction of $(\pi_{\xi,\nu})_\HC$ to $K$ decomposes as
$$ \calE = \bigoplus_{\alpha\in\widehat{K}} \calE(\alpha) $$
with $\calE(\alpha)$ being the $\alpha$-isotypic component in $\calE$. Note that $\calE$ and hence its decomposition into $K$-isotypic components is independent of $\nu\in\fraka_\CC^*$ and only depends on $\xi$.

Similarly we consider $\tau_{\xi',\nu'}=\Ind_{P'}^{G'}(\xi'\otimes e^{\nu'}\otimes\1)$ for a finite-dimensional representation $(\xi',V_{\xi'})$ of $M'$ and an element $\nu'\in(\fraka')_\CC^*$, and its underlying $(\frakg',K')$-module $(\tau_{\xi',\nu'})_\HC$ realized on the space $\calE'=\calE(K';\xi'|_{K'\cap M'})_{K'}$. As above we decompose the restriction of $(\tau_{\xi',\nu'})_\HC$ to $K'$
$$ \calE' = \bigoplus_{\alpha'\in\widehat{K'}} \calE'(\alpha') $$
with $\calE'(\alpha')$ being the $\alpha'$-isotypic component.

\subsection{The spectrum generating operator}\label{sec:SpectrumGeneratingOperator}

Since $P$ is a maximal parabolic subgroup we have $\dim\fraka=1$ and we can choose $H\in\fraka$ such that the eigenvalues of $\ad(H)$ on the Lie algebra $\frakn$ of $N$ are $1,\ldots,q$. Define $\nu\in\fraka^*$ by $\nu(H)=1$, then $\Sigma(\frakg,\fraka)=\{\pm\nu,\ldots,\pm q\nu\}$. We abbreviate $\pi_{\xi,r}=\pi_{\xi,r\nu}$ for $r\in\CC$.

Let $B$ be an invariant non-degenerate symmetric bilinear form on $\frakg$ normalized by $B(H,H)=1$. For $1\leq j\leq q$ let
$$ \frakk_j = \frakk\cap(\frakg_{j\nu}+\frakg_{-j\nu}). $$
Choose a basis $(X_{j,k})_k$ of $\frakk_j$, denote by $(X_{j,k}')_k$ the corresponding dual basis with respect to $B$ and put
$$ \Cas_j = \sum_k X_{j,k}X_{j,k}'. $$
Then $\Cas_j$ is an element of $\calU(\frakk)$, the universal enveloping algebra of $\frakk$. Clearly the elements $\Cas_j\in\calU(\frakk)$ do not depend on the choice of the corresponding bases. Following \cite{BOO96} we define the \textit{spectrum generating operator} as the second order element in $\calU(\frakk)$ given by
$$ \calP = \sum_{j=1}^q j^{-1}\Cas_j. $$
We remark that even though the spaces $\frakk_j$ do not form subalgebras the operator $\calP$ can be written as a rational linear combination of Casimir elements of subalgebras of $\frakk$ (see \cite[Remark 2.4]{BOO96}). Since the left-regular representation of $K$ on $\calE$ commutes with the right-action $\calR_\calP$ of $\calP$ the restriction of $\calR_\calP$ to each isotypic component $\calE(\alpha)$ is a linear transformation
$$ \sigma_\alpha=\sigma_{\alpha,\xi|_{M\cap K}}\in\End\calE(\alpha) $$
which only depends on $\xi$ but not on $\nu$.

Similarly we define $H'\in\fraka'$, $\nu'\in(\fraka')^*$ and choose an invariant non-degenerate symmetric bilinear form $B'$ on $\frakg'$ with $B'(H',H')=1$. Let $\calP'$ denote the spectrum generating operator for $G'$ and write $\sigma'_{\alpha'}\in\End\calE'(\alpha')$ for the restriction of $\calR_{\calP'}$ to the isotypic component $\calE'(\alpha')$.

\subsection{Reduction to the cocycle}

For each $X\in\frakg_\CC$ we define a scalar-valued function $\omega(X)$ on $K$ by
$$ \omega(X)(k) = B(\Ad(k^{-1})X,H), \qquad k\in K, $$
where we extend $B$ to a symmetric $\CC$-bilinear form on $\frakg_\CC$. This defines a $K$-equivariant map
$$ \omega: \frakg_\CC\to\calE(K;\1)\cong C^\infty(K/(M\cap K)), $$
where $\1$ is the trivial $M\cap K$-representation. The map $\omega$ is called a \textit{cocycle}. Note that $\omega$ vanishes on $\frakk_\CC$. Let $m(\omega(X))$ denote the multiplication operator
$$ \calE\to\calE, \quad \varphi\mapsto\omega(X)\varphi. $$
For $\alpha,\beta\in\widehat{K}$ with $\calE(\alpha),\calE(\beta)\neq0$ we let
$$ \omega_\alpha^\beta(X) = \proj_{\calE(\beta)}\circ\, m(\omega(X))|_{\calE(\alpha)}, \qquad X\in\frakg_\CC, $$
where $\proj_{\calE(\beta)}$ denotes the projection from $\calE$ onto $\calE(\beta)$. We can now express the differential representation $\pi_{\xi,r}$ of $\frakg$ on $\calE$ in terms of the cocycle $\omega$ and the maps $\sigma_\alpha$:

\begin{theorem}[{\cite[Corollary 2.6]{BOO96}}]\label{thm:ReductionToCocycle}
For $X\in\fraks_\CC$ and any $\alpha,\beta\in\widehat{K}$ with $\calE(\alpha),\calE(\beta)\neq0$ we have
\begin{equation}
 \proj_{\calE(\beta)}\circ\,\pi_{\xi,r}(X)|_{\calE(\alpha)} = \tfrac{1}{2}(\sigma_\beta\omega_\alpha^\beta(X)-\omega_\alpha^\beta(X)\sigma_\alpha+2r\omega_\alpha^\beta(X)).\label{eq:ReductionToCocycle}
\end{equation}
\end{theorem}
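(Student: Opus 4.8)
The plan is to compute the differential of the $G$-action on the compact model $\calE(K;\xi|_{M\cap K})$ directly from the transport-of-structure definition, and then to recognize the cocycle $\omega$ and the operator $\calP$ in the resulting formula. First I would make the identification $\calE(G;\xi,\nu)\xrightarrow{\sim}\calE(K;\xi|_{M\cap K})$ explicit: given $F\in\calE(K;\xi|_{M\cap K})$, extend it to $\widetilde F\in\calE(G;\xi,\nu)$ by $\widetilde F(kman)=a^{-\nu-\rho}\xi(m)^{-1}F(k)$, which is well defined using $G=KP$ and $K\cap P=M\cap K$. Since $G$ acts on $\calE(G;\xi,\nu)$ by the left-regular representation, $(\pi_{\xi,\nu}(X)F)(k)=\frac{d}{dt}\big|_{0}\widetilde F(\exp(-tX)k)$ for $X\in\frakg$. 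The element $\exp(-tX)k$ need not lie in $K$, so one writes its Iwasawa-type decomposition $\exp(-tX)k=\kappa(t)\cdot\exp(tH(t))\cdot n(t)\cdot m(t)$ relative to $G=KAN(M\cap K)$ and differentiates at $t=0$; equivalently, decompose $\Ad(k^{-1})X\in\frakg_\CC$ along $\frakg=\frakk\oplus\fraka\oplus\frakn$ (the $M\cap K$-part being absorbed into $\frakk$), which gives the infinitesimal version.

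The key computation is then to track the three contributions. The $\frakn$-component of $\Ad(k^{-1})X$ acts trivially at the level of the highest/leading term because $\widetilde F$ is $N$-invariant, the $\fraka$-component produces the scalar $\omega(X)(k)=B(\Ad(k^{-1})X,H)$ times the eigenvalue of $-\nu-\rho$ on $a^{-\nu-\rho}$, which is where the $2r$ (and, after normalizing by $2$, the $\rho$-shift) enters, and the $\frakk$-component produces an infinitesimal left-translation by $K$. The point of \cite{BOO96} is that this $\frakk$-part, which a priori is a first-order operator, can be rewritten using the spectrum generating operator: one uses that for $X\in\fraks$ the map $k\mapsto\frakk$-component of $\Ad(k^{-1})X$ is governed by $\calP$, so that on the $\alpha$-isotypic component the first-order operator collapses to the algebraic combination $\frac12(\sigma_\beta\omega_\alpha^\beta(X)-\omega_\alpha^\beta(X)\sigma_\alpha)$ after projecting to $\calE(\beta)$. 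Concretely, I would invoke the identity (from the definition of $\calP$ via the $\Cas_j$) relating $\sum_k X_{j,k}\,\omega(X_{j,k}')$-type expressions to $\ad$ acting through $\fraks$, and the fact that $\omega$ is the unique (up to scalar) $K$-map $\fraks_\CC\to\calE(K;\1)$ landing in the relevant $K$-types.

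So the concrete steps are: (1) set up the compact picture and the formula $(\pi_{\xi,\nu}(X)F)(k)=\frac{d}{dt}|_0\widetilde F(\exp(-tX)k)$; (2) decompose $\Ad(k^{-1})X$ along $\frakk\oplus\fraka\oplus\frakn$ and differentiate, obtaining a first-order term (from $\frakk$) plus $(\text{eigenvalue})\cdot\omega(X)(k)\cdot F(k)$ (from $\fraka$, contributing the $2r$ after the normalization in which $B(H,H)=1$ and $\nu(H)=1$); (3) project to the pair of $K$-types $(\alpha,\beta)$ and use the structure of $\calP$ from \cite{BOO96} to replace the surviving first-order operator by $\frac12(\sigma_\beta\omega_\alpha^\beta(X)-\omega_\alpha^\beta(X)\sigma_\alpha)$; (4) collect terms to arrive at \eqref{eq:ReductionToCocycle}.

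The main obstacle I anticipate is step (3): identifying the first-order $\frakk$-contribution with the commutator-type expression $\sigma_\beta\omega_\alpha^\beta(X)-\omega_\alpha^\beta(X)\sigma_\alpha$. This is exactly the technical heart of \cite[Corollary 2.6]{BOO96}, and it relies on a careful bookkeeping of how the Casimir-like elements $\Cas_j$ interact with multiplication by $\omega(X)$: one must use that $\ad(H)$ has integer eigenvalues $1,\ldots,q$ on $\frakn$, that $\fraks_\CC=\bigoplus_j(\fraks\cap(\frakg_{j\nu}+\frakg_{-j\nu}))$, and that the weighting $j^{-1}$ in $\calP=\sum_j j^{-1}\Cas_j$ is precisely what makes the telescoping work so that only the $\frakk$-part of $\Ad(k^{-1})X$ (weighted correctly) reassembles into $\sigma_\beta-\sigma_\alpha$. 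Since we are allowed to cite \cite[Corollary 2.6]{BOO96}, in practice the ``proof'' reduces to recalling that statement and checking that our normalization conventions ($B(H,H)=1$, $\nu(H)=1$, the sign in $a^{-\nu-\rho}$, and $\pi_{\xi,r}=\pi_{\xi,r\nu}$) match theirs, so that the parameter appearing on the right-hand side is indeed $2r$ and not $2r\pm\rho$ or $r$; verifying this bookkeeping is the only real content.
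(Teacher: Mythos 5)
Your proposal is essentially what the paper does: Theorem~\ref{thm:ReductionToCocycle} is not proved in the paper at all but is quoted directly as \cite[Corollary 2.6]{BOO96}, so reducing the argument to that citation together with a check of the normalizations ($B(H,H)=1$, $\nu(H)=1$, $\pi_{\xi,r}=\pi_{\xi,r\nu}$) is exactly the intended content. Your additional sketch of the internal mechanism of \cite{BOO96} (differentiating the induced action, splitting $\Ad(k^{-1})X$ along $\frakk\oplus\fraka\oplus\frakn$, and rewriting the $\frakk$-contribution via the commutator of $\calR_\calP$ with $m(\omega(X))$) is a reasonable outline, but since its key step is anyway deferred to the cited result, it adds nothing beyond the paper's approach.
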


Similarly we denote by $\omega'(X)$ the corresponding cocycle for $G'$ and by $\omega_{\alpha'}^{\beta'}(X)$ the corresponding map from $\calE'(\alpha')$ to $\calE'(\beta')$. Then we obtain for $X\in\fraks'_\CC$ and any $\alpha',\beta'\in\widehat{K'}$ with $\calE'(\alpha'),\calE'(\beta')\neq0$ the analogous identity:
\begin{equation}
 \proj_{\calE'(\beta')}\circ\,\tau_{\xi',r'}(X)|_{\calE'(\alpha')} = \tfrac{1}{2}(\sigma'_{\beta'}\omega_{\alpha'}^{\beta'}(X)-\omega_{\alpha'}^{\beta'}(X)\sigma'_{\alpha'}+2r'\omega_{\alpha'}^{\beta'}(X)).\label{eq:ReductionToCocycle'}
\end{equation}

\section{The compact picture of symmetry breaking operators}\label{sec:GeneralTheory}

Consider the admissible $(\frakg,K)$-module $(\pi_{\xi,r})_\HC$. Then its restriction $(\pi_{\xi,r})_\HC|_{(\frakg',K')}$ is a $(\frakg',K')$-module which is in general not admissible anymore. However, we can still study the space
$$ \Hom_{(\frakg',K')}((\pi_{\xi,r})_\HC|_{(\frakg',K')},(\tau_{\xi',r'})_\HC) $$
of intertwining operators between the $(\frakg',K')$-modules. In this section we use Theorem~\ref{thm:ReductionToCocycle} to characterize these intertwining operators in terms of their action on the $K'$-isotypic components in the $K$-types $\calE(\alpha)$.

\subsection{Relating $K$-types and $K'$-types}\label{sec:RelatingKKprimeTypes}

From now on we assume that both $\calE$ and $\calE'$ are multiplicity-free, i.e.
\begin{equation}
 \dim\Hom_K(\alpha,\calE),\dim\Hom_{K'}(\alpha',\calE')\leq 1 \qquad \forall\,\alpha\in\widehat{K},\alpha'\in\widehat{K'}.\tag{MF1}\label{eq:MF1}
\end{equation}
This implies by Schur's Lemma that the maps $\sigma_\alpha$ and $\sigma'_{\alpha'}$ are scalars which we denote by the same symbols. We further assume that each $K'$-type $\calE'(\alpha')\neq0$ occurs at most once in each $K$-type $\calE(\alpha)\neq0$, i.e.
\begin{equation}
 \dim\Hom_{K'}(\calE(\alpha),\calE'(\alpha')) \leq 1 \qquad \forall\,\calE(\alpha),\calE'(\alpha')\neq0.\tag{MF2}\label{eq:MF2}
\end{equation}

Each $K$-isotypic component $\calE(\alpha)$ decomposes under the action of $K'\subseteq K$ into
$$ \calE(\alpha) = \bigoplus_{\alpha'\in\widehat{K'}} \calE(\alpha;\alpha'), $$
where $\calE(\alpha;\alpha')$ is the $\alpha'$-isotypic component in $\calE(\alpha)$. Then our assumptions imply that whenever $\calE(\alpha;\alpha'),\calE'(\alpha')\neq0$ then $\calE(\alpha;\alpha')\cong\calE'(\alpha')$. In all such cases we fix an isomorphism
$$ R_{\alpha,\alpha'}:\calE(\alpha;\alpha')\stackrel{\sim}{\to}\calE'(\alpha'). $$
To simplify notation, let $R_{\alpha,\alpha'}=0$ whenever $\calE'(\alpha')=0$, so that we have surjective $K'$-equivariant maps $R_{\alpha,\alpha'}:\calE(\alpha;\alpha')\to\calE'(\alpha')$ for all $\calE(\alpha;\alpha')\neq0$.

In applications, it is often useful to choose a natural isomorphism $\calE(\alpha;\alpha')\cong\calE'(\alpha')$ relating $K$-types and $K'$-types. For this we study the restriction of functions from $K$ to $K'$. Assume for simplicity that $\xi'=\xi|_{M'}$. In this case we can consider the restriction operator
$$ \rest:\calE\to\calE', \quad \varphi\mapsto\varphi|_{K'}. $$
This operator is $K'$-equivariant and hence, if $\rest$ is non-zero on some $K'$-type $\calE(\alpha;\alpha')$ in $\calE$ then $\rest|_{\calE(\alpha;\alpha')}$ is an isomorphism onto $\calE'(\alpha')$ by Schur's Lemma. However, $\rest$ might also vanish on some $\calE(\alpha;\alpha')$ and therefore we need to combine the restriction with differentiation in the normal direction.

For this we write
$$ \frakk = (\frakm\cap\frakk) \oplus \frakq $$
where $\frakq$ is the orthogonal complement of $(\frakm\cap\frakk)$ in $\frakk$ with respect to the invariant form $B$. Note that $M\cap K$ acts on $\frakq$. Similarly
$$ \frakk' = (\frakm'\cap\frakk') \oplus \frakq'. $$
Let $\frakq''$ denote the orthogonal complement of $\frakq'$ in $\frakq$, then
$$ \frakq = \frakq' \oplus \frakq''. $$
We note that since $M\cap K$ acts on $\frakq$ and $M'\cap K'$ acts on $\frakq'$, the group $M'\cap K'$ also acts on $\frakq''$. We then have
$$ \frakk/(\frakm\cap\frakk) \cong \frakk'/(\frakm'\cap\frakk') \oplus \frakq'', $$
i.e. $\frakq''$ identifies with the normal space of $K'/(M'\cap K')$ in $K/(M\cap K)$ at the base point. Denote by $S(\frakq'')$ the symmetric algebra over $\frakq''$ and by $S(\frakq'')^{M'\cap K'}$ its $(M'\cap K')$-invariants. Note that $S(\frakq'')^{M'\cap K'}$ acts naturally from the right by differential operators on functions defined on a small neighborhood of $K'/(M'\cap K')$ in $K/(M\cap K)$.

\begin{lemma}\label{lem:ResNormalDiff}
Let $(\alpha,\alpha')\in\widehat{K}\times\widehat{K'}$ with $\calE(\alpha;\alpha')\neq0$ and $D\in S(\frakq'')^{M'\cap K'}$. Then the map $\rest\circ\, D:\calE\to\calE'$ is $K'$-equivariant. In particular,
$$ (\rest\circ\, D)|_{\calE(\alpha;\alpha')}:\calE(\alpha;\alpha')\to\calE'(\alpha') $$
is an isomorphism whenever it is non-zero.
\end{lemma}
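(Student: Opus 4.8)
The plan is to exploit the decomposition $\frakk = (\frakm\cap\frakk)\oplus\frakq' \oplus\frakq''$ together with the fact that $\frakq''$ is exactly the normal direction for the inclusion $K'/(M'\cap K')\hookrightarrow K/(M\cap K)$, and to reduce the claimed $K'$-equivariance to a bookkeeping statement about right-invariant differential operators coming from $S(\frakq'')^{M'\cap K'}$.

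\textbf{Step 1: $K'$-equivariance of $\rest\circ D$.} First I would observe that $\rest$ itself is $K'$-equivariant (it intertwines the left-regular $K'$-action on $\calE$ with that on $\calE'$, since restriction of functions from $K$ to $K'$ commutes with left translation by $K'$). The operator $D$ acts from the right on functions on (a neighborhood of $K'/(M'\cap K')$ in) $K/(M\cap K)$, and the right action of $\calU(\frakk)$ commutes with the left-regular action of $K$, hence in particular with the left $K'$-action. Thus $\rest\circ D$ is a composition of two $K'$-equivariant maps, and one only needs to check that it is well-defined, i.e. lands in $\calE'$. This is where the $(M'\cap K')$-invariance of $D$ enters: a general $D\in S(\frakq'')$ applied to $\varphi\in\calE$ and then restricted to $K'$ need not satisfy the correct $M'\cap K'$-covariance $F(k'm')=\xi'(m')^{-1}F(k')$, but if $D$ is $(M'\cap K')$-invariant then the right differentiation commutes past the right translation by $M'\cap K'$ up to the appropriate twist, so $\rest\circ D(\varphi)$ does lie in $\calE'=\calE(K';\xi'|_{K'\cap M'})_{K'}$ (using $\xi'=\xi|_{M'}$). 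I would spell out this covariance computation carefully, since it is the only genuinely non-formal point. One also has to note that although $D$ a priori only makes sense on a neighborhood of $K'/(M'\cap K')$, the composite $\rest\circ D$ only depends on the germ along $K'$, so the output is a well-defined smooth function on $K'$.

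\textbf{Step 2: from equivariance to the isomorphism statement.} Once $\rest\circ D:\calE\to\calE'$ is known to be $K'$-equivariant, the second assertion is immediate from Schur's Lemma: by the multiplicity-free hypotheses, $\calE(\alpha;\alpha')$ is either zero or an irreducible $K'$-module isomorphic to $\calE'(\alpha')$, and likewise $\calE'(\alpha')$ is irreducible (or zero). A $K'$-equivariant map between irreducible $K'$-modules of the same type is either zero or an isomorphism, so $(\rest\circ D)|_{\calE(\alpha;\alpha')}$ is an isomorphism onto $\calE'(\alpha')$ as soon as it is nonzero. (If $\calE'(\alpha')=0$ the statement is vacuous.)

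\textbf{Main obstacle.} The only real work is Step 1, and within it the verification that $(M'\cap K')$-invariance of $D$ is precisely what guarantees $\rest\circ D(\varphi)$ transforms correctly under right translation by $M'\cap K'$. Concretely one writes $\rest\circ D(\varphi)(k') = \big(R_D\varphi\big)(k')$ for $k'\in K'$, where $R_D$ is the right-differential operator associated to $D$, and then computes $\big(R_D\varphi\big)(k'm')$ for $m'\in M'\cap K'$ by moving $m'$ through: $\Ad(m'^{-1})$ acts on $\frakq''$, fixes $D$ by hypothesis, and the residual effect is exactly the factor $\xi(m')^{-1}=\xi'(m')^{-1}$ coming from $\varphi\in\calE$. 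Everything else — the commutation of right $\calU(\frakk)$-action with left $K$-action, and the Schur's-Lemma conclusion — is formal given the hypotheses already in force.
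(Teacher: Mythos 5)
Your proof is correct, and it is exactly the routine argument the paper leaves implicit (the lemma is stated there without proof): right differentiation by $D$ and restriction both commute with the left $K'$-action, the $\Ad(M'\cap K')$-invariance of $D$ together with $\xi'=\xi|_{M'}$ gives the required covariance $\psi(k'm')=\xi'(m')^{-1}\psi(k')$ so that the image lies in $\calE'$, and Schur's Lemma with the standing multiplicity-free assumptions yields the isomorphism statement. No gaps worth noting.
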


\begin{remark}
Of course one could as well consider other irreducible $M'\cap K'$-subrepresentations of $S(\frakq'')$ than the trivial one. In fact, using an idea of {\O}rsted--Vargas~\cite{OV04} one can construct an injective $K'$-equivariant map
$$ \calE=C^\infty(K\times_{M\cap K}\xi)_K \to \bigoplus_{m=0}^\infty C^\infty(K'\times_{M'\cap K'}(\xi\otimes S^m(\frakq'')))_{K'} $$
and use it to relate $K$-types and $K'$-types of the induced representations $\pi_{\xi,r}$ and $\tau_{\xi',r'}$ for $\xi'|_{M'\cap K'}$ any subrepresentation of $\xi|_{M'\cap K'}\otimes S(\frakq'')$. Lemma~\ref{lem:ResNormalDiff} can then be viewed as the special case $\xi'=\xi\otimes\CC D$ where $D\in S(\frakq'')^{M'\cap K'}$ and hence $\CC D$ is the trivial $M'\cap K'$-representation.
\end{remark}

\subsection{Intertwining operators between Harish-Chandra modules}\label{sec:IntertwiningOperatorsBetweenHarishChandraModules}

Let $\calV\subseteq\calE$ be a $(\frakg',K')$-submodule of $(\pi_{\xi,r})_\HC$, i.e. $\calV$ is stable under $\pi_{\xi,r}(\frakg')$ and stable under $\pi_{\xi,r}(K')$. A linear map $T:\calV\to\calE'$ is called an \textit{intertwining operator for $\pi_{\xi,r}$ and $\tau_{\xi',r'}$} if for every $v\in\calV$ we have
\begin{align}
 (T\circ\, \pi_{\xi,r}(X))v &= (\tau_{\xi',r'}(X)\circ\, T)v && \forall\,X\in\frakg',\label{eq:IntertwiningPropertyDiff}\\
 (T\circ\, \pi_{\xi,r}(k))v &= (\tau_{\xi',r'}(k)\circ\, T)v && \forall\,k\in K'.\label{eq:IntertwiningPropertyK}
\end{align}
In particular an intertwining operator commutes by \eqref{eq:IntertwiningPropertyK} with the action of $K'$ and hence restricts to a map $T_{\alpha,\alpha'}=T|_{\calE(\alpha;\alpha')}:\calE(\alpha;\alpha')\to\calE'(\alpha')$ for all $\calE(\alpha;\alpha')\subseteq\calV$. If $\calE'(\alpha')=0$ then clearly $T_{\alpha,\alpha'}=0$. Recall that we fixed in Section~\ref{sec:RelatingKKprimeTypes} $K'$-equivariant maps $R_{\alpha,\alpha'}:\calE(\alpha;\alpha')\to\calE'(\alpha')$, then by Schur's Lemma $T_{\alpha,\alpha'}$ is a scalar multiple of $R_{\alpha,\alpha'}$. We write
\begin{equation}
 T_{\alpha,\alpha'} = t_{\alpha,\alpha'}\cdot R_{\alpha,\alpha'} \qquad \forall\,0\neq\calE(\alpha;\alpha')\subseteq\calV\label{eq:IntertwinerScalarMultiple}
\end{equation}
with $t_{\alpha,\alpha'}\in\CC$.

Restricting \eqref{eq:IntertwiningPropertyDiff} to $\calE(\alpha;\alpha')$ and composing with the projection $\proj_{\calE'(\beta')}$ we obtain
\begin{equation}
 \proj_{\calE'(\beta')}\circ\,T\circ\,\pi_{\xi,r}(X)|_{\calE(\alpha;\alpha')} = \proj_{\calE'(\beta')}\circ\,\tau_{\xi',r'}(X)\circ\,T|_{\calE(\alpha;\alpha')}.\label{eq:IntertwiningPropertyDiffRest}
\end{equation}
To simplify both sides we let
$$ \omega_{\alpha,\alpha'}^{\beta,\beta'}:\fraks'_\CC\otimes\calE(\alpha;\alpha')\to\calE(\beta;\beta'), \quad \omega_{\alpha,\alpha'}^{\beta,\beta'}(X) = \proj_{\calE(\beta;\beta')}\circ\, m(\omega(X))|_{\calE(\alpha;\alpha')}, $$
where we view $\omega_{\alpha,\alpha'}^{\beta,\beta'}(X)$, $X\in\fraks'$, as a linear map $\calE(\alpha;\alpha')\to\calE(\beta;\beta')$. Write $(\alpha;\alpha')\to(\beta;\beta')$ if $\omega_{\alpha,\alpha'}^{\beta,\beta'}\neq0$. The following lemma is proved along the same lines as \cite[Lemma 4.4~(c)]{BOO96} and justifies the use of the notation $(\alpha;\alpha')\leftrightarrow(\beta;\beta')$ instead of $(\alpha;\alpha')\to(\beta;\beta')$:

\begin{lemma}
For an orthonormal basis $(X_k)_k\subseteq\fraks'$ put
$$ s_{\alpha,\alpha'}^{\beta,\beta'} = \sum_k \omega_{\beta,\beta'}^{\alpha,\alpha'}(X_k)\circ\,\omega_{\alpha,\alpha'}^{\beta,\beta}(X_k). $$
Then $s_{\alpha,\alpha'}^{\beta,\beta'}$ is independent of the choice of $(X_k)_k$ and
$$ (\alpha;\alpha')\to(\beta;\beta') \quad \Leftrightarrow \quad s_{\alpha,\alpha'}^{\beta,\beta'}\neq0 \quad \Leftrightarrow \quad (\beta;\beta')\to(\alpha;\alpha'). $$
\end{lemma}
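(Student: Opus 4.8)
The plan is to follow the structure of the analogous result \cite[Lemma 4.4]{BOO96} for the group $G$ itself, now carried out in the relative situation for the pair $(K,M\cap K)\supseteq(K',M'\cap K')$. First I would fix an orthonormal basis $(X_k)_k$ of $\fraks'$ with respect to $B'$ (equivalently $B$, after normalization) and observe that $\sum_k \omega_{\beta,\beta'}^{\alpha,\alpha'}(X_k)\circ\omega_{\alpha,\alpha'}^{\beta,\beta'}(X_k)$ is, up to the projections onto isotypic components, the $(\alpha;\alpha')\to(\alpha;\alpha')$ block of the operator $C=\sum_k m(\omega'(X_k))\,\proj_{\calE(\beta;\beta')}\,m(\omega'(X_k))$ on $\calE(\alpha;\alpha')$ --- here I use that $\omega(X)=\omega'(X)$ for $X\in\fraks'_\CC$, which follows from compatibility of $B$ and $B'$ and from $\Ad(k'^{-1})X\in\frakg'_\CC$ for $k'\in K'$. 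Independence of the basis is then immediate because the only basis-dependent object is $\sum_k \omega'(X_k)\otimes\omega'(X_k)\in S^2(\fraks'_\CC)$, which is the image of the $K'$-invariant element dual to $B'|_{\fraks'}$, hence independent of $(X_k)_k$.

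Next I would establish the two equivalences. The implication $(\alpha;\alpha')\to(\beta;\beta')\Rightarrow s_{\alpha,\alpha'}^{\beta,\beta'}\neq0$: if $\omega_{\alpha,\alpha'}^{\beta,\beta'}\neq0$ then, since $\calE(\alpha;\alpha')$ and $\calE(\beta;\beta')$ are irreducible $K'$-modules, each map $\omega_{\alpha,\alpha'}^{\beta,\beta'}(X_k)$ is either $0$ or an isomorphism, and at least one $X_k$ gives an isomorphism; pairing that summand against its ``transpose'' $\omega_{\beta,\beta'}^{\alpha,\alpha'}(X_k)$ produces a nonzero contribution. To see that the sum over $k$ cannot cancel, I would use that $s_{\alpha,\alpha'}^{\beta,\beta'}$ is a $K'$-intertwiner $\calE(\alpha;\alpha')\to\calE(\alpha;\alpha')$, hence a scalar by Schur, and compute its trace: $\tr s_{\alpha,\alpha'}^{\beta,\beta'}=\sum_k \tr\!\big(\omega_{\beta,\beta'}^{\alpha,\alpha'}(X_k)\circ\omega_{\alpha,\alpha'}^{\beta,\beta'}(X_k)\big)$, and each term is $\geq 0$ once one checks that $\omega_{\beta,\beta'}^{\alpha,\alpha'}(X_k)$ is (a positive multiple of) the adjoint of $\omega_{\alpha,\alpha'}^{\beta,\beta'}(X_k)$ with respect to the $L^2$ inner products on $\calE(\alpha;\alpha')$ and $\calE(\beta;\beta')$; this adjointness is the key computation, and it follows from the fact that $m(\omega'(X_k))$ is self-adjoint (being multiplication by a real-valued function, using that $X_k\in\fraks'$ so $\omega'(X_k)$ is real) together with the fact that the projections onto $K'$-isotypic components are self-adjoint. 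Since the trace is a sum of nonnegative terms, at least one of which is strictly positive by the previous sentence, $s_{\alpha,\alpha'}^{\beta,\beta'}\neq 0$. The converse $s_{\alpha,\alpha'}^{\beta,\beta'}\neq0\Rightarrow(\alpha;\alpha')\to(\beta;\beta')$ is clear, since $s_{\alpha,\alpha'}^{\beta,\beta'}$ is built out of the maps $\omega_{\alpha,\alpha'}^{\beta,\beta'}(X_k)$ and vanishes identically if they all do. Finally, the symmetry $(\alpha;\alpha')\to(\beta;\beta')\Leftrightarrow(\beta;\beta')\to(\alpha;\alpha')$ drops out because $s_{\beta,\beta'}^{\alpha,\alpha'}\neq0\Leftrightarrow s_{\alpha,\alpha'}^{\beta,\beta'}\neq0$: the two scalars have the same (nonnegative) trace up to the swap $\alpha\leftrightarrow\beta$, by the cyclicity of the trace and the adjointness relation above, so one is zero iff the other is.

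I would expect the main obstacle to be pinning down the adjointness/positivity statement cleanly: one must be careful that ``orthonormal basis of $\fraks'$'' refers to $B'$ while the cocycle $\omega$ is defined via $B$, so the normalization $B(H,H)=B'(H',H')=1$ and the relation between $B|_{\fraks'_\CC}$ and $B'|_{\fraks'_\CC}$ have to be invoked; and one must verify that $\omega'(X_k)$ is genuinely real-valued on $K'$ for $X_k\in\fraks'$ (so that $m(\omega'(X_k))$ is self-adjoint on $L^2(K'/(M'\cap K'))$), which is where the $\pm1$-eigenspace decomposition and the properties of $\Ad(K')$ enter. Once these normalizations are in place the argument is a verbatim adaptation of \cite[Lemma 4.4~(c)]{BOO96}, and I would simply cite that lemma for the routine parts while spelling out the few places where passing from $K$ to the pair $(K,K')$ changes the bookkeeping.
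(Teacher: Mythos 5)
Your argument is essentially the intended one: the paper gives no proof of this lemma, only the pointer to \cite[Lemma 4.4~(c)]{BOO96}, and your reconstruction — basis-independence via the invariance of $\sum_k X_k\otimes X_k$, plus self-adjointness of $m(\omega(X_k))$ for real $X_k\in\fraks'$ so that $\omega_{\beta,\beta'}^{\alpha,\alpha'}(X_k)=\omega_{\alpha,\alpha'}^{\beta,\beta'}(X_k)^*$ and $s_{\alpha,\alpha'}^{\beta,\beta'}=\sum_k A_k^*A_k$ is positive semidefinite with vanishing trace iff all $A_k=0$ — is correct and suffices for both equivalences. One intermediate claim is false, though harmlessly so: for a \emph{fixed} $X_k$ the map $\omega_{\alpha,\alpha'}^{\beta,\beta'}(X_k)$ is not a $K'$-intertwiner (only the map on $\fraks'_\CC\otimes\calE(\alpha;\alpha')$ is $K'$-equivariant, since $\omega(\Ad(k)X)=L_k\omega(X)$), so Schur's Lemma does not make it $0$ or an isomorphism; what the trace argument actually needs is only that some $\omega_{\alpha,\alpha'}^{\beta,\beta'}(X_k)\neq0$, which follows from $\omega_{\alpha,\alpha'}^{\beta,\beta'}\neq0$ by $\CC$-linearity in the $\fraks'_\CC$-variable. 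With that sentence deleted, the proof stands as written.
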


Now, on the left hand side of the identity~\eqref{eq:IntertwiningPropertyDiffRest} we can express $\pi_{\xi,r}(X)|_{\calE(\alpha;\alpha')}$ in terms of the cocycle using \eqref{eq:ReductionToCocycle}:
\begin{align*}
 \proj_{\calE'(\beta')}\circ\,T\circ\,\pi_{\xi,r}(X)|_{\calE(\alpha;\alpha')} &= \sum_{\substack{\beta\\(\alpha;\alpha')\leftrightarrow(\beta;\beta')}}\hspace{-.5cm}T\circ\,\proj_{\calE'(\beta;\beta')}\circ\,\pi_{\xi,r}(X)|_{\calE(\alpha;\alpha')}\\
 &= \hspace{.2cm}\tfrac{1}{2}\hspace{-.5cm}\sum_{\substack{\beta\\(\alpha;\alpha')\leftrightarrow(\beta;\beta')}}\hspace{-.5cm}(\sigma_\beta-\sigma_\alpha+2r)\cdot\left(T\circ\,\omega_{\alpha,\alpha'}^{\beta,\beta'}(X)\right)\\
 &= \hspace{.2cm}\tfrac{1}{2}\hspace{-.5cm}\sum_{\substack{\beta\\(\alpha;\alpha')\leftrightarrow(\beta;\beta')}}\hspace{-.5cm}(\sigma_\beta-\sigma_\alpha+2r)t_{\beta,\beta'}\cdot\left(R_{\beta,\beta'}\circ\,\omega_{\alpha,\alpha'}^{\beta,\beta'}(X)\right).
\end{align*}
Similarly we use \eqref{eq:ReductionToCocycle'} to obtain for the right hand side:
$$ \proj_{\calE'(\beta')}\circ\,\tau_{\xi',r'}(X)\circ\,T|_{\calE(\alpha;\alpha')} = \tfrac{1}{2}(\sigma'_{\beta'}-\sigma'_{\alpha'}+2r')t_{\alpha,\alpha'}\cdot\left(\omega_{\alpha'}^{\beta'}(X)\circ\, R_{\alpha,\alpha'}\right). $$
Inserting both expressions into the initial equation \eqref{eq:IntertwiningPropertyDiffRest} we obtain:

\begin{theorem}\label{thm:EigenvalueCondition}
Assume \eqref{eq:MF1} and \eqref{eq:MF2} and fix $R_{\alpha,\alpha'}:\calE(\alpha;\alpha')\to\calE'(\alpha')$ as in Section~\ref{sec:RelatingKKprimeTypes}. Let $\calV\subseteq\calE$ be a $(\frakg',K')$-submodule of $(\pi_{\xi,r})_\HC$. A linear map $T:\calV\to\calE'$ is an intertwining operator for $\pi_{\xi,r}$ and $\tau_{\xi',r'}$ if and only if
$$ T|_{\calE(\alpha;\alpha')} = t_{\alpha,\alpha'}\cdot R_{\alpha,\alpha'} \qquad \forall\,0\neq\calE(\alpha;\alpha')\subseteq\calV, $$
and for all $0\neq\calE(\alpha;\alpha')\subseteq \calV$ and $\calE'(\beta')\neq0$ we have
\begin{equation}
 \sum_{\substack{\beta\\(\alpha;\alpha')\leftrightarrow(\beta;\beta')}}\hspace{-.5cm}(\sigma_\beta-\sigma_\alpha+2r)t_{\beta,\beta'}\cdot\left(R_{\beta,\beta'}\circ\,\omega_{\alpha,\alpha'}^{\beta,\beta'}\right) = (\sigma'_{\beta'}-\sigma'_{\alpha'}+2r')t_{\alpha,\alpha'}\cdot\left(\omega_{\alpha'}^{\beta'}\circ\, R_{\alpha,\alpha'}\right).\label{eq:CharacterizationIntertwiners}
\end{equation}
\end{theorem}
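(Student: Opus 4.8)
The plan is to translate the operator identity \eqref{eq:IntertwiningPropertyDiffRest} into the scalar relation \eqref{eq:CharacterizationIntertwiners} by substituting the two closed-form expressions we have just derived for its left- and right-hand sides. Concretely, I would first note that by \eqref{eq:IntertwiningPropertyK} any intertwining operator $T:\calV\to\calE'$ is $K'$-equivariant, so $T$ maps $\calE(\alpha;\alpha')$ into $\calE'(\alpha')$ for every $0\neq\calE(\alpha;\alpha')\subseteq\calV$; by the multiplicity-free hypotheses and Lemma~\ref{lem:ResNormalDiff} the map $R_{\alpha,\alpha'}$ is an isomorphism onto $\calE'(\alpha')$, hence $T|_{\calE(\alpha;\alpha')}$ is a scalar multiple $t_{\alpha,\alpha'}\cdot R_{\alpha,\alpha'}$ as in \eqref{eq:IntertwinerScalarMultiple}, and $T|_{\calE(\alpha;\alpha')}=0$ whenever $\calE'(\alpha')=0$. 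This gives the first (``shape'') assertion of the theorem and reduces the problem to checking \eqref{eq:IntertwiningPropertyDiff} on each piece $\calE(\alpha;\alpha')$.

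Next I would observe that it suffices to verify \eqref{eq:IntertwiningPropertyDiff} for $X$ ranging over $\fraks'_\CC$ only: since $\frakg'=\frakk'\oplus\fraks'$ and $T$ already intertwines the $K'$-action (hence also the $\frakk'$-action, as $\calV$ and $\calE'$ are $(\frakg',K')$-modules and the $\frakk'$-action is the differential of the $K'$-action), the condition for all $X\in\frakg'$ is equivalent to the condition for $X\in\fraks'_\CC$. For such $X$, I would fix a pair $(\alpha;\alpha')$ with $0\neq\calE(\alpha;\alpha')\subseteq\calV$ and a $K'$-type $\beta'$ with $\calE'(\beta')\neq0$, restrict \eqref{eq:IntertwiningPropertyDiff} to $\calE(\alpha;\alpha')$, and compose with $\proj_{\calE'(\beta')}$ to land on \eqref{eq:IntertwiningPropertyDiffRest}. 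The left side is rewritten by first inserting $\id=\sum_\beta\proj_{\calE(\beta;\beta')}$ (only $\beta$ with $(\alpha;\alpha')\leftrightarrow(\beta;\beta')$ contribute, since $\pi_{\xi,r}(X)$ shifts by the cocycle $\omega(X)$ which connects exactly these $K'$-types-in-$K$-types), then applying Theorem~\ref{thm:ReductionToCocycle} to replace $\proj_{\calE(\beta;\beta')}\circ\pi_{\xi,r}(X)|_{\calE(\alpha;\alpha')}$ by $\tfrac12(\sigma_\beta-\sigma_\alpha+2r)\,\omega_{\alpha,\alpha'}^{\beta,\beta'}(X)$, and finally using $T|_{\calE(\beta;\beta')}=t_{\beta,\beta'}R_{\beta,\beta'}$. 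The right side is handled by \eqref{eq:ReductionToCocycle'} together with $T|_{\calE(\alpha;\alpha')}=t_{\alpha,\alpha'}R_{\alpha,\alpha'}$, yielding $\tfrac12(\sigma'_{\beta'}-\sigma'_{\alpha'}+2r')t_{\alpha,\alpha'}\,(\omega_{\alpha'}^{\beta'}(X)\circ R_{\alpha,\alpha'})$. Cancelling the common factor $\tfrac12$ and letting $X$ vary over $\fraks'_\CC$ gives precisely \eqref{eq:CharacterizationIntertwiners} as an identity of maps $\fraks'_\CC\otimes\calE(\alpha;\alpha')\to\calE'(\beta')$. For the converse, one runs the same computation backwards: given scalars satisfying \eqref{eq:CharacterizationIntertwiners}, the operator $T$ defined by the case distinction is $K'$-equivariant by construction, and \eqref{eq:CharacterizationIntertwiners} forces \eqref{eq:IntertwiningPropertyDiffRest} for all $\beta'$ and all $X\in\fraks'_\CC$; summing over $\beta'$ recovers \eqref{eq:IntertwiningPropertyDiff} for $X\in\fraks'_\CC$, and with $K'$-equivariance this extends to all $X\in\frakg'$.

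The one genuinely delicate point, which I would treat carefully rather than routinely, is the bookkeeping that makes the insertion of $\id=\sum_\beta\proj_{\calE(\beta;\beta')}$ on the left-hand side legitimate and finite: one must argue that $\pi_{\xi,r}(X)$ for $X\in\fraks'_\CC$ sends $\calE(\alpha;\alpha')$ into $\bigoplus_\beta\calE(\beta;\beta')$ with only finitely many nonzero $\beta$-components and, more importantly, that the relevant $\beta'$ appearing on the right is matched correctly with the $K'$-type label used in $R_{\beta,\beta'}$ and $\omega_{\alpha,\alpha'}^{\beta,\beta'}$ --- this is exactly where the reduction to the cocycle $\omega(X)$ (which only involves multiplication by a function in $\calE(K;\1)$, hence acts as an $M'\cap K'$-equivariant, and in fact $K'$-covariant, tensoring operation) and the compatibility $\xi'=\xi|_{M'}$ enter. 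Here I would invoke the preceding lemma on $s_{\alpha,\alpha'}^{\beta,\beta'}$ to know that the relation $(\alpha;\alpha')\leftrightarrow(\beta;\beta')$ is symmetric, so that the finitely many $\beta$ occurring on the left of \eqref{eq:CharacterizationIntertwiners} are exactly those for which the corresponding term on the right could be nonzero, ensuring the two sides are indexed consistently. Everything else --- the commutativity of $\proj$ with the $K'$-action, the fact that $R_{\alpha,\alpha'}$ is an isomorphism, and the passage between ``for all $X$'' and a finite system --- is a direct consequence of Schur's lemma and the results already quoted (Theorem~\ref{thm:ReductionToCocycle}, \eqref{eq:ReductionToCocycle'}, Lemma~\ref{lem:ResNormalDiff}).
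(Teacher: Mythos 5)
Your argument is correct and takes essentially the same route as the paper: the paper obtains Theorem~\ref{thm:EigenvalueCondition} precisely by restricting \eqref{eq:IntertwiningPropertyDiff} to $\calE(\alpha;\alpha')$, projecting onto $\calE'(\beta')$ to get \eqref{eq:IntertwiningPropertyDiffRest}, and substituting \eqref{eq:ReductionToCocycle} and \eqref{eq:ReductionToCocycle'} together with $T|_{\calE(\beta;\beta')}=t_{\beta,\beta'}R_{\beta,\beta'}$, which is exactly your computation. The points you make explicit (reduction to $X\in\fraks'_\CC$ via $K'$-equivariance, the finitely many $\beta$ contributing through the cocycle, the symmetry of $\leftrightarrow$, and the converse by summing the projected identities over $\beta'$) are details the paper leaves implicit, and you handle them correctly.
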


\begin{remark}\label{rem:IntertwinersBetweenCompositionFactors}
Through the formulation of Theorem~\ref{thm:EigenvalueCondition} for any submodule $\calV$ of $(\pi_{\xi,r})_\HC$ one can also use \eqref{eq:CharacterizationIntertwiners} to describe intertwining operators from subquotients of $(\pi_{\xi,r})_\HC$ to $(\tau_{\xi',r'})_\HC$. In fact, if $\calV'\subseteq\calV\subseteq\calE$ are $(\frakg,K)$-submodules of $(\pi_{\xi,r})_\HC$ then any intertwining operator $\calV/\calV'\to\calE'$ for the actions $\pi_{\xi,r}$ and $\tau_{\xi',r'}$ is given by an intertwining operator $\calV\to\calE'$ which vanishes on $\calV'$.\\
A little more complicated is the description of intertwining operators into subquotients of $(\tau_{\xi',r'})_\HC$. Let $\calW'\subseteq\calW\subseteq\calE'$ be $(\frakg',K')$-submodules of $(\tau_{\xi',r'})_\HC$ and decompose $\calW=\calW'\oplus\calW''$ as $K'$-modules. Then a close examination of the arguments above shows that any operator $\calV\to\calW/\calW'$ which intertwines the actions of $\pi_{\xi,r}$ and $\tau_{\xi',r'}$ is given by a $K'$-intertwining linear map $T:\calV\to\calW''$ with $T|_{\calE(\alpha;\alpha')}=t_{\alpha,\alpha'}\cdot R_{\alpha,\alpha'}$ such that the relations \eqref{eq:CharacterizationIntertwiners} hold for any $0\neq\calE(\alpha;\alpha')\subseteq\calV$ and $0\neq\calE'(\beta')\subseteq\calW''$. Note that $t_{\alpha,\alpha'}=0$ whenever $\calE'(\alpha')\nsubseteq\calW''$.
\end{remark}

\subsection{Scalar identities}\label{sec:ScalarIdentities}

To extract from equation~\eqref{eq:CharacterizationIntertwiners} information on the constants $t_{\alpha,\alpha'}$ we have to transform it into a scalar identity. For this we assume additionally that
\begin{equation}
 \dim\Hom_{K'}(\fraks'_\CC\otimes\alpha',\beta') \leq 1 \qquad \forall\,0\neq\calE(\alpha;\alpha')\subseteq \calV,\calE(\beta')\neq0.\tag{MF3}\label{eq:MF3}
\end{equation}
This implies that the maps
$$ \eta_{\alpha,\alpha'}^{\beta,\beta'} = R_{\beta,\beta'}\circ\,\omega_{\alpha,\alpha'}^{\beta,\beta'}:\fraks'_\CC\otimes\calE'(\alpha,\alpha')\to\calE'(\beta') $$
are proportional to each other. If further the map
$$ \eta_{\alpha,\alpha'}^{\beta'} = \omega_{\alpha'}^{\beta'}\circ\, R_{\alpha,\alpha'}:\fraks'_\CC\otimes\calE'(\alpha;\alpha')\to\calE'(\beta'), $$
is non-zero then there exist constants $\lambda_{\alpha,\alpha'}^{\beta,\beta'}\neq0$ such that
$$ \eta_{\alpha,\alpha'}^{\beta,\beta'} = \lambda_{\alpha,\alpha'}^{\beta,\beta'}\eta_{\alpha,\alpha'}^{\beta'}. $$
We call $\lambda_{\alpha,\alpha'}^{\beta,\beta'}$ the \textit{proportionality constants}. In this case equation~\eqref{eq:CharacterizationIntertwiners} simplifies:

\begin{corollary}\label{cor:CharacterizationIntertwinersScalar}
Under the multiplicity-freeness assumption \eqref{eq:MF3} the identity \eqref{eq:CharacterizationIntertwiners} is equivalent to
\begin{equation}
 \sum_{\substack{\beta\\(\alpha;\alpha')\leftrightarrow(\beta;\beta')}} \lambda_{\alpha,\alpha'}^{\beta,\beta'}(\sigma_\beta-\sigma_\alpha+2r)t_{\beta,\beta'} = (\sigma'_{\beta'}-\sigma'_{\alpha'}+2r')t_{\alpha,\alpha'}.\label{eq:CharacterizationIntertwinersScalar}
\end{equation}
\end{corollary}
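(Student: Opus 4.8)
The plan is to start from the operator identity \eqref{eq:CharacterizationIntertwiners} established in Theorem~\ref{thm:EigenvalueCondition} and to ``remove the vector-valued maps'' by invoking the newly-imposed multiplicity-one hypothesis \eqref{eq:AssumptionTensorProd}. Concretely, fix a pair $(\alpha;\alpha')$ with $0\neq\calE(\alpha;\alpha')\subseteq\calV$ and a $K'$-type $\beta'$ with $\calE'(\beta')\neq0$. The left-hand side of \eqref{eq:CharacterizationIntertwiners} is a sum, over those $\beta$ with $(\alpha;\alpha')\leftrightarrow(\beta;\beta')$, of the maps $\eta_{\alpha,\alpha'}^{\beta,\beta'}=R_{\beta,\beta'}\circ\,\omega_{\alpha,\alpha'}^{\beta,\beta'}$ weighted by the scalars $(\sigma_\beta-\sigma_\alpha+2r)t_{\beta,\beta'}$, and the right-hand side is $(\sigma'_{\beta'}-\sigma'_{\alpha'}+2r')t_{\alpha,\alpha'}$ times $\eta_{\alpha,\alpha'}^{\beta'}=\omega_{\alpha'}^{\beta'}\circ\,R_{\alpha,\alpha'}$. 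Each of these is a $K'$-equivariant linear map $\fraks'_\CC\otimes\calE'(\alpha;\alpha')\to\calE'(\beta')$ — here I use $\calE(\alpha;\alpha')\cong\calE'(\alpha')$ and the $K'$-equivariance of $R_{\alpha,\alpha'}$, $\omega_{\alpha,\alpha'}^{\beta,\beta'}$ (a restriction of the multiplication operator $m(\omega(X))$, which is $K$- hence $K'$-equivariant in $X$), and $\omega_{\alpha'}^{\beta'}$.

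The key observation is that \eqref{eq:AssumptionTensorProd} says $\dim\Hom_{K'}(\fraks'_\CC\otimes\alpha',\beta')\leq1$, so the space of $K'$-equivariant maps $\fraks'_\CC\otimes\calE'(\alpha;\alpha')\to\calE'(\beta')$ is at most one-dimensional. Therefore all the maps $\eta_{\alpha,\alpha'}^{\beta,\beta'}$ (for the various admissible $\beta$) and the map $\eta_{\alpha,\alpha'}^{\beta'}$ lie in a common one-dimensional space. Provided $\eta_{\alpha,\alpha'}^{\beta'}\neq0$ — which is the running assumption in the subsection, and which also forces $\dim\Hom_{K'}(\fraks'_\CC\otimes\alpha',\beta')=1$ — we may write $\eta_{\alpha,\alpha'}^{\beta,\beta'}=\lambda_{\alpha,\alpha'}^{\beta,\beta'}\eta_{\alpha,\alpha'}^{\beta'}$ with a unique scalar $\lambda_{\alpha,\alpha'}^{\beta,\beta'}$, which is exactly the proportionality constant introduced just above the Corollary. (If instead $\eta_{\alpha,\alpha'}^{\beta'}=0$, then \eqref{eq:CharacterizationIntertwiners} reads ``a combination of the $\eta_{\alpha,\alpha'}^{\beta,\beta'}$ equals $0$''; since these all lie in a one-dimensional space they are simultaneously scalar multiples of a single map, and the scalar equation \eqref{eq:CharacterizationIntertwinersScalar} still captures the content, with the right-hand side being $0$ — one should remark this, but the main case is the nonvanishing one.)

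Substituting $\eta_{\alpha,\alpha'}^{\beta,\beta'}=\lambda_{\alpha,\alpha'}^{\beta,\beta'}\eta_{\alpha,\alpha'}^{\beta'}$ into \eqref{eq:CharacterizationIntertwiners} turns the left-hand side into $\bigl(\sum_{\beta}\lambda_{\alpha,\alpha'}^{\beta,\beta'}(\sigma_\beta-\sigma_\alpha+2r)t_{\beta,\beta'}\bigr)\eta_{\alpha,\alpha'}^{\beta'}$ and the right-hand side into $(\sigma'_{\beta'}-\sigma'_{\alpha'}+2r')t_{\alpha,\alpha'}\,\eta_{\alpha,\alpha'}^{\beta'}$. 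Since $\eta_{\alpha,\alpha'}^{\beta'}\neq0$ and the ambient $\Hom$-space is one-dimensional, two scalar multiples of $\eta_{\alpha,\alpha'}^{\beta'}$ agree iff the scalars agree; comparing coefficients yields precisely \eqref{eq:CharacterizationIntertwinersScalar}. Conversely, if \eqref{eq:CharacterizationIntertwinersScalar} holds for the given pair then multiplying through by $\eta_{\alpha,\alpha'}^{\beta'}$ recovers \eqref{eq:CharacterizationIntertwiners}, so the two are equivalent for each $(\alpha;\alpha')$ and $\beta'$, hence equivalent as systems.

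The only real subtlety — and the step I would be most careful about — is bookkeeping around the degenerate cases: when $\calE'(\beta')=0$ (then both sides vanish trivially, and one must make sure such $\beta'$ are excluded from the claim, matching the quantifier $\calE'(\beta')\neq0$ already present in Theorem~\ref{thm:EigenvalueCondition}), and when $\eta_{\alpha,\alpha'}^{\beta'}=0$ or when the whole $\Hom$-space $\Hom_{K'}(\fraks'_\CC\otimes\alpha',\beta')$ is zero (so that every $\eta$ in sight vanishes and both \eqref{eq:CharacterizationIntertwiners} and \eqref{eq:CharacterizationIntertwinersScalar} are vacuously $0=0$). In all of these the equivalence is immediate; the content is entirely in the generic case, where one-dimensionality of the $\Hom$-space collapses the vector identity to a scalar one. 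No further calculation is needed beyond what is already recorded in the paragraph preceding the Corollary defining $\lambda_{\alpha,\alpha'}^{\beta,\beta'}$.
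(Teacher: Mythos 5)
Your argument is correct and coincides with the paper's own (essentially implicit) justification: the assumption \eqref{eq:AssumptionTensorProd} makes the space of $K'$-maps $\fraks'_\CC\otimes\calE(\alpha;\alpha')\to\calE'(\beta')$ at most one-dimensional, so substituting $\eta_{\alpha,\alpha'}^{\beta,\beta'}=\lambda_{\alpha,\alpha'}^{\beta,\beta'}\eta_{\alpha,\alpha'}^{\beta'}$ into \eqref{eq:CharacterizationIntertwiners} and comparing coefficients of the nonzero map $\eta_{\alpha,\alpha'}^{\beta'}$ gives \eqref{eq:CharacterizationIntertwinersScalar}, exactly as in the paragraph defining the proportionality constants. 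Your remarks on the degenerate cases ($\calE'(\beta')=0$ or $\eta_{\alpha,\alpha'}^{\beta'}=0$, where the $\lambda$'s are not defined) match the paper's implicit convention and need no further elaboration.
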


Whereas the constants $\sigma_\alpha$ and $\sigma_{\alpha'}'$ are easy to calculate using the highest weights of $\alpha$ and $\alpha'$ (see \cite{BOO96}), we do not have a general method to find the constants $\lambda_{\alpha,\alpha'}^{\beta,\beta'}$. Of course one can always try to compute the action of the cocycle on explicit $K$-finite vectors and decompose the result, but this turns out to be quite involved already in low rank cases. However, in some special cases the following information is enough to determine $\lambda_{\alpha,\alpha'}^{\beta,\beta'}$:

\begin{lemma}\label{lem:CalculationOfLambdas}
Assume that the elements $H\in\fraka$ and $H'\in\fraka'$ coincide. Let $\calE(\alpha;\alpha')\neq0$ and $\calE'(\beta')\neq0$ and assume that $R_{\alpha,\alpha'}=R_{\beta,\beta'}=\rest$ for all $\beta$ with $(\alpha;\alpha')\leftrightarrow(\beta;\beta')$. Then
\begin{align*}
 \sum_{\substack{\beta\\(\alpha;\alpha')\leftrightarrow(\beta;\beta')}}\lambda_{\alpha,\alpha'}^{\beta,\beta'} &= 1,\\
 \sum_{\substack{\beta\\(\alpha;\alpha')\leftrightarrow(\beta;\beta')}}(\sigma_\beta-\sigma_\alpha)\lambda_{\alpha,\alpha'}^{\beta,\beta'} &= \sigma_{\beta'}'-\sigma_{\alpha'}'+2(\rho-\rho').
\end{align*}
Here $\rho$ and $\rho'$ are identified with the numbers $\rho(H)$ and $\rho'(H')$.
\end{lemma}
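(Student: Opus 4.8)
\textbf{Proof plan for Lemma~\ref{lem:CalculationOfLambdas}.}

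The plan is to test both proportionality identities on a single explicit vector, namely a spherical (i.e. $M'\cap K'$-invariant, or in the spherical case simply a suitable lowest/highest weight) vector $v\in\calE(\alpha;\alpha')$, and to exploit the hypothesis $D_{\alpha,\alpha'}=D_{\beta,\beta'}=1$, which means $R_{\alpha,\alpha'}=\rest|_{\calE(\alpha;\alpha')}$ and $R_{\beta,\beta'}=\rest|_{\calE(\beta;\beta')}$ are just restriction of functions from $K$ to $K'$. First I would record the defining relation $\eta_{\alpha,\alpha'}^{\beta,\beta'}=\lambda_{\alpha,\alpha'}^{\beta,\beta'}\eta_{\alpha,\alpha'}^{\beta'}$ more concretely: for $X\in\fraks'_\CC$ and $v\in\calE(\alpha;\alpha')$,
$$
 \proj_{\calE(\beta;\beta')}\bigl(\omega(X)\,v\bigr)\big|_{K'} = \lambda_{\alpha,\alpha'}^{\beta,\beta'}\cdot\proj_{\calE'(\beta')}\bigl(\omega'(X)\,(v|_{K'})\bigr).
$$
Summing over all $\beta$ with $(\alpha;\alpha')\leftrightarrow(\beta;\beta')$, the left-hand side telescopes to $\proj_{\calE'(\beta')}$ applied to $\bigl(\omega(X)\,v\bigr)|_{K'}$, because the $\calE(\beta;\beta')$ with varying $\beta$ (and fixed $\beta'$) exhaust the $\beta'$-isotypic part of $\calE$ that $\omega(X)v$ can land in, and restriction to $K'$ of that $\beta'$-isotypic part lands in $\calE'(\beta')$. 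The key point is then that $\omega$ and $\omega'$ are compatible under restriction: since $H=H'$ by hypothesis and $B'$ is the restriction of $B$ to $\frakg'$ (both normalized by $B(H,H)=1$), for $X\in\fraks'_\CC$ and $k\in K'$ we have $\omega(X)(k)=B(\Ad(k^{-1})X,H)=B'(\Ad(k^{-1})X,H')=\omega'(X)(k)$, i.e. $\omega(X)|_{K'}=\omega'(X)$. Hence multiplication by $\omega(X)$ followed by restriction equals restriction followed by multiplication by $\omega'(X)$, so $\bigl(\omega(X)v\bigr)|_{K'}=\omega'(X)\,(v|_{K'})$, and applying $\proj_{\calE'(\beta')}$ to both sides and comparing with the telescoped sum gives exactly $\sum_\beta\lambda_{\alpha,\alpha'}^{\beta,\beta'}=1$ (this needs $\eta_{\alpha,\alpha'}^{\beta'}\neq0$, which is the standing hypothesis making the $\lambda$'s well-defined, so one also gets along the way that $v|_{K'}\neq0$, consistent with $D_{\alpha,\alpha'}=1$).

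For the second identity I would insert the weight $\sigma_\beta$ into the telescoping sum. Write $\Omega=\omega(X)$ and recall from Theorem~\ref{thm:ReductionToCocycle} that $\pi_{\xi,r}(X)$ acting from $\calE(\alpha;\alpha')$ to $\calE(\beta;\beta')$ equals $\tfrac12(\sigma_\beta-\sigma_\alpha+2r)\,\omega_{\alpha,\alpha'}^{\beta,\beta'}(X)$. The idea is to compute $\proj_{\calE'(\beta')}\bigl(\pi_{\xi,r}(X)v\bigr)|_{K'}$ in two ways. On one hand, summing the cocycle formula over $\beta$ and using the telescoping together with the first identity's mechanism,
$$
 \proj_{\calE'(\beta')}\bigl(\pi_{\xi,r}(X)v\bigr)\big|_{K'} = \tfrac12\sum_{\substack{\beta\\(\alpha;\alpha')\leftrightarrow(\beta;\beta')}}(\sigma_\beta-\sigma_\alpha+2r)\lambda_{\alpha,\alpha'}^{\beta,\beta'}\cdot\omega'(X)(v|_{K'})\big|_{\text{proj to }\calE'(\beta')}.
$$
On the other hand $\pi_{\xi,r}(X)$ for $X\in\fraks'_\CC\subseteq\frakg'$ is a differential operator on $\calE$ that is intrinsic to $G$; I would compare it with $\tau_{\xi',r'}(X)$ on $\calE'$ via the restriction map. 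This is precisely the content of the reduction-to-cocycle formulas \eqref{eq:ReductionToCocycle} and \eqref{eq:ReductionToCocycle'}: under $\rest$ the $G$-action and the $G'$-action of $X\in\fraks'_\CC$ agree \emph{except} for the shift in the $\rho$-normalization (the induced representation on $G$ is normalized by $\rho$, the one on $G'$ by $\rho'$), so $\rest\circ\pi_{\xi,r}(X)=\tau_{\xi',\,r'}(X)\circ\rest$ only if $r\nu$ and $r'\nu'$ match up to the difference $\rho-\rho'$; more precisely, on the $\beta'$-component,
$$
 \rest\circ\proj_{\calE(\beta;\beta')}\circ\pi_{\xi,r}(X) = \tfrac12(\sigma'_{\beta'}-\sigma'_{\alpha'}+2r + 2(\rho-\rho'))\cdot\omega'(X)\circ\rest
$$
after summing over $\beta$, since the unnormalized differential action of $X$ is the same function-theoretic operator on $K$ and on $K'$ (restriction commutes with it), and passing to the normalized pictures introduces the additive constants $\rho$ resp.\ $\rho'$, whence the $2(\rho-\rho')$. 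Equating the two computations, cancelling the common nonzero factor $\tfrac12\,\omega'(X)(v|_{K'})$ projected to $\calE'(\beta')$ (nonzero for a suitable choice of $X$ because $(\alpha;\alpha')\leftrightarrow(\beta;\beta')$ for at least one $\beta$), and using $\sum_\beta\lambda=1$ to cancel the $2r$ terms, yields
$$
 \sum_{\substack{\beta\\(\alpha;\alpha')\leftrightarrow(\beta;\beta')}}(\sigma_\beta-\sigma_\alpha)\lambda_{\alpha,\alpha'}^{\beta,\beta'} = \sigma'_{\beta'}-\sigma'_{\alpha'}+2(\rho-\rho'),
$$
which is the claim.

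The main obstacle I expect is making the bookkeeping in the second identity fully rigorous: one must be careful that summing the reduction-to-cocycle formula \eqref{eq:ReductionToCocycle} over all $\beta$ with fixed $\beta'$ really does telescope against $\rest$ — this uses both the multiplicity-one hypotheses of Section~\ref{sec:RelatingKKprimeTypes} and the assumption $D_{\alpha,\alpha'}=D_{\beta,\beta'}=1$ so that no normal derivative is hiding in $R_{\alpha,\alpha'},R_{\beta,\beta'}$ — and that the $\rho$-shift is accounted for exactly once and with the correct sign. The cleanest way to handle the $\rho$-shift, which I would adopt, is to first prove everything for the \emph{unnormalized} induced representations (where $\rest$ genuinely intertwines the differential $G$- and $G'$-actions of $\fraks'_\CC$), obtain $\sum_\beta\lambda=1$ and $\sum_\beta(\sigma_\beta-\sigma_\alpha)\lambda=\sigma'_{\beta'}-\sigma'_{\alpha'}$ there, and then track how the substitution $r\mapsto r-\rho$, $r'\mapsto r'-\rho'$ relating unnormalized and normalized pictures modifies the $\sigma$'s, producing the $2(\rho-\rho')$ term. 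Everything else is a direct application of the $K'$-equivariance in Lemma~\ref{lem:ResNormalDiff} and Schur's Lemma.
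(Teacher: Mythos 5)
Your proposal is correct and is essentially the paper's own argument: the first identity comes from $\omega(X)|_{K'}=\omega'(X)$ (a consequence of $H=H'$) and the resulting telescoping $\eta_{\alpha,\alpha'}^{\beta'}=\sum_\beta\eta_{\alpha,\alpha'}^{\beta,\beta'}$, exactly as in the paper. Your second computation is just an unwinding of the paper's shorter route, namely that $\rest$ intertwines $\pi_{\xi,r}$ and $\tau_{\xi',r'}$ precisely when $r+\rho=r'+\rho'$ (your ``unnormalized picture'' remark is this fact), so the scalar relation \eqref{eq:CharacterizationIntertwinersScalar} holds with $t_{\alpha,\alpha'}\equiv1$ and eliminating $r,r'$ via the first identity gives the claim.
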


\begin{proof}
For the first identity we note that $H=H'$ implies $\omega(X)|_{K'}=\omega'(X)$ for all $X\in\fraks'$. Hence
$$ R_{\beta,\beta'}\circ\,\omega(X) = \omega'(X)\circ\, R_{\alpha,\alpha'} \qquad \forall\,X\in\frakg' $$
which implies 
$$ \eta_{\alpha,\alpha'}^{\beta'} = \sum_{\substack{\beta\\(\alpha;\alpha')\leftrightarrow(\beta;\beta')}}\eta_{\alpha,\alpha'}^{\beta,\beta'} $$
and the claimed identity follows. For the second identity note that for $r+\rho=r'+\rho'$ the restriction operator $\rest:\calE\to\calE'$ is intertwining for $\pi_{\xi,r}$ and $\tau_{\xi',r'}$. Hence the identity \eqref{eq:CharacterizationIntertwinersScalar} is satisfied with $t_{\alpha,\alpha'}=1$ for all $\calE(\alpha;\alpha')\neq0$. Eliminating $r$ and $r'$ gives the desired formula.
\end{proof}

\begin{remark}
The knowledge of any intertwining operator $T:(\pi_{\xi,r})_\HC\to(\tau_{\xi',r'})_\HC$ and the corresponding numbers $t_{\alpha,\alpha'}$ provides an additional identity for the constants $\lambda_{\alpha,\alpha'}^{\beta,\beta'}$ just as in the proof of Lemma~\ref{lem:CalculationOfLambdas} for the restriction operator $T=\rest$ with $r+\rho=r'+\rho'$ and $t_{\alpha,\alpha'}=1$.
\end{remark}

\subsection{Automatic continuity}

In this section we study the question of whether $(\frakg',K')$-inter\-twining operators $(\pi_{\xi,r})_\HC\to(\tau_{\xi',r'})_\HC$ between Harish-Chandra modules extend to $G'$-intertwining operators $\pi_{\xi,r}\to\tau_{\xi',r'}$ between the smooth representations, i.e. whether the natural injective map
$$ \Hom_{G'}(\pi_{\xi,r}|_{G'},\tau_{\xi',r'}) \to \Hom_{(\frakg',K')}((\pi_{\xi,r})_\HC|_{(\frakg',K')},(\tau_{\xi',r'})_\HC) $$
is an isomorphism. It is expected (see Kobayashi~\cite[Remark 10.2~(4)]{Kob14}) that this is true if the space $(G\times G')/\diag(G')$ is real spherical. Statements of this type are also known as ``automatic continuity theorems'' since they imply continuity with respect to the smooth topologies of every intertwining operator between the algebraic Harish-Chandra modules. We provide a criterion to show automatic continuity in the context of this paper.

Fix a Haar measure $dk'$ on $K'$. Then the non-degenerate bilinear pairing
$$ \calE(K';\xi'|_{M'\cap K'})\times\calE(K';\xi'^\vee|_{M'\cap K'})\to\CC,\,(f_1,f_2)\mapsto\int_{K'}\langle f_1(k'),f_2(k')\rangle\,dk' $$
is invariant under $\tau_{\xi',r'}\otimes\tau_{\xi'^\vee,-r'}$ for any $r'\in\CC$, where $\xi'^\vee$ denotes the contragredient representation of $\xi'$ on the dual space $V_{\xi'}^\vee$. Using this pairing we identify $\tau_{\xi',r'}$ with a subrepresentation of the contragredient representation $\tau_{\xi'^\vee,-r'}^\vee$ of $\tau_{\xi'^\vee,-r'}$, which is realized on the topological dual space $\calE(K';\xi'^\vee|_{M'\cap K'})^\vee$ carrying the weak-$\star$ topology.

\begin{lemma}\label{lem:AutomaticSmoothness}
Every continuous $G'$-intertwining operator $T:\pi_{\xi,r}\to\tau_{\xi'^\vee,-r'}^\vee$ maps into $\tau_{\xi',r'}$ and defines a continuous $G'$-intertwining operator $T:\pi_{\xi,r}\to\tau_{\xi',r'}$.
\end{lemma}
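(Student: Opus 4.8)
The plan is to show that the image of $T$ lands inside the smooth vectors of $\tau_{\xi'^\vee,-r'}^\vee$, and then to identify those smooth vectors with $\tau_{\xi',r'}$ under the pairing. First I would recall the general principle that if $(\sigma, W)$ is a smooth admissible Fr\'echet representation of moderate growth (such as $\pi_{\xi,r}$, whose underlying Harish-Chandra module is admissible), and $T\colon W\to V'$ is a continuous $G'$-intertwining map into \emph{any} continuous representation $V'$ of $G'$, then for every $w\in W$ the orbit map $g'\mapsto \sigma'(g')Tw = T(\pi_{\xi,r}(g')w)$ is smooth, because $g'\mapsto\pi_{\xi,r}(g')w$ is a smooth $W$-valued map and $T$ is continuous and linear. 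Hence $Tw$ is automatically a smooth vector for the $G'$-action on $\tau_{\xi'^\vee,-r'}^\vee$. This is the core of the argument and requires nothing beyond the smoothness of orbit maps in $\pi_{\xi,r}$ and continuity of $T$.

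The second step is to identify the space of smooth vectors of the contragredient representation $\tau_{\xi'^\vee,-r'}^\vee$ (realized on $\calE(K';\xi'^\vee|_{M'\cap K'})^\vee$ with the weak-$\star$ topology) with $\tau_{\xi',r'}$ via the pairing. Since $\tau_{\xi'^\vee,-r'}$ is itself a smooth admissible Fr\'echet representation of moderate growth, the Casselman--Wallach theory identifies the smooth globalization of its contragredient Harish-Chandra module with the smooth vectors of the full (weak-$\star$) dual, and this smooth globalization is precisely $\tau_{\xi',r'}$: indeed the two are Casselman--Wallach completions of mutually contragredient Harish-Chandra modules, and the $K'$-finite vectors on both sides agree under the $L^2(K')$-pairing written in the statement. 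Thus $Tw$, being a smooth vector, lies in the copy of $\tau_{\xi',r'}$ inside $\tau_{\xi'^\vee,-r'}^\vee$.

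Finally I would check that the corestricted map $T\colon \pi_{\xi,r}\to\tau_{\xi',r'}$ is continuous for the native (Fr\'echet) topology of $\tau_{\xi',r'}$, not merely for the subspace topology induced from the weak-$\star$ dual. For this one invokes automatic continuity for maps between Casselman--Wallach (Fr\'echet, moderate growth) representations: a $G'$-equivariant linear map between such representations that is continuous into the weak-$\star$ dual is automatically continuous into the smooth Fr\'echet topology — equivalently, the inclusion $\tau_{\xi',r'}\hookrightarrow\tau_{\xi'^\vee,-r'}^\vee$ is a topological embedding onto its image, which is the subspace of smooth vectors, by the Casselman--Wallach theorem on the uniqueness and functoriality of smooth globalizations. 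The $G'$-intertwining property of the corestricted map is inherited from that of the original $T$. The main obstacle is purely the topological bookkeeping in this last step — making the identification of $\tau_{\xi',r'}$ with the smooth vectors of the weak-$\star$ dual and the continuity of $T$ into the Fr\'echet topology precise — and this is handled by citing the Casselman--Wallach globalization theorem rather than by any computation.
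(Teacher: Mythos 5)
Your route is genuinely different from the paper's. The paper never invokes globalization theory: it converts $T$ into a Schwartz-kernel-type distribution $K_T$ on $K\times K'$, uses only the diagonal $K'$-invariance of that kernel (together with compactness of $K$ and $K'$) to rewrite $\langle\lambda,T\phi(k')\rangle$ as the pairing of a fixed distribution $\tilde{K}_T$ on $K$ against $\phi(k'\,\cdot)\otimes\lambda$, and reads off directly that $T\phi$ is a smooth $\xi'$-equivariant function on $K'$, with continuity immediate from continuity of $\tilde{K}_T$. You instead use the full $G'$-smooth structure: orbit maps of $\pi_{\xi,r}$ are smooth, so $Tw$ is a (weak-$\star$) smooth vector of the dual, and you then quote the Casselman--Wallach-type fact that the smooth vectors of the distribution globalization recover the smooth globalization $\tau_{\xi',r'}$. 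That identification is a real theorem, not a formality -- it is precisely the work the paper does by hand -- and citing it is legitimate, but two points need care. First, your smoothness is scalar (weak-$\star$) smoothness, and one needs the standard Banach--Steinhaus/barrelledness argument that scalarly smooth orbit maps into the dual of a Fr\'echet space are smooth in the sense required by that theorem. Second, and more seriously, your justification of the final continuity step is wrong as stated: the inclusion $\tau_{\xi',r'}\hookrightarrow\tau_{\xi'^\vee,-r'}^\vee$ is continuous and injective but is \emph{not} a topological embedding -- the Fr\'echet $C^\infty$-topology is strictly finer than the induced weak-$\star$ topology (already visible for $K'$ a circle). The continuity of the corestricted map $T:\pi_{\xi,r}\to\tau_{\xi',r'}$ should instead be deduced from the closed graph theorem for Fr\'echet spaces: if $w_n\to w$ in $\pi_{\xi,r}$ and $Tw_n\to u$ in $\tau_{\xi',r'}$, then both $Tw_n\to Tw$ and $Tw_n\to u$ weak-$\star$, so $u=Tw$ and the graph is closed. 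With that repair your argument is complete; what the paper's explicit kernel computation buys is a self-contained proof that uses only $K'$-equivariance, continuity into the weak-$\star$ dual, and compactness, whereas your approach is shorter but leans on the globalization machinery.
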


\begin{proof}
Let $T:\calE(K;\xi|_{M\cap K})\to\calE(K';\xi'^\vee|_{M'\cap K'})^\vee$ be a continuous linear operator which is $G'$-intertwining for $\pi_{\xi,r}$ and $\tau_{\xi'^\vee,-r'}^\vee$. Then $T$ induces a continuous linear functional
$$ \overline{T}:\calE(K;\xi|_{M\cap K})\,\widehat{\otimes}\,\calE(K';\xi'^\vee|_{M'\cap K'})\to\CC, $$
which is invariant under the diagonal action of $K'$. The left hand side is naturally isomorphic to $\calE(K\times K';(\xi\otimes\xi'^\vee)|_{(M\cap K)\times(M'\cap K')})$. Composing with the surjective continuous linear operator
\begin{multline*}
 \flat:C^\infty(K\times K';V_\xi\otimes V_{\xi'}^\vee) \to \calE(K\times K';(\xi\otimes\xi'^\vee)|_{(M\cap K)\times(M'\cap K')})\\
 \flat F(k,k') = \int_{M\cap K}\int_{M'\cap K'} (\xi(m)\otimes\xi'(m')^\vee)F(km,k'm')\,dm'\,dm
\end{multline*}
we obtain a functional
$$ K_T:=\overline{T}\circ\flat:C^\infty(K\times K';V_\xi\otimes V_{\xi'}^\vee)\to\CC, $$
i.e. a distribution on $K\times K'$ with values in $V_\xi\otimes V_{\xi'}^\vee$. (This is basically the Schwartz kernel of the operator $T$, avoiding distribution sections of vector bundles.) The distribution $K_T$ is invariant under the diagonal action of $K'$ from the left and equivariant under the action of $(M\cap K)\times(M'\cap K')$ from the right. We define a distribution $\tilde{K}_T$ on $K$ with values in $V_\xi\otimes V_{\xi'}^\vee$, i.e. a continuous linear functional on $C^\infty(K;V_\xi\otimes V_{\xi'}^\vee)$, by
$$ \langle\tilde{K}_T,\phi\rangle := \langle K_T(x,x'),\phi(x'^{-1}x)\rangle. $$
Then for $\phi\in\calE(K;\xi|_{M\cap K})$ and $\psi\in\calE(K';\xi'^\vee|_{M'\cap K'})$ we have
\begin{align*}
 \langle T\phi,\psi\rangle &= \langle K_T,\phi\otimes\psi\rangle = \int_{K'}\langle K_T(x,x'),\phi(k'x)\otimes\psi(k'x')\rangle\,dk'\\
 &= \langle K_T(x,x'),\int_{K'}\phi(k'x)\otimes\psi(k'x')\,dk'\rangle = \langle K_T(x,x'),\int_{K'}\phi(k'x'^{-1}x)\otimes\psi(k')\,dk'\rangle\\
 &= \int_{K'}\langle K_T(x,x'),\phi(k'x'^{-1}x)\otimes\psi(k')\rangle\,dk' = \int_{K'}\langle\tilde{K}_T,\phi(k'\blank)\otimes\psi(k')\rangle\,dk'.
\end{align*}
This implies that for any $\lambda\in V_{\xi'}^\vee$:
$$ \langle\lambda,T\phi(k')\rangle = \langle\tilde{K}_T,\phi(k'\blank)\otimes\lambda\rangle $$
which shows that $T\phi\in C^\infty(K';V_{\xi'})$. That $T\phi\in\calE(K';\xi'|_{M'\cap K'})$ easily follows from the equivariance property of $\tilde{K}_T$ with respect to $M\cap K$ and $M'\cap K'$. Finally, continuity of the thus defined operator $T:\pi_{\xi,r}\to\tau_{\xi',r'}$ follows from the continuity of the functional $\tilde{K}_T$ on $C^\infty(K;V_\xi\otimes V_{\xi'}^\vee)$ and the proof is complete.
\end{proof}

Fix invariant inner products on the representation $\xi|_{M\cap K}$ resp. $\xi'|_{M'\cap K'}$ and let $\|\blank\|$ resp. $\|\blank\|'$ denote the corresponding $L^2$-norm on $L^2(K\times_{M\cap K}\xi)$ resp. $L^2(K'\times_{M'\cap K'}\xi')$. These norms induce norms on each $K'$-type $\calE(\alpha;\alpha')$ resp. $\calE'(\alpha')$. Write $\|R_{\alpha,\alpha'}\|_{L^2\to L^2}$ for the operator norm of $R_{\alpha,\alpha'}:\calE(\alpha;\alpha')\to\calE'(\alpha')$ with respect to the $L^2$-norms.

For any $F\in L^2(K\times_{M\cap K}\xi)$ write
$$ F=\sum_{\alpha\in\widehat{K}}F_\alpha $$
with $F_\alpha\in\calE(\alpha)$. Then the sequence $\{\|F_\alpha\|\}_\alpha$ belongs to $\ell^2(\widehat{K})$, the space of square-summable sequences. We identify the set $\widehat{K}$ resp. $\widehat{K'}$ with the corresponding weight lattice so that it becomes a subset of a finite-dimensional vector space. Denote by $|\blank|$ resp. $|\blank|'$ a norm on this finite-dimensional vector space. It is known that $F\in\calE(K;\xi_{M\cap K})$ if and only if the sequence $\{\|F_\alpha\|\}_\alpha$ belongs to $s(\widehat{K})$, the space of rapidly decreasing sequences, i.e. those that are still bounded if multiplied with any power $|\alpha|^N$. Moreover, $\calE(K;\xi^\vee|_{M\cap K})^\vee$ is identified with all formal expansions $F=\sum_\alpha F_\alpha$ where $\{\|F_\alpha\|\}_\alpha$ belongs to $s'(\widehat{K})$, the space of tempered sequences, i.e. those that grow at most at the rate of $|\alpha|^N$ for some $N\in\NN$.

\begin{proposition}\label{prop:HCtoInfty}
A $(\frakg',K')$-intertwining operator $T:(\pi_{\xi,r})_\HC\to(\tau_{\xi',r'})_\HC$ with $T|_{\calE(\alpha;\alpha')}=t_{\alpha,\alpha'}\cdot R_{\alpha,\alpha'}$ extends to a continuous $G'$-intertwining operator $\pi_{\xi,r}\to\tau_{\xi',r'}$ if both $t_{\alpha,\alpha'}$ and $\|R_{\alpha,\alpha'}\|_{L^2\to L^2}$ are of at most polynomial growth in $\alpha$ and $\alpha'$.
\end{proposition}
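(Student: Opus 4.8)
The plan is to extend $T$ to a continuous $G'$-intertwining operator from $\pi_{\xi,r}$ into the larger space $\tau_{\xi'^\vee,-r'}^\vee$, of which $\tau_{\xi',r'}$ is a subrepresentation, and then to invoke Lemma~\ref{lem:AutomaticSmoothness}, which brings the image back into $\tau_{\xi',r'}$. Throughout I use the sequence-space descriptions recalled above: $F=\sum_\alpha F_\alpha$ lies in $\calE(K;\xi|_{M\cap K})$ iff $\{\|F_\alpha\|\}_\alpha\in s(\widehat{K})$, while $\calE(K';\xi'^\vee|_{M'\cap K'})^\vee$ is identified with the formal expansions $\sum_{\alpha'}F_{\alpha'}$, $F_{\alpha'}\in\calE'(\alpha')$, whose norm sequence $\{\|F_{\alpha'}\|'\}_{\alpha'}$ is tempered, i.e. lies in $s'(\widehat{K'})$.

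First I would write down the candidate extension $\widetilde{T}$. For a smooth vector $F=\sum_\alpha F_\alpha$ set
$$ \widetilde{T}F = \sum_{\alpha'}(\widetilde{T}F)_{\alpha'}, \qquad (\widetilde{T}F)_{\alpha'} = \sum_{\alpha:\,\calE(\alpha;\alpha')\neq0} t_{\alpha,\alpha'}\cdot R_{\alpha,\alpha'}\big(\proj_{\calE(\alpha;\alpha')}F_\alpha\big)\in\calE'(\alpha'), $$
so that $\widetilde{T}$ agrees with $T$ on the $K$-finite vectors $\calE$, where all sums are finite. Fixing a polynomial bound $|t_{\alpha,\alpha'}|\cdot\|R_{\alpha,\alpha'}\|_{L^2\to L^2}\leq C(1+|\alpha|+|\alpha'|')^d$ and using $1+|\alpha|+|\alpha'|'\leq(1+|\alpha|)(1+|\alpha'|')$, $\|\proj_{\calE(\alpha;\alpha')}F_\alpha\|\leq\|F_\alpha\|$, and a Cauchy--Schwarz step against a sequence $\{(1+|\alpha|)^{-N}\}_\alpha$ which becomes summable once $N$ exceeds the dimension of the ambient space of $\widehat{K}$, one obtains an estimate of the shape
$$ \|(\widetilde{T}F)_{\alpha'}\|' \leq C'\,(1+|\alpha'|')^d\cdot\sup_\alpha(1+|\alpha|)^{N'}\|F_\alpha\| $$
for suitable constants $C',N'$. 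In particular each inner sum converges absolutely in the finite-dimensional space $\calE'(\alpha')$, the sequence $\{\|(\widetilde{T}F)_{\alpha'}\|'\}_{\alpha'}$ is tempered so that $\widetilde{T}F\in\calE(K';\xi'^\vee|_{M'\cap K'})^\vee$, and since the right-hand side is a single continuous seminorm of $F$ the operator $\widetilde{T}:\pi_{\xi,r}\to\tau_{\xi'^\vee,-r'}^\vee$ is continuous for the weak-$\star$ topology.

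It remains to verify that $\widetilde{T}$ is $G'$-equivariant; it is $K'$-equivariant by construction. Because $\calE$ is dense in $\pi_{\xi,r}$ and $\widetilde{T}$, $\pi_{\xi,r}(g')$ and $\tau_{\xi'^\vee,-r'}^\vee(g')$ are continuous, it suffices to check the equivariance on vectors $v\in\calE$. For such $v$ and any $K'$-finite $\psi\in\calE(K';\xi'^\vee|_{M'\cap K'})$, the functions $g'\mapsto\langle\widetilde{T}\pi_{\xi,r}(g')v,\psi\rangle$ and $g'\mapsto\langle\tau_{\xi'^\vee,-r'}^\vee(g')\widetilde{T}v,\psi\rangle$ are real-analytic on $G'$, since $K$-finite vectors of $\pi_{\xi,r}$ and $K'$-finite vectors of $\tau_{\xi'^\vee,-r'}$ are analytic vectors, hence $G'$-analytic. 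As $\calE$ is stable under $\pi_{\xi,r}(\frakg')$ and $T$ is $(\frakg',K')$-intertwining on $\calE$, all left-invariant derivatives of these two analytic functions coincide at the identity, so they coincide on $(G')^0$; together with $K'$-equivariance and $G'=(G')^0K'$ this gives $\widetilde{T}\pi_{\xi,r}(g')v=\tau_{\xi'^\vee,-r'}^\vee(g')\widetilde{T}v$ for all $g'\in G'$. Thus $\widetilde{T}$ is a continuous $G'$-intertwining operator $\pi_{\xi,r}\to\tau_{\xi'^\vee,-r'}^\vee$, and Lemma~\ref{lem:AutomaticSmoothness} shows it factors through a continuous $G'$-intertwining operator $\pi_{\xi,r}\to\tau_{\xi',r'}$ extending $T$.

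The main obstacle is the norm estimate in the second paragraph: one must check that the double sum over the pairs $(\alpha,\alpha')$ converges and yields a \emph{tempered} --- not merely bounded --- sequence in $\alpha'$, dominated by one fixed Fr\'echet seminorm of $F$. This is exactly where both hypotheses are used, polynomial growth of $t_{\alpha,\alpha'}$ and of $\|R_{\alpha,\alpha'}\|_{L^2\to L^2}$, together with the elementary fact that the number of lattice points $\alpha\in\widehat{K}$ with $|\alpha|\leq R$ grows only polynomially in $R$, so the rapid decay of $\{\|F_\alpha\|\}$ absorbs it; the surviving polynomial factor in $\alpha'$ only weakens the conclusion from smooth to distribution vectors, which is precisely the gap that Lemma~\ref{lem:AutomaticSmoothness} is designed to close. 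The remaining ingredients --- density of $K$-finite vectors, weak-$\star$ continuity of the transposed Lie-algebra action, analyticity of $K$- and $K'$-finite vectors, and the component identity $G'=(G')^0K'$ --- are standard and require no separate work.
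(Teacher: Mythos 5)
Your proposal is correct and follows essentially the same route as the paper: reduce via Lemma~\ref{lem:AutomaticSmoothness} to extending $T$ into $\tau_{\xi'^\vee,-r'}^\vee$, and then run the same Fourier-coefficient estimate combining the polynomial bounds on $t_{\alpha,\alpha'}$ and $\|R_{\alpha,\alpha'}\|_{L^2\to L^2}$ with the rapid decay of $\{\|F_\alpha\|\}_\alpha$ to land in the tempered sequences $s'(\widehat{K'})$. The only difference is that you spell out, via analytic vectors and $G'=(G')^0K'$, why the continuous extension is actually $G'$-equivariant — a point the paper's proof leaves implicit — which is a welcome but not essentially different addition.
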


\begin{proof}
By Lemma~\ref{lem:AutomaticSmoothness} it suffices to show that $T$ extends to a continuous $G'$-intertwining operator $\pi_{\xi,r}\to(\tau_{\xi'^\vee,-r'})'$. Let $F\in\pi_{\xi,r}$, then $F=\sum_\alpha F_\alpha$ with $\{\|F_\alpha\|\}_\alpha$ a sequence in $s(\widehat{K})$. We have $TF=\sum_{\alpha'}(TF)_{\alpha'}$ with
$$ (TF)_{\alpha'} = \sum_{\alpha}t_{\alpha,\alpha'}\cdot R_{\alpha,\alpha'}F_\alpha. $$
By the assumptions $|t_{\alpha,\alpha'}|\leq C_1(1+|\alpha|+|\alpha'|)^{N_1}$ and $\|R_{\alpha,\alpha'}\|_{L^2\to L^2}\leq C_2(1+|\alpha|+|\alpha'|)^{N_2}$ for some $C_1,C_2>0$ and $N_1,N_2\in\NN$. Further, since $\|F_\alpha\|\in s(\widehat{K})$, for every $N\in\NN$ there exists $C>0$ such that $\|F_\alpha\|\leq C(1+|\alpha|)^{-N}$. Hence, we have for any $\alpha'$:
$$ \|(TF)_{\alpha'}\|' \leq CC_1C_2\sum_\alpha(1+|\alpha|+|\alpha'|)^{N_1+N_2}(1+|\alpha|)^{-N}. $$
Choosing $N$ large enough this is uniformly bounded by a constant times $(1+|\alpha'|)^{N_1+N_2}$ and hence $\|(TF)_{\alpha'}\|'\in s'(\widehat{K'})$ so that $TF\in\calE(K';\xi'^\vee|_{M'\cap K'})^\vee$. This shows that $T$ extends to a $G'$-intertwining operator $\pi_{\xi,r}\to\tau_{\xi'^\vee,-r'}^\vee$. Continuity of this operator also follows by the above estimates.
\end{proof}

\section{Rank one orthogonal groups}\label{sec:ExampleReal}

In this section we apply our method to classify symmetry breaking operators for rank one orthogonal groups. Let $n\geq3$ and consider the indefinite orthogonal group $G=\upO(1,n)$ of $(n+1)\times(n+1)$ real matrices leaving the standard bilinear form on $\RR^{n+1}$ of signature $(1,n)$ invariant. The subgroup $G'\subseteq G$ of matrices fixing the last standard basis vector $e_{n+1}$ is isomorphic to $\upO(1,n-1)$.

\subsection{$K$-types}\label{sec:RealKTypes}

We fix $K=\upO(1)\times \upO(n)$ and choose
$$ H = \left(\begin{array}{ccc}0&1&\\1&0&\\&&\0_{n-1}\end{array}\right) $$
so that $P=MAN$ with $M=\Delta \upO(1)\times \upO(n-1)$ where $\Delta \upO(1)=\{\diag(x,x):x\in \upO(1)\}$. Note that $\rho=\frac{n-1}{2}$. Then $K$ acts transitively on $S^{n-1}$ via $\diag(\varepsilon,k)\cdot x=\varepsilon kx$, $\varepsilon\in \upO(1)$, $k\in \upO(n)$, $x\in S^{n-1}$, and $M$ is the stabilizer subgroup of the first standard basis vector $e_1\in S^{n-1}$ whence $K/M\cong S^{n-1}$. The subgroup $G'=\upO(1,n-1)$ is embedded into $G$ such that $K'=\upO(1)\times \upO(n-1)$ and $P'=G'\cap P=M'A'N'$ with $A'=A$ and $M'=\Delta \upO(1)\times \upO(n-2)$. Then $K'/M'=S^{n-2}$, viewed as the equator in $K/M=S^{n-1}\subseteq\RR^n$ given by $x_n=0$. Further we have $\nu=\nu'$ and $\rho'=\frac{n-2}{2}$.

Let $\xi=\1$, $\xi'=\1$ be the trivial representations of $M$ and $M'$ and abbreviate $\pi_r=\pi_{\xi,r}$ and $\tau_{r'}=\tau_{\xi',r'}$. As $K$-modules resp. $K'$-modules we have
$$ \calE = \bigoplus_{\alpha=0}^\infty \underbrace{\sgn^{\alpha}\boxtimes\,\calH^\alpha(\RR^n)}_{\calE(\alpha)=}, \qquad \calE' = \bigoplus_{\alpha'=0}^\infty \underbrace{\sgn^{\alpha'}\boxtimes\,\calH^{\alpha'}(\RR^{n-1})}_{\calE'(\alpha')=}, $$
so that \eqref{eq:MF1} is satisfied. Further, each $K$-type decomposes by \eqref{eq:BranchingRealSphericalHarmonics} into $K'$-types as follows:
$$ \left.\left(\sgn^\alpha\boxtimes\,\calH^\alpha(\RR^n)\right)\right|_{K'} \simeq \bigoplus_{0\leq\alpha'\leq\alpha} \left(\sgn^\alpha\boxtimes\,\calH^{\alpha'}(\RR^{n-1})\right), $$
and hence \eqref{eq:MF2} holds. Comparing the sign representations of the $\upO(1)$-factor of $K'$ we find that $\Hom_{K'}(\calE(\alpha)|_{K'},\calE'(\alpha'))\neq0$ if and only if $\alpha-\alpha'\in2\ZZ$. In this case formulas \eqref{eq:ExplicitBranchingRealSphericalHarmonics} and \eqref{eq:GegenbauerValue0} show that the restriction operator
$$ R_{\alpha,\alpha'}=\rest|_{\calE(\alpha;\alpha')}:\calE(\alpha;\alpha')\to\calE'(\alpha') $$
is an isomorphism. Hence the restriction $T_{\alpha,\alpha'}=T|_{\calE(\alpha;\alpha')}$ of a $K'$-intertwining operator $T:\calE\to\calE'$ is given by $T_{\alpha,\alpha'}=t_{\alpha,\alpha'}R_{\alpha,\alpha'}$ for $\alpha-\alpha'\in2\NN$ and $T_{\alpha,\alpha'}=0$ else. The $K$- and $K'$-types are illustrated in Diagram~\ref{Ktypes}.

\begin{diagram}
\setlength{\unitlength}{4pt}
\begin{picture}(26,26)
\thicklines
\put(0,0){\vector(1,0){25}}
\put(0,0){\vector(0,1){25}}

\multiput(0,0)(10,0){3}{\circle*{1}}
\multiput(5,5)(10,0){2}{\circle*{1}}
\multiput(10,10)(10,0){2}{\circle*{1}}
\multiput(15,15)(10,0){1}{\circle*{1}}
\multiput(20,20)(10,0){1}{\circle*{1}}

\multiput(4,-0.6)(10,0){2}{$\times$}
\multiput(9,4.4)(10,0){2}{$\times$}
\multiput(14,9.4)(10,0){1}{$\times$}
\multiput(19,14.4)(10,0){1}{$\times$}
\put(23,1){$\alpha$}
\put(1,23){$\alpha'$}
\end{picture}
\vspace{10pt}
\subcaption{\vbox{\hsize22pc
\begin{legend}
\item[\raise3pt\hbox{\circle*{1}}] $K'$-types $\calE(\alpha;\alpha')$ with $\alpha-\alpha'\in2\ZZ$,
\item[\hspace{1.858cm}\hbox{$\times$}] $K'$-types $\calE(\alpha;\alpha')$ with $\alpha-\alpha'\in2\ZZ+1$.
\endgraf
\end{legend}}}
\caption[]{}\label{Ktypes}
\end{diagram}

\subsection{Proportionality constants}

The eigenvalues of the spectrum generating operator on the $K$-types are simply the eigenvalues of the Laplacian on $S^{n-1}$ and given by (see \cite[Section 3.a]{BOO96})
$$ \sigma_\alpha = \alpha(\alpha+n-2), \qquad \sigma'_{\alpha'} = \alpha'(\alpha'+n-3). $$
We identify $\fraks\cong\RR^n$ via
$$ \RR^n \to \fraks, \qquad y \mapsto X_y=\left(\begin{array}{cc}0&y^t\\y&\0_n\end{array}\right). $$
Then $\fraks'\cong\RR^{n-1}$, embedded in $\RR^n$ as the first $n-1$ coordinates. Since $\fraks'_\CC\simeq\CC^{n-1}$ is a weight multiplicity-free $K'$-module, \eqref{eq:MF3} holds and we can use Corollary~\ref{cor:CharacterizationIntertwinersScalar}. To compute the proportionality constants $\lambda_{\alpha,\alpha'}^{\beta,\beta'}$ we use Lemma~\ref{lem:CalculationOfLambdas} which applies to this situation, because $H=H'$ and $R_{\alpha,\alpha'}=\rest$.  The cocycle $\omega$ is given by
$$ \omega(X_y)(x) = y^tx, \qquad x\in S^{n-1},y\in\RR^n. $$
Using \eqref{eq:DecompositionOfMultRealSphericalHarmonics} it is easy to see that for fixed $0\leq\alpha'\leq\alpha$:
$$ (\alpha;\alpha')\leftrightarrow(\beta;\beta') \quad \Leftrightarrow \quad |\alpha-\beta|=|\alpha'-\beta'|=1. $$
By Lemma~\ref{lem:CalculationOfLambdas} we have the following equations for $\lambda_{\alpha,\alpha'}^{\beta,\beta'}$: For $\beta'=\alpha'+1$ we obtain
$$
\arraycolsep=1.4pt\def\arraystretch{1.5}
\begin{array}{rcrcl}
 \lambda_{\alpha,\alpha'}^{\alpha+1,\alpha'+1}&+&\lambda_{\alpha,\alpha'}^{\alpha-1,\alpha'+1} &=& 1,\\
 (2\alpha+n-1)\lambda_{\alpha,\alpha'}^{\alpha+1,\alpha'+1}&-&(2\alpha+n-3)\lambda_{\alpha,\alpha'}^{\alpha-1,\alpha'+1} &=& 2\alpha'+n-1,
\end{array}
$$
which gives
\begin{equation*}
 \lambda_{\alpha,\alpha'}^{\alpha+1,\alpha'+1} = \frac{\alpha+\alpha'+n-2}{2\alpha+n-2}, \qquad \lambda_{\alpha,\alpha'}^{\alpha-1,\alpha'+1} = \frac{\alpha-\alpha'}{2\alpha+n-2},
\end{equation*}
and for $\beta'=\alpha'-1$ we get
$$
\arraycolsep=1.4pt\def\arraystretch{1.5}
\begin{array}{rcrcl}
 \lambda_{\alpha,\alpha'}^{\alpha+1,\alpha'-1}&+&\lambda_{\alpha,\alpha'}^{\alpha-1,\alpha'-1} &=& 1,\\
 (2\alpha+n-1)\lambda_{\alpha,\alpha'}^{\alpha+1,\alpha'-1}&-&(2\alpha+n-3)\lambda_{\alpha,\alpha'}^{\alpha-1,\alpha'-1} &=& -2\alpha'-n+5,
\end{array}
$$
implying
\begin{equation*}
 \lambda_{\alpha,\alpha'}^{\alpha+1,\alpha'-1} = \frac{\alpha-\alpha'+1}{2\alpha+n-2}, \qquad \lambda_{\alpha,\alpha'}^{\alpha-1,\alpha'-1} = \frac{\alpha+\alpha'+n-3}{2\alpha+n-2}.
\end{equation*}
We remark that the constants $\lambda_{\alpha,\alpha'}^{\beta,\beta'}$ can in this case also be obtained by computing the action of $\omega(X)$ on explicit $K$-finite vectors using \eqref{eq:ExplicitBranchingRealSphericalHarmonics} and recurrence relations for the Gegenbauer polynomials. With the explicit form of the constants $\lambda_{\alpha,\alpha'}^{\beta,\beta'}$ Corollary~\ref{cor:CharacterizationIntertwinersScalar} now provides the following characterization of symmetry breaking operators:

\begin{theorem}\label{thm:RealCharacterizationIntertwiners}
An operator $T:\calE\to\calE'$ is intertwining for $\pi_r$ and $\tau_{r'}$ if and only if
$$ T|_{\calE(\alpha;\alpha')} = \begin{cases}t_{\alpha,\alpha'}\cdot\rest|_{\calE(\alpha;\alpha')} & \mbox{for $\alpha-\alpha'\in2\ZZ$,}\\0 & \mbox{for $\alpha-\alpha'\in2\ZZ+1$,}\end{cases} $$
with numbers $t_{\alpha,\alpha'}$ satisfying
\begin{multline}
 (2\alpha+n-2)(2r'+2\alpha'+n-2)t_{\alpha,\alpha'} = (\alpha+\alpha'+n-2)(2r+2\alpha+n-1)t_{\alpha+1,\alpha'+1}\\
 + (\alpha-\alpha')(2r-2\alpha-n+3)t_{\alpha-1,\alpha'+1}\label{eq:RealRel1}
\end{multline}
and
\begin{multline}
 (2\alpha+n-2)(2r'-2\alpha'-n+4)t_{\alpha,\alpha'} = (\alpha-\alpha'+1)(2r+2\alpha+n-1)t_{\alpha+1,\alpha'-1}\\
 + (\alpha+\alpha'+n-3)(2r-2\alpha-n+3)t_{\alpha-1,\alpha'-1}.\label{eq:RealRel2}
\end{multline}
\end{theorem}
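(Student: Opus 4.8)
The plan is to derive Theorem~\ref{thm:RealCharacterizationIntertwiners} directly from Corollary~\ref{cor:CharacterizationIntertwinersScalar} by feeding in the explicit data for the pair $(O(1,n),O(1,n-1))$ that has already been assembled in this section. First I would verify that the general hypotheses of Section~\ref{sec:GeneralTheory} hold here: $\calE$ and $\calE'$ are multiplicity-free since $\calH^\alpha(\RR^n)$ and $\calH^{\alpha'}(\RR^{n-1})$ each occur once, the branching law \eqref{eq:BranchingRealSphericalHarmonics} is multiplicity-free so each $K'$-type sits at most once in each $K$-type, and the tensor-product condition \eqref{eq:AssumptionTensorProd} follows from the fact that $\fraks'_\CC\otimes\calH^{\alpha'}(\RR^{n-1})$ decomposes with multiplicity one onto each $\calH^{\beta'}(\RR^{n-1})$ with $|\alpha'-\beta'|=1$ (this is the classical Clebsch--Gordan-type decomposition for spherical harmonics, cf.\ \eqref{eq:DecompositionOfMultRealSphericalHarmonics}). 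This justifies writing every $K'$-intertwiner via the scalars $t_{\alpha,\alpha'}$ with $R_{\alpha,\alpha'}=\rest|_{\calE(\alpha;\alpha')}$, and reduces the intertwining property to \eqref{eq:CharacterizationIntertwinersScalar}.

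Next I would substitute the concrete constants. The adjacency relation $(\alpha;\alpha')\leftrightarrow(\beta;\beta')\iff|\alpha-\beta|=|\alpha'-\beta'|=1$ was recorded above, so for a fixed pair $(\alpha;\alpha')$ and a target $K'$-type $\beta'$ the sum in \eqref{eq:CharacterizationIntertwinersScalar} runs over exactly two values $\beta=\alpha\pm1$ (or one, at the boundary $\alpha=\alpha'$, where only $\beta=\alpha+1$ survives — this degenerate case is automatically consistent since the coefficient $\lambda_{\alpha,\alpha'}^{\alpha-1,\alpha'+1}=(\alpha-\alpha')/(2\alpha+n-2)$ vanishes there). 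Plugging in $\sigma_\alpha=\alpha(\alpha+n-2)$, $\sigma'_{\alpha'}=\alpha'(\alpha'+n-3)$, and the four proportionality constants computed just above, I would treat the cases $\beta'=\alpha'+1$ and $\beta'=\alpha'-1$ separately. Writing $(\alpha;\alpha')$ in place of the index $(\alpha+1;\alpha'+1)$ etc., a short computation gives $\sigma_{\alpha+1}-\sigma_\alpha=2\alpha+n-1$ and $\sigma_{\alpha-1}-\sigma_\alpha=-(2\alpha+n-3)$, and similarly $\sigma'_{\alpha'+1}-\sigma'_{\alpha'}=2\alpha'+n-2$, $\sigma'_{\alpha'-1}-\sigma'_{\alpha'}=-(2\alpha'+n-4)$. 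After multiplying through by $2\alpha+n-2$ to clear the common denominator of the $\lambda$'s, the relation for $\beta'=\alpha'+1$ becomes \eqref{eq:RealRel1} and the one for $\beta'=\alpha'-1$ becomes \eqref{eq:RealRel2}; the shift of the index conventions (the theorem is stated with $t_{\alpha\pm1,\alpha'\pm1}$ on the right and $t_{\alpha,\alpha'}$ on the left, which is the form \eqref{eq:CharacterizationIntertwinersScalar} takes with $(\alpha;\alpha')$ being the ``source'' and $\beta'$ the neighbouring ``target'') accounts for the signs $2r-2\alpha-n+3$ attached to the $t_{\alpha-1,\cdot}$ terms.

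The only genuinely non-routine ingredient is the determination of the proportionality constants $\lambda_{\alpha,\alpha'}^{\beta,\beta'}$, but this has already been carried out in the preceding subsection via Lemma~\ref{lem:CalculationOfLambdas}: the hypotheses $H=H'$ and $D_{\alpha,\alpha'}=1$ hold by the explicit choices in Section~\ref{sec:RealKTypes}, and the two linear equations of the lemma (with $\rho-\rho'=\frac12$) pin down each pair of constants uniquely because the coefficient matrix $\left(\begin{smallmatrix}1&1\\ 2\alpha+n-1&-(2\alpha+n-3)\end{smallmatrix}\right)$ is invertible. So the main obstacle is not conceptual but purely bookkeeping: keeping the index shifts and signs straight when passing between the ``intrinsic'' form \eqref{eq:CharacterizationIntertwinersScalar} and the ``expanded'' form \eqref{eq:RealRel1}--\eqref{eq:RealRel2}, and checking that the boundary cases $\alpha=\alpha'$ and $\alpha'=0$ are subsumed (the latter because $\calE'(\alpha'-1)=0$ forces the $\beta'=\alpha'-1$ relation to be vacuous, consistent with the coefficient $(\alpha-\alpha'+1)$ not vanishing but $t_{\cdot,-1}$ being undefined/zero). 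I would therefore present the proof as: (i) cite Corollary~\ref{cor:CharacterizationIntertwinersScalar} and note the hypotheses are met; (ii) recall the adjacency rule and the values of $\sigma,\sigma',\lambda$; (iii) substitute and simplify, handling $\beta'=\alpha'\pm1$ in turn; (iv) remark that the boundary cases cause no trouble.
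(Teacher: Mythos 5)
Your proposal is correct and follows essentially the same route as the paper: the theorem is obtained by specializing Corollary~\ref{cor:CharacterizationIntertwinersScalar} using the adjacency rule, the eigenvalues $\sigma_\alpha$, $\sigma'_{\alpha'}$, and the proportionality constants $\lambda_{\alpha,\alpha'}^{\beta,\beta'}$ determined via Lemma~\ref{lem:CalculationOfLambdas}, then clearing the common denominator $2\alpha+n-2$. Your index bookkeeping and treatment of the boundary cases $\alpha=\alpha'$ and $\alpha'=0$ are consistent with the paper's conventions.
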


We view these two relations as triangles connecting three vertices in the $K$-type picture (see Diagram~\ref{fig:RealHooks}).

\begin{diagram}
\setlength{\unitlength}{4pt}
\begin{picture}(48,15)
\thicklines
\put(1,7){\line(1,-1){6}}
\put(15,7){\line(-1,-1){6}}
\put(1,8){\line(1,0){14}}
\put(33,1){\line(1,1){6}}
\put(47,1){\line(-1,1){6}}
\put(33,0){\line(1,0){14}}
\put(0,8){\circle*{1}}
\put(8,0){\circle*{1}}
\put(16,8){\circle*{1}}
\put(32,0){\circle*{1}}
\put(40,8){\circle*{1}}
\put(48,0){\circle*{1}}
\put(4.6,-3){$(\alpha,\alpha')$}
\put(-10,10){$(\alpha-1,\alpha'+1)$}
\put(10.5,10){$(\alpha+1,\alpha'+1)$}
\put(36.6,10){$(\alpha,\alpha')$}
\put(22,-3){$(\alpha-1,\alpha'-1)$}
\put(42.5,-3){$(\alpha+1,\alpha'-1)$}
\end{picture}
\vspace{.3cm}
\caption[]{The relations \eqref{eq:RealRel1} and \eqref{eq:RealRel2}}\label{fig:RealHooks}
\end{diagram}

Note that if $r\notin-\rho-\NN$ then $2r+2\alpha+n-1\neq0$ for all $\alpha$ and hence one can define $t_{\alpha+1,\alpha'+1}$ in terms of $t_{\alpha,\alpha'}$ and $t_{\alpha-1,\alpha'+1}$ using \eqref{eq:RealRel1} and do similarly for $t_{\alpha+1,\alpha'-1}$ using \eqref{eq:RealRel2}. If $r=-\rho-i\in-\rho-\NN$ and $\alpha=i$ the coefficient $(2r+2\alpha+n-1)$ vanishes and \eqref{eq:RealRel1} and \eqref{eq:RealRel2} reduce to identities involving only two terms. We indicate this by drawing a vertical line between $i$ and $i+1$ indicating that one cannot `step' from the left hand side to the right hand side (see Diagram~\ref{fig:RealBarriers}). Similarly we have that if $r'\notin-\rho'-\NN$ then $2r'+2\alpha'+n-2\neq0$ for all $\alpha'$ and we can define $t_{\alpha,\alpha'}$ in terms of $t_{\alpha\pm1,\alpha'+1}$ using \eqref{eq:RealRel1}. If $r'=-\rho'-j\in-\rho'-\NN$ and $\alpha'=j$ we obtain a horizontal line between $j$ and $j+1$ as barrier, indicating that we cannot `step' from the part above this line to the part below. Note that if there is a vertical resp. horizontal barrier like this the coefficient $(2r-2\alpha-n+3)$ resp. $(2r'-2\alpha'-n+4)$ never vanishes and one can `step' in the other direction, namely from right to left resp. from the part below the line to the part above.

\begin{diagram}
\setlength{\unitlength}{4pt}
\begin{picture}(15,15)
\thicklines
\put(1,7){\line(1,-1){6}}
\put(15,7){\line(-1,-1){6}}
\put(1,8){\line(1,0){14}}
{\color{red}
\put(12,-4){\line(0,1){16}}}
\put(0,8){\circle*{1}}
\put(8,0){\circle*{1}}
\put(16,8){\circle*{1}}
\put(3,-3){$(\alpha,\alpha')$}
\put(-10,10){$(\alpha-1,\alpha'+1)$}
\put(14.5,10){$(\alpha+1,\alpha'+1)$}
\end{picture}
\hspace{4cm}
\begin{picture}(15,15)
\thicklines
\put(1,7){\line(1,-1){6}}
\put(15,7){\line(-1,-1){6}}
\put(1,8){\line(1,0){14}}
{\color{red}
\put(-10,4){\line(1,0){36}}}
\put(0,8){\circle*{1}}
\put(8,0){\circle*{1}}
\put(16,8){\circle*{1}}
\put(4.6,-3){$(\alpha,\alpha')$}
\put(-10,10){$(\alpha-1,\alpha'+1)$}
\put(10.5,10){$(\alpha+1,\alpha'+1)$}
\end{picture}
\vspace{.3cm}
\caption[]{Barriers for $r=-\rho-i$ and $r'=-\rho'-j$}\label{fig:RealBarriers}
\end{diagram}

\subsection{Multiplicities}\label{sec:MultiplicitiesReal}

The $(\frakg,K)$-module $(\pi_r)_\HC$ is reducible if and only if $r\in\pm(\rho+\NN)$. More precisely, for $r=-\rho-i$ the module $(\pi_r)_\HC$ contains a unique non-trivial finite-dimensional $(\frakg,K)$-submodule $\calF(i)\subseteq\calE$ with $K$-types $\calE(\alpha)$, $0\leq\alpha\leq i$. Its quotient $\calT(i)=\calE/\calF(i)$ is irreducible and can be identified with the unique non-trivial $(\frakg,K)$-submodule of $(\pi_{-r})_\HC$. Similarly we denote for $r'=-\rho'-j$ by $\calF'(j)$ the unique finite-dimensional $(\frakg',K')$-submodule of $(\tau_{r'})_\HC$ and by $\calT'(j)$ its irreducible quotient. Let
\begin{align*}
 \Leven &= \{(r,r'):r=-\rho-i,r'=-\rho'-j,i-j\in2\NN\},\\
 \Lodd &= \{(r,r'):r=-\rho-i,r'=-\rho'-j,i-j\in2\NN+1\}.
\end{align*}
This notation agrees with the notation used by Kobayashi--Speh~\cite{KS13}.

\begin{theorem}\label{thm:RealMultiplicities}
\begin{enumerate}
\item The multiplicities between spherical principal series of $G$ and $G'$ are given by
$$ m((\pi_r)_\HC,(\tau_{r'})_\HC) = \begin{cases}1&\mbox{for $(r,r')\in\CC^2\setminus\Leven$,}\\2&\mbox{for $(r,r')\in\Leven$.}\end{cases} $$
\item For $i,j\in\NN$ the multiplicities $m(\calV,\calW)$ between subquotients are given by
\vspace{.2cm}
\begin{center}
 \begin{tabular}{c|cc}
  \diagonal{.1em}{.72cm}{$\calV$}{$\calW$} & $\calF'(j)$ & $\calT'(j)$ \\
  \hline
  $\calF(i)$ & $1$ & $0$\\
  $\calT(i)$ & $0$ & $1$\\
  \multicolumn{3}{c}{for $i-j\in2\NN$,}
 \end{tabular}
 \qquad
 \begin{tabular}{c|cc}
  \diagonal{.1em}{.72cm}{$\calV$}{$\calW$} & $\calF'(j)$ & $\calT'(j)$ \\
  \hline
  $\calF(i)$ & $0$ & $0$\\
  $\calT(i)$ & $1$ & $0$\\
  \multicolumn{3}{c}{otherwise.}
 \end{tabular}
\end{center}
\vspace{.2cm}
\end{enumerate}
\end{theorem}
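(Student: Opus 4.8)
The plan is to read off everything from the scalar recurrences \eqref{eq:RealRel1} and \eqref{eq:RealRel2} together with the geometry of the barriers in Diagrams~\ref{fig:RealHooks} and \ref{fig:RealBarriers}. For part~(1), fix $(r,r')$ and set up the solution space of the system $\{t_{\alpha,\alpha'}\}$ by induction on $\alpha+\alpha'$ along the diagonals of the $(\alpha,\alpha')$-quadrant. Starting from the single value $t_{0,0}$, relation~\eqref{eq:RealRel1} (which expresses $t_{\alpha+1,\alpha'+1}$ in terms of $t_{\alpha,\alpha'}$ and $t_{\alpha-1,\alpha'+1}$ whenever $2r+2\alpha+n-1\neq0$, i.e. whenever there is no vertical barrier at $\alpha$) and relation~\eqref{eq:RealRel2} (which expresses $t_{\alpha+1,\alpha'-1}$ similarly) let one propagate a value prescribed on the left edge $\alpha'=0$, or more precisely on the set of lattice points that are ``sources'' of the stepping relations. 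When $(r,r')\notin\Leven$ there is a single connected region of allowed steps reaching every $(\alpha,\alpha')$ with $\alpha-\alpha'\in2\NN$ from $t_{0,0}$, so the whole family is determined by the single scalar $t_{0,0}$ and the multiplicity is~$1$ (one must still check consistency, i.e. that the two relations never contradict each other, which follows from the fact that $\rest$ with $r+\rho=r'+\rho'$ already gives one nonzero solution and the recurrences are at most first order in each new diagonal). When $(r,r')\in\Leven$, i.e. $r=-\rho-i$, $r'=-\rho'-j$ with $i-j\in2\NN$, there is simultaneously a vertical barrier between columns $i$ and $i+1$ and a horizontal barrier between rows $j$ and $j+1$, and the parity condition $i-j\in2\NN$ is exactly what makes these two barriers meet at the lattice point $(i,j)$ (which lies on a valid diagonal); this disconnects the quadrant into two regions each carrying a free parameter, giving multiplicity~$2$. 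In the $\Lodd$ case the barriers miss each other and connectivity is restored, so the multiplicity drops back to~$1$.

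For part~(2) I would combine part~(1) with Remark~\ref{rem:IntertwinersBetweenCompositionFactors}, which reduces $\Hom$ between subquotients to solutions of the same relations supported on the appropriate sub- or quotient regions. Concretely: $\calF(i)$ occupies $K$-types $0\le\alpha\le i$ and $\calT(i)$ occupies $\alpha\ge i+1$ (as a quotient, its solutions are those vanishing on $\alpha\le i$); likewise $\calF'(j)$ occupies $\alpha'\le j$ and $\calT'(j)$ occupies $\alpha'\ge j+1$. A nonzero intertwiner $\calV\to\calW$ must have $t_{\alpha,\alpha'}$ supported on $K'$-types of $\calW$; the relations then force compatibility across the barriers. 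For $i-j\in2\NN$: the barrier geometry separates the ``small $\alpha$, small $\alpha'$'' block from the rest, so a solution can be supported on $\{\alpha\le i\}$ and project into $\{\alpha'\le j\}$, giving $m(\calF(i),\calF'(j))=1$; by the same token any solution on $\calT(i)$ (vanishing for $\alpha\le i$) is, by the stepping relations across the corner at $(i,j)$, forced into $\{\alpha'\ge j+1\}$, giving $m(\calT(i),\calT'(j))=1$ and $m(\calT(i),\calF'(j))=0$; and $m(\calF(i),\calT'(j))=0$ because a solution supported on $\alpha\le i$ cannot reach $\alpha'\ge j+1$ once the corner blocks the path. For $i-j\notin2\NN$: the barriers do not close off the small block, so no nonzero solution stays inside $\calF(i)$ (forcing $m(\calF(i),-)=0$), while the unique solution from part~(1) restricted to $\calT(i)$ lands in $\calF'(j)$, yielding $m(\calT(i),\calF'(j))=1$ and $m(\calT(i),\calT'(j))=0$. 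Throughout I would point back to the explicit family $T^{(1)}(r,r')$ and its renormalizations (Theorem~\ref{thm:IntroExplicitIntertwiners}) to exhibit the nonzero operators rather than merely count solution dimensions.

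The main obstacle is the consistency/overdetermination issue: a priori one has \emph{two} families of relations \eqref{eq:RealRel1}--\eqref{eq:RealRel2} on one family of unknowns, so when propagating along diagonals one must verify that the value of $t_{\alpha+1,\alpha'+1}$ obtained via \eqref{eq:RealRel1} from $(\alpha,\alpha')$ agrees with what \eqref{eq:RealRel2} would give (or rather, that the combined system has exactly the claimed number of free parameters and no hidden contradiction). The clean way around this is to exhibit enough explicit solutions: the restriction operator $\rest$ handles the line $r+\rho=r'+\rho'$, and more generally the holomorphic family $T^{(1)}(r,r')$ constructed in Theorem~\ref{thm:IntroExplicitIntertwiners} (built precisely by solving \eqref{eq:IntroRelation}) shows solvability for all $(r,r')$; dimension counting then only needs an \emph{upper} bound, which the stepping argument supplies since each new diagonal introduces at most one new degree of freedom, and exactly one extra degree appears precisely when both barriers are present and meet, i.e. on $\Leven$. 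So the real work is the careful bookkeeping of which lattice points are ``determined'' versus ``free'' as the two barriers interact, and matching this against the support conditions coming from the composition series; the rest is the routine verification that the listed operators are nonzero, which is already covered by the explicit computations referenced in Theorem~\ref{thm:IntroExplicitIntertwiners}.
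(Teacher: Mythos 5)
Your plan follows the paper's own strategy — a combinatorial analysis of \eqref{eq:RealRel1}--\eqref{eq:RealRel2} with the two barriers for part (1), and support conditions from the composition series for part (2) — and the idea of importing \emph{existence} of solutions from the explicit spectral function is legitimate (cite Proposition~\ref{prop:RealExplicitEigenvalues} and Corollary~\ref{cor:RealRenormalizations}, not Theorem~\ref{thm:RealExplicitBasis}, which already presupposes the multiplicity count). The genuine gap is the other half, the upper bound on the dimension, and there your sketch contains claims that are false as stated. The natural initial-data set is the diagonal $\alpha=\alpha'$, where \eqref{eq:RealRel1} degenerates to the two-term recursion \eqref{eq:RealRelDiagonal}; it is not true that a solution is ``determined by the single scalar $t_{0,0}$'' for all $(r,r')\notin\Leven$: for $(r,r')\in\Lodd$ every solution has $t_{0,0}=0$ — in fact the whole diagonal vanishes — and the one free parameter is $t_{i+1,j}$ (Lemma~\ref{lem:RealDiagonalExtensionSingular}~(2)). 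Likewise ``in the $\Lodd$ case the barriers miss each other and connectivity is restored'' misdescribes the situation: both barriers are present whenever $i\geq j$, and the space of diagonal sequences is already two-dimensional for \emph{all} $(r,r')\in\Leven\cup\Lodd$ (Lemma~\ref{lem:RealDiagonalSequences}~(2)); the entire content of the odd case is that the off-diagonal relations across the offset corner kill one of these two dimensions, via the chains of relations at $(\alpha,\alpha')=(i,j+1),(i-2,j+1),\ldots$ and $(i+1,j+2k)$ that force zeros back onto the diagonal. Your sketch gives no mechanism that would distinguish dimension $1$ from dimension $2$ at an $\Lodd$ point.

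Relatedly, ``each new diagonal introduces at most one new degree of freedom, and exactly one extra degree appears precisely on $\Leven$'' is a restatement of the theorem, not an argument: summed over infinitely many anti-diagonals it gives no finite bound, and deciding when an extra degree survives is exactly the bookkeeping of Lemmas~\ref{lem:RealDiagonalExtensionGeneric} and \ref{lem:RealDiagonalExtensionSingular} (unique extension of diagonal data off the diagonal, which in turn requires the compatibility check between \eqref{eq:RealRel1} and \eqref{eq:RealRel2} when one relation is used to define new values — the triangle computation in Step~1 of Lemma~\ref{lem:RealDiagonalExtensionGeneric}; your appeal to the single solution $\rest$ at $r+\rho=r'+\rho'$ says nothing about other $(r,r')$, and the explicit family only shows that \emph{some} extension exists, not that the extension of \emph{every} diagonal datum exists and is unique, which is what the count $2$ on $\Leven$ needs). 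Part (2) then requires not just the dimensions but the precise vanishing pattern of the solutions — Lemma~\ref{lem:RealDiagonalSequences}~(1)(c),(d) together with the extension steps — to decide which solutions are supported on $\{\alpha\leq i\}$, vanish on $\{\alpha\leq i\}$, or map into $\{\alpha'\leq j\}$; the delicate entries, $m(\calT(i),\calF'(j))$ for $j\leq i$ with $i-j$ even versus odd, and $m(\calT(i),\calT'(j))$ (which the paper computes at the reflected parameter $r'=\rho'+j$, where $\calT'(j)$ is a submodule, or via Remark~\ref{rem:IntertwinersBetweenCompositionFactors}), are exactly the ones your barrier-geometry phrases assert rather than prove. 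So the skeleton is right, but the forced-vanishing and unique-extension arguments that constitute the proof still have to be supplied.
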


To prove Theorem~\ref{thm:RealMultiplicities} we study how the relations \eqref{eq:RealRel1} and \eqref{eq:RealRel2} determine the numbers $t_{\alpha,\alpha'}$. We first consider the diagonal $\alpha=\alpha'$. Relation~\eqref{eq:RealRel1} then simplifies to
\begin{equation}
 (2r'+2\alpha+n-2)t_{\alpha,\alpha} = (2r+2\alpha+n-1)t_{\alpha+1,\alpha+1}.\label{eq:RealRelDiagonal}
\end{equation}
This immediately yields:

\begin{lemma}\label{lem:RealDiagonalSequences}
\begin{enumerate}
\item For $(r,r')\in\CC^2\setminus(\Leven\cup\Lodd)$ the space of diagonal sequences $(t_{\alpha,\alpha})_\alpha$ satisfying \eqref{eq:RealRelDiagonal} has dimension $1$. Any generator $(t_{\alpha,\alpha})_\alpha$ satisfies:
\begin{enumerate}
\item for $r\notin-\rho-\NN$, $r'\notin-\rho'-\NN$:
$$ t_{\alpha,\alpha} \neq 0 \quad \forall\,\alpha\in\NN, $$
\item for $r=-\rho-i\in-\rho-\NN$, $r'\notin-\rho'-\NN$:
$$ t_{\alpha,\alpha} = 0 \quad \forall\,\alpha\leq i \qquad \mbox{and} \qquad t_{\alpha,\alpha} \neq 0 \quad \forall\,\alpha> i, $$
\item for $r\notin-\rho-\NN$, $r'=-\rho'-j\in-\rho'-\NN$:
$$ t_{\alpha,\alpha} \neq 0 \quad \forall\,\alpha\leq j \qquad \mbox{and} \qquad t_{\alpha,\alpha} = 0 \quad \forall\,\alpha> j, $$
\item for $r=-\rho-i\in-\rho-\NN$, $r'=-\rho'-j\in-\rho'-\NN$ with $i<j$:
$$ t_{\alpha,\alpha} \neq 0 \quad \forall\,i<\alpha\leq j \qquad \mbox{and} \qquad t_{\alpha,\alpha} = 0 \quad \mbox{else}. $$
\end{enumerate}
\item For $(r,r')=(-\rho-i,-\rho'-j)\in(\Leven\cup\Lodd)$, the space of diagonal sequences $(t_{\alpha,\alpha})_\alpha$ satisfying \eqref{eq:RealRelDiagonal} has dimension $2$. It has a basis $(t'_{\alpha,\alpha})_\alpha$, $(t''_{\alpha,\alpha})_\alpha$ with the properties
\begin{align*}
 & t'_{\alpha,\alpha} \neq 0 \quad \forall\,\alpha\leq j, && t'_{\alpha,\alpha} = 0 \quad \forall\,\alpha>j,\\
 & t''_{\alpha,\alpha} = 0 \quad \forall\,\alpha\leq i, && t''_{\alpha,\alpha} \neq 0 \quad \forall\,\alpha>i.
\end{align*}
\end{enumerate}
\end{lemma}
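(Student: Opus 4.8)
The plan is to analyze the first-order linear recurrence \eqref{eq:RealRelDiagonal} directly. Write $a_\alpha = 2r'+2\alpha+n-2 = 2r'+2\rho'+2\alpha$ and $b_\alpha = 2r+2\alpha+n-1 = 2r+2\rho+2\alpha$, so that \eqref{eq:RealRelDiagonal} reads $a_\alpha t_{\alpha,\alpha} = b_\alpha t_{\alpha+1,\alpha+1}$. Note $a_\alpha = 0$ exactly when $r' = -\rho'-\alpha$, i.e. when $r'\in-\rho'-\NN$ and $\alpha = j$; and $b_\alpha = 0$ exactly when $r = -\rho-\alpha$, i.e. when $r\in-\rho-\NN$ and $\alpha = i$. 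The whole statement is a case-by-case bookkeeping of where the sequence is forced to vanish and where it is free, organized by whether $a_\alpha$ or $b_\alpha$ hits zero.

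First I would treat part (1), assuming $(r,r')\notin\Leven\cup\Lodd$, which means the indices $i$ (if $r\in-\rho-\NN$) and $j$ (if $r'\in-\rho'-\NN$) satisfy $i\geq j$ whenever both are defined (in fact $i\geq j$; the excluded case $i<j$ is exactly part (2)). The recurrence, read upward as $t_{\alpha+1,\alpha+1} = (a_\alpha/b_\alpha)t_{\alpha,\alpha}$ whenever $b_\alpha\neq 0$, shows that the sequence is determined by a single value. I would start at $\alpha=0$: in case (a), neither $a_\alpha$ nor $b_\alpha$ ever vanishes, so choosing $t_{0,0}\neq 0$ propagates a nowhere-vanishing sequence, and the solution space is clearly one-dimensional. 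In case (b), $b_i = 0$ but no $a_\alpha$ vanishes; reading the recurrence downward as $t_{\alpha,\alpha} = (b_\alpha/a_\alpha)t_{\alpha+1,\alpha+1}$ from $\alpha = i$ downward forces $t_{i,i} = 0$ and then $t_{\alpha,\alpha}=0$ for all $\alpha\leq i$, while reading upward from $\alpha = i+1$ with $t_{i+1,i+1}$ free and all subsequent $a_\alpha,b_\alpha$ nonzero gives $t_{\alpha,\alpha}\neq 0$ for $\alpha>i$; again one-dimensional. Case (c) is the mirror image: $a_j=0$ but no $b_\alpha$ vanishes, read upward from $\alpha=j$ to get $t_{j+1,j+1}=0$ hence $t_{\alpha,\alpha}=0$ for $\alpha>j$, and read downward from $t_{j,j}$ free to get $t_{\alpha,\alpha}\neq 0$ for $\alpha\leq j$. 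Case (d), $i<j$: here $b_i=0$ and $a_j=0$; the downward read from $\alpha=i$ kills $t_{\alpha,\alpha}$ for $\alpha\leq i$, the upward read from $\alpha=j$ kills it for $\alpha>j$, and on the window $i<\alpha\leq j$ all of $a_\alpha$ (for $\alpha<j$) and $b_\alpha$ (for $\alpha>i$) are nonzero so the value $t_{i+1,i+1}$ propagates to a nowhere-vanishing sequence on that window; solution space one-dimensional.

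Next I would do part (2), the case $(r,r')\in\Leven\cup\Lodd$, i.e. both $r=-\rho-i$ and $r'=-\rho'-j$ with $i\geq j$ (for $i=j$ one must check the coincidence $a_i = b_i$? no: $a_i = 2r'+2\rho'+2i = 0$ and $b_j = 2r+2\rho+2j$; with $i=j$ one has $b_i = 2r+2\rho+2i = 0$ too, so both coefficients vanish at the same index). The key observation is that now the recurrence \emph{decouples} at the vanishing indices: $b_i = 0$ means equation \eqref{eq:RealRelDiagonal} at $\alpha=i$ reads $a_i t_{i,i} = 0$, i.e. imposes no constraint linking $t_{i,i}$ and $t_{i+1,i+1}$ beyond $a_i t_{i,i}=0$; similarly $a_j = 0$ decouples $t_{j,j}$ from $t_{j+1,j+1}$. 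So I would build the basis explicitly: the sequence $(t'_{\alpha,\alpha})$ is supported on $\alpha\leq j$ — take $t'_{j,j}$ arbitrary nonzero and read downward (all $b_\alpha\neq 0$ for $\alpha<j\leq i$, and $a_\alpha\neq 0$ for $\alpha<j$) to get a nowhere-vanishing sequence on $\{0,\dots,j\}$, and set $t'_{\alpha,\alpha}=0$ for $\alpha>j$, which is consistent because the link at $\alpha=j$ is $a_j t'_{j,j} = b_j t'_{j+1,j+1}$, i.e. $0 = b_j\cdot 0$. The sequence $(t''_{\alpha,\alpha})$ is supported on $\alpha>i$ — set $t''_{\alpha,\alpha}=0$ for $\alpha\leq i$, take $t''_{i+1,i+1}$ nonzero, read upward (all $a_\alpha\neq 0$ for $\alpha>i\geq j$, all $b_\alpha\neq 0$ for $\alpha>i$) to get nowhere-vanishing on $\{i+1,i+2,\dots\}$; the link at $\alpha=i$ is $a_i\cdot 0 = b_i t''_{i+1,i+1}$, i.e. $0 = 0$. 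These two are linearly independent (disjoint supports when $i<j$, and even when $i=j$ one is supported on $\alpha\leq i$ and the other on $\alpha\geq i+1$). Finally I would argue the dimension is exactly $2$: the map sending a solution to the pair $(t_{i,i}, t_{i+1,i+1})$ — or more robustly, observing that the only indices at which the recurrence fails to be a bijective link are $\alpha=i$ (where it decouples) and $\alpha=j$ (where it decouples, if $j<i$), so the solution space splits as a direct sum of the solution spaces on the three intervals $\{0,\dots,j\}$, $\{j+1,\dots,i\}$ (which by part (1) case (c)/(d) reasoning is forced to be zero here since on this middle block both endpoints are ``absorbing'' in the wrong direction — actually one must check this block contributes nothing: on $\{j+1,\dots,i\}$ the recurrence links are bijective in the interior, but the boundary condition at $\alpha=j$ forces $t_{j+1,j+1}$ via $0 = b_j t_{j+1,j+1}$ hence $t_{j+1,j+1}=0$, propagating to zero on the whole middle block), and $\{i+1,\dots\}$ — each contributing one dimension for the two outer blocks and zero for the middle.

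The main obstacle, and the one point requiring genuine care rather than routine recurrence-chasing, is the bookkeeping at the coincident index when $i = j$ (so $(r,r')$ lies on the diagonal corner of $\Leven$), where $a_i = b_i = 0$ simultaneously: one must verify that the two coefficients do vanish at the \emph{same} $\alpha$ (they do: $a_\alpha = b_\alpha$ iff $2r'+2\rho' = 2r+2\rho-1$, which need not hold, so actually $a$ and $b$ vanish at $\alpha=j$ and $\alpha=i$ respectively and these coincide iff $i=j$), that equation \eqref{eq:RealRelDiagonal} at $\alpha=i=j$ becomes the vacuous $0=0$, and hence that $t_{i,i}$ and $t_{i+1,i+1}$ are \emph{both} free and independent — giving exactly the claimed two-dimensional space with the basis $(t')$ supported on $\alpha\le i=j$ and $(t'')$ on $\alpha\ge i+1$. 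I would also double-check the edge behavior at $\alpha=0$ in cases (b)-(d) to confirm no additional constraint appears there (none does, since the recurrence only relates consecutive indices and $\alpha=0$ has no predecessor). Once these boundary subtleties are pinned down, the remaining verifications are immediate from the product formula $t_{\alpha,\alpha} = \big(\prod_{\beta<\alpha} a_\beta/b_\beta\big)t_{0,0}$ valid on any interval avoiding the zeros of $b$.
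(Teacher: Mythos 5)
Your proof is correct, and it follows exactly the route the paper intends: the paper states that the first-order recurrence \eqref{eq:RealRelDiagonal} ``immediately yields'' the lemma and gives no further argument, so your case-by-case tracking of where the coefficients $2(r'+\rho'+\alpha)$ and $2(r+\rho+\alpha)$ vanish is precisely the omitted routine verification. The one garbled aside (the condition for $a_\alpha=b_\alpha$) is irrelevant to the argument, since all that matters is that the two coefficients vanish at $\alpha=j$ and $\alpha=i$ respectively, which you handle correctly.
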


Next we investigate how a diagonal sequence $(t_{\alpha,\alpha})_\alpha$ satisfying \eqref{eq:RealRelDiagonal} can be extended to a sequence $(t_{\alpha,\alpha'})_{(\alpha,\alpha')}$ satisfying \eqref{eq:RealRel1} and \eqref{eq:RealRel2}.

\begin{lemma}\label{lem:RealDiagonalExtensionGeneric}
Let $(r,r')\in\CC^2\setminus(\Leven\cup\Lodd)$. Then every diagonal sequence $(t_{\alpha,\alpha})_\alpha$ satisfying \eqref{eq:RealRelDiagonal} has a unique extension to a sequence $(t_{\alpha,\alpha'})_{(\alpha,\alpha')}$ satisfying \eqref{eq:RealRel1} and \eqref{eq:RealRel2}.
\end{lemma}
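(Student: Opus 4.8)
The plan is to think of the index set $\{(\alpha,\alpha'):\alpha-\alpha'\in2\NN\}$ as a family of ``columns'' indexed by the diagonal offset $k=\alpha-\alpha'\in2\NN$, with the $k=0$ column being the diagonal $\alpha=\alpha'$ which is already pinned down (up to the one-dimensional freedom of Lemma~\ref{lem:RealDiagonalSequences}~(1)). The idea is to propagate outward: given the values of $t$ on columns $0,2,\ldots,k$, produce the values on column $k+2$ using the two relations \eqref{eq:RealRel1} and \eqref{eq:RealRel2}, and show that this is forced and consistent. Concretely, one moves away from the diagonal: at the point $(\alpha,\alpha')$ with $\alpha>\alpha'$, relation \eqref{eq:RealRel2} expresses $t_{\alpha+1,\alpha'-1}$ in terms of $t_{\alpha,\alpha'}$ and $t_{\alpha-1,\alpha'-1}$ — but since we want to increase the offset rather than decrease $\alpha'$, it is cleaner to reindex: relation \eqref{eq:RealRel1} applied at $(\alpha-1,\alpha'-1)$ reads, after shifting, as an expression for $t_{\alpha,\alpha'}$ in terms of $t_{\alpha-1,\alpha'-1}$ (offset $k$) and $t_{\alpha-2,\alpha'}$ (offset $k+2$, previously known), while \eqref{eq:RealRel2} at $(\alpha-1,\alpha'+1)$ gives a second such expression. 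The coefficient of the ``new'' unknown $t_{\alpha,\alpha'}$ on the diagonal-facing side is $(\alpha+\alpha'+n-2)(2r+2\alpha+n-1)$ in the first case and $(\alpha-\alpha'+1)(2r+2\alpha+n-1)$ in the second; away from $\Leven\cup\Lodd$ and after the base column is fixed, I will check these never vanish where needed, so $t_{\alpha,\alpha'}$ is determined, proving existence-and-uniqueness of the extension column by column by induction on $k$.

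The first step, then, is to set up this induction precisely and verify the nonvanishing of the relevant coefficients. For $r\notin-\rho-\NN$ the factor $2r+2\alpha+n-1$ is never zero and everything propagates freely; for $r=-\rho-i$ with $r'\notin-\rho'-\NN$ (so $(r,r')\notin\Leven\cup\Lodd$ forces $r'\notin-\rho'-\NN$, or $i,j$ not both present) the factor vanishes exactly at $\alpha=i$, and here one must use that the diagonal sequence already vanishes for $\alpha\le i$ (Lemma~\ref{lem:RealDiagonalSequences}~(1b,1d)) so that the degenerate relation is automatically satisfied and does not obstruct propagation — this is the delicate bookkeeping. Symmetrically one handles the horizontal barrier at $\alpha'=j$ when $r'=-\rho'-j$: there \eqref{eq:RealRel1} and \eqref{eq:RealRel2} with the vanishing coefficient $2r'+2\alpha'+n-2$ or $2r'-2\alpha'-n+4$ degenerate, and one checks the already-known values make them hold. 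The key structural point making all of this work is that \eqref{eq:RealRel1} and \eqref{eq:RealRel2} are not independent at each point: together they relate the four vertices of two triangles sharing the edge $(\alpha,\alpha')$, and there is a built-in compatibility (traceable to the fact, used in Lemma~\ref{lem:CalculationOfLambdas}, that for $r+\rho=r'+\rho'$ the constant sequence solves everything) which guarantees that the two a priori different formulas for the same $t_{\alpha,\alpha'}$ agree.

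The second step is the consistency check: having defined $t_{\alpha,\alpha'}$ by \eqref{eq:RealRel1} for $\alpha'\le\alpha$, one must verify that \eqref{eq:RealRel2} also holds at every point, and vice versa. Here I would argue that the space of all sequences satisfying both relations is, by the column-by-column analysis, at most one-dimensional over the choice of diagonal generator; and since \eqref{eq:RealRel1}, \eqref{eq:RealRel2} are the restriction to $K'$-isotypic components of a genuine linear system (Theorem~\ref{thm:EigenvalueCondition}/Corollary~\ref{cor:CharacterizationIntertwinersScalar}), any locally consistent propagation that starts from a true diagonal solution extends to a global solution — alternatively, one exhibits an explicit solution (e.g. the rational family $t_{\alpha,\alpha'}(r,r')$ promised in the introduction, or simply the restriction operator when $r+\rho=r'+\rho'$, then analytically continue the resulting rational formulas in $r,r'$). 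Concretely, the cleanest route is: solve \eqref{eq:RealRel1} and \eqref{eq:RealRelDiagonal} as recursions to get a candidate $(t_{\alpha,\alpha'})$ depending rationally on $r,r'$, observe it manifestly satisfies \eqref{eq:RealRel1}, then prove it satisfies \eqref{eq:RealRel2} by checking the identity of rational functions on the line $r+\rho=r'+\rho'$ (where $t\equiv1$ works by the restriction operator) and invoking that two rational functions agreeing on a curve in $\CC^2$... — more carefully, fix $\alpha,\alpha'$; the discrepancy in \eqref{eq:RealRel2} is a rational function of $(r,r')$ that vanishes whenever the data comes from an honest intertwiner, in particular on $r+\rho=r'+\rho'$, and a short degree/induction argument forces it to vanish identically.

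The main obstacle I anticipate is not any single computation but the bookkeeping at the barriers: when $r=-\rho-i$ or $r'=-\rho'-j$ (but not both, since we are off $\Leven\cup\Lodd$), the coefficients in \eqref{eq:RealRel1}--\eqref{eq:RealRel2} can vanish, and one must carefully match each degenerate instance of a relation against the already-established vanishing pattern of the diagonal sequence (Lemma~\ref{lem:RealDiagonalSequences}) to confirm that uniqueness is preserved and no spurious constraint appears. The rest — the nonvanishing of $2r+2\alpha+n-1$ away from the barrier, the recursion being well-posed, the consistency of the two relations — is routine once the triangle picture of Diagram~\ref{fig:RealHooks} is taken seriously.
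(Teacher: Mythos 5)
Your skeleton (propagate outward from the diagonal using one relation, treat the other as a consistency condition, and deal separately with the parameter values where coefficients vanish) is the same as the paper's, but the two places where the actual work lies are not done, and the shortcuts you offer for them do not hold up. First, the existence half hinges on the consistency check: that the values produced by one recursion also satisfy the other relation. Your appeal to the relations being ``the restriction of a genuine linear system'' is circular — whether a nonzero $(\frakg',K')$-intertwiner exists for the given $(r,r')$ is exactly what this lemma is feeding into, so nothing guarantees a priori that the overdetermined system is solvable for every diagonal seed. Your rational-function fallback also fails as stated: for fixed $(\alpha,\alpha')$ the discrepancy in \eqref{eq:RealRel2} is a rational function of $(r,r')$ known to vanish only on the single line $r+\rho=r'+\rho'$ (the restriction operator), and a rational function of two variables vanishing on one complex line need not vanish identically (e.g. $(r+\rho)-(r'+\rho')$ itself); the ``short degree/induction argument'' is asserted, not given. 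The paper closes this gap concretely: it defines the new column via \eqref{eq:RealRel2} and verifies the remaining instance of \eqref{eq:RealRel1} (the red triangle in Diagram~\ref{fig:RealRelConsistency}) by an elementary algebraic identity among the five relations already known to hold; the alternative you mention — exhibiting the explicit solution of Proposition~\ref{prop:RealExplicitEigenvalues} and checking \eqref{eq:RealRel2} directly — is legitimate but is the full computation, not a shortcut, and you neither perform it nor address the parameter values where that rational family must be renormalized.

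Second, the barrier cases are not actually handled. For $r=-\rho-i$ both of your defining expressions for a new value carry the factor $2r+2\alpha+n-1$ evaluated where it vanishes precisely when the new point has first index $\alpha=i+1$; so your column-by-column sweep neither defines nor forces the values $t_{i+1,\alpha'}$ with $\alpha'\le i$, and uniqueness (as well as existence of a full extension) is left open there. Your observation that the degenerate relations are ``automatically satisfied'' because the diagonal vanishes for $\alpha\le i$ only disposes of the skipped constraints (indeed the whole region $\alpha\le i$ is zero); it does not determine the values beyond the barrier. The paper resolves this by switching the direction of propagation: it first fills the region $\alpha'>i$ (where the offending coefficient is never zero) and then descends row by row in $\alpha'$ using \eqref{eq:RealRel1}, whose relevant coefficient $2r'+2\alpha'+n-2$ is nonzero because $r'\notin-\rho'-\NN$, or, when $r'=-\rho'-j$ with $j>i$, because $2r'+2\alpha'+n-2=2(\alpha'-j)<0$ in the range needed — and again checks the other relation by a local identity. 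Some such directional switch (or an explicit argument defining and forcing the stuck values, e.g. via \eqref{eq:RealRel1} applied at the stuck point once the rest of its column is known, together with the corresponding extra consistency checks) is indispensable and missing from your proposal.
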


\begin{proof}
\textbf{Step 1.} We first treat the case $r\notin-\rho-\NN$. In this case the coefficients $(2r+2\alpha+n-1)$ in \eqref{eq:RealRel1} and \eqref{eq:RealRel2} never vanish. We now extend the diagonal sequence $(t_{\alpha,\alpha})_\alpha$ inductively to a sequence $(t_{\alpha,\alpha'})_{\alpha-\alpha'\leq2k}$ with $k\in\NN$ which satisfies \eqref{eq:RealRel1} for $(\alpha,\alpha')$ with $\alpha-\alpha'\leq2k$ and \eqref{eq:RealRel2} for $(\alpha,\alpha')$ with $\alpha-\alpha'\leq2k-2$ (i.e. the two relations hold whenever the corresponding triangles in Diagram~\ref{fig:RealHooks} are contained in the region $\alpha-\alpha'\leq2k$).
\begin{diagram}
\setlength{\unitlength}{4pt}
\begin{picture}(41,30)
\thicklines
\put(0,0){\vector(1,0){40}}
\put(0,0){\vector(0,1){30}}
\put(37,20){\vector(1,-1){5}}
\dashline{1}(13,-2)(40,25)
{\color{blue}
\put(20,10){\line(1,1){5}}
\put(30,10){\line(-1,1){5}}
\put(20,10){\line(1,0){10}}}
\multiput(0,0)(10,0){2}{\circle*{1}}
\multiput(5,5)(10,0){2}{\circle*{1}}
\multiput(10,10)(10,0){2}{\circle*{1}}
\multiput(15,15)(10,0){2}{\circle*{1}}
\multiput(20,20)(10,0){2}{\circle*{1}}
\multiput(25,25)(10,0){2}{\circle*{1}}
\multiput(20,0)(10,0){2}{\circle{.8}}
\multiput(25,5)(10,0){2}{\circle{.8}}
\multiput(30,10)(10,0){1}{\circle{.8}}
\multiput(35,15)(10,0){1}{\circle{.8}}
\put(28,28){$\alpha-\alpha'\leq2k$}
\put(38,1){$\alpha$}
\put(1,28){$\alpha'$}
\end{picture}
\vspace{10pt}
\subcaption{\vbox{\hsize30pc
\begin{legend}
\item[\raise3pt\hbox{\circle*{1}}] $K'$-types $\calE(\alpha;\alpha')$ with $\alpha-\alpha'\leq2k$ ($t_{\alpha,\alpha'}$ already defined)
\item[\raise3pt\hbox{\circle{.8}}] $K'$-types $\calE(\alpha;\alpha')$ with $\alpha-\alpha'>2k$ ($t_{\alpha,\alpha'}$ yet to define)
\endgraf
\end{legend}}}
\caption[]{}
\end{diagram}
For $k=0$ the diagonal sequence we start with satisfies these assumptions. For the induction step $k\to k+1$ let $\alpha-\alpha'=2k$ and define $t_{\alpha+1,\alpha'-1}$ and $t_{\alpha+2,\alpha'}$ using \eqref{eq:RealRel2} (the blue triangles in Diagram~\ref{fig:RealRelConsistency}) in terms of $t_{\alpha-1,\alpha'-1}$, $t_{\alpha,\alpha'}$ and $t_{\alpha+1,\alpha'+1}$. This is possible, because $2r+2\alpha+n-1\neq0$ for all $\alpha$ and hence the corresponding coefficients in \eqref{eq:RealRel2} are non-zero. Now we have to prove that \eqref{eq:RealRel1} holds for $(\alpha+1,\alpha'-1)$ (the red triangle). This can be done by an elementary calculation using the blue triangles that are by definition valid as well as the green triangles that are valid by the induction assumption.
\begin{diagram}
\setlength{\unitlength}{4pt}
\begin{picture}(34,19)
\thicklines
\put(1,9){\line(1,1){6}}
\put(1,7){\line(1,-1){6}}
\put(15,9){\line(-1,1){6}}
\put(15,7){\line(-1,-1){6}}
\put(17,9){\line(1,1){6}}
\put(17,7){\line(1,-1){6}}
\put(31,9){\line(-1,1){6}}
\put(31,7){\line(-1,-1){6}}
\dashline{1}(17,0)(33,16)
{\color{green}
\polygon*(4.5,11.5)(11.5,11.5)(8,15)
\polygon*(4.5,4.5)(11.5,4.5)(8,1)
\polygon*(16,9)(12.5,12.5)(19.5,12.5)
\color{blue}
\polygon*(20.5,11.5)(27.5,11.5)(24,15)
\polygon*(16,7)(12.5,3.5)(19.5,3.5)
\color{red}
\polygon*(20.5,4.5)(27.5,4.5)(24,1)
}
\put(0,8){\circle*{1}}
\put(8,0){\circle*{1}}
\put(8,16){\circle*{1}}
\put(16,8){\circle*{1}}
\put(24,0){\circle{.8}}
\put(24,16){\circle*{1}}
\put(32,8){\circle{.8}}
\put(-13,7.5){$(\alpha-2,\alpha')$}
\put(-4,18){$(\alpha-1,\alpha'+1)$}
\put(-4,-3){$(\alpha-1,\alpha'-1)$}
\put(17.5,7.5){$(\alpha,\alpha')$}
\put(20,18){$(\alpha+1,\alpha'+1)$}
\put(20,-3){$(\alpha+1,\alpha'-1)$}
\put(34,7.5){$(\alpha+2,\alpha')$}
\end{picture}
\vspace{.3cm}
\caption[]{}\label{fig:RealRelConsistency}
\vspace{-.3cm}
\end{diagram}
Hence this extends the diagonal sequence $(t_{\alpha,\alpha})_\alpha$ to a sequence $(t_{\alpha,\alpha'})_{0\leq\alpha'\leq\alpha}$ satisfying \eqref{eq:RealRel1} and \eqref{eq:RealRel2}. Since the relations were used to extend the diagonal sequence this extension is unique.\par

\textbf{Step 2.} Next assume $r=-\rho-i\in-\rho-\NN$ and $r'\notin-\rho'-\NN$. Then the coefficient $(2r+2\alpha+n-1)$ vanishes if and only if $\alpha=i$. We can therefore use the technique in Step 1 to extend the upper part $(t_{\alpha,\alpha})_{\alpha>i}$ of the diagonal sequence to a sequence $(t_{\alpha,\alpha'})_{i<\alpha'\leq\alpha}$ in the region $\alpha'>i$. Next we extend the sequence $(t_{\alpha,\alpha})_{\alpha'>i}$ inductively to a sequence $(t_{\alpha,\alpha'})_{\alpha'> i-k}$ with $k=0,\ldots,i+1$ which satisfies \eqref{eq:RealRel1} for $(\alpha,\alpha')$ with $\alpha'>i-k$ and \eqref{eq:RealRel2} for $(\alpha,\alpha')$ with $\alpha'>i-k+1$ (i.e. the two relations hold whenever the corresponding triangles in Diagram~\ref{fig:RealHooks} are contained in the region $\alpha'>i-k$).
\begin{diagram}
\setlength{\unitlength}{4pt}
\begin{picture}(41,30)
\thicklines
\put(0,0){\vector(1,0){40}}
\put(0,0){\vector(0,1){30}}
\put(37,11){\vector(0,-1){5}}
\dashline{1}(-2,12.5)(40,12.5)
{\color{blue}
\put(20,10){\line(1,1){5}}
\put(20,10){\line(-1,1){5}}
\put(15,15){\line(1,0){10}}}
\multiput(0,0)(5,5){3}{\circle*{1}}
\multiput(10,0)(10,0){3}{\circle{.8}}
\multiput(15,5)(10,0){2}{\circle{.8}}
\multiput(20,10)(10,0){2}{\circle{.8}}
\multiput(15,15)(10,0){2}{\circle*{1}}
\multiput(20,20)(10,0){2}{\circle*{1}}
\multiput(25,25)(10,0){1}{\circle*{1}}
\put(30,15){$\alpha'>i-k$}
\put(38,1){$\alpha$}
\put(1,28){$\alpha'$}
\end{picture}
\vspace{10pt}
\subcaption{\vbox{\hsize30pc
\begin{legend}
\item[\raise3pt\hbox{\circle*{1}}] $K'$-types $\calE(\alpha;\alpha')$ with $\alpha'>i-k$ ($t_{\alpha,\alpha'}$ already defined)
\item[\raise3pt\hbox{\circle{.8}}] $K'$-types $\calE(\alpha;\alpha')$ with $\alpha'\leq i-k$ ($t_{\alpha,\alpha'}$ yet to define)
\endgraf
\end{legend}}}
\caption[]{}
\end{diagram}
For $k=0$ the sequence we obtained using Step 1 satisfies these assumptions by Step 1. For the induction step $k\to k+1$ let $\alpha'=i-k+1$ and define $t_{\alpha-1,\alpha'-1}$ and $t_{\alpha+1,\alpha'-1}$ using \eqref{eq:RealRel1} (the blue triangles in Diagram~\ref{fig:RealRelConsistency2}) in terms of $t_{\alpha-2,\alpha'}$, $t_{\alpha,\alpha'}$ and $t_{\alpha+2,\alpha'}$. This is possible, because $r'\notin-\rho'-\NN$ and hence the corresponding coefficient $(2r'+2\alpha'+n-2)$ in \eqref{eq:RealRel1} never vanishes. Now we have to prove that \eqref{eq:RealRel2} holds for $(\alpha,\alpha')$ (the red triangle) which is done in a similar fashion as in Step 1 using the green triangles. This finishes Step 2.
\begin{diagram}
\setlength{\unitlength}{4pt}
\begin{picture}(34,19)
\thicklines
\put(1,9){\line(1,1){6}}
\put(1,7){\line(1,-1){6}}
\put(15,9){\line(-1,1){6}}
\put(15,7){\line(-1,-1){6}}
\put(17,9){\line(1,1){6}}
\put(17,7){\line(1,-1){6}}
\put(31,9){\line(-1,1){6}}
\put(31,7){\line(-1,-1){6}}
\dashline{1}(-1,5.5)(33,5.5)
{\color{green}
\polygon*(4.5,11.5)(11.5,11.5)(8,15)
\polygon*(16,9)(12.5,12.5)(19.5,12.5)
\polygon*(20.5,11.5)(27.5,11.5)(24,15)
\color{blue}
\polygon*(4.5,4.5)(11.5,4.5)(8,1)
\polygon*(20.5,4.5)(27.5,4.5)(24,1)
\color{red}
\polygon*(16,7)(12.5,3.5)(19.5,3.5)
}
\put(0,8){\circle*{1}}
\put(8,0){\circle{.8}}
\put(8,16){\circle*{1}}
\put(16,8){\circle*{1}}
\put(24,0){\circle{.8}}
\put(24,16){\circle*{1}}
\put(32,8){\circle*{1}}
\put(-13,7.5){$(\alpha-2,\alpha')$}
\put(-4,18){$(\alpha-1,\alpha'+1)$}
\put(-4,-3){$(\alpha-1,\alpha'-1)$}
\put(18.5,7.5){$(\alpha,\alpha')$}
\put(20,18){$(\alpha+1,\alpha'+1)$}
\put(20,-3){$(\alpha+1,\alpha'-1)$}
\put(34,7.5){$(\alpha+2,\alpha')$}
\end{picture}
\vspace{.3cm}
\caption[]{}\label{fig:RealRelConsistency2}
\vspace{-.3cm}
\end{diagram}

\textbf{Step 3.} Now let $r=-\rho-i\in-\rho-\NN$ and $r'=-\rho'-j\in-\rho'-\NN$ with $i,j\in\NN$, $j>i$. Note that to carry out Step 2 we only need that $(2r'+2\alpha'+n-2)\neq0$ for $\alpha'\leq i$. This is satisfied since
$$ 2r'+2\alpha'+n-2=2(\alpha'-j)<2(\alpha'-i)\leq0 $$
by assumption. Hence the technique in Step 2 carries over to this case.
\end{proof}

\begin{lemma}\label{lem:RealDiagonalExtensionSingular}
Let $(r,r')=(-\rho-i,-\rho'-j)$, $i,j\in\NN$.
\begin{enumerate}
\item For $(r,r')\in\Leven$ every diagonal sequence $(t_{\alpha,\alpha})_\alpha$ satisfying \eqref{eq:RealRelDiagonal} has a unique extension to a sequence $(t_{\alpha,\alpha'})_{(\alpha,\alpha')}$ satisfying \eqref{eq:RealRel1} and \eqref{eq:RealRel2}.
\item For $(r,r')\in\Lodd$ any sequence $(t_{\alpha,\alpha'})_{\alpha,\alpha'}$ satisfying \eqref{eq:RealRel1} and \eqref{eq:RealRel2} has the property $t_{\alpha,\alpha'}=0$ for $\alpha\leq i$ or $\alpha'>j$. Conversely, for any choice of $t_{i+1,j}\in\CC$ there exists a unique extension to a sequence $(t_{\alpha,\alpha'})_{(\alpha,\alpha')}$ satisfying \eqref{eq:RealRel1} and \eqref{eq:RealRel2}.
\end{enumerate}
\end{lemma}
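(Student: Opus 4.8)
The plan is to run the propagation argument from the proof of Lemma~\ref{lem:RealDiagonalExtensionGeneric}, now with the two degeneracies of \eqref{eq:RealRel1}--\eqref{eq:RealRel2} switched on simultaneously. Writing $r=-\rho-i$ and $r'=-\rho'-j$, the four coefficients occurring there become
$$ 2r+2\alpha+n-1=2(\alpha-i), \qquad 2r-2\alpha-n+3=-2(\alpha+i+n-2), $$
$$ 2r'+2\alpha'+n-2=2(\alpha'-j), \qquad 2r'-2\alpha'-n+4=-2(\alpha'+j+n-3), $$
and, since $n\geq3$, the second and fourth never vanish on the lattice $\{0\leq\alpha'\leq\alpha,\ \alpha-\alpha'\in2\NN\}$ (the fourth because $\alpha'\geq1$ whenever \eqref{eq:RealRel2} is non-vacuous). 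So the only obstructions to ``stepping'' are the \emph{vertical barrier} $\alpha=i$ and the \emph{horizontal barrier} $\alpha'=j$, and the first thing I would record is that these meet at $(i,j)$, a lattice point exactly when $i-j$ is even --- this parity is what separates case (1) from case (2).

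For part (1) I would use linearity and extend separately the two basis diagonal sequences $(t'_{\alpha,\alpha})_\alpha$ (supported in $\alpha\leq j$) and $(t''_{\alpha,\alpha})_\alpha$ (supported in $\alpha>i$) furnished by Lemma~\ref{lem:RealDiagonalSequences}(2). For $(t''_{\alpha,\alpha})_\alpha$ the claim is that its unique extension vanishes on the half-lattice $\{\alpha\leq i\}$. That this is \emph{consistent} holds because in a triangle of Diagram~\ref{fig:RealHooks} based at $(\alpha,\alpha')$ with $\alpha\leq i$ the only vertex with first coordinate exceeding $i$ carries the coefficient $2(\alpha-i)$, which vanishes at $\alpha=i$; that it is \emph{forced} holds because \eqref{eq:RealRel2} based at $(\alpha-1,\alpha'+1)$ solves for $t_{\alpha,\alpha'}$ with a coefficient non-zero whenever $\alpha\leq i$, so induction on $\alpha$ from the vanishing diagonal kills every $t_{\alpha,\alpha'}$ with $\alpha\leq i$. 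On $\{\alpha>i\}$ the vanishing of all $t_{i,\alpha'}$ is compatible with the relations based at $\alpha=i+1$ (there $t_{i,\alpha'\pm1}$ enters only with the non-vanishing coefficient $-2(\alpha+i+n-2)$, now multiplying $0$), so one is reduced to extending $\{t''_{\alpha,\alpha}:\alpha>i\}$ inside $\{\alpha>i\}$; this extends the method of Lemma~\ref{lem:RealDiagonalExtensionGeneric} by filling the part above the horizontal barrier as in Step~1 there and then \emph{crossing} $\alpha'=j$ --- a move not needed in the generic lemma --- by means of \eqref{eq:RealRel2}, whose coefficient $-2(\alpha'+j+n-3)$ of the target row-$j$ value is non-zero, and of \eqref{eq:RealRel1} for $\alpha'<j$, whose coefficient $2(\alpha'-j)$ is then non-zero. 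The extension of $(t'_{\alpha,\alpha})_\alpha$ is handled analogously: it vanishes on $\{\alpha'>j\}$, and on $\{\alpha'\leq j\}$ only the vertical barrier degenerates, so the extension proceeds as in the generic lemma. Summing the two extensions gives the unique extension of an arbitrary diagonal sequence.

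For part (2) I would exploit the parity mismatch to produce the large vanishing region. Given any solution $(t_{\alpha,\alpha'})$, relation \eqref{eq:RealRel1} based at $(\alpha,j)$ has vanishing left-hand side and so becomes the two-term relation
$$ (\alpha+j+n-2)(\alpha-i)\,t_{\alpha+1,j+1}=(\alpha-j)(\alpha+i+n-2)\,t_{\alpha-1,j+1} $$
along row $j+1$; at $\alpha=j$ it forces $t_{j+1,j+1}=0$ (as $i\neq j$), and since $i-j$ is odd the factor $\alpha-i$ is non-zero at every index $\alpha\in\{j,j+2,j+4,\ldots\}$ occurring here, so the vanishing propagates along all of row $j+1$; feeding this into \eqref{eq:RealRel2} successively at $\alpha'=j+2,j+3,\ldots$ (whose coefficient $-2(\alpha'+j+n-3)$ of the target value is non-zero) then forces $t_{\alpha,\alpha'}=0$ for all $\alpha'>j$. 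With row $j+1$ now zero, \eqref{eq:RealRel2} based at $(\alpha,j+1)$ becomes the two-term relation
$$ (\alpha-j)(\alpha-i)\,t_{\alpha+1,j}=(\alpha+j+n-1)(\alpha+i+n-2)\,t_{\alpha-1,j} $$
along row $j$; at $\alpha=i$ --- a legitimate index for row $j+1$, again because $i-j$ is odd --- the coefficient of $t_{i+1,j}$ vanishes, forcing $t_{i-1,j}=0$, hence (running the recursion downward) $t_{\alpha,j}=0$ for $j\leq\alpha\leq i-1$, while running it upward expresses $t_{\alpha,j}$ for $\alpha>i$ as an explicit multiple of $t_{i+1,j}$. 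Propagating downward row by row via \eqref{eq:RealRel1} (whose coefficient $2(\alpha'-j)$ of $t_{\alpha,\alpha'}$ is non-zero for $\alpha'<j$) then gives $t_{\alpha,\alpha'}=0$ for all $\alpha\leq i$ and pins down each $t_{\alpha,\alpha'}$ with $\alpha>i$, $\alpha'\leq j$ as a multiple of $t_{i+1,j}$. For the converse I would take the values so obtained --- zero on $\{\alpha\leq i\}\cup\{\alpha'>j\}$ and the prescribed multiples of $t_{i+1,j}$ on the remaining strip --- and verify \eqref{eq:RealRel1}--\eqref{eq:RealRel2} directly; away from the two barriers this is immediate, and along $\alpha=i,i+1$ and $\alpha'=j,j+1$ it holds by construction of the recursions.

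The step I expect to be the main obstacle is the bookkeeping around the corner $(i,j)$: at each stage of the extension one must pick a relation whose leading coefficient is non-zero and then check that the relations already imposed force the remaining one --- the ``three triangles determine the fourth'' consistency of Diagram~\ref{fig:RealRelConsistency}, now complicated by the simultaneous presence of both barriers. In case (1) this consistency survives precisely because $(i,j)$ is a lattice point; in case (2) the very same local computation at $(i,j)$ is what collapses rows $j+1$ and $j$ and singles out $t_{i+1,j}$ as the one remaining degree of freedom --- so the two cases are two faces of the same calculation.
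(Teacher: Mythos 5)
Your proposal is correct and runs on the same engine as the paper's proof: identify the vertical barrier $\alpha=i$ and the horizontal barrier $\alpha'=j$, propagate with whichever of \eqref{eq:RealRel1}, \eqref{eq:RealRel2} has a non-vanishing leading coefficient, and defer the ``three triangles determine the fourth'' consistency to the argument of Lemma~\ref{lem:RealDiagonalExtensionGeneric} --- exactly as the paper does. The differences are organizational but worth noting. In part (1) you extend the two basis diagonal sequences $t',t''$ of Lemma~\ref{lem:RealDiagonalSequences}~(2) separately and sum, whereas the paper extends an arbitrary diagonal sequence directly to $\{\alpha\leq i\text{ or }\alpha'>j\}$ and then crosses row $j$; your route additionally records that $T''$ vanishes on $\{\alpha\leq i\}$ and $T'$ on $\{\alpha'>j\}$, which the paper only extracts later from the explicit formulas. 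In part (2) your opening move is cleaner than the paper's: you observe that \eqref{eq:RealRel1} based at $(\alpha,j)$ has vanishing left-hand side, which kills row $j+1$ outright (the parity $i-j$ odd guaranteeing $\alpha\neq i$ along that row) and then all rows above via \eqref{eq:RealRel2}; the paper instead first invokes Lemma~\ref{lem:RealDiagonalSequences}~(2), fills the rectangle $j<\alpha'\leq\alpha\leq i$, and runs an induction $t_{i-2k-1,j}=0$ along row $j$. Two small slips to fix: the factor you call the ``coefficient of the target row-$j$ value'' in \eqref{eq:RealRel2}, namely $-2(\alpha'+j+n-3)$, is the left-hand-side factor $(2r'-2\alpha'-n+4)$, not the coefficient of $t_{\alpha\pm1,\alpha'-1}$; and in the two-term relation along row $j$ the factor should be $(\alpha+j+n-2)$, not $(\alpha+j+n-1)$, since $\alpha+\alpha'+n-3$ is evaluated at $\alpha'=j+1$. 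Also, in part (1) the phrase ``only the vertical barrier degenerates on $\{\alpha'\leq j\}$'' is slightly misleading: defining row $j$ from row $j+1$ still requires crossing the horizontal barrier via \eqref{eq:RealRel2}, exactly as in your $t''$ case --- but since you perform that crossing explicitly there, the analogy carries through.
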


\begin{proof}
\begin{enumerate}
\item First Step 1 and Step 2 in the proof of Lemma~\ref{lem:RealDiagonalExtensionGeneric} extend a diagonal sequence $(t_{\alpha,\alpha})_\alpha$ uniquely to the range $\{(\alpha,\alpha'):\alpha\leq i\mbox{ or }\alpha'>j\}$. This extension satisfies $t_{\alpha,\alpha'}=0$ whenever $j<\alpha'\leq\alpha\leq i$. Next one can use \eqref{eq:RealRel2} for $(\alpha,\alpha')=(i+1,j+1)$ to define $t_{i+2,j}$ in terms of $t_{i,j}$ and $t_{i+1,j+1}$ (the blue triangle in Diagram~\ref{KtypesPropagationLeven}). Inductively, using \eqref{eq:RealRel2} for $(\alpha,\alpha')=(i+2k+1,j+1)$, $k=0,1,2,\ldots$, the values of $t_{i+2k+2,j}$ are determined for all $k$. In the next step the technique from Step 2 in the proof of Lemma~\ref{lem:RealDiagonalExtensionGeneric} is used to inductively define $t_{\alpha,\alpha'}$ for $\alpha>i$ and $\alpha'=j-k$, $k=0,\ldots,j$ (the red triangle).
\begin{diagram}
\setlength{\unitlength}{4pt}
\begin{picture}(44,40)
\thicklines
\put(0,0){\vector(1,0){40}}
\put(0,0){\vector(0,1){40}}
\put(22.5,-2){\line(0,1){42.5}}
\put(-2,12.5){\line(1,0){42.5}}
{\color{blue}
\put(20,10){\line(1,1){5}}
\put(20,10){\line(1,0){10}}
\put(30,10){\line(-1,1){5}}
\put(32,10){\vector(1,0){6}}}
\multiput(0,0)(10,0){3}{\circle*{1}}
\put(30,0){\circle{.8}}
\multiput(5,5)(10,0){2}{\circle*{1}}
\put(25,5){\circle{.8}}
\put(35,5){\circle{.8}}
\multiput(10,10)(10,0){2}{\circle*{1}}
\put(30,10){\circle{.8}}
\multiput(15,15)(10,0){3}{\circle*{1}}
\multiput(20,20)(10,0){2}{\circle*{1}}
\multiput(25,25)(10,0){2}{\circle*{1}}
\multiput(30,30)(10,0){1}{\circle*{1}}
\multiput(35,35)(10,0){1}{\circle*{1}}
\put(38,1){$\alpha$}
\put(1,38){$\alpha'$}
\put(19.5,-3.5){$i$}
\put(23.5,-3.5){$i+1$}
\put(-3,9.5){$j$}
\put(-6.5,14.5){$j+1$}
\put(-1.5,1){$*$}
\put(3.5,6){$*$}
\put(8.5,11){$*$}
\put(8.5,1){$*$}
\put(13.5,6){$*$}
\put(18.5,11){$*$}
\put(18.5,1){$*$}
\put(13.5,16){$0$}
\put(18.5,21){$0$}
\put(23.5,26){$*$}
\put(28.5,31){$*$}
\put(33.5,36){$*$}
\put(23.5,16){$*$}
\put(28.5,21){$*$}
\put(33.5,26){$*$}
\put(33.5,16){$*$}
\end{picture}
\begin{picture}(40,40)
\thicklines
\put(0,0){\vector(1,0){40}}
\put(0,0){\vector(0,1){40}}
\put(22.5,-2){\line(0,1){42.5}}
\put(-2,12.5){\line(1,0){42.5}}
{\color{red}
\put(30,10){\line(-1,0){10}}
\put(20,10){\line(1,-1){5}}
\put(30,10){\line(-1,-1){5}}
\put(30,8){\vector(0,-1){6}}}
\multiput(0,0)(10,0){3}{\circle*{1}}
\put(30,0){\circle{.8}}
\multiput(5,5)(10,0){2}{\circle*{1}}
\put(25,5){\circle{.8}}
\put(35,5){\circle{.8}}
\multiput(10,10)(10,0){3}{\circle*{1}}
\multiput(15,15)(10,0){3}{\circle*{1}}
\multiput(20,20)(10,0){2}{\circle*{1}}
\multiput(25,25)(10,0){2}{\circle*{1}}
\multiput(30,30)(10,0){1}{\circle*{1}}
\multiput(35,35)(10,0){1}{\circle*{1}}
\put(38,1){$\alpha$}
\put(1,38){$\alpha'$}
\put(19.5,-3.5){$i$}
\put(23.5,-3.5){$i+1$}
\put(-1.5,1){$*$}
\put(3.5,6){$*$}
\put(8.5,11){$*$}
\put(8.5,1){$*$}
\put(13.5,6){$*$}
\put(18.5,11){$*$}
\put(18.5,1){$*$}
\put(13.5,16){$0$}
\put(18.5,21){$0$}
\put(23.5,26){$*$}
\put(28.5,31){$*$}
\put(28.5,11){$*$}
\put(33.5,36){$*$}
\put(23.5,16){$*$}
\put(28.5,21){$*$}
\put(33.5,26){$*$}
\put(33.5,16){$*$}
\end{picture}
\vspace{20pt}
\subcaption{\vbox{\hsize25pc
\begin{legend}
\item[\raise3pt\hbox{\circle*{1}}] $K'$-types $\calE(\alpha;\alpha')$ with $t_{\alpha,\alpha'}$ already defined
\item[\raise3pt\hbox{\circle{.8}}] $K'$-types $\calE(\alpha;\alpha')$ with $t_{\alpha,\alpha'}$ yet to define
\endgraf
\end{legend}}}
\caption[]{}\label{KtypesPropagationLeven}
\end{diagram}
That all relations \eqref{eq:RealRel1} and \eqref{eq:RealRel2} are satisfied within the four quadrants in Diagram~\ref{KtypesPropagationLeven} is clear from the arguments in Step 1 and Step 2 in the proof of Lemma~\ref{lem:RealDiagonalExtensionGeneric}. That these relations are also satisfied at the edges between the quadrants holds either by definition or since all terms vanish.

\item Let $(t_{\alpha,\alpha'})_{\alpha,\alpha'}$ be a sequence satisfying \eqref{eq:RealRel1} and \eqref{eq:RealRel2}. Note that Lemma~\ref{lem:RealDiagonalSequences}~(2) already implies $t_{\alpha,\alpha}=0$ for $j<\alpha\leq i$. Then by Step 1 in the proof of Lemma~\ref{lem:RealDiagonalExtensionGeneric} we have $t_{\alpha,\alpha'}=0$ whenever $j<\alpha'\leq\alpha\leq i$ (the black zeroes in Diagram~\ref{KtypesPropagationLodd}). We first show inductively that $t_{i-2k-1,j}=0$ for $k=0,\ldots,\frac{i-j-1}{2}$ (the red zeroes). To show the statement for $k=0$ consider the relation \eqref{eq:RealRel2} for $(\alpha,\alpha')=(i,j+1)$. By the previous considerations $t_{\alpha,\alpha'}=0$ and further the coefficient $(2r+2\alpha+n-1)$ of $t_{\alpha+1,\alpha'-1}$ vanishes. Hence $t_{\alpha-1,\alpha'-1}=t_{i-1,j}=0$. For the induction step assume $t_{i-2k-1,j}=0$ and consider the relation \eqref{eq:RealRel2} for $(\alpha,\alpha')=(i-2k-2,j+1)$. Then $t_{\alpha,\alpha'}=t_{\alpha+1,\alpha'-1}=0$ and therefore $t_{\alpha-1,\alpha'-1}=t_{i-2(k+1)-1,j}=0$. Thus we have showed $t_{j,j}=0$. But in view of \eqref{eq:RealRelDiagonal} this yields $t_{\alpha,\alpha}=0$ for $\alpha\leq j$.
\begin{diagram}
\setlength{\unitlength}{4pt}
\begin{picture}(44,40)
\thicklines
\put(0,0){\vector(1,0){40}}
\put(0,0){\vector(0,1){40}}
\put(27.5,-2){\line(0,1){42.5}}
\put(-2,12.5){\line(1,0){42.5}}
{\color{red}
\put(20,10){\line(1,1){5}}
\put(20,10){\line(1,0){10}}
\put(29.9,9.9){\line(-1,1){5}}
\put(18,10){\vector(-1,0){6}}
\put(18.5,7.5){$0$}
\put(8.5,7.5){$0$}
\put(3.5,2.5){$0$}
\put(-1.5,-2.5){$0$}
\color{blue}
\put(30.1,10.1){\line(-1,1){5}}
\put(30,10){\line(1,1){5}}
\put(25,15){\line(1,0){10}}
\put(35,17){\vector(0,1){6}}
\put(35.5,16){$0$}
\put(30.5,21){$0$}
\put(35.5,26){$0$}
\put(30.5,31){$0$}
\put(35.5,36){$0$}}
\multiput(0,0)(10,0){4}{\circle*{1}}
\multiput(5,5)(10,0){4}{\circle*{1}}
\multiput(10,10)(10,0){3}{\circle*{1}}
\multiput(15,15)(10,0){3}{\circle*{1}}
\multiput(20,20)(10,0){2}{\circle*{1}}
\multiput(25,25)(10,0){2}{\circle*{1}}
\multiput(30,30)(10,0){1}{\circle*{1}}
\multiput(35,35)(10,0){1}{\circle*{1}}
\put(38,1){$\alpha$}
\put(1,38){$\alpha'$}
\put(24.5,-3.5){$i$}
\put(28.5,-3.5){$i+1$}
\put(-3,9.5){$j$}
\put(-6.5,14.5){$j+1$}
\put(23.5,16){$0$}
\put(18.5,21){$0$}
\put(23.5,26){$0$}
\put(13.5,16){$0$}
\end{picture}
\begin{picture}(40,40)
\thicklines
\put(0,0){\vector(1,0){40}}
\put(0,0){\vector(0,1){40}}
\put(27.5,-2){\line(0,1){42.5}}
\put(-2,12.5){\line(1,0){42.5}}
\multiput(0,0)(10,0){3}{\circle*{1}}
\put(30,0){\circle{.8}}
\multiput(5,5)(10,0){3}{\circle*{1}}
\put(35,5){\circle{.8}}
\multiput(10,10)(10,0){3}{\circle*{1}}
\multiput(15,15)(10,0){3}{\circle*{1}}
\multiput(20,20)(10,0){2}{\circle*{1}}
\multiput(25,25)(10,0){2}{\circle*{1}}
\multiput(30,30)(10,0){1}{\circle*{1}}
\multiput(35,35)(10,0){1}{\circle*{1}}
\put(38,1){$\alpha$}
\put(1,38){$\alpha'$}
\put(24.5,-3.5){$i$}
\put(28.5,-3.5){$i+1$}
\put(23.5,16){$0$}
\put(-1.5,1){$0$}
\put(3.5,6){$0$}
\put(8.5,11){$0$}
\put(8.5,1){$0$}
\put(13.5,6){$0$}
\put(18.5,11){$0$}
\put(18.5,1){$0$}
\put(23.5,6){$0$}
\put(13.5,16){$0$}
\put(18.5,21){$0$}
\put(23.5,26){$0$}
\put(28.5,31){$0$}
\put(33.5,36){$0$}
\put(28.5,21){$0$}
\put(33.5,26){$0$}
\put(33.5,16){$0$}
\put(28.5,11){$*$}
\put(32,10){\vector(1,0){6}}
\end{picture}
\vspace{20pt}
\subcaption{\vbox{\hsize20pc
}}
\caption[]{}\label{KtypesPropagationLodd}
\end{diagram}
In a similar way one uses \eqref{eq:RealRel1} and \eqref{eq:RealRel2} for $(\alpha,\alpha')=(i+1,j+2k)$, $k=0,\ldots,\frac{i-j+1}{2}$ to show that $t_{i+1,i+1}=0$ and hence $t_{\alpha,\alpha}=0$ for all $\alpha>i$. From the vanishing of the diagonal the techniques in Step 1 and Step 2 in the proof of Lemma~\ref{lem:RealDiagonalExtensionGeneric} yield $t_{\alpha,\alpha'}=0$ whenever $\alpha\leq i$ or $\alpha'>j$.\par
Now let $t_{i+1,j}\in\CC$ be given and put $t_{\alpha,\alpha'}=0$ whenever $\alpha\leq i$ or $\alpha'>j$. Then \eqref{eq:RealRel1} and \eqref{eq:RealRel2} are trivially satisfied whenever all three terms are defined. Further, using Step 1 and Step 2 it is again easy to see that this sequence has a unique extension $(t_{\alpha,\alpha'})_{\alpha,\alpha'}$ with the required properties.\qedhere
\end{enumerate}
\end{proof}

\begin{proof}[{Proof of Theorem~\ref{thm:RealMultiplicities}}]
\begin{enumerate}
\item Let first $(r,r')\in\CC^2\setminus(\Leven\cup\Lodd)$. Then by Lemma~\ref{lem:RealDiagonalSequences} the space of diagonal sequences satisfying \eqref{eq:RealRelDiagonal} is one-dimensional and each such sequence gives by Lemma~\ref{lem:RealDiagonalExtensionGeneric} rise to a unique extension $(t_{\alpha,\alpha'})_{(\alpha,\alpha')}$ satisfying \eqref{eq:RealRel1} and \eqref{eq:RealRel2}. Hence, by Theorem~\ref{thm:RealCharacterizationIntertwiners} the multiplicity is $1$. Similarly we obtain multiplicity $2$ for $(r,r')\in\Leven$ using Lemma~\ref{lem:RealDiagonalExtensionSingular}~(1). For $(r,r')\in\Lodd$ the multiplicity statement is contained in Lemma~\ref{lem:RealDiagonalExtensionSingular}~(2).
\item We first consider the case $\calV=\calF(i)$ and $\calW=\calF(j)$. Then any intertwining operator in $\Hom_{(\frakg',K')}(\calV|_{(\frakg',K')},\calW)$ corresponds to an intertwining operator $T:(\pi_r)_\HC\to(\tau_{r'})_\HC$ for $r=\rho+i$ and $r'=-\rho'-j$ such that $T|_{\calE(\alpha)}=0$ for all $\alpha>i$ and $T(\calE)\subseteq\calF'(j)$. This implies that $T$ is given by a sequence $(t_{\alpha,\alpha'})_{\alpha,\alpha'}$ with $t_{\alpha,\alpha'}=0$ if either $\alpha>i$ or $\alpha'>j$. By part (1) the space of intertwining operators $T:(\pi_r)_\HC\to(\tau_{r'})_\HC$ is one-dimensional, and using Lemma~\ref{lem:RealDiagonalSequences}~(1)~(c) and Step~1 in the proof of Lemma~\ref{lem:RealDiagonalExtensionGeneric} it is easy to see that this operator satisfies the conditions on $t_{\alpha,\alpha'}$ if and only if $i-j\in2\NN$. Hence $m(\calF(i),\calF'(j))=1$ for $i-j\in2\NN$ and $=0$ else. Similar considerations for $r=-\rho-i$ and $r'=\rho'+j$ show that $m(\calT(i),\calT'(j))=1$ for $i-j\in2\NN$ and $=0$ else.\par
Now let $\calV=\calT(i)$ and $\calW=\calF'(j)$. Then $m(\calV,\calW)\neq0$ if and only if there exists a non-trivial sequence $(t_{\alpha,\alpha'})_{\alpha,\alpha'}$ satisfying \eqref{eq:RealRel1} and \eqref{eq:RealRel2} for $r=-\rho-i$ and $r'=-\rho'-j$ such that $t_{\alpha,\alpha'}=0$ whenever $\alpha\leq i$ or $\alpha'>j$. First assume $j>i$, then by part (1) there exists a unique sequence $(t_{\alpha,\alpha'})_{\alpha,\alpha'}$, and by Lemma~\ref{lem:RealDiagonalSequences}~(1)~(d) and Step~3 in the proof of Lemma~\ref{lem:RealDiagonalExtensionGeneric} it is easy to see that for this sequence $t_{\alpha,\alpha'}=0$ if either $\alpha\leq i$ or $\alpha'>j$. Hence $m(\calT(i),\calF'(j))=1$ in this case. Next assume $j\leq i$, then by Lemma~\ref{lem:RealDiagonalSequences}~(2) and Lemma~\ref{lem:RealDiagonalExtensionSingular} there can only exist a sequence $(t_{\alpha,\alpha'})_{\alpha,\alpha'}$ with the above properties if $i-j\in2\NN+1$. This shows the claimed formulas for $m(\calT(i),\calF'(j))$. That $m(\calF(i),\calT'(j))=0$ for any $i,j$ follows easily by similar considerations.\qedhere
\end{enumerate}
\end{proof}

\subsection{Explicit formula for the spectral function}\label{sec:RealSpectralFunction}

From the relations \eqref{eq:RealRel1} and \eqref{eq:RealRel2} one can deduce an explicit \textit{spectral function} $(t_{\alpha,\alpha'}(r,r'))_{0\leq\alpha'\leq\alpha}$, i.e. a set of solutions to the relations for all $r,r'\in\CC$ depending meromorphically on $r$ and $r'$:

\begin{proposition}\label{prop:RealExplicitEigenvalues}
For $(\alpha,\alpha')\in\NN$ with $\alpha-\alpha'\in2\ZZ$ the numbers
\begin{multline}
 t_{\alpha,\alpha'}(r,r') = \sum_{k=0}^\infty\frac{2^{4k}\Gamma(\frac{\alpha+\alpha'+n-2}{2}+k)\Gamma(\frac{\alpha-\alpha'+2}{2})}{(2k)!\Gamma(\frac{\alpha+\alpha'+n-2}{2})\Gamma(\frac{\alpha-\alpha'+2}{2}-k)}\\
 \times \frac{\Gamma(r+\rho)\Gamma(r'+\rho'+\alpha')\Gamma(\frac{2r'+2r+1}{4}+k)\Gamma(\frac{2r'-2r+3}{4})}{\Gamma(r+\rho+\alpha'+2k)\Gamma(r'+\rho')\Gamma(\frac{2r'+2r+1}{4})\Gamma(\frac{2r'-2r+3}{4}-k)}\label{eq:ExplicitFormulaReal}
\end{multline}
are rational functions in $r$ and $r'$ satisfying \eqref{eq:RealRel1} and \eqref{eq:RealRel2}. They are normalized to $t_{0,0}\equiv1$.
\end{proposition}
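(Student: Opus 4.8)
The plan is to rewrite the right-hand side of \eqref{eq:ExplicitFormulaReal} as a terminating generalized hypergeometric series, read off rationality and the normalization at once, reduce the verification of the two recurrences to a single one on a Zariski-dense locus by means of Lemma~\ref{lem:RealDiagonalExtensionGeneric}, and check that one recurrence as a contiguous relation.

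\emph{Step 1 (hypergeometric normal form).} Using the duplication formula for $\Gamma$ in the guises $(2k)!=4^k\,k!\,(\tfrac12)_k$ and $(x)_{2k}=4^k(\tfrac{x}{2})_k(\tfrac{x+1}{2})_k$, together with $\Gamma(z)/\Gamma(z-k)=(-1)^k(1-z)_k$, every $\Gamma$-quotient in \eqref{eq:ExplicitFormulaReal} collapses to a Pochhammer symbol and the factors $4^k$ and $(-1)^k$ cancel, leaving
\[
 t_{\alpha,\alpha'}(r,r') = \frac{(r'+\rho')_{\alpha'}}{(r+\rho)_{\alpha'}}\sum_{k\geq0}\frac{(-\tfrac{\alpha-\alpha'}{2})_k\,(\tfrac{\alpha+\alpha'+n-2}{2})_k\,(\tfrac{2r+2r'+1}{4})_k\,(\tfrac{2r-2r'+1}{4})_k}{(\tfrac12)_k\,(\tfrac{r+\rho+\alpha'}{2})_k\,(\tfrac{r+\rho+\alpha'+1}{2})_k\,k!},
\]
a terminating ${}_4F_3$ of argument $1$ (the sum stops at $k=\tfrac{\alpha-\alpha'}{2}$ because $\alpha-\alpha'\in2\NN$ kills the first numerator Pochhammer beyond that index). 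Two things are then immediate: each summand, hence $t_{\alpha,\alpha'}(r,r')$, is a rational function of $(r,r')$; and for $\alpha=\alpha'=0$ only the term $k=0$ survives, with every factor equal to $1$, so $t_{0,0}\equiv1$.

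\emph{Step 2 (reduction to a generic locus).} After substituting the explicit $t_{\alpha,\alpha'}(r,r')$, both \eqref{eq:RealRel1} and \eqref{eq:RealRel2} become identities between rational functions of $(r,r')$, so it is enough to verify them on a Zariski-dense set. Take $U\subseteq\CC^2$ to be $\{(r,r'):r\notin-\rho-\NN\}$ with the finitely many pole hyperplanes of the $t_{\alpha,\alpha'}$ removed. Then $U$ is Zariski-dense, and on $U$ we are in the situation of Step~1 of the proof of Lemma~\ref{lem:RealDiagonalExtensionGeneric}: a family $(t_{\alpha,\alpha'})$ is \emph{the} unique extension of its diagonal $(t_{\alpha,\alpha})$ exactly when it satisfies \eqref{eq:RealRel2} throughout the range, and then \eqref{eq:RealRel1} holds automatically. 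The diagonal values of our family are $t_{\alpha,\alpha}(r,r')=(r'+\rho')_\alpha/(r+\rho)_\alpha$, and a one-line Pochhammer computation (using $(r'+\rho'+\alpha)(r'+\rho')_\alpha=(r'+\rho')_{\alpha+1}$ and the analogue for $r+\rho$) confirms that these satisfy \eqref{eq:RealRelDiagonal}. Hence it remains only to check that the explicit family satisfies \eqref{eq:RealRel2}; uniqueness then yields \eqref{eq:RealRel1} on $U$, and rationality propagates both identities to all of $\CC^2$.

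\emph{Step 3 (the contiguous relation) and main obstacle.} The real work is the verification of \eqref{eq:RealRel2} for the series in Step~1. Replacing $(\alpha,\alpha')$ by the neighbours $(\alpha\pm1,\alpha'-1)$ simultaneously shifts the prefactor $(r'+\rho')_{\alpha'}/(r+\rho)_{\alpha'}$ and the four hypergeometric parameters $-\tfrac{\alpha-\alpha'}{2}$, $\tfrac{\alpha+\alpha'+n-2}{2}$, $\tfrac{r+\rho+\alpha'}{2}$, $\tfrac{r+\rho+\alpha'+1}{2}$. Using the elementary relations between $(x)_k$ and $(x\pm1)_k$ and between $(x)_{k+1}$ and $(x)_k$, I would rewrite each of the three sums in \eqref{eq:RealRel2} as a sum against one common summand — reindexing $k\mapsto k+1$ in whichever sum needs it — and so reduce \eqref{eq:RealRel2} to the vanishing of a single explicit polynomial in $k$ whose coefficients are polynomial in $r,r',\alpha,\alpha',n$; that polynomial identity is then confirmed by direct algebra. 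I expect this step to be the only genuine obstacle: the bookkeeping of the four simultaneous parameter shifts, the alignment of the three neighbouring series for term-by-term comparison, and the boundary cases ($\alpha'=0$, the top index $k=\tfrac{\alpha-\alpha'}{2}$, the diagonal $\alpha=\alpha'$) all require care, whereas everything else is formal. An alternative that bypasses Lemma~\ref{lem:RealDiagonalExtensionGeneric} is to prove \eqref{eq:RealRel1} and \eqref{eq:RealRel2} directly as two separate contiguous relations for the ${}_4F_3$, but that roughly doubles the computation.
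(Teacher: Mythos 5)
Your Steps 1 and 2 are sound, and Step 2 is a genuinely different organization from the paper's: the paper never invokes Lemma~\ref{lem:RealDiagonalExtensionGeneric} here, but instead verifies \eqref{eq:RealRel1} directly, by checking that each individual Gamma-quotient building block (the summand with the $k$-dependent coefficient stripped off) satisfies \eqref{eq:RealRel1} for every $k$, and then imposes \eqref{eq:RealRel2} on the full sum, which after the shift $k\mapsto k-1$ collapses to the first-order recurrence $2k(2k-1)b_k=(2r'+2r+4k-3)(2r'-2r-4k+3)b_{k-1}$ that the explicit coefficients visibly satisfy. Your reduction of the two relations to \eqref{eq:RealRel2} alone (via the diagonal check of \eqref{eq:RealRelDiagonal}, the uniqueness of the extension in Step~1 of the proof of Lemma~\ref{lem:RealDiagonalExtensionGeneric} on the locus $r\notin-\rho-\NN$, and rationality) is legitimate and would save roughly half the bookkeeping; your ${}_4F_3$ normal form, the rationality claim, and $t_{0,0}\equiv1$ are all correct.

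However, there is a genuine gap: Step 3, which you yourself identify as ``the real work,'' is never carried out. The entire nontrivial content of Proposition~\ref{prop:RealExplicitEigenvalues} is precisely that the numbers \eqref{eq:ExplicitFormulaReal} satisfy the recurrences; rationality and normalization are immediate. Your proposal replaces this verification with a description of how one \emph{would} align the three shifted series and with the expectation that the resulting polynomial identity in $k$ ``is then confirmed by direct algebra,'' without exhibiting that identity or checking it, and without resolving the boundary cases ($\alpha'=0$, $\alpha=\alpha'$, the top index $k=\tfrac{\alpha-\alpha'}{2}$) that you flag. As it stands the argument proves nothing beyond the trivial parts. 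To close the gap you could either complete your contiguous-relation computation explicitly, or adopt the paper's lighter route: keep the sum in the unreduced form $t_{\alpha,\alpha'}=\sum_k b_k\,\Gamma(\tfrac{\alpha+\alpha'+n-2}{2}+k)\Gamma(\tfrac{\alpha-\alpha'+2}{2})\Gamma(r'+\rho'+\alpha')\big/\big(\Gamma(\tfrac{\alpha+\alpha'+n-2}{2})\Gamma(\tfrac{\alpha-\alpha'+2}{2}-k)\Gamma(r+\rho+\alpha'+2k)\big)$, where $b_k$ is independent of $(\alpha,\alpha')$; substituting this into \eqref{eq:RealRel2} and reindexing one group of terms reduces the relation to the two-term recurrence for $b_k$ quoted above, a short Gamma-function computation rather than a full four-parameter contiguity analysis.
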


\begin{proof}
First note that since $\alpha-\alpha'\in2\ZZ$ the number $\frac{\alpha-\alpha'+2}{2}-k$ is a negative integer for $k\gg0$ and hence the sum is actually finite for each fixed pair $(\alpha,\alpha')$. It is also easy to see that each summand is a rational function in $r$ and $r'$. A short calculation shows that for each $k\in\NN$ the term
$$ \frac{\Gamma(\frac{\alpha+\alpha'+n-2}{2}+k)\Gamma(\frac{\alpha-\alpha'+2}{2})\Gamma(r'+\rho'+\alpha')}{\Gamma(\frac{\alpha+\alpha'+n-2}{2})\Gamma(\frac{\alpha-\alpha'+2}{2}-k)\Gamma(r+\rho+\alpha'+2k)} $$
solves \eqref{eq:RealRel1}. If we further make the ansatz
$$ t_{\alpha,\alpha'} = \sum_{k=0}^\infty b_k\frac{\Gamma(\frac{\alpha+\alpha'+n-2}{2}+k)\Gamma(\frac{\alpha-\alpha'+2}{2})\Gamma(r'+\rho'+\alpha')}{\Gamma(\frac{\alpha+\alpha'+n-2}{2})\Gamma(\frac{\alpha-\alpha'+2}{2}-k)\Gamma(r+\rho+\alpha'+2k)} $$
with $b_k=b_k(r,r')$ not depending on $\alpha$ and $\alpha'$ then we find that \eqref{eq:RealRel2} holds if and only if
\begin{multline*}
 \sum_{k=0}^\infty b_k \frac{\Gamma(\frac{\alpha+\alpha'+n-2}{2}+k-1)\Gamma(\frac{\alpha-\alpha'+2}{2})\Gamma(r'+\rho'+\alpha'-1)}{\Gamma(\frac{\alpha+\alpha'+n-2}{2})\Gamma(\frac{\alpha-\alpha'+2}{2}-k+1)\Gamma(r+\rho+\alpha'+2k)}\\
 \times \Big[(2r'+2r+4k+1)(2r'-2r-4k-1)\left(\tfrac{\alpha-\alpha'+2}{2}-k\right)\left(\tfrac{\alpha+\alpha'+n-2}{2}+k-1\right)\\
 -2k(2k-1)\left(r+\rho+\alpha'+2k-1\right)\left(r+\rho+\alpha'+2k-2\right)\Big] = 0.
\end{multline*}
Substituting $k-1$ for $k$ in the first summand in the brackets gives the condition
\begin{multline*}
 \sum_{k=1}^\infty \frac{\Gamma(\frac{\alpha+\alpha'+n-2}{2}+k-1)\Gamma(\frac{\alpha-\alpha'+2}{2})\Gamma(r'+\rho'+\alpha'-1)}{\Gamma(\frac{\alpha+\alpha'+n-2}{2})\Gamma(\frac{\alpha-\alpha'+2}{2}-k+1)\Gamma(r+\rho+\alpha'+2k-2)}\\
 \times \Big[(2r'+2r+4k-3)(2r'-2r-4k+3)b_{k-1}-2k(2k-1)b_k\Big] = 0
\end{multline*}
which holds if
$$ 2k(2k-1)b_k = (2r'+2r+4k-3)(2r'-2r-4k+3)b_{k-1}. $$
This recurrence relation has the solution
$$ b_k = c\cdot\frac{2^{4k}\Gamma(\frac{2r'+2r+1}{4}+k)}{(2k)!\Gamma(\frac{2r'-2r+3}{4}-k)} $$
with $c=c(r,r')$ not depending on $k$. Finally $t_{0,0}\equiv1$ implies
\begin{equation*}
 c = \frac{\Gamma(r+\rho)\Gamma(\frac{2r'-2r+3}{4})}{\Gamma(r'+\rho')\Gamma(\frac{2r'+2r+1}{4})}.\qedhere
\end{equation*}
\end{proof}

\begin{corollary}\label{cor:RealRenormalizations}
\begin{enumerate}
\item The renormalized numbers
$$ t^{(1)}_{\alpha,\alpha'}(r,r') = \frac{1}{\Gamma(r+\rho)}t_{\alpha,\alpha'}(r,r') $$
are holomorphic in $(r,r')\in\CC^2$ for every $\alpha,\alpha'\in\NN$, $\alpha-\alpha'\in2\NN$. Further, $t^{(1)}_{\alpha,\alpha'}(r,r')=0$ for all $\alpha,\alpha'$ if and only if $(r,r')\in\Leven$.
\item Fix $r'=-\rho'-j$, $j\in\NN$, then the renormalized numbers
$$ t^{(2)}_{\alpha,\alpha'}(r,r') = \frac{\Gamma(\frac{(r+\rho)-(r'+\rho')}{2})}{\Gamma(r+\rho)}t_{\alpha,\alpha'}(r,r') $$
are holomorphic in $r\in\CC$ for every $\alpha,\alpha'\in\NN$, $\alpha-\alpha'\in2\NN$. We have $t^{(2)}_{\alpha,\alpha'}(r,r')\equiv0$ for $\alpha'>j$. Further, for every $r\in\CC$ there exists a pair $(\alpha,\alpha')$ with $t^{(2)}_{\alpha,\alpha'}(r,r')\neq0$.
\item Fix $N\in\NN$ and let $r'+\rho'=r+\rho+2N$, then the renormalized numbers
$$ t^{(3)}_{\alpha,\alpha'}(r,r') = \frac{\Gamma(r'+\rho')}{\Gamma(r+\rho)}t_{\alpha,\alpha'}(r,r') $$
are holomorphic in $r\in\CC$ for every $\alpha,\alpha'\in\NN$, $\alpha-\alpha'\in2\NN$. Further, for every $r\in\CC$ there exists $\alpha_0\in\NN$ such that $t^{(3)}_{\alpha,\alpha}(r,r')\neq0$ for $\alpha\geq\alpha_0$.
\end{enumerate}
\end{corollary}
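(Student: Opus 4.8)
The plan is to read everything off the closed formula \eqref{eq:ExplicitFormulaReal}, combined with the uniqueness results obtained in the proof of Theorem~\ref{thm:RealMultiplicities} (notably Lemma~\ref{lem:RealDiagonalExtensionSingular}). Two remarks are used throughout. First, in every summand of \eqref{eq:ExplicitFormulaReal} the factor $\Gamma(r+\rho)$ enters only through $\Gamma(r+\rho)/\Gamma(r+\rho+\alpha'+2k)=1/(r+\rho)_{\alpha'+2k}$ (where $(x)_m=x(x+1)\cdots(x+m-1)=\Gamma(x+m)/\Gamma(x)$ is the Pochhammer symbol), whereas $\Gamma(\tfrac{2r'+2r+1}{4}+k)/\Gamma(\tfrac{2r'+2r+1}{4})$, $\Gamma(\tfrac{2r'-2r+3}{4})/\Gamma(\tfrac{2r'-2r+3}{4}-k)$ and $\Gamma(r'+\rho'+\alpha')/\Gamma(r'+\rho')$ are polynomials in $r,r'$; moreover the sum over $k$ is finite. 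Second, scaling all $t_{\alpha,\alpha'}$ by a common function of $(r,r')$ preserves \eqref{eq:RealRel1}--\eqref{eq:RealRel2}, so by analytic continuation each of $t^{(1)},t^{(2)},t^{(3)}$ still solves these relations at every $(r,r')$, hence defines an intertwining operator via Theorem~\ref{thm:RealCharacterizationIntertwiners}.

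\textbf{Part (1).} Dividing \eqref{eq:ExplicitFormulaReal} by $\Gamma(r+\rho)$ turns each summand into (constant)$\cdot\tfrac{1}{\Gamma(r+\rho+\alpha'+2k)}\cdot$(polynomial in $r,r'$), a finite sum of entire functions, so $t^{(1)}_{\alpha,\alpha'}$ is holomorphic on $\CC^2$. For the vanishing locus I would specialize to the diagonal: $\alpha'=\alpha$ in \eqref{eq:ExplicitFormulaReal} leaves only the $k=0$ term and gives $t^{(1)}_{\alpha,\alpha}=(r'+\rho')_\alpha/\Gamma(r+\rho+\alpha)$, in particular $t^{(1)}_{0,0}=1/\Gamma(r+\rho)$. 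If $r\notin-\rho-\NN$ then $t^{(1)}_{0,0}\neq0$; if $r=-\rho-i$ but $(r,r')\notin\Leven\cup\Lodd$, equivalently $r'+\rho'\notin\{0,-1,\dots,-i\}$, then $t^{(1)}_{i+1,i+1}=(r'+\rho')_{i+1}\neq0$. Thus $t^{(1)}\neq0$ off $\Leven\cup\Lodd$. For $(r,r')\in\Leven\cup\Lodd$, say $r=-\rho-i$, $r'=-\rho'-j$ with $j\leq i$, the whole diagonal of $t^{(1)}$ vanishes, since $(r'+\rho')_\alpha=(-j)_\alpha$ carries the factor $0$ for $\alpha>i\geq j$ while $\Gamma(r+\rho+\alpha)$ has a pole for $\alpha\leq i$. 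On $\Leven$ the zero diagonal extends uniquely by Lemma~\ref{lem:RealDiagonalExtensionSingular}(1), and the zero sequence is such an extension, so $t^{(1)}\equiv0$. On $\Lodd$ I would instead evaluate $t^{(1)}_{i+1,j}$ at $(-\rho-i,-\rho'-j)$ in \eqref{eq:ExplicitFormulaReal}: the summands with $k<\tfrac{i-j+1}{2}$ vanish because $1/\Gamma(r+\rho+\alpha'+2k)=1/\Gamma(-i+j+2k)=0$, those with $k>\tfrac{i-j+1}{2}$ vanish because $1/\Gamma(\tfrac{\alpha-\alpha'+2}{2}-k)=0$, and a short check — using $n\geq3$ to exclude the bad integer arguments — shows the single remaining summand $k=\tfrac{i-j+1}{2}$ is a product of finite nonzero $\Gamma$-ratios; hence $t^{(1)}\neq0$ on $\Lodd$. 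This establishes the ``if and only if''.

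\textbf{Part (2).} With $r'=-\rho'-j$ fixed one has $t^{(2)}_{\alpha,\alpha'}(r,r')=\Gamma(\tfrac{r+\rho+j}{2})\,t^{(1)}_{\alpha,\alpha'}(r,-\rho'-j)$, and $\Gamma(\tfrac{r+\rho+j}{2})$ has only simple poles, located exactly at the $r$ with $(r,-\rho'-j)\in\Leven$; at those $r$, $t^{(1)}_{\alpha,\alpha'}$ vanishes by Part (1), so $t^{(2)}_{\alpha,\alpha'}$ is holomorphic in $r$. Vanishing for $\alpha'>j$ is immediate from the factor $(r'+\rho')_{\alpha'}=(-j)_{\alpha'}=0$ in \eqref{eq:ExplicitFormulaReal}. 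For nonvanishing at a given $r$: if $(r,-\rho'-j)\notin\Leven$ then $\Gamma(\tfrac{r+\rho+j}{2})$ is finite nonzero and $t^{(1)}(r,-\rho'-j)\neq0$ by Part (1), so some $t^{(2)}_{\alpha,\alpha'}\neq0$; if $(r,-\rho'-j)\in\Leven$, I would take $(\alpha,\alpha')=(j,j)$, where $t^{(1)}_{j,j}=(-1)^jj!/\Gamma(r+\rho+j)$ has a simple zero and $\Gamma(\tfrac{r+\rho+j}{2})$ a simple pole at that $r$, so $t^{(2)}_{j,j}$ takes a finite nonzero value there.

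\textbf{Part (3).} Along the line $r'+\rho'=r+\rho+2N$ one computes $\tfrac{2r'+2r+1}{4}=r+N+\tfrac12$ and $\tfrac{2r'-2r+3}{4}=N+1$, so $\Gamma(N+1)/\Gamma(N+1-k)=0$ for $k\geq N+1$ and the $k$-sum in \eqref{eq:ExplicitFormulaReal} reduces to $0\leq k\leq N$. In each surviving summand the poles in $r$ of $1/(r+\rho)_{\alpha'+2k}$ at $r+\rho\in\{-2N,-2N-1,\dots\}$ are cancelled by the zeros of $(r'+\rho')_{\alpha'}=(r+\rho+2N)_{\alpha'}$ (here $k\leq N$ is what makes the cancellation complete), leaving only simple poles at $r+\rho\in\{0,-1,\dots,-(2N-1)\}$, which are removed by the prefactor $(r+\rho)_{2N}=\Gamma(r'+\rho')/\Gamma(r+\rho)$; hence $t^{(3)}_{\alpha,\alpha'}$ is holomorphic in $r\in\CC$. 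Finally $t^{(3)}_{\alpha,\alpha}=\Gamma(r'+\rho'+\alpha)/\Gamma(r+\rho+\alpha)=(r+\rho+\alpha)_{2N}$, which vanishes only for $r+\rho+\alpha\in\{0,-1,\dots,-(2N-1)\}$, i.e.\ for at most finitely many $\alpha$ once $r$ is fixed; choosing $\alpha_0$ beyond these gives $t^{(3)}_{\alpha,\alpha}(r,r')\neq0$ for all $\alpha\geq\alpha_0$. The main obstacle throughout is the pole/zero bookkeeping of the $\Gamma$-factors needed for holomorphy after each renormalization — especially noticing in Part (3) that the $k$-sum truncates and that the numerator Pochhammer $(r+\rho+2N)_{\alpha'}$ cancels the apparently higher-order poles — together with the ``only if'' direction of Part (1), where \eqref{eq:ExplicitFormulaReal} does not visibly vanish and one must either invoke uniqueness of the extension (on $\Leven$) or isolate the single surviving summand (on $\Lodd$).
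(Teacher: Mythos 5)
Your proposal is correct, and its skeleton is the paper's: everything is extracted from the closed formula \eqref{eq:ExplicitFormulaReal} by Pochhammer rewriting and pole/zero bookkeeping, with the diagonal $t_{\alpha,\alpha}=(r'+\rho')_\alpha/\Gamma(r+\rho+\alpha)$ and the single surviving summand at $(\alpha,\alpha')=(i+1,j)$, $k=\tfrac{i-j+1}{2}$, playing the same decisive roles, and Part (3) (truncation at $k=N$, cancellation completed by $k\leq N$, diagonal $(r+\rho+\alpha)_{2N}$) essentially reproducing \eqref{eq:EigenvaluesJuhlOperators}. You deviate in two places. For the vanishing on $\Leven$ in Part (1), the paper argues directly from the formula that in every summand one of the three factors $(\tfrac{2r-2r'+1}{4})_k=(-\tfrac{i-j}{2})_k$, $(r'+\rho')_{\alpha'}=(-j)_{\alpha'}$, $1/\Gamma(-i+\alpha'+2k)$ vanishes; you instead show only that the diagonal vanishes and then invoke the uniqueness of the extension in Lemma~\ref{lem:RealDiagonalExtensionSingular}~(1), which requires your (correctly justified) preliminary remark that the renormalized family still satisfies \eqref{eq:RealRel1}--\eqref{eq:RealRel2} at the singular point by analytic continuation; the paper's argument is more self-contained, yours trades the three-factor case check for the combinatorial machinery already proved. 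In Part (2), the paper rewrites $t^{(2)}$ explicitly and cancels the poles of $\Gamma(\tfrac{2r+2j+n-1}{4}+k)$ against those of $\Gamma(r+\rho+\alpha'+2k)$, and gets nonvanishing by a contradiction argument; you factor $t^{(2)}_{\alpha,\alpha'}=\Gamma(\tfrac{r+\rho+j}{2})\,t^{(1)}_{\alpha,\alpha'}(r,-\rho'-j)$ and let Part (1) absorb the simple poles, then evaluate $t^{(2)}_{j,j}$ as a simple-pole-times-simple-zero limit on $\Leven$. Your route for (2) is arguably cleaner but makes (2) logically dependent on (1), whereas the paper's treatment of each part stands on the formula alone.
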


\begin{proof}
\begin{enumerate}
\item We can write
$$ \hspace{.5cm}t^{(1)}_{\alpha,\alpha'}(r,r') = (r'+\rho')_{\alpha'}\sum_{k=0}^{\frac{\alpha-\alpha'}{2}} \frac{2^{4k}(\frac{\alpha+\alpha'+n-2}{2})_k(-\frac{\alpha-\alpha'}{2})_k(\frac{2r'+2r+1}{4})_k(\frac{2r-2r'+1}{4})_k}{(2k)!\Gamma(r+\rho+\alpha'+2k)}, $$
where $(\lambda)_n=\lambda(\lambda+1)\cdots(\lambda+n-1)$ denotes the Pochhammer symbol. This expression is obviously holomorphic in $(r,r')\in\CC^2$. Now assume $t^{(1)}_{\alpha,\alpha'}(r,r')=0$ for all $\alpha,\alpha'$. For $\alpha=\alpha'$ we have $(-\frac{\alpha-\alpha'}{2})_k=0$ for $k>0$ and hence
$$ t^{(1)}_{\alpha,\alpha}(r,r') = \frac{(r'+\rho')_\alpha}{\Gamma(r+\rho+\alpha)} $$
which vanishes for all $\alpha\in\NN$ if and only if $r+\rho=-i$ and $r'+\rho'=-j$ with $j\leq i$. We claim that $i-j\in2\NN$. In fact, if $i-j\in2\NN+1$ then for $(\alpha,\alpha')=(i+1,j)$ only the summand for $k=\frac{i-j+1}{2}$ is non-zero and hence $t^{(1)}_{\alpha,\alpha'}(r,r')\neq0$, a contradiction. Therefore $i-j\in2\NN$ which means $(r,r')\in\Leven$.\par Conversely assume $r+\rho=-i$, $r'+\rho'=-j$ with $i-j\in2\NN$. Then in each summand at least one of the three factors
$$ \hspace{.5cm}(\tfrac{2r-2r'+1}{4})_k=(-\tfrac{i-j}{2})_k, \quad (r'+\rho')_{\alpha'}=(-j)_{\alpha'}, \quad \tfrac{1}{\Gamma(r+\rho+\alpha'+2k)}=\tfrac{1}{\Gamma(-i+\alpha'+2k)} $$
vanishes and hence $t^{(1)}_{\alpha,\alpha'}(r,r')=0$ for all $\alpha,\alpha'$.
\item We can write
$$ \hspace{1.5cm}t^{(2)}_{\alpha,\alpha'}(r,r') = (-j)_{\alpha'}\sum_{k=0}^{\frac{\alpha-\alpha'}{2}} \frac{2^{4k}(\frac{\alpha+\alpha'+n-2}{2})_k(-\frac{\alpha-\alpha'}{2})_k(\frac{2r-2j-n+3}{4})_k\Gamma(\frac{2r+2j+n-1}{4}+k)}{(2k)!\Gamma(r+\rho+\alpha'+2k)}, $$
as a meromorphic function of $r$. Then $t^{(2)}_{\alpha,\alpha'}(r,r')\equiv0$ for $\alpha'>j$. Further, for $\alpha'\leq j$ each pole $r$ of the factor $\Gamma(\frac{2r+2j+n-1}{4}+k)$ is simple and also a pole of the denominator $\Gamma(r+\rho+\alpha'+2k)$ whence $t^{(2)}_{\alpha,\alpha'}(r,r')$ is holomorphic in $r\in\CC$. Now assume $t^{(2)}_{\alpha,\alpha'}(r,r')=0$ for all $\alpha,\alpha'$. Then
$$ 0 = t^{(2)}_{j,j}(r,r') = (-j)_j \frac{\Gamma(\frac{2r+2j+n-1}{4})}{\Gamma(r+\rho+j)} $$
and hence $r$ has to be a pole of the denominator while it is a regular point for the numerator. This means $r+\rho=-i\in-\NN$ with $i\geq j$ and $\frac{2r+2j+n-1}{4}=\frac{j-i}{2}\notin-\NN$, i.e. $i-j\in2\NN+1$. But for $(\alpha,\alpha')=(i+1,j)$ only the summand for $k=\frac{i-j+1}{2}$ is non-zero and hence $t^{(2)}_{\alpha,\alpha'}(r,r')\neq0$, a contradiction.
\item Note that $(\frac{2r-2r'+1}{4})_k=(-N)_k=0$ for $k>N$ and hence we can write
\begin{multline}
 \hspace{2cm}t^{(3)}_{\alpha,\alpha'}(r,r') = \sum_{k=0}^N \frac{2^{4k}(-N)_k(\frac{\alpha+\alpha'+n-2}{2})_k(-\frac{\alpha-\alpha'}{2})_k}{(2k)!}\\
 \times(r+N+\tfrac{1}{2})_k(r+\rho+\alpha'+2k)_{2N-2k}\label{eq:EigenvaluesJuhlOperators}
\end{multline}
which is clearly holomorphic in $r\in\CC$. Further, $t^{(3)}_{\alpha,\alpha}(r,r')=(r+\rho+\alpha)_{2N}$ which is non-zero for $\alpha>-(r+\rho)$.\qedhere
\end{enumerate}
\end{proof}

\begin{remark}
After a few modifications we find that
$$ t_{\alpha,\alpha'}(r,r') = \frac{(r'+\rho')_{\alpha'}}{(r+\rho)_{\alpha'}}{_4F_3}\left(-\tfrac{\alpha-\alpha'}{2},\tfrac{\alpha+\alpha'+n-2}{2},\tfrac{2r+2r'+1}{4},\tfrac{2r-2r'+1}{4};\tfrac{1}{2},\tfrac{r+\rho+\alpha'}{2},\tfrac{r+\rho+\alpha'+1}{2};1\right). $$
Note that the generalized hypergeometric function $_4F_3(a_1,a_2,a_3,a_4;b_1,b_2,b_3;z)$ occurring here is balanced, i.e. $a_1+a_2+a_3+a_4+1=b_1+b_2+b_3$. However, there does not exist an explicit formula for its special value at $z=1$ in the literature. Also, we could not find estimates for special values of such hypergeometric functions for large/small parameters, and therefore were not able to show that $t_{\alpha,\alpha'}(r,r')$ grows at most polynomially in $\alpha,\alpha'\geq0$ for fixed $(r,r')\in\CC^2$. This is what is needed to apply Proposition~\ref{prop:HCtoInfty} in order to show automatic continuity of intertwining operators. We will therefore first describe all intertwining operators in terms of the holomorphic family $T^{(1)}(r,r')$ (see Theorem~\ref{thm:RealExplicitBasis}) and then show automatic continuity using the corresponding holomorphic family in the smooth category obtained in joint work with Y. Oshima~\cite{MOO13}. This is done in Corollary~\ref{cor:HCtoInftyReal}.
\end{remark}

\begin{theorem}\label{thm:RealExplicitBasis}
For $i=1,2,3$ we let $T^{(i)}(r,r')$ be the intertwining operators $(\pi_r)_\HC\to(\tau_{r'})_\HC$ corresponding to the numbers $t^{(i)}_{\alpha,\alpha'}(r,r')$ in Corollary~\ref{cor:RealRenormalizations}. Then the operator $T^{(1)}(r,r')$ is defined for $(r,r')\in\CC^2$, the operator $T^{(2)}(r,r')$ is defined for $r'\in-\rho'-\NN$ and the operator $T^{(3)}(r,r')$ is defined for $(r+\rho)-(r'+\rho')\in-2\NN$. We have
$$ \Hom_{(\frakg',K')}((\pi_r)_\HC|_{(\frakg',K')},(\tau_{r'})_\HC) = \begin{cases}\CC T^{(1)}(r,r') & \mbox{for $(r,r')\in\CC^2\setminus\Leven$,}\\\CC T^{(2)}(r,r')\oplus\CC T^{(3)}(r,r') & \mbox{for $(r,r')\in\Leven$.}\end{cases} $$
\end{theorem}

\begin{remark}\label{rem:RenormalizationForSubquotientsReal}
By the proof of Theorem~\ref{thm:RealMultiplicities}~(2) every intertwining operator between the subquotients $\calF(i),\calT(i)$ and $\calF'(j),\calT'(j)$ can be constructed by composing an intertwining operator $(\pi_r)_\HC\to(\tau_{r'})_\HC$ for particular $r,r'$ with embeddings and/or quotient maps for the subquotients. Hence, also every intertwining operator between subquotients is given by an operator in one of the three families $T^{(i)}(r,r')$. Therefore, all information about intertwining operators between $(\pi_r)_\HC$ and $(\tau_{r'})_\HC$ and any of their subquotients is contained in the holomorphic family $T^{(1)}(r,r')$.
\end{remark}

\begin{remark}\label{rem:JuhlOperators}
The family of operators $T^{(3)}$ is (up to a constant) equal to Juhl's family of conformally invariant differential restriction operators $D_{2N}(r):C^\infty(S^{n-1})\to C^\infty(S^{n-2})$ (see \cite{Juh09} and also \cite{KS13}). The constants $t_{\alpha,\alpha'}^{(3)}$ then give the ``spectrum'' of Juhl's operators in the sense that they describe how the operators are acting on explicit $K$-finite vectors. Note that by \eqref{eq:EigenvaluesJuhlOperators} the number of summands for $t^{(3)}_{\alpha,\alpha'}(r,r')$ is at most $N+1$.
\end{remark}

\begin{corollary}\label{cor:HCtoInftyReal}
For $(G,G')=(\upO(1,n),\upO(1,n-1))$ the natural injective map
\begin{equation}
 \Hom_{G'}(\pi|_{G'},\tau)\to\Hom_{(\frakg',K')}(\pi_\HC|_{(\frakg',K')},\tau_\HC)\label{eq:NaturalInjMapInftyToHC}
\end{equation}
is an isomorphism for all spherical principal series $\pi$ of $G$ and $\tau$ of $G'$ and their subquotients.
\end{corollary}

\begin{proof}
By Remark~\ref{rem:RenormalizationForSubquotientsReal} all intertwining operators between subquotients arise by composing with quotient maps and embeddings. It therefore suffices to show that \eqref{eq:NaturalInjMapInftyToHC} is an isomorphism for $\pi=\pi_r$ and $\tau=\tau_{r'}$ for all $(r,r')\in\CC^2$. In \cite{MOO13} a holomorphic family $A(r,r')\in\Hom_{G'}(\pi_r|_{G'},\tau_{r'})$ was constructed in the smooth category using singular integral operators (see Section~\ref{sec:SingularIntegralOperators} for details). Denote by $\overline{A}(r,r')\in\Hom_{(\frakg',K')}((\pi_r)_\HC|_{(\frakg',K')},(\tau_{r'})_\HC)$ its image under the map \eqref{eq:NaturalInjMapInftyToHC}. By Theorem~\ref{thm:RealExplicitBasis} this space is generically spanned by $T^{(1)}(r,r')$, and since both $\overline{A}(r,r')$ and $T^{(1)}(r,r')$ depend holomorphically on $(r,r')\in\CC^2$ there exists a meromorphic function $\phi(r,r')$ such that
$$ \overline{A}(r,r') = \phi(r,r')\cdot T^{(1)}(r,r'). $$
Replacing $A(r,r')$ and $\overline{A}(r,r')$ by $\phi(r,r')^{-1}A(r,r')$ and $\phi(r,r')^{-1}\overline{A}(r,r')$ we may assume that
$$ \overline{A}(r,r') = T^{(1)}(r,r'). $$
This already implies that for $(r,r')\in\CC^2\setminus\Leven$ every intertwining operator in the space $\Hom_{(\frakg',K')}((\pi_r)_\HC|_{(\frakg',K')},(\tau_{r'})_\HC)$ extends to the smooth globalization. Further, for $(r,r')\in\Leven$ we may restrict $(r,r')\mapsto T^{(1)}(r,r')$ to an affine complex line and renormalize to obtain all intertwining operators in $\Hom_{(\frakg',K')}((\pi_r)_\HC|_{(\frakg',K')},(\tau_{r'})_\HC)$ by Theorem~\ref{thm:RealExplicitBasis}. The same restriction and renormalization can be applied to $(r,r')\mapsto A(r,r')$, and in this way one obtains extensions of all operators in $\Hom_{(\frakg',K')}((\pi_r)_\HC|_{(\frakg',K')},(\tau_{r'})_\HC)$ to the smooth globalization. Note that renormalization of $A(r,r')$ preserves continuity of the operators. This shows that the map \eqref{eq:NaturalInjMapInftyToHC} is surjective, hence an isomorphism for all $(r,r')\in\CC^2$.
\end{proof}

\begin{remark}
The operators $T^{(i)}(r,r')$ are related to the operators $\tilde{\AA}_{\lambda,\nu}$, $\tilde{\tilde{\AA}}_{\lambda,\nu}$ and $\tilde{\CC}_{\lambda,\nu}$ studied by Kobayashi--Speh~\cite{KS13} for $\lambda=r+\rho$, $\nu=r'+\rho'$. In fact, using their notation we have
\begin{align*}
 T^{(1)}(r,r') &= \pi^{-\frac{n-2}{2}}\tilde{\AA}_{\lambda,\nu},\\
 T^{(2)}(r,r') &= \pi^{-\frac{n-2}{2}}\tilde{\tilde{\AA}}_{\lambda,\nu},\\
 T^{(3)}(r,r') &= \frac{(-1)^NN!}{2^{2N}}\tilde{\CC}_{\lambda,\nu},
\end{align*}
where for $i=3$ we write $r'+\rho'=r+\rho+2N$ with $N\in\NN$.
\end{remark}

\subsection{Discrete components in the restriction of unitary representations}

We apply our results to branching problems for unitary representations. The $(\frakg,K)$-modules $(\pi_r)_\HC$ are unitarizable if and only if $r\in i\RR\cup(-\rho,\rho)$ and we denote by $\widehat{\pi}_r$ their unitary completions. For $r\in i\RR$ these representations form the unitary principal series and for $r\in(-\rho,\rho)$ they belong to the complementary series. Further, all irreducible quotients $\calT(i)$ are unitarizable and their unitary completions will be denoted by $\widehat{\pi}_{-\rho-i}$. We note that for $r\in-(\rho+\ZZ)$, $r<0$, each representation $\widehat{\pi}_r$ is isomorphic to some Zuckerman derived functor module $A_\frakq(\lambda)$ and occurs discretely in the decomposition of the regular representation on $L^2(G/G')$.

Similarly we denote by $\widehat{\tau}_{r'}$, $r'\in i\RR\cup(-\rho',\rho')$ the unitary completions of $\tau_{r'}$ and by $\widehat{\tau}_{-\rho'-j}$, $j\in\NN$, the unitary completions of $\calT'(j)$.

For $r\in\RR$ we define the finite set
$$ D(r) = (r+\tfrac{1}{2}+2\NN)\cap(-\infty,0) $$
and note that for $r\in(-\rho,0)\cup(-\rho-\NN)$ and $r'\in D(r)$ we have $r'\in(-\rho',0)\cup(-\rho'-\NN)$, i.e. $\widehat{\tau}_{r'}$ is a unitary representation.

\begin{theorem}\label{thm:RealDiscreteComponentsInUniReps}
Let $r\in(-\rho,0)\cup(-\rho-\NN)$. Then for every $r'\in D(r)$ the representation $\widehat{\tau}_{r'}$ occurs discretely with multiplicity one in the restriction of $\widehat{\pi}_r$ to $G'$.
\end{theorem}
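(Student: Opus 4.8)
\emph{Outline of proof.} The plan is to construct, for each $r'\in D(r)$, a nonzero bounded $G'$-equivariant operator $S\colon\widehat\pi_r\to\widehat\tau_{r'}$ and to read off the discrete component from its adjoint. Once such an $S$ is available, irreducibility of the unitary representation $\widehat\tau_{r'}$ together with Schur's lemma forces $SS^\ast=c\cdot\operatorname{id}_{\widehat\tau_{r'}}$ with $c>0$ (note $c\neq0$ because $S\neq0$), so $c^{-1/2}S^\ast\colon\widehat\tau_{r'}\to\widehat\pi_r$ is a $G'$-equivariant isometry onto a closed $G'$-invariant subspace; hence $\widehat\tau_{r'}$ occurs as a subrepresentation, i.e.\ discretely, in $\widehat\pi_r|_{G'}$.

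To build $S$ I would use the explicit operators of Section~\ref{sec:RealSpectralFunction}. The condition $r'\in D(r)$ reads $r'=r+\tfrac12+2N$ with $N\in\NN$ and $r'<0$, which is precisely $r'+\rho'=(r+\rho)+2N$, the range in which $T^{(3)}(r,r')$ is defined (Corollary~\ref{cor:RealRenormalizations}(3) and Theorem~\ref{thm:RealExplicitBasis}); by Remark~\ref{rem:JuhlOperators} this is, up to a constant, Juhl's conformally invariant differential restriction operator $D_{2N}$, with $K$-type spectrum the polynomials $t^{(3)}_{\alpha,\alpha'}(r,r')$ of~\eqref{eq:EigenvaluesJuhlOperators}. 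It is nonzero since $t^{(3)}_{\alpha,\alpha}(r,r')=(r+\rho+\alpha)_{2N}\neq0$ for $\alpha\gg0$, and by Corollary~\ref{cor:HCtoInftyReal} it already extends continuously to $\pi_r\to\tau_{r'}$. When $r\in(-\rho,0)$ the modules involved are full complementary series; when $r=-\rho-i\in-\rho-\NN$ one instead takes the operator induced on the unitarizable quotient $\widehat\pi_{-\rho-i}$ (the completion of $\calT(i)$), composed with the quotient map onto $\widehat\tau_{r'}$, which exists and is nonzero by Remark~\ref{rem:RenormalizationForSubquotientsReal} together with Theorem~\ref{thm:RealMultiplicities}(2).

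The main step — and the main obstacle — is to prove that $S=T^{(3)}(r,r')$ extends to a \emph{bounded} operator between the Hilbert completions. For this I would write both unitary norms on the $K'$-isotypic pieces of the $K$-types, $\|F\|^2=\sum_\alpha c_\alpha(r)\,\|F_\alpha\|_{L^2}^2$ and $\|g\|'^2=\sum_{\alpha'}c'_{\alpha'}(r')\,\|g_{\alpha'}\|_{L^2}^2$, where $c_\alpha(r),c'_{\alpha'}(r')$ are the eigenvalues on $K$-types of the normalized Knapp--Stein intertwining operators defining the complementary series inner products — ratios of Gamma functions in $\alpha$, hence of a fixed polynomial order determined by $r+\rho$ resp.\ $r'+\rho'$ — or, when $r\in-\rho-\NN$, of the orthogonal projections cutting out the quotient $\calT(i)$ (supported on $\alpha>i$). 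Since $T^{(3)}|_{\calE(\alpha;\alpha')}=t^{(3)}_{\alpha,\alpha'}(r,r')\cdot R_{\alpha,\alpha'}$, boundedness of $S$ reduces to the uniform estimate
\[
 c'_{\alpha'}(r')\,\bigl|t^{(3)}_{\alpha,\alpha'}(r,r')\bigr|^2\,\|R_{\alpha,\alpha'}\|_{L^2\to L^2}^2\ \leq\ C\,c_\alpha(r)\qquad(0\leq\alpha'\leq\alpha,\ \alpha-\alpha'\in2\NN),
\]
where $\|R_{\alpha,\alpha'}\|_{L^2\to L^2}\sim(\alpha+1)(\alpha+\alpha'+1)^{-1/2}(\alpha-\alpha'+1)^{-1/2}$ by \cite[Proposition~3.4]{Zha15} and $t^{(3)}_{\alpha,\alpha'}$ is polynomial of controlled degree with at most $N+1$ terms by~\eqref{eq:EigenvaluesJuhlOperators}. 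Because $r'+\rho'=(r+\rho)+2N$, the total powers of $\alpha$ on the two sides match, so the estimate is borderline; verifying it sharply by Stirling-type asymptotics — uniformly in $\alpha'$, and separately along the diagonal $\alpha=\alpha'$ where the coefficient $\alpha-\alpha'$ degenerates — is exactly where the precise arithmetic of $D(r)=(r+\tfrac12+2\NN)\cap(-\infty,0)$ is used, and is the heart of the argument.

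Finally, multiplicity one. Any $G'$-embedding $\widehat\tau_{r'}\hookrightarrow\widehat\pi_r$ has a bounded $G'$-equivariant adjoint $\widehat\pi_r\to\widehat\tau_{r'}$, which restricts to a continuous $G'$-intertwiner of smooth globalizations and, by automatic continuity (Corollary~\ref{cor:HCtoInftyReal}), corresponds injectively to an element of $\Hom_{(\frakg',K')}$ between the underlying Harish--Chandra modules (or the relevant subquotients when $r\in-\rho-\NN$). Hence the discrete multiplicity is bounded by the corresponding Harish--Chandra multiplicity, which equals $1$ by Theorem~\ref{thm:RealMultiplicities}: for $r\in(-\rho,0)$ one has $(r,r')\notin\Leven$ and $m((\pi_r)_\HC,(\tau_{r'})_\HC)=1$; for $r=-\rho-i$ with $r'=-\rho'-j$, $j=i-2N\geq0$, one has $m(\calT(i),\calT'(j))=1$ by the subquotient table (since $i-j=2N\in2\NN$); and for $r=-\rho-i$ with $r'\in(-\rho',0)$ again $(r,r')\notin\Leven$. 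Combined with the existence of $S$ this yields discrete multiplicity exactly one.
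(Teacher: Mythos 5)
Your overall strategy is the same as the paper's: take the operator with spectrum $t^{(3)}_{\alpha,\alpha'}$ (vanishing on $\calF(i)$ resp.\ mapping into the relevant quotient when $r\in-\rho-\NN$), prove it is bounded for the invariant Hilbert norms, pass to the adjoint and apply Schur's lemma to get an isometric embedding, and deduce multiplicity one from the Harish--Chandra module multiplicities of Theorem~\ref{thm:RealMultiplicities}. The multiplicity-one paragraph and the identification of $D(r)$ with the range $r'+\rho'=(r+\rho)+2N$ are fine.

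The genuine gap is in the boundedness step, which you yourself call the heart of the argument but then both misstate and defer. Your claimed reduction to the \emph{pointwise} estimate $c'_{\alpha'}(r')\,|t^{(3)}_{\alpha,\alpha'}|^2\,\|R_{\alpha,\alpha'}\|_{L^2\to L^2}^2\leq C\,c_\alpha(r)$, i.e.\ a uniform bound on each operator norm $\|T|_{\calE(\alpha;\alpha')}\|_{\calV\to\calW}$, is not sufficient: for a fixed $K'$-type $\alpha'$ infinitely many $K$-types $\alpha=\alpha'+2\ell$, $\ell\in\NN$, are mapped into the \emph{same} target space $\calE'(\alpha')$, and these contributions add up rather than being orthogonal. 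The correct criterion (Lemma~\ref{lem:GenCriterionBoundedness}, after Zhang) is square-summability: $\sum_{\alpha}\|T|_{\calE(\alpha;\alpha')}\|_{\calV\to\calW}^2\leq C$ uniformly in $\alpha'$, so the norms must in fact decay in $\ell$, not merely stay bounded; with only a uniform per-piece bound one can easily produce unbounded operators of this block form. The paper verifies the summability concretely: it computes $\|\rest|_{\calE(\alpha;\alpha')}\|^2_{L^2\to L^2}\sim(1+\alpha'+\ell)^{1/2}(1+\ell)^{-1/2}$ from the Plancherel formula \eqref{eq:PlancherelEmbeddingReal}, inserts the asymptotics $b_\alpha(r)\sim(1+\alpha)^{-2r}$, $b'_{\alpha'}(r')\sim(1+\alpha')^{-2r'}$, treats each of the $N+1$ summands of $t^{(3)}_{\alpha'+2\ell,\alpha'}$ in \eqref{eq:EigenvaluesJuhlOperators} separately, and sums over $\ell$ with Lemma~\ref{lem:ElementarySumEstimate}; the condition $r'<0$ built into $D(r)$ is exactly what makes the $\ell$-sum converge and produces the bound $C(1+\alpha')^{2\Re r'}$ on the nose. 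Without replacing your per-piece condition by this summed criterion and actually carrying out the estimate (your proposal only announces that Stirling asymptotics should do it), the proof of discreteness is not complete.
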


We note that for a complementary series representation $\widehat{\pi}_r$, $r\in(-\rho,0)$, all representations $\widehat{\tau}_{r'}$, $r'\in D(r)$, are complementary series representations. If $\widehat{\pi}_r$ is an $A_\frakq(\lambda)$-module, $r\in-\rho+\ZZ$, $r<0$, then so are the representations $\widehat{\tau}_{r'}$, $r'\in D(r)$. The restriction of the $A_\frakq(\lambda)$-modules $\widehat{\pi}_r$ to $G'$ decomposes with both discrete and continuous spectrum and is therefore hard to study by purely algebraic methods.

\begin{remark}\label{rem:RealDiscreteComponentsInUniReps}
For the special case $r'=r+\frac{1}{2}$, i.e. $N=0$, the occurrence of $\widehat{\tau}_{r'}$ in $\widehat{\pi}_r|_{G'}$ was first proved by Speh--Venkataramana~\cite{SV11} for $r\in[-\rho,-\frac{1}{2})$ and generalized by Zhang~\cite{Zha15} to the case $r\in(-\rho,-\frac{1}{2})\cup(-\rho-\NN)$. Later Kobayashi--Speh~\cite[Theorem 1.4]{KS13} proved Theorem~\ref{thm:RealDiscreteComponentsInUniReps} for the case $r\in(-\rho,0)$. The full decomposition of $\widehat{\pi}_r|_{G'}$ for $r\in(-\rho,0)\cup(-\rho-\NN)$ including the continuous spectrum was given by M\"{o}llers--Oshima~\cite{MO12}.
\end{remark}

We first describe the invariant norms on the unitarizable constituents for $r\in\RR$. For this we fix the $L^2$-norm $\|\blank\|_{L^2(S^{n-1})}$ on $L^2(K/M)=L^2(S^{n-1})$ corresponding to the standard Euclidean measure on $S^{n-1}$. For $r\in(-\rho,\rho)$ the norm $\|\blank\|_r$ on $\calE$ given by
$$ \|v\|_r^2 = \sum_{\alpha=0}^\infty b_\alpha(r)\|v_\alpha\|_{L^2(S^{n-1})}^2, \qquad \mbox{for }v=\sum_{\alpha=0}^\infty v_\alpha\in\bigoplus_{\alpha=0}^\infty\calE(\alpha), $$
with
$$ b_\alpha = \frac{\Gamma(\rho-r+\alpha)}{\Gamma(\rho+r+\alpha)} \sim (1+\alpha)^{-2r} $$
turns $(\pi_r)_\HC$ into a unitary $(\frakg,K)$-module. Further, for $r=-\rho-i$ the seminorm $\|\blank\|_r$ on $\calE$ has kernel $\calF(i)$ and turns the quotient $\calT(i)=\calE/\calF(i)$ into a unitary $(\frakg,K)$-module.

Similarly we denote by $\|\blank\|_{r'}'$ the $\tau_{r'}$-invariant norm on $\calE'$ respectively $\calT'(j)$ given by
$$ \|w\|_{r'}'^2 = \sum_{\alpha=0}^\infty b'_{\alpha'}(r')\|w_{\alpha'}\|_{L^2(S^{n-2})}^2, \qquad \mbox{for }w=\sum_{\alpha'=0}^\infty w_{\alpha'}\in\bigoplus_{\alpha'=0}^\infty\calE'(\alpha'), $$
with
$$ b'_{\alpha'} = \frac{\Gamma(\rho'-r'+\alpha')}{\Gamma(\rho'+r'+\alpha')} \sim (1+\alpha')^{-2r'}. $$

We need the following two basic results (see e.g. \cite[Lemma 3.2 and 3.5]{Zha15}):

\begin{lemma}\label{lem:GenCriterionBoundedness}
Let $\calV\subseteq\calE$ be a $K$-invariant subspace and $\calW\subseteq\calE'$ a $K'$-invariant subspace and assume that $\calV$ and $\calW$ are endowed with pre-Hilbert space structures with respect to which the groups $K$ and $K'$ act unitarily. A linear map $T:\calV\to\calW$ is bounded if and only if there exists a constant $C>0$ such that
$$ \sum_{\substack{\alpha\\\calE(\alpha;\alpha')\subseteq\calV}} \|T|_{\calE(\alpha;\alpha')}\|_{\calV\to\calW}^2 \leq C \qquad \forall\,\alpha', $$
where $\|\blank\|_{\calV\to\calW}$ denotes the operator norm with respect to the given pre-Hilbert space structures.
\end{lemma}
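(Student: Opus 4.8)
The plan is to reduce the boundedness of $T$ to a computation of operator norms one $K'$-type at a time. Recall that $T$ is $K'$-equivariant (this is what makes the criterion meaningful, and it holds for all the intertwining operators to which we apply the lemma) and that $\calW\subseteq\calE'$ is automatically $K'$-multiplicity-free since $\calE'$ is; the latter will be crucial for the nontrivial implication. For $\alpha'\in\widehat{K'}$ set $\calW(\alpha')=\calW\cap\calE'(\alpha')$ and $\calV_{\alpha'}=\bigoplus_{\alpha\,:\,\calE(\alpha;\alpha')\subseteq\calV}\calE(\alpha;\alpha')$. Then $\calV=\bigoplus_{\alpha'}\calV_{\alpha'}$ and $\calW=\bigoplus_{\alpha'}\calW(\alpha')$ are orthogonal decompositions, isotypic components of a unitary $K'$-action being mutually orthogonal, and $K'$-equivariance of $T$ gives $T(\calV_{\alpha'})\subseteq\calW(\alpha')$. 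Hence $T$ is block-diagonal for these gradings and $\|T\|=\sup_{\alpha'}\|T|_{\calV_{\alpha'}}\|$, so it suffices to prove
\[
 \|T|_{\calV_{\alpha'}}\|^{2}=\sum_{\alpha\,:\,\calE(\alpha;\alpha')\subseteq\calV}\|T|_{\calE(\alpha;\alpha')}\|^{2}\qquad\text{for every }\alpha'\in\widehat{K'};
\]
the claimed equivalence is then immediate. Throughout one works with finite sums in the given pre-Hilbert spaces, so no completions are needed.

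The inequality $\le$ is routine: if $v=\sum_{\alpha}v_{\alpha}$ with $v_{\alpha}\in\calE(\alpha;\alpha')$, then the $v_{\alpha}$ are mutually orthogonal, and the triangle inequality followed by Cauchy--Schwarz gives $\|Tv\|\le\sum_{\alpha}\|T|_{\calE(\alpha;\alpha')}\|\,\|v_{\alpha}\|\le\bigl(\sum_{\alpha}\|T|_{\calE(\alpha;\alpha')}\|^{2}\bigr)^{1/2}\|v\|$. The content of the lemma, and the step I expect to be the main obstacle, is the reverse inequality, where $K'$-multiplicity-freeness of $\calW$ is indispensable. Assume $\calW(\alpha')\neq0$ (otherwise there is nothing to prove); then $\calW(\alpha')$ is a single copy of the irreducible $K'$-module $\alpha'$, and so is each nonzero $\calE(\alpha;\alpha')$. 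By Schur's lemma every nonzero $K'$-equivariant map between two such copies is a positive scalar times a unitary isomorphism, so after fixing $K'$-invariant inner products one may choose unitary identifications $\calW(\alpha')\cong\alpha'$ and $\calE(\alpha;\alpha')\cong\alpha'$ under which $T|_{\calE(\alpha;\alpha')}$ is multiplication by a scalar $\lambda_{\alpha}$ with $|\lambda_{\alpha}|=\|T|_{\calE(\alpha;\alpha')}\|$. Under the induced unitary identification $\calV_{\alpha'}\cong\bigoplus_{\alpha}\alpha'$, the operator $T|_{\calV_{\alpha'}}$ becomes $(w_{\alpha})_{\alpha}\mapsto\sum_{\alpha}\lambda_{\alpha}w_{\alpha}$, i.e.\ a scalar linear functional on the multiplicity space tensored with $\mathrm{id}_{\alpha'}$; an elementary computation shows its operator norm is the $\ell^{2}$-norm $\bigl(\sum_{\alpha}|\lambda_{\alpha}|^{2}\bigr)^{1/2}$ of the coefficient sequence. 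Concretely, one tests $T$ on the vector corresponding to $(\overline{\lambda_{\alpha}})_{\alpha\in F}$ for a finite index set $F$: its image under $T$ is a nonnegative multiple of a single fixed unit vector $w_{0}\in\calW(\alpha')$, so the contributions of the different $\alpha$ add up constructively and one obtains $\|T|_{\calV_{\alpha'}}\|\ge\bigl(\sum_{\alpha\in F}\|T|_{\calE(\alpha;\alpha')}\|^{2}\bigr)^{1/2}$; letting $F$ exhaust the index set gives $\ge$.

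The crux is precisely this coherent addition inside $\calW(\alpha')$: it works because $\calW(\alpha')$ is \emph{one} copy of $\alpha'$, so that all the maps $T|_{\calE(\alpha;\alpha')}$ have the same target and can be aligned by Schur's lemma. If $\calW$ were not $K'$-multiplicity-free the images could spread over several copies of $\alpha'$, the norm identity would fail, and with it the ``only if'' direction of the lemma (for instance the identity map on an infinite orthogonal sum of copies of a single $\alpha'$ is bounded but violates the summability condition). In the present setting $\calW\subseteq\calE'$ inherits multiplicity-freeness from $\calE'$, so this obstruction does not arise and the argument establishes the equivalence.
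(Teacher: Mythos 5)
There is no internal proof to compare against: the paper states this lemma as a known fact, citing \cite[Lemmas 3.2 and 3.5]{Zha15}, and gives no argument of its own. Judged on its own terms, your proof is correct and is essentially the expected one: orthogonally decompose $\calV=\bigoplus_{\alpha'}\calV_{\alpha'}$ and $\calW=\bigoplus_{\alpha'}\calW(\alpha')$, use $K'$-equivariance to see that $T$ is block diagonal, so that $\|T\|=\sup_{\alpha'}\|T|_{\calV_{\alpha'}}\|$ (finite sums only, so no completion issues), and then compute each block norm exactly: since $\calE'$ --- hence $\calW$ --- is $K'$-multiplicity-free and each nonzero $\calE(\alpha;\alpha')$ is a single copy of $\alpha'$, Schur's lemma lets you write $T|_{\calE(\alpha;\alpha')}=\lambda_\alpha U_\alpha$ with $U_\alpha$ a unitary equivariant identification, and your test vector $\sum_{\alpha\in F}\overline{\lambda_\alpha}U_\alpha^{-1}w_0$ gives the reverse inequality, so $\|T|_{\calV_{\alpha'}}\|^2=\sum_\alpha\|T|_{\calE(\alpha;\alpha')}\|^2$. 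The only implicit steps are harmless: the $\calE(\alpha;\alpha')$ for distinct $\alpha$ are orthogonal because they lie in inequivalent $K$-isotypic components and $K$ acts unitarily, and $\Hom_{K'}(\calE(\alpha;\alpha'),\calW(\alpha'))$ is one-dimensional because these are complex (absolutely) irreducible $K'$-modules; both hold in the setting of the paper.

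One point is worth stating plainly. The lemma as printed speaks of an arbitrary linear map $T:\calV\to\calW$, and for such maps both implications fail: a map sending, for infinitely many $\alpha'$, a unit vector of one $\calE(\alpha_{\alpha'};\alpha')$ to a fixed unit vector of $\calW$ (and vanishing elsewhere) satisfies the summability condition with $C=1$ but is unbounded, and conversely a bounded non-equivariant map can spread over $K'$-types so that the sums are not uniformly controlled. So the hypothesis you ``recall'' --- that $T$ is $K'$-intertwining --- is not literally in the statement but is genuinely needed; it is satisfied by every operator to which the lemma is applied (e.g.\ the intertwiner in the proof of Theorem~\ref{thm:RealDiscreteComponentsInUniReps}) and is the setting of the cited source. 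Your write-up correctly isolates the two ingredients, equivariance (block structure and Schur scalars, uniformly in $\alpha'$) and multiplicity-freeness of $\calW$ (coherent $\ell^2$ addition inside a single copy of $\alpha'$), and your remark about what goes wrong without multiplicity-freeness is accurate.
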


\begin{lemma}\label{lem:ElementarySumEstimate}
Suppose that $\alpha>-1$, $\beta\geq0$ and $\beta-\alpha>1$. Then there exists a constant $C>0$ such that
$$ \sum_{p=0}^\infty \frac{(1+p)^\alpha}{(1+p+q)^\beta} \leq \frac{C}{(1+q)^{\beta-\alpha-1}} \qquad \forall\,q\geq0. $$
\end{lemma}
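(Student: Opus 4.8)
The plan is to prove the estimate by an elementary integral comparison, after splitting the sum over $p$ at the index $p=\lfloor q\rfloor$. For the inner range $0\le p\le q$ I would use the trivial bound $1+p+q\ge 1+q$ (valid since $p\ge 0$), together with $\beta\ge 0$, to get
\[
 \sum_{0\le p\le q}\frac{(1+p)^\alpha}{(1+p+q)^\beta}\ \le\ \frac{1}{(1+q)^\beta}\sum_{0\le p\le q}(1+p)^\alpha .
\]
Since $\alpha>-1$, the partial sum $\sum_{0\le p\le q}(1+p)^\alpha$ is bounded by a constant depending only on $\alpha$ times $(1+q)^{\alpha+1}$: for $\alpha\ge 0$ one bounds each of the at most $1+q$ terms by $(1+q)^\alpha$, while for $-1<\alpha<0$ one compares the decreasing summand with $\int_0^q(1+x)^\alpha\,dx$ and absorbs the $p=0$ term separately. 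This yields a bound $C_1(1+q)^{\alpha+1-\beta}$ for the inner part.

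For the outer range $p>q$ I would instead use $1+p+q\ge 1+p$, so that $(1+p)^\alpha(1+p+q)^{-\beta}\le(1+p)^{\alpha-\beta}$; since $\alpha-\beta<-1$ the tail $\sum_{p>q}(1+p)^{\alpha-\beta}$ converges and, by comparison with $\int_q^{\infty}(1+x)^{\alpha-\beta}\,dx$, is bounded by a constant times $(1+q)^{\alpha-\beta+1}$. This gives a bound $C_2(1+q)^{\alpha+1-\beta}$ for the outer part, and adding the two estimates proves the lemma with $C=C_1+C_2$.

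There is no genuine obstacle here; the argument is entirely elementary. The only points that require a little care are the case distinction on the sign of $\alpha$ in the inner estimate and the correct use of the monotonicity of $x\mapsto(1+x)^{\alpha-\beta}$ in the sum-to-integral comparison for the outer estimate, both of which guarantee that the final constant depends only on $\alpha$ and $\beta$ and not on $q$. One could alternatively quote the standard asymptotics $\sum_{p=0}^{m}(1+p)^\alpha\asymp m^{\alpha+1}$ for $\alpha>-1$ and $\sum_{p\ge m}(1+p)^{\gamma}\asymp m^{\gamma+1}$ for $\gamma<-1$, but carrying out the direct integral comparison keeps the proof self-contained.
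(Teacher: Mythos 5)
Your argument is correct: splitting at $p\approx q$, using $1+p+q\ge 1+q$ together with $\sum_{p\le q}(1+p)^\alpha\lesssim(1+q)^{\alpha+1}$ (valid for $\alpha>-1$) on the inner range, and $1+p+q\ge 1+p$ together with the convergent tail $\sum_{p>q}(1+p)^{\alpha-\beta}\lesssim(1+q)^{\alpha-\beta+1}$ (valid since $\alpha-\beta<-1$, $\beta\ge0$) on the outer range, gives exactly the claimed bound $C(1+q)^{\alpha+1-\beta}$ with $C$ depending only on $\alpha,\beta$. Note that the paper does not prove this lemma at all; it is quoted from Zhang \cite[Lemma 3.5]{Zha15}, so your self-contained integral-comparison proof simply supplies the standard argument behind that reference, and the only points needing care (the sign split on $\alpha$ in the inner sum and the monotone sum-to-integral comparison in the tail, with the harmless discrepancy between the cutoff $\lfloor q\rfloor$ and $q$ itself) are exactly the ones you flag.
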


\begin{proof}[{Proof of Theorem~\ref{thm:RealDiscreteComponentsInUniReps}}]
For $r\in(-\rho,0)$ let $\calV=\calE$ and for $r=-\rho-i\in-\rho-\NN$ let $\calV=\bigoplus_{\alpha=i+1}^\infty\calE(\alpha)$. Let $r'\in D(r)$ then similarly we put $\calW=\calE'$ for $r'\in(-\rho',0)$ and $\calW=\bigoplus_{\alpha'=j+1}^\infty\calE'(\alpha')$ for $r'=-\rho'-j\in-\rho'-\NN$. By Theorem~\ref{thm:RealMultiplicities} there exists (up to scalar) a unique non-zero intertwining operator $T:(\pi_r)_\HC\to(\tau_{r'})_\HC$ with $T(\calV)\subseteq\calW$ and if $r=-\rho-i$ additionally $T|_{\calF(i)}=0$. In our notation
$$ T|_{\calE(\alpha;\alpha')} = t_{\alpha,\alpha'}\cdot\rest|_{\calE(\alpha;\alpha')} $$
with $t_{\alpha,\alpha'}=t^{(3)}_{\alpha,\alpha'}$ for $\alpha'>j$ and $t_{\alpha,\alpha'}=0$ else (see Corollary~\ref{cor:RealRenormalizations} for the definition of $t_{\alpha,\alpha'}^{(3)}$). We show that $T$ is bounded if we endow $\calV$ with the norm $\|\blank\|_r$ and $\calW$ with the norm $\|\blank\|_{r'}$. To apply Lemma~\ref{lem:GenCriterionBoundedness} we calculate
$$ \|T|_{\calE(\alpha;\alpha')}\|_{\calV\to\calW}^2 = t_{\alpha,\alpha'}^2\|\rest|_{\calE(\alpha;\alpha')}\|_{\calE(\alpha;\alpha')\to\calE'(\alpha')}^2\frac{b'_{\alpha'}(r')}{b_\alpha(r)}, $$
where $\|\blank\|_{\calE(\alpha;\alpha')\to\calE'(\alpha')}$ denotes the operator norm with respect to the $L^2$-inner products on $\calE(\alpha;\alpha')\subseteq L^2(S^{n-1})$ and $\calE'(\alpha')\subseteq L^2(S^{n-2})$. Using \eqref{eq:GegenbauerValue0}, \eqref{eq:ExplicitBranchingRealSphericalHarmonics} and \eqref{eq:PlancherelEmbeddingReal} it is easy to see that for $\alpha=\alpha'+2\ell$ we have
\begin{align*}
 \|\rest|_{\calE(\alpha;\alpha')}\|_{\calE(\alpha;\alpha')\to\calE'(\alpha')}^2 &= \frac{2^{2\alpha'+n-3}(\alpha'+2\ell+\frac{n-2}{2})(2\ell)!\Gamma(\alpha'+\ell+\frac{n-2}{2})^2}{\pi(\ell!)^2\Gamma(2\alpha'+2\ell+n-2)}\\
 &= \frac{(\alpha'+2\ell+\frac{n-2}{2})\Gamma(\ell+\frac{1}{2})\Gamma(\alpha'+\ell+\frac{n-2}{2})}{\pi\Gamma(\ell+1)\Gamma(\alpha'+\ell+\frac{n-1}{2})} \sim \frac{(1+\alpha'+\ell)^{\frac{1}{2}}}{(1+\ell)^{\frac{1}{2}}}.
\end{align*}
Then Lemma~\ref{lem:GenCriterionBoundedness} translates into
$$ \sum_{\ell=0}^\infty t_{\alpha'+2\ell,\alpha'}^2 \frac{(1+\alpha'+\ell)^{\frac{1}{2}+2r}}{(1+\ell)^{\frac{1}{2}}} \leq C(1+\alpha')^{2r'}. $$
It is enough to check this for each of the $N+1$ summands of $t_{\alpha'+2\ell,\alpha'}$ in \eqref{eq:EigenvaluesJuhlOperators} where $r'+\rho'=r+\rho+2N$. The $k$'th summand grows of order
$$ \sim(1+\alpha')^{(r'+\rho')-(r+\rho+2k)}(1+\ell)^k(1+\alpha'+\ell)^k $$
and hence the claim follows by Lemma~\ref{lem:ElementarySumEstimate}. Altogether this shows that $T$ induces a bounded $G'$-intertwining operator $\tilde{T}:\widehat{\pi}_r|_{G'}\to\widehat{\tau}_{r'}$ whose adjoint $\tilde{T}^*:\widehat{\tau}_{r'}\to\widehat{\pi}_r|_{G'}$ embeds $\widehat{\tau}_{r'}$ isometrically as a subrepresentation of $\widehat{\pi}_r$ by Schur's Lemma. Multiplicity one follows from the fact that any $G'$-equivariant embedding $S:\widehat{\tau}_{r'}\to\widehat{\pi}_r|_{G'}$ induces an intertwiner $S^*:(\pi_r)_\HC\to(\tau_{r'})_\HC$ between the Harish-Chandra modules by taking the adjoint operator and then passing to $K$-finite vectors. Such an operator is unique (up to scalars) by Theorem~\ref{thm:RealMultiplicities} and since $K$-finite vectors are dense in $\widehat{\pi}_r$ the embedding $S$ is unique (up to scalars). This finishes the proof.
\end{proof}

\subsection{Comparison with singular integral operators}\label{sec:SingularIntegralOperators}

In \cite{KS13,MOO13} a meromorphic family of intertwining operators $A(r,r'):u_{\1,r\nu}|_{G'}\to u'_{\1,r'\nu}$ in the smooth category is constructed as family of singular integral operators. In the compact picture this family is (up to scalars) given by
\begin{multline*}
 A(r,r'): C^\infty(S^{n-1})\to C^\infty(S^{n-2}),\\
 A(r,r')f(y) = \int_{S^{n-1}} (|x'-y|^2+x_n^2)^{-(r'+\rho')}|x_n|^{(r-\rho)+(r'+\rho')} f(x) dx,
\end{multline*}
where $dx$ denotes the Euclidean measure on $S^{n-1}$.

\begin{theorem}
Let $T(r,r'):C^\infty(S^{n-1})\to C^\infty(S^{n-2})$ denote the intertwining operator with spectrum given by the numbers $t_{\alpha,\alpha'}(r,r')$ in \eqref{eq:ExplicitFormulaReal}. Then
$$ A(r,r') = \frac{2^{r-r'+\frac{1}{2}}\pi^{\frac{n-2}{2}}\Gamma(\frac{2r+2r'+1}{4})\Gamma(\frac{2r-2r'+1}{4})}{\Gamma(r+\frac{n-1}{2})}\cdot T(r,r'). $$
\end{theorem}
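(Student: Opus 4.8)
The plan is to show that $A(r,r')$ and $T(r,r')$ are proportional as intertwining operators and to determine the proportionality factor by evaluating both sides on the constant function $\1 \in \calE(0) = \calE(0;0)$. First I would observe that $A(r,r')$, being a continuous $G'$-intertwining operator $\pi_r \to \tau_{r'}$, restricts to a $(\frakg',K')$-intertwining operator $(\pi_r)_\HC \to (\tau_{r'})_\HC$: every $K$-finite vector is $K'$-finite, hence smooth and in the domain, and $A(r,r')$ commutes with $K'$, so its images are $K'$-finite elements of $C^\infty(S^{n-2})$ and thus lie in $(\tau_{r'})_\HC$. By the analysis of Section~\ref{sec:RealKTypes}, $A(r,r')|_{\calE(\alpha;\alpha')}$ is therefore a scalar multiple $s_{\alpha,\alpha'}(r,r')\cdot R_{\alpha,\alpha'}$ of the restriction isomorphism when $\alpha-\alpha' \in 2\NN$ and vanishes otherwise, and Theorem~\ref{thm:RealCharacterizationIntertwiners} shows that the scalars $s_{\alpha,\alpha'}(r,r')$ satisfy the recurrences \eqref{eq:RealRel1} and \eqref{eq:RealRel2}; the same holds for the $t_{\alpha,\alpha'}(r,r')$ by Proposition~\ref{prop:RealExplicitEigenvalues}. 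For $(r,r') \in \CC^2\setminus\Leven$ the solution space of \eqref{eq:RealRel1}--\eqref{eq:RealRel2} is one-dimensional by Theorem~\ref{thm:RealMultiplicities}~(1), so $s_{\alpha,\alpha'}(r,r') = c(r,r')\,t_{\alpha,\alpha'}(r,r')$ with $c(r,r')$ independent of $(\alpha,\alpha')$. Since $t_{0,0}\equiv 1$ and $R_{0,0}$ sends the constant $\1$ on $S^{n-1}$ to the constant $\1$ on $S^{n-2}$, we get $A(r,r')\1 = s_{0,0}(r,r')\cdot\1$ and hence
\[
 c(r,r') = \int_{S^{n-1}} \bigl(|x'-y|^2 + x_n^2\bigr)^{-(r'+\rho')}\,|x_n|^{(r-\rho)+(r'+\rho')}\,dx
\]
for any fixed $y \in S^{n-2}$.

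It remains to evaluate this integral. For $y \in S^{n-2}\subseteq S^{n-1}$ one has $|x'-y|^2 + x_n^2 = |x|^2 - 2\langle x',y\rangle + |y|^2 = 2(1-\langle x',y\rangle)$, and with $\mu = (r-\rho)+(r'+\rho') = r+r'-\tfrac12$ I would use the latitude decomposition of $S^{n-1}$ with respect to $e_n$, namely $x = \cos\psi\,e_n + \sin\psi\,v$ with $v \in S^{n-2}\subseteq e_n^\perp$ and $dx = \sin^{n-2}\psi\,d\psi\,dv$, so that $x_n = \cos\psi$ and $\langle x',y\rangle = \sin\psi\,\langle v,y\rangle$. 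The inner integral $\int_{S^{n-2}}(1-\sin\psi\,\langle v,y\rangle)^{-(r'+\rho')}\,dv$ is evaluated by the Funk--Hecke formula (reducing it, via a further latitude decomposition of $S^{n-2}$, to a Beta integral times a Gauss hypergeometric function ${}_2F_1$ in $\sin\psi$); expanding that ${}_2F_1$ in powers of $\sin\psi$ and integrating term by term against $\sin^{n-2}\psi\,|\cos\psi|^\mu$ produces Beta integrals of the form $\int_0^\pi \sin^{n-2+2k}\psi\,|\cos\psi|^\mu\,d\psi = B(\tfrac{n-1}{2}+k,\tfrac{\mu+1}{2})$, and resumming by Gauss's summation theorem together with the Legendre duplication formula for $\Gamma$ collapses the answer to $\dfrac{2^{r-r'+1/2}\pi^{(n-2)/2}\Gamma(\frac{2r+2r'+1}{4})\Gamma(\frac{2r-2r'+1}{4})}{\Gamma(r+\frac{n-1}{2})}$ (note that the factor $\Gamma(\tfrac{2r+2r'+1}{4}) = \Gamma(\tfrac{\mu+1}{2})$ already arises from the $\psi$-integral, which explains its appearance). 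Getting this constant exactly right, and verifying convergence and the correct normalization of the defining integral in a suitable parameter range before the continuation, is the step I expect to be the main obstacle.

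Finally, both $A(r,r')$ and $T(r,r')$ depend meromorphically on $(r,r') \in \CC^2$, and so does $c(r,r')$; hence the identity $A(r,r') = c(r,r')\,T(r,r')$, established on the dense set $\CC^2\setminus\Leven$ at the level of Harish--Chandra modules and then on all of $C^\infty(S^{n-1})$ by density of the $K$-finite vectors together with continuity of both operators, holds wherever the two families are defined. This gives the asserted formula.
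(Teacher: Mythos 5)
Your proposal is correct, and its first half (proportionality plus evaluation on the spherical vector) is essentially the paper's argument: the paper deduces $A(r,r')=c(r,r')T(r,r')$ from generic multiplicity one (Theorem~\ref{thm:RealMultiplicities}~(1) together with Corollary~\ref{cor:HCtoInftyReal}) and meromorphy, and then computes $c(r,r')$ by setting $f\equiv1$, exactly as you do; your variant of phrasing the proportionality at the level of Harish--Chandra modules (the scalars of $A(r,r')$ satisfy \eqref{eq:RealRel1}--\eqref{eq:RealRel2}, and the solution space is one-dimensional off $\Leven$) is an equivalent reformulation. Where you genuinely diverge is the evaluation of the scalar integral: the paper passes to the non-compact picture by stereographic projection and quotes the resulting integral over $\RR^{n-1}$ from \cite[Proposition 7.4]{KS13}, whereas you stay on the sphere, use $|x'-y|^2+x_n^2=2(1-\langle x',y\rangle)$ and a double latitude decomposition, expand the inner integral into ${}_2F_1\bigl(\tfrac{a}{2},\tfrac{a+1}{2};\tfrac{n-1}{2};\sin^2\psi\bigr)$ with $a=r'+\rho'$, integrate term by term against $|\cos\psi|^{\mu}\sin^{n-2}\psi$ with $\mu=r+r'-\tfrac12$, and finish with Gauss's theorem and Legendre duplication. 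I checked this route: it yields ${}_2F_1\bigl(\tfrac{a}{2},\tfrac{a+1}{2};\tfrac{n+\mu}{2};1\bigr)=\Gamma(\tfrac{n+\mu}{2})\Gamma(\tfrac{2r-2r'+1}{4})/\bigl(\Gamma(\tfrac{n+2r+1}{4})\Gamma(\tfrac{n+2r-1}{4})\bigr)$, and the duplication formula collapses the denominator to $2^{\frac{3-n-2r}{2}}\sqrt{\pi}\,\Gamma(r+\tfrac{n-1}{2})$, giving exactly the constant in the statement, so the step you flag as the main obstacle does go through; one only needs the computation in the region where the binomial expansion, the term-by-term integration and Gauss summation are legitimate (e.g. $\Re\mu>-1$, $\Re(r-r')>-\tfrac12$, $\Re a$ small), after which meromorphic continuation gives the identity everywhere, as you indicate. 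The trade-off is that your argument is self-contained (no appeal to \cite{KS13} for the integral) at the cost of a longer classical special-function computation, while the paper's stereographic reduction is shorter but outsources the analytic work.
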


\begin{proof}
Since by Theorem~\ref{thm:RealMultiplicities}~(1) and Corollary~\ref{cor:HCtoInftyReal} we generically have $\dim\Hom_{G'}(\pi_r|_{G'},\tau_{r'})=1$ and both $A(r,r')$ and $T(r,r')$ are meromorphic in $r,r'\in\CC$ there exists a scalar meromorphic function $c(r,r')$ with $A(r,r')=c(r,r')T(r,r')$. To determine $c(r,r')$ we put $f\equiv 1$:
$$ c(r,r') = \int_{S^{n-1}} (|x'-y|^2+x_n^2)^{-(r'+\frac{n-2}{2})}|x_n|^{r+r'-\frac{1}{2}} dx. $$
Using the stereographic projection $x=(\frac{1-|z|^2}{1+|z|^2},\frac{2z}{1+|z|^2})$, $z\in\RR^{n-1}$, the measure transforms by $dx=2^{n-1}(1+|z|^2)^{-(n-1)}dz$ where $dz$ is the standard Lebesgue measure on $\RR^{n-1}$. Writing $y=(\frac{1-|w|^2}{1+|w|^2},\frac{2w}{1+|w|^2})$ with $w\in\RR^{n-2}$ we find
\begin{multline*}
 c(r,r') = 2^{r-r'+\frac{1}{2}}(1+|w|^2)^{r'+\frac{n-2}{2}} \int_{\RR^{n-1}} (|z'-w|^2+z_{n-1}^2)^{-(r'+\frac{n-2}{2})}\\
 |z_{n-1}|^{r+r'-\frac{1}{2}} (1+|z|^2)^{-(r+\frac{n-1}{2})} dz,
\end{multline*}
where we have written $z=(z',z_{n-1})$. This integral is evaluated in \cite[Proposition 7.4]{KS13} and we obtain
$$ c(r,r') = \frac{2^{r-r'+\frac{1}{2}}\pi^{\frac{n-2}{2}}\Gamma(\frac{2r+2r'+1}{4})\Gamma(\frac{2r-2r'+1}{4})}{\Gamma(r+\frac{n-1}{2})} $$
which shows the claim.
\end{proof}

\begin{remark}
The special value of the intertwiners $A(r,r')$ at the spherical vector $f\equiv1$ was also calculated in \cite{MO13} by a different method.
\end{remark}

\begin{remark}
The action of $A(r,r')$ on $K$-finite vectors was also computed by Kobayashi--Speh~\cite[Lemma 7.7]{KS13}. However, their parametrization of $K$-finite vectors differs from our parametrization by $(\alpha,\alpha')$, and therefore it is non-trivial to see the equivalence of their identity and our identity \eqref{eq:ExplicitFormulaReal}.
\end{remark}

%
%

\section{Rank one unitary groups}\label{sec:ExampleComplex}

We indicate in this section how the calculations in Section~\ref{sec:ExampleReal} can be generalized to rank one unitary groups and state the corresponding results. Let $n\geq2$ and consider the indefinite unitary group $G=\upU(1,n)$ of $(n+1)\times(n+1)$ complex matrices leaving the standard Hermitian form on $\CC^{n+1}$ of signature $(1,n)$ invariant. The subgroup $G'\subseteq G$ of matrices fixing the last standard basis vector $e_{n+1}$ is isomorphic to $\upU(1,n-1)$.

\subsection{$K$-types}\label{sec:ComplexKTypes}

We fix $K=\upU(1)\times \upU(n)$ and choose
$$ H = \left(\begin{array}{ccc}0&1&\\1&0&\\&&\0_{n-1}\end{array}\right) $$
so that $P=MAN$ with $M=\Delta \upU(1)\times \upU(n-1)$ where $\Delta \upU(1)=\{\diag(x,x):x\in \upU(1)\}$. Note that $\rho=n$. Then $K$ acts transitively on the unit sphere $S^{2n-1}\subseteq\CC^n$ via $\diag(\lambda,k)\cdot z=\lambda^{-1}kz$, $\lambda\in \upU(1)$, $k\in \upU(n)$, $z\in S^{2n-1}$, and $M$ is the stabilizer subgroup of the first standard basis vector $e_1$ whence $K/M\cong S^{2n-1}$. The subgroup $G'=\upU(1,n-1)$ is embedded into $G$ such that $K'=\upU(1)\times \upU(n-1)$ and $P'=G'\cap P=M'A'N'$ with $A'=A$ and $M'=\Delta \upU(1)\times \upU(n-2)$. Then $K'/M'=S^{2n-3}\subseteq\CC^{n-1}$, viewed as the codimension two submanifold in $K/M=S^{2n-1}\subseteq\CC^n$ given by $z_n=0$. Further we have $\rho'=n-1$.

Let $\xi=\1$, $\xi'=\1$ be the trivial representations of $M$ and $M'$ and abbreviate $\pi_r=\pi_{\xi,r}$ and $\tau_{r'}=\tau_{\xi',r'}$. Then as $K$-modules resp. $K'$-modules we have
$$ \calE =\! \bigoplus_{\alpha_1,\alpha_2=0}^\infty\! \underbrace{e^{i(\alpha_1-\alpha_2)\theta}\boxtimes\,\calH^{\alpha_1,\alpha_2}(\CC^n)}_{\calE(\alpha)=}, \quad \calE' =\! \bigoplus_{\alpha_1',\alpha_2'=0}^\infty\! \underbrace{e^{i(\alpha_1'-\alpha_2')\theta}\boxtimes\,\calH^{\alpha_1',\alpha_2'}(\CC^{n-1})}_{\calE'(\alpha')=}, $$
where we abbreviate $\alpha=(\alpha_1,\alpha_2)$ and $\alpha'=(\alpha_1',\alpha_2')$. Hence, \eqref{eq:MF1} is satisfied. Further, each $K$-type decomposes by \eqref{eq:BranchingComplexSphericalHarmonics} into $K'$-types as follows:
$$ \left.\left(e^{i(\alpha_1-\alpha_2)\theta}\boxtimes\,\calH^{\alpha_1,\alpha_2}(\CC^n)\right)\right|_{K'} = \bigoplus_{\substack{0\leq\alpha_1'\leq\alpha_1\\0\leq\alpha_2'\leq\alpha_2}} \left(e^{i(\alpha_1-\alpha_2)\theta}\boxtimes\,\calH^{\alpha_1',\alpha_2'}(\CC^{n-1})\right), $$
so that \eqref{eq:MF2} holds. Comparing the characters of the $\upU(1)$-factor of $K'$ we find that $\Hom_{K'}(\calE(\alpha)|_{K'},\calE'(\alpha'))\neq0$ if and only if $\alpha_1-\alpha_2=\alpha_1'-\alpha_2'$. In this case formulas \eqref{eq:ExplicitBranchingComplexSphericalHarmonics} and \eqref{eq:JacobiValue1} show that the restriction operator
$$ R_{\alpha,\alpha'}=\rest|_{\calE(\alpha;\alpha')}:\calE(\alpha;\alpha')\to\calE'(\alpha') $$
is an isomorphism. Hence the restriction $T_{\alpha,\alpha'}=T|_{\calE(\alpha;\alpha')}$ of a $K'$-intertwining operator $T:\calE\to\calE'$ is given by $T_{\alpha,\alpha'}=t_{\alpha,\alpha'}R_{\alpha,\alpha'}$ for $\alpha_1-\alpha_2=\alpha_1'-\alpha_2'$ and $T_{\alpha,\alpha'}=0$ else.

\subsection{Proportionality constants}

The eigenvalues of the spectrum generating operator on the $K$-types are given by (see \cite[Section 3.b]{BOO96})
\begin{align*}
 \sigma_{(\alpha_1,\alpha_2)} &= 2\alpha_1(\alpha_1+n-1)+2\alpha_2(\alpha_2+n-1),\\
 \sigma_{(\alpha_1',\alpha_2')}' &= 2\alpha_1'(\alpha_1'+n-2)+2\alpha_2'(\alpha_2'+n-2).
\end{align*}
We write $\fraks_\CC=\fraks+J\fraks=\fraks_++\fraks_-$ and identify $\fraks_\pm\cong\CC^n$ via
$$ \CC^n \to \fraks_\pm, \quad w \mapsto X_{w,\pm}=\left(\begin{array}{cc}0&w^*\mp Jiw^*\\w\pm Jiw&\0_n\end{array}\right). $$
Then $\fraks_\pm'\simeq\CC^{n-1}$, embedded in $\CC^n$ as the first $n-1$ coordinates. Since both $\fraks_\pm'$ are multiplicity-free $K'$-modules, \eqref{eq:MF3} holds (with $\fraks'_\CC$ replaced by $\fraks'_\pm$) and we can use Corollary~\ref{cor:CharacterizationIntertwinersScalar}. The cocycle $\omega$ is given by
$$ \omega(X_{w,+})(z) = w^*z, \quad w\in\fraks_+, \qquad \omega(X_{w,-})(z) = z^*w, \quad w\in\fraks_-, $$
where $z\in S^{2n-1}\subseteq\CC^n$.

We note by \eqref{eq:DecompositionOfMultComplexSphericalHarmonics} that if $X\in\fraks_+$ then the multiplication map $m(\omega(X))$ maps the $K$-type $\calE(\alpha_1,\alpha_2)$ into the $K$-types $\calE(\alpha_1+1,\alpha_2)$ and $\calE(\alpha_1,\alpha_2-1)$ and if $X\in\fraks_-$ into the $K$-types $\calE(\alpha_1,\alpha_2+1)$ and $\calE(\alpha_1-1,\alpha_2)$. Because of similar considerations for $\fraks_+'$ and $\fraks_-'$ the equivalence relation $(\alpha,\alpha')\leftrightarrow(\beta,\beta')$ is given by
\begin{align*}
 ((\alpha_1,\alpha_2);(\alpha_1',\alpha_2'))\leftrightarrow(\beta;(\alpha_1'+1,\alpha_2')) &\Leftrightarrow \beta\in\{(\alpha_1+1,\alpha_2),(\alpha_1,\alpha_2-1)\},\\
 ((\alpha_1,\alpha_2);(\alpha_1',\alpha_2'))\leftrightarrow(\beta;(\alpha_1'-1,\alpha_2')) &\Leftrightarrow \beta\in\{(\alpha_1-1,\alpha_2),(\alpha_1,\alpha_2+1)\},\\
 ((\alpha_1,\alpha_2);(\alpha_1',\alpha_2'))\leftrightarrow(\beta;(\alpha_1',\alpha_2'+1)) &\Leftrightarrow \beta\in\{(\alpha_1-1,\alpha_2),(\alpha_1,\alpha_2+1)\},\\
 ((\alpha_1,\alpha_2);(\alpha_1',\alpha_2'))\leftrightarrow(\beta;(\alpha_1',\alpha_2'-1)) &\Leftrightarrow \beta\in\{(\alpha_1+1,\alpha_2),(\alpha_1,\alpha_2-1)\}.
\end{align*}
Now, Lemma~\ref{lem:CalculationOfLambdas} yields the following equations for $\lambda_{\alpha,\alpha'}^{\beta,\beta'}$: For $\beta'=(\alpha_1'+1,\alpha_2')$ we obtain
$$
\arraycolsep=1.4pt\def\arraystretch{1.5}
\begin{array}{rcrcl}
 \lambda_{(\alpha_1,\alpha_2),(\alpha_1',\alpha_2')}^{(\alpha_1+1,\alpha_2),(\alpha_1'+1,\alpha_2')}&+&\lambda_{(\alpha_1,\alpha_2),(\alpha_1',\alpha_2')}^{(\alpha_1,\alpha_2-1),(\alpha_1'+1,\alpha_2')} &=& 1,\\
 (2\alpha_1+n)\lambda_{(\alpha_1,\alpha_2),(\alpha_1',\alpha_2')}^{(\alpha_1+1,\alpha_2),(\alpha_1'+1,\alpha_2')}&-&(2\alpha_2+n-2)\lambda_{(\alpha_1,\alpha_2),(\alpha_1',\alpha_2')}^{(\alpha_1,\alpha_2-1),(\alpha_1'+1,\alpha_2')} &=& 2\alpha_1'+n,
\end{array}
$$
which gives
\begin{equation*}
 \lambda_{(\alpha_1,\alpha_2),(\alpha_1',\alpha_2')}^{(\alpha_1+1,\alpha_2),(\alpha_1'+1,\alpha_2')} = \frac{\alpha_1'+\alpha_2+n-1}{\alpha_1+\alpha_2+n-1}, \quad \lambda_{(\alpha_1,\alpha_2),(\alpha_1',\alpha_2')}^{(\alpha_1,\alpha_2-1),(\alpha_1'+1,\alpha_2')} = \frac{\alpha_1-\alpha_1'}{\alpha_1+\alpha_2+n-1},
\end{equation*}
for $\beta'=(\alpha_1'-1,\alpha_2')$ we get
$$
\arraycolsep=1.4pt\def\arraystretch{1.5}
\begin{array}{rcrcl}
 \lambda_{(\alpha_1,\alpha_2),(\alpha_1',\alpha_2')}^{(\alpha_1-1,\alpha_2),(\alpha_1'-1,\alpha_2')}&+&\lambda_{(\alpha_1,\alpha_2),(\alpha_1',\alpha_2')}^{(\alpha_1,\alpha_2+1),(\alpha_1'-1,\alpha_2')} &=& 1,\\
 (2\alpha_1+n-2)\lambda_{(\alpha_1,\alpha_2),(\alpha_1',\alpha_2')}^{(\alpha_1-1,\alpha_2),(\alpha_1'-1,\alpha_2')}&-&(2\alpha_2+n)\lambda_{(\alpha_1,\alpha_2),(\alpha_1',\alpha_2')}^{(\alpha_1,\alpha_2+1),(\alpha_1'-1,\alpha_2')} &=& 2\alpha_1'+n-4,
\end{array}
$$
implying
\begin{equation*}
 \lambda_{(\alpha_1,\alpha_2),(\alpha_1',\alpha_2')}^{(\alpha_1-1,\alpha_2),(\alpha_1'-1,\alpha_2')} = \frac{\alpha_1'+\alpha_2+n-2}{\alpha_1+\alpha_2+n-1}, \quad \lambda_{(\alpha_1,\alpha_2),(\alpha_1',\alpha_2')}^{(\alpha_1,\alpha_2+1),(\alpha_1'-1,\alpha_2')} = \frac{\alpha_1-\alpha_1'+1}{\alpha_1+\alpha_2+n-1},
\end{equation*}
and similarly we find
\begin{align*}
 \lambda_{(\alpha_1,\alpha_2),(\alpha_1',\alpha_2')}^{(\alpha_1,\alpha_2+1),(\alpha_1',\alpha_2'+1)} &= \frac{\alpha_1+\alpha_2'+n-1}{\alpha_1+\alpha_2+n-1}, & \lambda_{(\alpha_1,\alpha_2),(\alpha_1',\alpha_2')}^{(\alpha_1-1,\alpha_2),(\alpha_1',\alpha_2'+1)} &= \frac{\alpha_2-\alpha_2'}{\alpha_1+\alpha_2+n-1},\\
 \lambda_{(\alpha_1,\alpha_2),(\alpha_1',\alpha_2')}^{(\alpha_1,\alpha_2-1),(\alpha_1',\alpha_2'-1)} &= \frac{\alpha_1+\alpha_2'+n-2}{\alpha_1+\alpha_2+n-1}, & \lambda_{(\alpha_1,\alpha_2),(\alpha_1',\alpha_2')}^{(\alpha_1+1,\alpha_2),(\alpha_1',\alpha_2'-1)} &= \frac{\alpha_2-\alpha_2'+1}{\alpha_1+\alpha_2+n-1}.
\end{align*}
We remark that the constants $\lambda_{\alpha,\alpha'}^{\beta,\beta'}$ can in this case also be obtained by computing the action of $\omega(X)$ on explicit $K$-finite vectors using \eqref{eq:ExplicitBranchingComplexSphericalHarmonics} and recurrence relations for the Jacobi polynomials. With the explicit form of the constants $\lambda_{\alpha,\alpha'}^{\beta,\beta'}$ Corollary~\ref{cor:CharacterizationIntertwinersScalar} now provides the following characterization of symmetry breaking operators:

\begin{theorem}\label{thm:ComplexCharacterizationIntertwiners}
An operator $T:\calE\to\calE'$ is intertwining for $\pi_r$ and $\tau_{r'}$ if and only if
$$ T|_{\calE(\alpha;\alpha')} = \begin{cases}t_{\alpha,\alpha'}\cdot\rest|_{\calE(\alpha;\alpha')} & \mbox{for $\alpha_1-\alpha_2=\alpha_1'-\alpha_2'$,}\\0 & \mbox{else,}\end{cases} $$
with numbers $t_{\alpha,\alpha'}$ satisfying the following four relations:
\begin{multline}
 (\alpha_1+\alpha_2+n-1)(r'+2\alpha_1'+n-1)t_{(\alpha_1,\alpha_2),(\alpha_1',\alpha_2')}\\
 = (\alpha_1'+\alpha_2+n-1)(r+2\alpha_1+n)t_{(\alpha_1+1,\alpha_2),(\alpha_1'+1,\alpha_2')}\\
 + (\alpha_1-\alpha_1')(r-2\alpha_2-n+2)t_{(\alpha_1,\alpha_2-1),(\alpha_1'+1,\alpha_2')},\label{eq:ComplexRel1}
\end{multline}
\vspace{-.5cm}
\begin{multline}
 (\alpha_1+\alpha_2+n-1)(r'-2\alpha_1'-n+3)t_{(\alpha_1,\alpha_2),(\alpha_1',\alpha_2')}\\
 = (\alpha_1'+\alpha_2+n-2)(r-2\alpha_1-n+2)t_{(\alpha_1-1,\alpha_2),(\alpha_1'-1,\alpha_2')}\\
 + (\alpha_1-\alpha_1'+1)(r+2\alpha_2+n)t_{(\alpha_1,\alpha_2+1),(\alpha_1'-1,\alpha_2')},\label{eq:ComplexRel2}
\end{multline}
\vspace{-.5cm}
\begin{multline}
 (\alpha_1+\alpha_2+n-1)(r'+2\alpha_2'+n-1)t_{(\alpha_1,\alpha_2),(\alpha_1',\alpha_2')}\\
 = (\alpha_1+\alpha_2'+n-1)(r+2\alpha_2+n)t_{(\alpha_1,\alpha_2+1),(\alpha_1',\alpha_2'+1)}\\
 + (\alpha_2-\alpha_2')(r-2\alpha_1-n+2)t_{(\alpha_1-1,\alpha_2),(\alpha_1',\alpha_2'+1)},\label{eq:ComplexRel3}
\end{multline}
\vspace{-.5cm}
\begin{multline}
 (\alpha_1+\alpha_2+n-1)(r'-2\alpha_2'-n+3)t_{(\alpha_1,\alpha_2),(\alpha_1',\alpha_2')}\\
 = (\alpha_1+\alpha_2'+n-2)(r-2\alpha_2-n+2)t_{(\alpha_1,\alpha_2-1),(\alpha_1',\alpha_2'-1)}\\
 + (\alpha_2-\alpha_2'+1)(r+2\alpha_1+n)t_{(\alpha_1+1,\alpha_2),(\alpha_1',\alpha_2'-1)}.\label{eq:ComplexRel4}
\end{multline}
\end{theorem}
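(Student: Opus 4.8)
The plan is to apply the general machinery of Section~\ref{sec:GeneralTheory}, specifically Corollary~\ref{cor:CharacterizationIntertwinersScalar}, to the pair $(U(1,n),U(1,n-1))$ with $\xi=\xi'=\1$, in exactly the same way that Theorem~\ref{thm:RealCharacterizationIntertwiners} was obtained in the orthogonal case. First I would verify the standing hypotheses of Sections~\ref{sec:RelatingKKprimeTypes}--\ref{sec:ScalarIdentities}. The decompositions of $\calE$ and $\calE'$ into complex spherical harmonics show that both are multiplicity-free as $K$- resp.\ $K'$-modules, and the branching law \eqref{eq:BranchingComplexSphericalHarmonics} shows that each $K'$-type occurs at most once in each $K$-type. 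Since $\xi'=\xi|_{M'}=\1$, the restriction map $\rest$ is defined, and \eqref{eq:ExplicitBranchingComplexSphericalHarmonics} together with the value \eqref{eq:JacobiValue1} of a Jacobi polynomial at $1$ shows that $R_{\alpha,\alpha'}=\rest|_{\calE(\alpha;\alpha')}$ is an isomorphism precisely when $\alpha_1-\alpha_2=\alpha_1'-\alpha_2'$, so one may take $D_{\alpha,\alpha'}=1$ throughout. The chosen $H$ lies in $\fraka'$ and equals the corresponding $H'$ for $G'$, so Lemma~\ref{lem:CalculationOfLambdas} applies. Finally, \eqref{eq:AssumptionTensorProd} holds because $\fraks'_\CC=\fraks'_+\oplus\fraks'_-$ and each summand, tensored with an irreducible $K'$-type and restricted, decomposes multiplicity-freely, again by \eqref{eq:DecompositionOfMultComplexSphericalHarmonics}.

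Next I would pin down the combinatorics. The explicit form of the cocycle $\omega(X_{w,\pm})$ combined with \eqref{eq:DecompositionOfMultComplexSphericalHarmonics} gives the adjacency relation $(\alpha;\alpha')\leftrightarrow(\beta;\beta')$ recorded just above the theorem: for each of the four shifts of $\alpha'$ (namely $\alpha_1'\mapsto\alpha_1'\pm1$ or $\alpha_2'\mapsto\alpha_2'\pm1$) there are exactly two admissible $\beta$. Because each adjacency class has exactly two elements, the two linear equations supplied by Lemma~\ref{lem:CalculationOfLambdas} --- the normalization $\sum_\beta\lambda_{\alpha,\alpha'}^{\beta,\beta'}=1$ from $H=H'$, and $\sum_\beta(\sigma_\beta-\sigma_\alpha)\lambda_{\alpha,\alpha'}^{\beta,\beta'}=\sigma_{\beta'}'-\sigma_{\alpha'}'+2(\rho-\rho')$ from the fact that $\rest$ intertwines when $r+\rho=r'+\rho'$ (here $\rho-\rho'=1$) --- determine each $\lambda_{\alpha,\alpha'}^{\beta,\beta'}$ uniquely. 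Solving these $2\times 2$ systems yields the four families of proportionality constants listed in the text; as a cross-check one may, as in the orthogonal case, recompute them by applying $\omega(X_{w,\pm})$ to explicit $K$-finite vectors from \eqref{eq:ExplicitBranchingComplexSphericalHarmonics} and invoking the three-term recurrences for Jacobi polynomials.

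Finally I would substitute the eigenvalues $\sigma_{(\alpha_1,\alpha_2)}$, $\sigma_{(\alpha_1',\alpha_2')}'$ and the four families of constants $\lambda_{\alpha,\alpha'}^{\beta,\beta'}$ into the scalar identity \eqref{eq:CharacterizationIntertwinersScalar}, once for each of the four shifts of $\alpha'$. Clearing the common factor and multiplying through by $\alpha_1+\alpha_2+n-1$ reduces each to one of \eqref{eq:ComplexRel1}--\eqref{eq:ComplexRel4}; conversely, Corollary~\ref{cor:CharacterizationIntertwinersScalar} guarantees that these relations are both necessary and sufficient for $T$ (acting on $\calE(\alpha;\alpha')$ by $t_{\alpha,\alpha'}\cdot\rest$, and by $0$ on the remaining $K'$-types) to be intertwining for $\pi_r$ and $\tau_{r'}$. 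The only genuinely laborious point is the last algebraic simplification: each substitution produces a combination of four quadratic polynomials in $(\alpha_1,\alpha_2,\alpha_1',\alpha_2',r,r')$ which must be shown to collapse to the stated product form, and one must check the degenerate cases where one $\lambda$ vanishes (e.g.\ $\alpha_1=\alpha_1'$ or $\alpha_2=\alpha_2'$), in which the corresponding relation simply loses a term. Everything else is a direct transcription of the orthogonal-group argument.
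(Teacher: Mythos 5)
Your proposal is correct and follows essentially the same route as the paper: verify the multiplicity-free hypotheses and that $\rest|_{\calE(\alpha;\alpha')}$ is an isomorphism via \eqref{eq:ExplicitBranchingComplexSphericalHarmonics} and \eqref{eq:JacobiValue1}, read off the adjacency relation from the cocycle and \eqref{eq:DecompositionOfMultComplexSphericalHarmonics}, solve the $2\times2$ systems from Lemma~\ref{lem:CalculationOfLambdas} for the proportionality constants, and then specialize Corollary~\ref{cor:CharacterizationIntertwinersScalar}, clearing the factor $2$ and multiplying by $\alpha_1+\alpha_2+n-1$ to obtain \eqref{eq:ComplexRel1}--\eqref{eq:ComplexRel4}. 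This is exactly the paper's argument in Sections~\ref{sec:ComplexKTypes} and the subsequent computation of the $\lambda$'s.
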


\subsection{Multiplicities}

The $(\frakg,K)$-module $(\pi_r)_\HC$ is reducible if and only if $r\in\pm(\rho+2\NN)$. More precisely, for $r=-\rho-2i$ the module $(\pi_r)_\HC$ contains a unique non-trivial finite-dimensional $(\frakg,K)$-submodule
$$ \calF(i)=\bigoplus_{\alpha_1,\alpha_2=0}^i\calE(\alpha_1,\alpha_2) $$
as well as the two non-trivial infinite-dimensional submodules
$$ \calF_+(i)=\bigoplus_{\alpha_1=0}^\infty\bigoplus_{\alpha_2=0}^i\calE(\alpha_1,\alpha_2) \qquad \calF_-(i)=\bigoplus_{\alpha_1=0}^i\bigoplus_{\alpha_2=0}^\infty\calE(\alpha_1,\alpha_2). $$
Then the composition series of $(\pi_r)_\HC$ is given by
$$ \{0\}\subseteq\calF(i)\subseteq\calF_+(i)\subseteq(\calF_+(i)+\calF_-(i))\subseteq\calE $$
(or equivalently with $\calF_+$ and $\calF_-$ switched). Hence the quotients
$$ \calT(i)=\calE/(\calF_+(i)+\calF_-(i)) \qquad \mbox{and} \qquad \calT_\pm(i)=\calF_\pm(i)/\calF(i) $$
are irreducible and infinite-dimensional. Similarly we denote by $\calF'(j)$, $\calF'_\pm(j)$ resp. $\calT'(j)$, $\calT'_\pm(j)$ the subrepresentations resp. -quotients of $(\tau_{r'})_\HC$ for $r'=-\rho'-2j$, $j\in\NN$.

Define
$$ L=\{(r,r')\in\CC^2:r=-\rho-2i,r'=-\rho'-2j,0\leq j\leq i\}. $$

\begin{theorem}\label{thm:ComplexMultiplicities}
\begin{enumerate}
\item The multiplicities between spherical principal series of $G$ and $G'$ are given by
$$ m((\pi_r)_\HC,(\tau_{r'})_\HC) = \begin{cases}1&\mbox{for $(r,r')\in\CC^2\setminus L$,}\\2&\mbox{for $(r,r')\in L$.}\end{cases} $$
\item For $i,j\in\NN$ the multiplicities $m(\calV,\calW)$ between subquotients are given by
\vspace{.2cm}
\begin{center}
 \begin{tabular}{c|cccc}
  \diagonal{.1em}{.85cm}{$\calV$}{$\calW$} & $\calF'(j)$ & $\calT_+'(j)$ & $\calT_-'(j)$ & $\calT'(j)$ \\
  \hline
  $\calF(i)$ & $1$ & $0$ & $0$ & $0$\\
  $\calT_+(i)$ & $0$ & $1$ & $0$ & $0$\\
  $\calT_-(i)$ & $0$ & $0$ & $1$ & $0$\\
  $\calT(i)$ & $0$ & $0$ & $0$ & $1$\\
  \multicolumn{5}{c}{for $j\leq i$,}
 \end{tabular}
 \qquad
 \begin{tabular}{c|cccc}
  \diagonal{.1em}{.85cm}{$\calV$}{$\calW$} & $\calF'(j)$ & $\calT_+'(j)$ & $\calT_-'(j)$ & $\calT'(j)$ \\
  \hline
  $\calF(i)$ & $0$ & $0$ & $0$ & $0$\\
  $\calT_+(i)$ & $0$ & $0$ & $0$ & $0$\\
  $\calT_-(i)$ & $0$ & $0$ & $0$ & $0$\\
  $\calT(i)$ & $1$ & $0$ & $0$ & $0$\\
  \multicolumn{5}{c}{otherwise.}
 \end{tabular}
\end{center}
\vspace{.2cm}
\end{enumerate}
\end{theorem}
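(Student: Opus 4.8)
The plan is to mirror the proof of Theorem~\ref{thm:RealMultiplicities}: Theorem~\ref{thm:ComplexCharacterizationIntertwiners} reduces everything to the combinatorics of the four relations \eqref{eq:ComplexRel1}--\eqref{eq:ComplexRel4}, which one may picture as two interlocking copies of the relations \eqref{eq:RealRel1}--\eqref{eq:RealRel2} of the orthogonal case, one pair governing the index $\alpha_1$ and one the index $\alpha_2$. First I would single out the diagonal locus $(\alpha_1',\alpha_2')=(\alpha_1,\alpha_2)$. There the coefficient $(\alpha_1-\alpha_1')$ in \eqref{eq:ComplexRel1} and the coefficient $(\alpha_2-\alpha_2')$ in \eqref{eq:ComplexRel3} vanish, so (after cancelling the nonzero factor $\alpha_1+\alpha_2+n-1$) these relations collapse to the pair of two-term recursions
\[ (r'+2\alpha_1+n-1)\,t_{(\alpha_1,\alpha_2),(\alpha_1,\alpha_2)} = (r+2\alpha_1+n)\,t_{(\alpha_1+1,\alpha_2),(\alpha_1+1,\alpha_2)}, \]
\[ (r'+2\alpha_2+n-1)\,t_{(\alpha_1,\alpha_2),(\alpha_1,\alpha_2)} = (r+2\alpha_2+n)\,t_{(\alpha_1,\alpha_2+1),(\alpha_1,\alpha_2+1)}, \]
the complex analogue of \eqref{eq:RealRelDiagonal}. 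The step coefficient $(r+2\alpha_1+n)$ vanishes exactly for $r\in-\rho-2\NN$ and the numerator $(r'+2\alpha_1+n-1)$ exactly for $r'\in-\rho'-2\NN$ (symmetrically in $\alpha_2$; here $\rho=n$, $\rho'=n-1$). A case analysis on whether $r\in-\rho-2\NN$ and/or $r'\in-\rho'-2\NN$, structurally identical to Lemma~\ref{lem:RealDiagonalSequences}, then pins down the dimension of the space of diagonal sequences together with the vanishing patterns of a set of generators; the feature not present in the real case is that for $r=-\rho-2i$, $r'=-\rho'-2j$ with $j\le i$ there is extra freedom in each index --- a one-variable generator supported in the low part ($\le j$) and one supported in the high part ($>i$) --- so the diagonal solution space is spanned there by the bilinear combinations of these.

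The next step is the extension analysis, in the spirit of Lemmas~\ref{lem:RealDiagonalExtensionGeneric} and~\ref{lem:RealDiagonalExtensionSingular}. Since the branching of $K$-types into $K'$-types here imposes the \emph{exact} identity $\alpha_1-\alpha_2=\alpha_1'-\alpha_2'$ (hence $\alpha_1-\alpha_1'=\alpha_2-\alpha_2'=:\ell\ge0$, with no parity restriction), the ``depth'' off the diagonal is the single nonnegative integer $\ell$, and one propagates outward in layers $\ell=0,1,2,\dots$, at each layer using whichever of \eqref{eq:ComplexRel1}--\eqref{eq:ComplexRel4} has a non-vanishing step coefficient to define the new values and then checking that the remaining relations hold. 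For $(r,r')\notin L$ the diagonal space is one-dimensional and the extension is unique, so by Theorem~\ref{thm:ComplexCharacterizationIntertwiners} the multiplicity is $1$; for $(r,r')=(-\rho-2i,-\rho'-2j)$ with $j>i$ this unique solution vanishes for $\alpha_1\le i$ or $\alpha_2\le i$ and for $\alpha_1'>j$ or $\alpha_2'>j$, which is why only the $\calT(i)\to\calF'(j)$ entry will survive there. For $(r,r')\in L$ I expect the outcome to be the exact analogue of Lemma~\ref{lem:RealDiagonalExtensionSingular}: only the two ``pure product'' diagonal seeds --- the finite one supported on $\{\alpha_1,\alpha_2\le j\}$ and the one supported on $\{\alpha_1,\alpha_2>i\}$ --- extend to a solution of all four relations on \emph{all} of $\calE$, giving multiplicity $2$, while the two ``mixed'' seeds do not extend globally but do extend on appropriate subquotients (which is what later yields $m(\calT_+(i),\calT'_+(j))=m(\calT_-(i),\calT'_-(j))=1$). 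There is no ``$\Lodd$'' phenomenon: the single condition $j\le i$ already cuts out the rank-two locus.

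For part~(2) I would invoke Remark~\ref{rem:IntertwinersBetweenCompositionFactors}: an intertwiner $\calV\to\calW$ between composition factors corresponds to a solution $(t_{\alpha,\alpha'})$ of \eqref{eq:ComplexRel1}--\eqref{eq:ComplexRel4} that vanishes on the $K$-types missing from $\calV$ and takes values only in the $K'$-types of a fixed $K'$-complement of the kernel inside $\calW$. The relevant $K$-type supports are immediate from the definitions: $\calF(i)$ on $\{\alpha_1,\alpha_2\le i\}$, $\calF_+(i)$ on $\{\alpha_2\le i\}$, $\calF_-(i)$ on $\{\alpha_1\le i\}$, so $\calT_+(i)$, $\calT_-(i)$, $\calT(i)$ on $\{\alpha_1>i,\alpha_2\le i\}$, $\{\alpha_1\le i,\alpha_2>i\}$, $\{\alpha_1>i,\alpha_2>i\}$ respectively, and likewise with $(i,n)$ replaced by $(j,n-1)$ for the primed modules. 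Matching these against $\alpha_1-\alpha_1'=\alpha_2-\alpha_2'\ge0$, the only support-compatible pairs $(\calV,\calW)$ are $(\calF(i),\calF'(j))$, $(\calT_\pm(i),\calT'_\pm(j))$, $(\calT(i),\calT'(j))$ and $(\calT(i),\calF'(j))$. For each of these the required solution exists --- the two pure-product global solutions from part~(1), composed with the obvious embeddings and projections, account for $(\calF,\calF')$, $(\calT,\calT')$ and $(\calT,\calF')$, while the mixed seeds from the extension analysis account for $(\calT_\pm,\calT'_\pm)$ --- is unique by part~(1), and is nonzero, the non-vanishing being read off from the explicit vanishing patterns of the generators exactly as in the proof of Theorem~\ref{thm:RealMultiplicities}~(2). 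All remaining pairs give $0$. Tabulating the result in the two regimes $j\le i$ and $j>i$ (in the latter $(r,r')\notin L$, so only $(\calT(i),\calF'(j))$ remains) produces the two tables.

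The hardest part will be the consistency verification inside the extension step: one must organize the layer-by-layer propagation so that at each stage the relations among \eqref{eq:ComplexRel1}--\eqref{eq:ComplexRel4} that were \emph{not} used to define the new $t$-values hold automatically, and, in the singular case $(r,r')\in L$, determine precisely which diagonal seeds extend on all of $\calE$ and which extend only on a subquotient. With every $K'$-type now having up to eight neighbours under $\leftrightarrow$ and four relations in play, this calls for a more elaborate matching of relations to triangle/hook configurations than in Diagrams~\ref{fig:RealRelConsistency}--\ref{fig:RealRelConsistency2}, although each individual check remains an elementary identity among the polynomial coefficients. A secondary burden is the bookkeeping for part~(2): with three composition factors above the finite-dimensional one instead of one, many more support-incompatible pairs must be explicitly disposed of, and the entry $m(\calT(i),\calF'(j))=1$ --- an intertwiner from the unitarizable quotient of $(\pi_r)_\HC$ onto the finite-dimensional subrepresentation of $(\tau_{r'})_\HC$, present for \emph{all} $i,j$ --- has to be separated cleanly from the vanishing of $m(\calF(i),\calT'(j))$.
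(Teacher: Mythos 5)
Your generic-parameter analysis (the two diagonal two-term recursions, the one-dimensional diagonal space off the singular set, unique layer-by-layer extension, the $j>i$ vanishing pattern, and the absence of an ``$\Lodd$'' phenomenon) agrees with Lemmas~\ref{lem:CplxDiagonalSequences} and \ref{lem:CplxDiagonalExtensionGeneric}, and using Remark~\ref{rem:IntertwinersBetweenCompositionFactors} for the $\calT_\pm$ factors is also what the paper does. The proposal fails, however, exactly at the point you defer to an ``expectation'': the singular locus $(r,r')\in L$. There is no complex analogue of Lemma~\ref{lem:RealDiagonalExtensionSingular}~(1). Restriction to the diagonal $\alpha=\alpha'$ is not injective on the two-dimensional solution space, and neither of your ``pure product'' seeds extends to all of $\calE$. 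Indeed, on the diagonal only the $k=0$ term of the spectral function of Proposition~\ref{prop:ComplexExplicitEigenvalues} survives, giving $(\tfrac{r'+\rho'}{2})_{\alpha_1}(\tfrac{r'+\rho'}{2})_{\alpha_2}\big/\bigl[(\tfrac{r+\rho}{2})_{\alpha_1}(\tfrac{r+\rho}{2})_{\alpha_2}\bigr]$, which stays finite for $\alpha_1,\alpha_2\le j$ as $(r,r')\to(-\rho-2i,-\rho'-2j)$, while the normalizing factor $\Gamma(\tfrac{(r+\rho)-(r'+\rho')}{2})/\Gamma(\tfrac{r+\rho}{2})^2$ of $T^{(2)}$ is a simple pole divided by a double pole and tends to $0$; hence the nonzero operator $T^{(2)}(r,r')$ of Corollary~\ref{cor:ComplexRenormalizations}~(2) vanishes identically on the diagonal at $(r,r')\in L$ (its nonzero entries come from $k\ge1$ terms, which have poles off the diagonal). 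So the diagonal-restriction map has a one-dimensional kernel, its image is spanned by a product $f\otimes f$ where $f$ is a specific mixture of your low and high one-variable generators (the diagonal of $T^{(3)}$), and if either $\mathrm{low}\otimes\mathrm{low}$ or $\mathrm{high}\otimes\mathrm{high}$ extended globally you would obtain three independent intertwiners, contradicting the very multiplicity you are proving. Counting which diagonal seeds extend therefore cannot produce the answer; the paper signals this (``the case $(r,r')\in L$ has to be handled a little different'') and proves Lemma~\ref{lem:CplxDiagonalExtensionSingular} instead: a solution is determined by the pair $(t_{0,0,0},t_{2i+2,0,0})$, the second seed being off-diagonal and indispensable because the region $\{p'-q_1>2i,\ q_1\le j\}$ in the $q_2=0$ slice is sealed off by barriers and unreachable from the diagonal. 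Your $\ell$-layer propagation hits the same wall: from diagonal data alone it neither determines that region nor excludes extra freedom in it.

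A second gap concerns part~(2): the claim that the only ``support-compatible'' pairs are the six you list is false. The branching constraints $0\le\alpha_1'\le\alpha_1$, $0\le\alpha_2'\le\alpha_2$, $\alpha_1-\alpha_2=\alpha_1'-\alpha_2'$ do not exclude, e.g., $(\calT(i),\calT'_+(j))$ (take $\alpha=(i+2,i+1)$, $\alpha'=(j+1,j)$) or $(\calT_+(i),\calF'(j))$ for $j\ge1$ (take $\alpha=(i+1,i)$, $\alpha'=(j,j-1)$), although these are zero entries in the $j\le i$ table. Such vanishing statements, like most of part~(2), require the vanishing-region and barrier arguments, not support bookkeeping; the paper reduces ``most'' entries to such arguments and carries out the hardest case $m(\calT_+(i),\calT'_+(j))$ in detail via Remark~\ref{rem:IntertwinersBetweenCompositionFactors}, the existence there coming from a one-parameter family of solutions of the restricted relations rather than from a globally extendable ``mixed'' diagonal seed. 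So both the multiplicity-two statement on $L$ and a substantial part of the table need arguments your plan does not yet contain.
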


To prove Theorem~\ref{thm:ComplexMultiplicities} we proceed similar as in Section~\ref{sec:MultiplicitiesReal}. For this we first reduce the four relations \eqref{eq:ComplexRel1}--\eqref{eq:ComplexRel4} in the four parameters $\alpha_1,\alpha_2,\alpha_1',\alpha_2'$ with $\alpha_1-\alpha_2=\alpha_1'-\alpha_2'$ to two pairs of two relations with only two parameters.

Put
$$ p=\alpha_1+\alpha_2, \qquad q_1=\alpha_1', \qquad q_2=\alpha_2' $$
then
$$ \alpha_1=\frac{p+q_1-q_2}{2}, \qquad \alpha_2=\frac{p-q_1+q_2}{2}, \qquad \alpha_1'=q_1, \qquad \alpha_2'=q_2. $$
Then $0\leq\alpha_1'\leq\alpha_1$, $0\leq\alpha_2'\leq\alpha_2$ and $\alpha_1-\alpha_2=\alpha_1'-\alpha_2'$ if and only if $p,q_1,q_2\in\NN$ with $p-q_1-q_2\in2\NN$. With this reparametrization, the parameter $q_2$ is constant in the identities \eqref{eq:ComplexRel1} and \eqref{eq:ComplexRel2} and the parameter $q_1$ is constant in \eqref{eq:ComplexRel3} and \eqref{eq:ComplexRel4}. Abusing notation and writing $t_{p,q_1,q_2}$ for $t_{(\alpha_1,\alpha_2),(\alpha_1',\alpha_2')}$ the relations \eqref{eq:ComplexRel1}--\eqref{eq:ComplexRel4} become
\begin{multline}
 (p+n-1)(r'+2q_1+n-1)t_{p,q_1,q_2} = (\tfrac{p+q_1+q_2}{2}+n-1)(r+p+q_1-q_2+n)t_{p+1,q_1+1,q_2}\\
 + (\tfrac{p-q_1-q_2}{2})(r-p+q_1-q_2-n+2)t_{p-1,q_1+1,q_2},\label{eq:ComplexRelReduced1}
\end{multline}
\vspace{-.5cm}
\begin{multline}
 (p+n-1)(r'-2q_1-n+3)t_{p,q_1,q_2} = (\tfrac{p+q_1+q_2}{2}+n-2)(r-p-q_1+q_2-n+2)t_{p-1,q_1-1,q_2}\\
 + (\tfrac{p-q_1-q_2}{2}+1)(r+p-q_1+q_2+n)t_{p+1,q_1-1,q_2},\label{eq:ComplexRelReduced2}
\end{multline}
\begin{multline}
 (p+n-1)(r'+2q_2+n-1)t_{p,q_1,q_2} = (\tfrac{p+q_1+q_2}{2}+n-1)(r+p-q_1+q_2+n)t_{p+1,q_1,q_2+1}\\
 + (\tfrac{p-q_1-q_2}{2})(r-p-q_1+q_2-n+2)t_{p-1,q_1,q_2+1},\label{eq:ComplexRelReduced3}
\end{multline}
\vspace{-.5cm}
\begin{multline}
 (p+n-1)(r'-2q_2-n+3)t_{p,q_1,q_2} = (\tfrac{p+q_1+q_2}{2}+n-2)(r-p+q_1-q_2-n+2)t_{p-1,q_1,q_2-1}\\
 + (\tfrac{p-q_1-q_2}{2}+1)(r+p+q_1-q_2+n)t_{p+1,q_1,q_2-1}.\label{eq:ComplexRelReduced4}
\end{multline}
Note that $q_2$ is fixed in \eqref{eq:ComplexRelReduced1} and \eqref{eq:ComplexRelReduced2}, and these relations hold for $p,q_1\in\NN$ with $p-q_1\in q_2+2\NN$. The obvious similar statement holds for \eqref{eq:ComplexRelReduced3} and \eqref{eq:ComplexRelReduced4}.

We first consider the diagonal $p=q_1+q_2$, then relations \eqref{eq:ComplexRelReduced1} and \eqref{eq:ComplexRelReduced3} simplify to
\begin{align}
 (r'+2q_1+n-1)t_{q_1+q_2,q_1,q_2} &= (r+2q_1+n)t_{q_1+q_2+1,q_1+1,q_2},\label{eq:ComplexRelDiagonal1}\\
 (r'+2q_2+n-1)t_{q_1+q_2,q_1,q_2} &= (r+2q_2+n)t_{q_1+q_2+1,q_1,q_2+1}.\label{eq:ComplexRelDiagonal2}
\end{align}
This immediately yields:

\begin{lemma}\label{lem:CplxDiagonalSequences}
\begin{enumerate}
\item For $(r,r')\in\CC^2\setminus L$ the space of diagonal sequences $(t_{q_1+q_2,q_1,q_2})_{q_1,q_2}$ satisfying \eqref{eq:ComplexRelDiagonal1} and \eqref{eq:ComplexRelDiagonal2} has dimension $1$. Any generator $(t_{q_1+q_2,q_1,q_2})_{q_1,q_2}$ satisfies:
\begin{enumerate}
\item for $r\notin-\rho-2\NN$, $r'\notin-\rho'-2\NN$:
$$ t_{q_1+q_2,q_1,q_2} \neq 0 \quad \forall\,q_1,q_2\in\NN, $$
\item for $r=-\rho-2i\in-\rho-2\NN$, $r'\notin-\rho'-2\NN$:
$$ \hspace{1.5cm}t_{q_1+q_2,q_1,q_2} = 0 \quad \forall\,q_1\leq i\mbox{ or }q_2\leq i \qquad \mbox{and} \qquad t_{q_1+q_2,q_1,q_2} \neq 0 \quad \forall\,q_1,q_2> i, $$
\item for $r\notin-\rho-2\NN$, $r'=-\rho'-2j\in-\rho'-2\NN$:
$$ \hspace{1.5cm}t_{q_1+q_2,q_1,q_2} \neq 0 \quad \forall\,q_1,q_2\leq j \qquad \mbox{and} \qquad t_{q_1+q_2,q_1,q_2} = 0 \quad \forall\,q_1>j\mbox{ or }q_2>j, $$
\item for $r=-\rho-2i\in-\rho-2\NN$, $r'=-\rho'-2j\in-\rho'-2\NN$ with $i<j$:
$$ t_{q_1+q_2,q_1,q_2} \neq 0 \quad \forall\,i<q_1,q_2\leq j \qquad \mbox{and} \qquad t_{q_1+q_2,q_1,q_2} = 0 \quad \mbox{else}. $$
\end{enumerate}
\item For $(r,r')=(-\rho-2i,-\rho'-2j)\in L$, the space of diagonal sequences $(t_{q_1+q_2,q_1,q_2})_{q_1,q_2}$ satisfying \eqref{eq:ComplexRelDiagonal1} and \eqref{eq:ComplexRelDiagonal2} has dimension $4$.
\end{enumerate}
\end{lemma}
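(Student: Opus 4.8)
The plan is to reduce everything to the two two–term recursions on the diagonal. Writing $u_{q_1,q_2}:=t_{q_1+q_2,q_1,q_2}$ and using $\rho=n$, $\rho'=n-1$, the relations \eqref{eq:ComplexRelDiagonal1} and \eqref{eq:ComplexRelDiagonal2} become
\begin{align*}
 (r+\rho+2q_1)\,u_{q_1+1,q_2} &= (r'+\rho'+2q_1)\,u_{q_1,q_2},\\
 (r+\rho+2q_2)\,u_{q_1,q_2+1} &= (r'+\rho'+2q_2)\,u_{q_1,q_2}.
\end{align*}
The crucial observation is that the ``step ratio'' of the first relation depends only on $q_1$ and that of the second only on $q_2$; hence the two recursions commute around every unit square of $\NN^2$, so that a solution $(u_{q_1,q_2})$ is determined by its values on any set of lattice points from which all others are reachable, and conversely those values may be prescribed freely (consistency at a unit square is automatic from the factorization of the ratios). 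Note that, for some $k\in\NN$, the ``right'' coefficient $r+\rho+2k$ vanishes iff $r\in-\rho-2\NN$, and the ``left'' coefficient $r'+\rho'+2k$ vanishes iff $r'\in-\rho'-2\NN$; these vanishings drive the case distinctions.

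For part (1) I would go through the four sub-cases. In case (a) all coefficients are non-zero, so $u_{q_1,q_2}=u_{0,0}\cdot f(q_1)f(q_2)$ with $f(k)=\prod_{l=0}^{k-1}\tfrac{r'+\rho'+2l}{r+\rho+2l}$, which is a non-vanishing function of $k$ under the hypotheses; thus the solution space is one-dimensional with nowhere-vanishing generator. In cases (b)--(d) one locates the vanishing coefficients: a vanishing right coefficient at $q_1=i$ (resp. $q_2=i$) creates a one-way barrier, while a vanishing left coefficient at $q_1=j$ turns the relation there into $0=(r+\rho+2j)\,u_{j+1,\cdot}$, which forces $u_{j+1,\cdot}=0$, and then — stepping with non-zero ratios — a whole slab of zeros, whenever the right coefficient $r+\rho+2j$ is non-zero. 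Running this bookkeeping through (b), (c), (d) shows that in each case exactly one seed value survives (dimension $1$) and that the generator vanishes precisely on the set stated. Since $L=\{(r,r'):r=-\rho-2i,\ r'=-\rho'-2j,\ 0\le j\le i\}$, the four sub-cases exhaust all $(r,r')\notin L$.

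For part (2), write $r=-\rho-2i$, $r'=-\rho'-2j$ with $0\le j\le i$, and split into $j<i$ and $j=i$. If $j<i$, the left coefficient of the first relation vanishes at $q_1=j$ while its right coefficient $r+\rho+2j=2(j-i)$ is non-zero there; hence $u_{j+1,q_2}=0$, and stepping up to $q_1=i$ (all ratios non-zero) gives $u_{q_1,q_2}=0$ for all $j<q_1\le i$; symmetrically for $j<q_2\le i$. These two zero-slabs disconnect $\NN^2$ into the four blocks $(\{q_1\le j\}\sqcup\{q_1\ge i+1\})\times(\{q_2\le j\}\sqcup\{q_2\ge i+1\})$, inside each of which both relations have only non-zero coefficients, so the recursions determine every value from the block's corner value, and the relations along the slab edges hold trivially since all their terms vanish; this yields four independent parameters, so $\dim=4$. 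If $j=i$ there are no forced zeros, but the two coefficients of the first relation coincide, $r+\rho+2q_1=r'+\rho'+2q_1=2(q_1-i)$, so its step ratio equals $1$ for $q_1\ne i$ and the relation at $q_1=i$ is vacuous; hence $u$ is constant in $q_1$ separately on $\{q_1\le i\}$ and on $\{q_1\ge i+1\}$, and likewise in $q_2$, so $u$ is constant on each of the four blocks $(\{q_1\le i\}\sqcup\{q_1\ge i+1\})\times(\{q_2\le i\}\sqcup\{q_2\ge i+1\})$ — again $\dim=4$.

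The argument is a routine lattice induction, and I expect the only delicate points to be (i) the consistency check that prescribed seed values really extend to a solution, which rests on the step ratios factoring through $q_1$ resp. $q_2$ alone, and (ii) the correct handling of the barrier lines $q_1=i$, $q_1=j$ and their $q_2$-analogues: it is precisely there that the free-parameter count is decided, and there that the two regimes $j<i$ (forced zero-slabs) and $j=i$ (vacuous relation, locally constant solution) diverge while both yielding dimension $4$.
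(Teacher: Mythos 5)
Your argument is correct and follows the same route the paper intends: the lemma is stated there as an immediate consequence of the two-term diagonal recursions \eqref{eq:ComplexRelDiagonal1}--\eqref{eq:ComplexRelDiagonal2} (``This immediately yields''), and your bookkeeping of where the coefficients $r+\rho+2k$ and $r'+\rho'+2k$ vanish, together with the commuting one-variable recursions in $q_1$ and $q_2$, is exactly the routine verification the paper leaves to the reader.
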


Next we investigate how a diagonal sequence $(t_{q_1+q_2,q_1,q_2})_{q_1,q_2}$ satisfying \eqref{eq:ComplexRelDiagonal1} and \eqref{eq:ComplexRelDiagonal2} can be extended to a sequence $(t_{p,q_1,q_2})_{p,q_1,q_2}$ satisfying \eqref{eq:ComplexRelReduced1} and \eqref{eq:ComplexRelReduced2} and the corresponding relations in $q_2$. For this note that if we fix, say, $q_2$ and put $p'=p-q_2$, then the relations \eqref{eq:ComplexRelReduced1} and \eqref{eq:ComplexRelReduced2} read
\begin{multline}
 (p'+q_2+n-1)(r'+2q_1+n-1)t_{p',q_1} = (\tfrac{p'+q_1}{2}+q_2+n-1)(r+p'+q_1+n)t_{p'+1,q_1+1}\\
 + (\tfrac{p'-q_1}{2})(r-p'+q_1-2q_2-n+2)t_{p'-1,q_1+1},\label{eq:ComplexRelReducedAgain1}
\end{multline}
\vspace{-.5cm}
\begin{multline}
 (p'+q_2+n-1)(r'-2q_1-n+3)t_{p',q_1} = (\tfrac{p'+q_1}{2}+q_2+n-2)(r-p'-q_1-n+2)t_{p'-1,q_1-1}\\
 + (\tfrac{p'-q_1}{2}+1)(r+p'-q_1+2q_2+n)t_{p'+1,q_1-1},\label{eq:ComplexRelReducedAgain2}
\end{multline}
where we again abuse notation and write $t_{p',q_1}$ for $t_{p,q_1,q_2}$. Similar relations hold if $q_1$ is fixed. We note that \eqref{eq:ComplexRelReducedAgain1} and \eqref{eq:ComplexRelReducedAgain2} have to be satisfied for all $p',q_1\in\NN$ with $p'-q_1\in2\NN$, just as in the case of orthogonal groups, see Diagram~\ref{Ktypes}. Thus, many arguments used in the orthogonal situation can be translated to this context. There are, however, differences to the orthogonal situation. If $r=-\rho-2i\in-\rho-2\NN$ then the coefficient $(r+p'+q_1+n)$ in \eqref{eq:ComplexRelReducedAgain1} vanishes for $p'+q_1=2i$ and the coefficient $(r+p'-q_1+2q_2+n)$ in \eqref{eq:ComplexRelReducedAgain2} vanishes for $p'-q_1=2(i-q_2)$ which we indicate by diagonal lines as in Diagram~\ref{fig:CplxBarriers1}. Further, if $r'=-\rho'-2j\in-\rho'-2\NN$ then the coefficient $(r'+2q_1+n-1)$ in \eqref{eq:ComplexRelReducedAgain1} vanishes for $q_1=j$ which we indicate by a vertical line as in Diagram~\ref{fig:CplxBarriers2}.

\begin{diagram}
\setlength{\unitlength}{4pt}
\begin{picture}(15,15)
\thicklines
\put(1,7){\line(1,-1){6}}
\put(15,7){\line(-1,-1){6}}
\put(1,8){\line(1,0){14}}
{\color{red}
\put(16,0){\line(-1,1){12}}}
\put(0,8){\circle*{1}}
\put(8,0){\circle*{1}}
\put(16,8){\circle*{1}}
\put(4,-3.5){$(p',q_1)$}
\put(-16,10){$(p'-1,q_1+1)$}
\put(10.5,10){$(p'+1,q_1+1)$}
\end{picture}
\hspace{4cm}
\begin{picture}(15,15)
\thicklines
\put(1,1){\line(1,1){6}}
\put(15,1){\line(-1,1){6}}
\put(1,0){\line(1,0){14}}
{\color{red}
\put(4,-4){\line(1,1){12}}}
\put(0,0){\circle*{1}}
\put(8,8){\circle*{1}}
\put(16,0){\circle*{1}}
\put(4.6,10){$(p',q_1)$}
\put(-16,-3.5){$(p'-1,q_1-1)$}
\put(10.5,-3.5){$(p'+1,q_1-1)$}
\end{picture}
\vspace{.3cm}
\caption[]{Barriers for $r=-\rho-2i$}\label{fig:CplxBarriers1}
\end{diagram}
\begin{diagram}
\setlength{\unitlength}{4pt}
\begin{picture}(15,15)
\thicklines
\put(1,7){\line(1,-1){6}}
\put(15,7){\line(-1,-1){6}}
\put(1,8){\line(1,0){14}}
{\color{red}
\put(-10,4){\line(1,0){36}}}
\put(0,8){\circle*{1}}
\put(8,0){\circle*{1}}
\put(16,8){\circle*{1}}
\put(4.6,-3){$(p',q_1)$}
\put(-10,10){$(p'-1,q_1+1)$}
\put(10.5,10){$(p'+1,q_1+1)$}
\end{picture}
\vspace{.3cm}
\caption[]{Barrier for $r'=-\rho'-2j$}\label{fig:CplxBarriers2}
\end{diagram}

\begin{lemma}\label{lem:CplxDiagonalExtensionGeneric}
Let $(r,r')\in\CC^2\setminus L$. Then every diagonal sequence $(t_{q_1+q_2,q_1,q_2})_{q_1,q_2}$ satisfying \eqref{eq:ComplexRelDiagonal1} and \eqref{eq:ComplexRelDiagonal2} has a unique extension to a sequence $(t_{p,q_1,q_2})_{p,q_1,q_2}$ satisfying \eqref{eq:ComplexRelReduced1}--\eqref{eq:ComplexRelReduced4}.
\end{lemma}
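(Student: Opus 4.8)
The plan is to mirror the proof of Lemma~\ref{lem:RealDiagonalExtensionGeneric} in the orthogonal case, exploiting the reduction of the four relations \eqref{eq:ComplexRelReduced1}--\eqref{eq:ComplexRelReduced4} into two independent pairs: for fixed $q_2$ the pair \eqref{eq:ComplexRelReducedAgain1}--\eqref{eq:ComplexRelReducedAgain2} in the parameters $(p',q_1)$, and symmetrically for fixed $q_1$ a pair in $(p',q_2)$. Each such pair lives on a lattice of exactly the same shape as the orthogonal picture in Diagram~\ref{Ktypes} (the constraint being $p'-q_1\in2\NN$), with the role of the coefficients $(2r+2\alpha+n-1)$, $(2r'+2\alpha'+n-2)$ now played by $(r+p'+q_1+n)$, $(r'+2q_1+n-1)$ and their $q_2$-shifted analogues. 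Since $(r,r')\notin L$ means either $r\notin-\rho-2\NN$, or $r'\notin-\rho'-2\NN$, or $r=-\rho-2i$ and $r'=-\rho'-2j$ with $i<j$, we are in exactly the three cases handled by Steps~1--3 of the proof of Lemma~\ref{lem:RealDiagonalExtensionGeneric}, and the same inductive ``stepping'' argument applies in each $q_2$-slice and each $q_1$-slice separately.

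Concretely, first I would fix $q_2=0$ and apply the argument of Lemma~\ref{lem:RealDiagonalExtensionGeneric} verbatim to \eqref{eq:ComplexRelReducedAgain1}--\eqref{eq:ComplexRelReducedAgain2} in the $(p',q_1)$-lattice: starting from the diagonal values $t_{q_1,q_1,0}$ (which satisfy the diagonal relation \eqref{eq:ComplexRelDiagonal1}), one extends inductively over the region $p'-q_1\le 2k$, at each step using one relation of the pair to define the new values and checking that the other relation is then automatically satisfied by the elementary consistency computation indicated by Diagram~\ref{fig:RealRelConsistency} (now with the barriers of Diagram~\ref{fig:CplxBarriers1} and \ref{fig:CplxBarriers2}). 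One has to be slightly careful which relation is used to ``step'': when $r\notin-\rho-2\NN$ the coefficients $(r+p'+q_1+n)$ never vanish and one steps to larger $p'$; when $r=-\rho-2i$ one instead steps inward past the diagonal barrier $p'+q_1=2i$ using the fact that the other coefficient does not vanish there, exactly as in Steps~2 and 3 of the orthogonal proof, which is legitimate because $i<j$ guarantees $r'+2q_1+n-1\ne0$ for $q_1\le i$. This produces $t_{p,q_1,0}$ for all admissible $(p,q_1)$.

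Having done the $q_2=0$ slice, I would then propagate in $q_2$: for each $q_1$ fixed, the pair \eqref{eq:ComplexRelReduced3}--\eqref{eq:ComplexRelReduced4} is again of the same orthogonal type in $(p,q_2)$, and its ``diagonal'' relation is \eqref{eq:ComplexRelDiagonal2}, which the diagonal sequence already satisfies. So by the same one-step induction one extends from the slice $q_2=0$ (where all $t_{p,q_1,0}$ are now known) to all $q_2\ge0$. The key point to verify is that the two procedures are compatible, i.e.\ the value of $t_{p,q_1,q_2}$ obtained by first going out in $q_1$ at level $q_2=0$ and then up in $q_2$ equals the value obtained by any other order — but this is forced because all four relations \eqref{eq:ComplexRelReduced1}--\eqref{eq:ComplexRelReduced4} were used as \emph{definitions} along the induction, so uniqueness is automatic and consistency follows from the local consistency checks (the analogues of Diagrams~\ref{fig:RealRelConsistency} and \ref{fig:RealRelConsistency2}) which now come in two flavors, one for each pair of relations. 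Uniqueness of the extension is immediate for the same reason: every relation used pins down a previously undetermined value in terms of already-determined ones.

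The main obstacle, and the only place where genuinely new bookkeeping is needed beyond the orthogonal case, is managing the interaction of the two barrier families of Diagram~\ref{fig:CplxBarriers1} (the two diagonal lines $p'+q_1=2i$ and $p'-q_1=2(i-q_2)$, the latter depending on $q_2$) with the vertical barrier $q_1=j$ of Diagram~\ref{fig:CplxBarriers2}, and checking that in the region $(r,r')\notin L$ one can always find a direction in which to step that avoids a vanishing coefficient. The condition $i<j$ (in the third case) is precisely what is needed, just as the condition $j>i$ was needed in Step~3 of Lemma~\ref{lem:RealDiagonalExtensionGeneric}; the cases $r\notin-\rho-2\NN$ or $r'\notin-\rho'-2\NN$ are easier and follow the pattern of Steps~1 and 2. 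I expect the consistency verifications themselves to be routine elementary identities in the coefficients — tedious but mechanical — and so I would state them as such, referring back to the computations underlying Lemmas~\ref{lem:RealDiagonalExtensionGeneric}, rather than reproducing them. The final output is the unique sequence $(t_{p,q_1,q_2})$, which translated back via $t_{(\alpha_1,\alpha_2),(\alpha_1',\alpha_2')}=t_{p,q_1,q_2}$ gives the claimed unique extension satisfying \eqref{eq:ComplexRelReduced1}--\eqref{eq:ComplexRelReduced4}, hence \eqref{eq:ComplexRel1}--\eqref{eq:ComplexRel4}.
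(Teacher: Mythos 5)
Your generic case ($r\notin-\rho-2\NN$) is viable, and your reduction to two orthogonal-type lattices is the same device the paper uses; but your prescribed order of construction breaks down in exactly the degenerate cases that the paper's Steps 2--3 are designed for. Suppose $r=-\rho-2i$ and fix $q_2=0$. Within this slice the only move that produces a value on a new line $p'-q_1=d+2$ from data on the lines $p'-q_1\le d$ is relation \eqref{eq:ComplexRelReducedAgain2} solved for the vertex $t_{p'+1,q_1-1}$, and its coefficient $(r+p'-q_1+2q_2+n)$ vanishes precisely at the crossing $d=2i$ (for $q_2=0$). All other within-slice moves --- including the one you invoke, solving \eqref{eq:ComplexRelReducedAgain1} for $t_{p',q_1}$ using $r'+2q_1+n-1\neq0$ --- only propagate along a fixed line $p'-q_1=d$ or downward to $d-2$, and they circumvent the \emph{anti-diagonal} barrier $p'+q_1=2i$, not the barrier $p'-q_1=2i$ parallel to the diagonal. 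Hence your Step A (``complete the $q_2=0$ slice from its own diagonal'') cannot reach the region $p'-q_1>2i$; in particular $t_{2i+2,0,0}$ is not determined by any within-slice argument. That this is a real obstruction and not a presentational issue is visible in Lemma~\ref{lem:CplxDiagonalExtensionSingular}: when additionally $r'\in-\rho'-2\NN$ with $j\le i$, $t_{2i+2,0,0}$ is a genuinely free parameter. For $(r,r')\notin L$ the missing values must be imported from \emph{other} $q_2$-slices, which is how the paper argues: it uses the full two-parameter diagonal data to fill every slice $q_2>i$ from its own diagonal (not just $q_2=0$), and only then fixes $q_1$ and descends in $q_2$ via \eqref{eq:ComplexRelReduced3}--\eqref{eq:ComplexRelReduced4}, where the stepping coefficient $(r'+2q_2+n-1)$ never vanishes (resp.\ is $2(q_2-j)<0$ for $q_2\le i<j$ in the third case).

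A secondary point: your statement that compatibility of the two propagation procedures ``is forced because all four relations were used as definitions, so uniqueness is automatic'' conflates uniqueness with existence. Uniqueness does follow from the fact that each step is forced; existence requires verifying that the relations \emph{not} used as definitions at a given lattice point also hold, and in your scheme this includes cross-family checks (relations \eqref{eq:ComplexRelReducedAgain1}--\eqref{eq:ComplexRelReducedAgain2} in the slices $q_2>0$ filled via the other family), which are of a different shape from the single-family triangle checks of the real case. The paper is also terse here, so deferring these as ``mechanical'' is defensible, but the construction order must first be repaired as above before the consistency argument can even be set up.
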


\begin{proof}
The proof is similar to the proof of Lemma~\ref{lem:RealDiagonalExtensionGeneric} and we only indicate the relevant steps. 
\textbf{Step 1.} We first treat the case $r\notin-\rho-2\NN$. We fix $q_2$, then the diagonal sequence determines $t_{p',q_1}$ for $p'=q_1$. Since $r\notin-\rho-2\NN$ the coefficient $(r+p'-q_1+2q_2+n)$ in \eqref{eq:ComplexRelReducedAgain2} never vanishes. Hence, \eqref{eq:ComplexRelReducedAgain2} can be used to express $t_{p'+1,q_1-1}$ in terms of $t_{p',q_1}$ and $t_{p'-1,q_1-1}$. As in the proof of Lemma~\ref{lem:RealDiagonalExtensionGeneric}~Step~1, this uniquely determines all numbers $t_{p',q_1}$. Since $q_2$ was arbitrary this determines all numbers $t_{p,q_1,q_2}$.

\textbf{Step 2.} Next assume $r=-\rho-2i\in-\rho-2\NN$ and $r'\notin-\rho'-2\NN$. Then the coefficient $(r+p'-q_1+2q_2+n)$ vanishes if and only if $p'-q_1=2(i-q_2)$. In particular, it does not vanish for $q_2>i$. We can therefore use the technique in Step 1 to extend the diagonal sequence to $t_{p,q_1,q_2}$ for $q_2>i$ and all $p,q_1$. Fixing $q_1$ instead of $	q_2$ we are in the situation that $t_{p',q_2}$ is given on the diagonal $p'=q_2$ and in the region $q_2>i$. Since $r'\notin-\rho'-2\NN$ the coefficient $(r'+2q_2+n-1)$ in \eqref{eq:ComplexRelReducedAgain1} (with $q_1$ and $q_2$ interchanged) never vanishes, so we can use \eqref{eq:ComplexRelReducedAgain1} (with $q_1$ and $q_2$ interchanged) to extend $t_{p',q_2}$ to all $p',q_2$ as in the proof of Lemma~\ref{lem:RealDiagonalExtensionGeneric}~Step~2. Since $q_1$ was arbitrary this determines all numbers $t_{p,q_1,q_2}$.

\textbf{Step 3.} Now let $r=-\rho-2i\in-\rho-2\NN$ and $r'=-\rho'-2j\in-\rho'-2\NN$ with $i,j\in\NN$, $j>i$. Note that to carry out Step 2 we only need that $r'+2q_2+n-1\neq0$ for $q_2\leq i$. This is satisfied since
$$ r'+2q_2+n-1=2(q_2-j)<2(q_2-i)\leq0 $$
by assumption. Hence the technique in Step 2 carries over to this case.
\end{proof}

The case $(r,r')\in L$ has to be handled a little differently from the orthogonal situation.

\begin{lemma}\label{lem:CplxDiagonalExtensionSingular}
Let $(r,r')=(-\rho-2i,-\rho'-2j)\in L$. Then every choice of $t_{0,0,0}$ and $t_{2i+2,0,0}$ determines a unique sequence $(t_{p,q_1,q_2})_{p,q_1,q_2}$ satisfying \eqref{eq:ComplexRelReduced1}--\eqref{eq:ComplexRelReduced4}.
\end{lemma}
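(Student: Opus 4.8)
The plan is to mimic the structure of the proof of Lemma~\ref{lem:RealDiagonalExtensionSingular}~(1), but being more careful about where the vanishing coefficients occur, since in the unitary case the barriers are \emph{diagonal} lines (Diagram~\ref{fig:CplxBarriers1}) rather than the vertical/horizontal barriers of the orthogonal case. First I would observe that, setting $(r,r')=(-\rho-2i,-\rho'-2j)$, the diagonal relations \eqref{eq:ComplexRelDiagonal1} and \eqref{eq:ComplexRelDiagonal2} decouple the diagonal sequence $(t_{q_1+q_2,q_1,q_2})_{q_1,q_2}$: the coefficient $(r+2q_1+n)$ in \eqref{eq:ComplexRelDiagonal1} vanishes exactly at $q_1=i$, and $(r+2q_2+n)$ in \eqref{eq:ComplexRelDiagonal2} vanishes exactly at $q_2=i$. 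Hence a diagonal sequence is determined by its values on the four ``rectangles'' $\{q_1,q_2\le i\}$ (one free parameter $t_{0,0,0}$, propagated by \eqref{eq:ComplexRelDiagonal1}--\eqref{eq:ComplexRelDiagonal2}, which are then forced to vanish for $j<q_1\le i$ or $j<q_2\le i$ because $r'+2q_1+n-1$ resp.\ $r'+2q_2+n-1$ never vanishes for $q_1,q_2\le i$ when $j\le i$), the strip $\{q_1>i\}$, the strip $\{q_2>i\}$, and the quadrant $\{q_1,q_2>i\}$; the first strip is generated by $t_{2i+2,i+1,0}$ and the second by $t_{0,0,i+1}$-type data, but $t_{2i+2,0,0}$ together with \eqref{eq:ComplexRelDiagonal1}--\eqref{eq:ComplexRelDiagonal2} in fact pins these down, so that $t_{0,0,0}$ and $t_{2i+2,0,0}$ together determine the full diagonal sequence. (This explains why the statement uses exactly these two parameters and is consistent with Lemma~\ref{lem:CplxDiagonalSequences}~(2): the four-dimensional space is cut down to two once we additionally require solvability of the off-diagonal relations, just as in the orthogonal $\Leven$ case.)

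Next I would extend off the diagonal. Fixing $q_2$ and using the reduced relations \eqref{eq:ComplexRelReducedAgain1}--\eqref{eq:ComplexRelReducedAgain2} in the variables $(p',q_1)$ with $p'-q_1\in2\NN$, exactly as in Diagram~\ref{Ktypes}, I would run the induction of Lemma~\ref{lem:RealDiagonalExtensionGeneric}~Step~1: since $r\in-\rho-2\NN$, the coefficient $(r+p'-q_1+2q_2+n)$ in \eqref{eq:ComplexRelReducedAgain2} vanishes precisely along the diagonal barrier $p'-q_1=2(i-q_2)$, so the extension works freely in the region $p'-q_1<2(i-q_2)$, i.e.\ $\alpha_2>i$, and separately in the region $\alpha_2<i$ after one uses \eqref{eq:ComplexRelReducedAgain2} at the barrier to jump across (the coefficient $(r-p'-q_1-n+2)$ of $t_{p'-1,q_1-1}$ is then nonzero). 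Symmetrically, fixing $q_1$ and using the analogues of \eqref{eq:ComplexRelReduced3}--\eqref{eq:ComplexRelReduced4}, one propagates in the $q_2$-direction with a barrier at $\alpha_1=i$. The consistency-of-overlapping-relations argument is the same elementary algebraic identity among the coefficients of three adjacent triangles that appears in Diagrams~\ref{fig:RealRelConsistency} and \ref{fig:RealRelConsistency2}; I would state that it goes through verbatim, with the roles of $\alpha-\alpha'$ played by $p'-q_1$.

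The remaining work is to glue the strips and check that $t_{2i+2,0,0}$ (rather than some higher-index datum) is the right second parameter and that no further freedom or obstruction appears where the barriers cross, i.e.\ near $(\alpha_1,\alpha_2)=(i,i)$ together with the diagonal. Here I would argue as in Lemma~\ref{lem:RealDiagonalExtensionSingular}~(1): in the ``forbidden rectangle'' $\{j<q_1,q_2\le i\}$ all $t_{p,q_1,q_2}$ vanish by the diagonal analysis and Step~1, so the relations there are trivially satisfied; at the edges between the four regions the relations hold either by the definition used to propagate across the barrier, or because all three terms vanish. The one genuinely new point compared with the orthogonal case is bookkeeping: there are now \emph{two} families of diagonal barriers (one for each of $\alpha_1=i$, $\alpha_2=i$) intersecting the two families of off-diagonal barriers, so one must verify that starting from $t_{0,0,0}$ and $t_{2i+2,0,0}$ one can reach every $(p,q_1,q_2)$ by a sequence of legal ``steps'' and that the result is independent of the chosen path. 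I expect this combinatorial coherence check --- confirming that exactly two initial parameters are needed, no more and no fewer, once all of \eqref{eq:ComplexRelReduced1}--\eqref{eq:ComplexRelReduced4} are imposed --- to be the main obstacle; everything else reduces to the triangle-consistency identity and to the pattern of vanishing coefficients already worked out for Lemmas~\ref{lem:RealDiagonalExtensionGeneric} and \ref{lem:RealDiagonalExtensionSingular}.
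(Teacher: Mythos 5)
There is a genuine gap, and it sits exactly where the content of the lemma lies. Your plan is diagonal-first, as in Lemma~\ref{lem:RealDiagonalExtensionSingular}~(1), but that strategy cannot start here: the datum $t_{2i+2,0,0}$ is \emph{not} a diagonal entry (since $2i+2\neq 0+0$), so the diagonal relations \eqref{eq:ComplexRelDiagonal1}--\eqref{eq:ComplexRelDiagonal2} neither involve it nor can they ``pin down'' the strips $\{q_1>i\}$, $\{q_2>i\}$ from it, as you assert. Moreover, by Lemma~\ref{lem:CplxDiagonalSequences}~(2) the diagonal solution space for $(r,r')\in L$ is $4$-dimensional while only a $2$-parameter family of full solutions exists, so in contrast to the orthogonal $\Leven$ case one cannot first fix the diagonal and then extend: the off-diagonal relations must be used to cut the diagonal freedom down, and this is precisely the step your outline leaves open (you yourself label the ``combinatorial coherence check'' the main obstacle and do not carry it out). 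There are also concrete errors in the barrier mechanics: at the barrier the degenerate relation \eqref{eq:ComplexRelReducedAgain2} connects two entries both lying \emph{on} the line $p'-q_1=2(i-q_2)$, so it propagates along the barrier, not ``across'' it; the region $p'-q_1<2(i-q_2)$ is $\alpha_2<i$, not $\alpha_2>i$; and the forced vanishing of the diagonal for $j<q_1\leq i$ comes from the \emph{vanishing} of $r'+2q_1+n-1$ at $q_1=j$ (with $r+2q_1+n\neq0$ there), not from its non-vanishing. Most importantly, your argument never identifies where $t_{2i+2,0,0}$ actually enters, which is the whole point of the statement.

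For comparison, the paper's proof works in the slice $q_2=0$: the value $t_{0,0,0}$ determines, by the generic propagation of Lemma~\ref{lem:CplxDiagonalExtensionGeneric}, all $t_{p',q_1}$ with $p'+q_1\leq 2i$; the degenerate relation \eqref{eq:ComplexRelReducedAgain2} along $p'-q_1=2i$ then extends this to $p'-q_1\leq 2i$; the degenerate relation \eqref{eq:ComplexRelReducedAgain1} at $q_1=j$ carries the data across into $q_1>j$; and only for the remaining region $\{p'-q_1>2i,\ q_1\leq j\}$ is the second datum $t_{2i+2,0,0}$ needed, both relations being used there. Having determined $(t_{p,q_1,0})_{p,q_1}$, one fixes $q_1$ and extends in the $q_2$-direction using that the coefficient $(r'-2q_2-n+3)$ never vanishes for $q_2\geq1$, which gives existence and uniqueness simultaneously. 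To repair your proposal you would need to replace the diagonal-first step by such a slice-by-slice propagation (or otherwise prove that the two given values reach every $(p,q_1,q_2)$ by admissible steps, independently of the path), since as written the reduction to the diagonal is both logically unavailable and quantitatively inconsistent with the $4$-dimensional diagonal space.
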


\begin{proof}
Fix $q_2=0$, $p'=p-q_2=p$, then by the assumption $t_{p',q_1}$ is known for $(p',q_1)=(0,0)$ and $(2i+2,0)$. This is illustrated in Diagram~\ref{KtypesPropagationL1} where the barriers are as in Diagrams~\ref{fig:CplxBarriers1} and \ref{fig:CplxBarriers2}. Then the techniques from the proof of Lemma~\ref{lem:CplxDiagonalExtensionGeneric} extend $t_{0,0}$ uniquely to the region $p'+q_1\leq2i$, see also Diagram~\ref{KtypesPropagationL1}. To overcome the barrier given by $p'+q_1=2i$ we use \eqref{eq:ComplexRelReducedAgain2} for $p'-q_1=2i$ in which the coefficient $(r+p'-q_1+2q_2+n)$ vanishes. Hence, this relation can be applied to extend along the diagonal line $p'-q_1=2i$ as indicated in Diagram~\ref{KtypesPropagationL1}. It may also be applied anywhere above the diagonal $p'-q_1=2i$ so that we actually extend to the area $p'-q_1\leq2i$, see Diagram~\ref{KtypesPropagationL2}.
\begin{diagram}
\setlength{\unitlength}{4pt}
\begin{picture}(44,38)
\thicklines
\put(0,0){\vector(1,0){40}}
\put(0,0){\vector(0,1){33}}
\put(17,-2){\line(-1,1){19}}
\put(13,-2){\line(1,1){27.5}}
\put(-2,7.5){\line(1,0){42.5}}
\put(0,0){\circle*{1}}
\put(10,0){\circle{.8}}
\put(20,0){\circle*{1}}
\put(30,0){\circle{.8}}
\multiput(5,5)(10,0){4}{\circle{.8}}
\multiput(10,10)(10,0){3}{\circle{.8}}
\multiput(15,15)(10,0){3}{\circle{.8}}
\multiput(20,20)(10,0){2}{\circle{.8}}
\multiput(25,25)(10,0){2}{\circle{.8}}
\put(30,30){\circle{.8}}
\put(39,1){$p'$}
\put(1,31){$q_1$}
\put(8.7,-3.5){$2i$}
\put(17.5,-3.5){$2i+2$}
\put(-3,4.5){$j$}
\put(-6.5,9.5){$j+1$}
\put(-1.5,1){$*$}
\put(18.5,1){$*$}
\end{picture}
\hspace{.5cm}
\begin{picture}(44,38)
\thicklines
\put(0,0){\vector(1,0){40}}
\put(0,0){\vector(0,1){33}}
\put(17,-2){\line(-1,1){19}}
\put(13,-2){\line(1,1){27.5}}
\put(-2,7.5){\line(1,0){42.5}}
\put(5,5){\circle*{1}}
\multiput(0,0)(10,0){3}{\circle*{1}}
\put(30,0){\circle{.8}}
\multiput(15,5)(10,0){3}{\circle{.8}}
\multiput(10,10)(10,0){3}{\circle{.8}}
\multiput(15,15)(10,0){3}{\circle{.8}}
\multiput(20,20)(10,0){2}{\circle{.8}}
\multiput(25,25)(10,0){2}{\circle{.8}}
\put(30,30){\circle{.8}}
{\color{blue}
\put(10,0){\line(1,1){5}}
\put(10,0){\line(1,0){10}}
\put(20,0){\line(-1,1){5}}
\put(16,6){\vector(1,1){3}}}
\put(39,1){$p'$}
\put(1,31){$q_1$}
\put(8.7,-3.5){$2i$}
\put(17.5,-3.5){$2i+2$}
\put(-3,4.5){$j$}
\put(-6.5,9.5){$j+1$}
\put(-1.5,1){$*$}
\put(8.5,1){$*$}
\put(18.5,1){$*$}
\put(3.5,6){$*$}
\end{picture}
\vspace{20pt}
\subcaption{\vbox{\hsize25pc
\begin{legend}
\item[\raise3pt\hbox{\circle*{1}}] $K'$-types with $t_{p',q_1}$ already defined
\item[\raise3pt\hbox{\circle{.8}}] $K'$-types with $t_{p',q_1}$ yet to define
\endgraf
\end{legend}}}
\caption[]{}\label{KtypesPropagationL1}
\end{diagram}
Next we need to overcome the barrier $p'-q_1=2i$ which we do by using \eqref{eq:ComplexRelReducedAgain1} for $q_1=j$. In this relation the coefficient $(r'+2q_1+n-1)$ vanishes and hence we can extend along the line $q_1=j+1$. Using again \eqref{eq:ComplexRelReducedAgain2} even extends to the whole region $q_1>j$, see Diagram~\ref{KtypesPropagationL2}. We note that up to this point we have not yet made use of $t_{2i+2,0}$. This is needed now to extend into the region $\{(p',q_1):p'-q_1>2i,q_1\leq j\}$, see Diagram~\ref{KtypesPropagationL2}. Here both relations \eqref{eq:ComplexRelReducedAgain1} and \eqref{eq:ComplexRelReducedAgain2} are needed.
\begin{diagram}
\setlength{\unitlength}{4pt}
\begin{picture}(44,38)
\thicklines
\put(0,0){\vector(1,0){40}}
\put(0,0){\vector(0,1){33}}
\put(17,-2){\line(-1,1){19}}
\put(13,-2){\line(1,1){27.5}}
\put(-2,7.5){\line(1,0){42.5}}
\multiput(0,0)(10,0){3}{\circle*{1}}
\put(30,0){\circle{.8}}
\multiput(5,5)(10,0){2}{\circle*{1}}
\multiput(25,5)(10,0){2}{\circle{.8}}
\multiput(10,10)(10,0){2}{\circle*{1}}
\put(30,10){\circle{.8}}
\multiput(15,15)(10,0){2}{\circle*{1}}
\put(35,15){\circle{.8}}
\multiput(20,20)(10,0){2}{\circle*{1}}
\multiput(25,25)(10,0){2}{\circle*{1}}
\put(30,30){\circle*{1}}
{\color{blue}
\put(20,10){\line(1,-1){5}}
\put(20,10){\line(1,0){10}}
\put(30,10){\line(-1,-1){5}}
\put(32,10){\vector(1,0){6}}}
\put(39,1){$p'$}
\put(1,31){$q_1$}
\put(8.7,-3.5){$2i$}
\put(17.5,-3.5){$2i+2$}
\put(-3,4.5){$j$}
\put(-6.5,9.5){$j+1$}
\put(-1.5,1){$*$}
\put(8.5,1){$*$}
\put(18.5,1){$*$}
\put(3.5,6){$*$}
\put(13.5,6){$*$}
\put(8.5,11){$*$}
\put(18.5,11){$*$}
\put(13.5,16){$*$}
\put(23.5,16){$*$}
\put(18.5,21){$*$}
\put(28.5,21){$*$}
\put(23.5,26){$*$}
\put(33.5,26){$*$}
\put(28.5,31){$*$}
\end{picture}
\hspace{.5cm}
\begin{picture}(44,38)
\thicklines
\put(0,0){\vector(1,0){40}}
\put(0,0){\vector(0,1){33}}
\put(17,-2){\line(-1,1){19}}
\put(13,-2){\line(1,1){27.5}}
\put(-2,7.5){\line(1,0){42.5}}
\multiput(0,0)(10,0){3}{\circle*{1}}
\put(30,0){\circle{.8}}
\multiput(5,5)(10,0){2}{\circle*{1}}
\multiput(25,5)(10,0){2}{\circle{.8}}
\multiput(10,10)(10,0){3}{\circle*{1}}
\multiput(15,15)(10,0){3}{\circle*{1}}
\multiput(20,20)(10,0){2}{\circle*{1}}
\multiput(25,25)(10,0){2}{\circle*{1}}
\put(30,30){\circle*{1}}
{\color{blue}
\put(15,5){\line(1,-1){5}}
\put(15,5){\line(1,0){10}}
\put(25,5){\line(-1,-1){5}}
\put(27,5){\vector(1,0){6}}}
\put(39,1){$p'$}
\put(1,31){$q_1$}
\put(8.7,-3.5){$2i$}
\put(17.5,-3.5){$2i+2$}
\put(-3,4.5){$j$}
\put(-6.5,9.5){$j+1$}
\put(-1.5,1){$*$}
\put(8.5,1){$*$}
\put(18.5,1){$*$}
\put(3.5,6){$*$}
\put(13.5,6){$*$}
\put(8.5,11){$*$}
\put(18.5,11){$*$}
\put(28.5,11){$*$}
\put(13.5,16){$*$}
\put(23.5,16){$*$}
\put(33.5,16){$*$}
\put(18.5,21){$*$}
\put(28.5,21){$*$}
\put(23.5,26){$*$}
\put(33.5,26){$*$}
\put(28.5,31){$*$}
\end{picture}
\vspace{20pt}
\subcaption{\vbox{\hsize25pc
\begin{legend}
\item[\raise3pt\hbox{\circle*{1}}] $K'$-types with $t_{p',q_1}$ already defined
\item[\raise3pt\hbox{\circle{.8}}] $K'$-types with $t_{p',q_1}$ yet to define
\endgraf
\end{legend}}}
\caption[]{}\label{KtypesPropagationL2}
\end{diagram}
Summarizing, we have extended $t_{0,0,0}$ and $t_{2i+2,0,0}$ uniquely to a sequence $(t_{p,q_1,0})_{p,q_1}$. Next fix $q_1$ and let $p'=p-q_1$. Then $t_{p',q_2}$ is already determined for $(p',q_2)=(p',0)$ with $p'$ arbitrary, see Diagram~\ref{KtypesPropagationL3}. Note that in Relation~\ref{eq:ComplexRelReducedAgain2} (with $q_1$ and $q_2$ interchanged) the coefficient $(r'-2q_2-n+3)$ never vanishes, and hence this relation can be used to extend $(t_{p',0})_{p'}$ uniquely to $(t_{p',q_2})_{p',q_2}$, see Diagram~\ref{KtypesPropagationL3}. Since $q_1$ was arbitrary this finally yields $t_{p,q_1,q_2}$ for any $p,q_1,q_2$ and finishes the proof.
\begin{diagram}
\setlength{\unitlength}{4pt}
\begin{picture}(49,28)
\thicklines
\put(0,0){\vector(1,0){45}}
\put(0,0){\vector(0,1){23}}
\multiput(0,0)(10,0){5}{\circle*{1}}
\multiput(5,5)(10,0){4}{\circle{.8}}
\multiput(10,10)(10,0){4}{\circle{.8}}
\multiput(15,15)(10,0){3}{\circle{.8}}
\multiput(20,20)(10,0){3}{\circle{.8}}
{\color{blue}
\put(15,5){\line(1,-1){5}}
\put(10,0){\line(1,0){10}}
\put(10,0){\line(1,1){5}}
\put(15,7){\vector(0,1){6}}
\put(35,5){\line(1,-1){5}}
\put(30,0){\line(1,0){10}}
\put(30,0){\line(1,1){5}}
\put(35,7){\vector(0,1){6}}}
\put(44,1){$p'$}
\put(1,21){$q_2$}
\put(-1.5,1){$*$}
\put(8.5,1){$*$}
\put(18.5,1){$*$}
\put(28.5,1){$*$}
\put(38.5,1){$*$}
\end{picture}
\vspace{20pt}
\subcaption{\vbox{\hsize25pc
\begin{legend}
\item[\raise3pt\hbox{\circle*{1}}] $K'$-types with $t_{p',q_1}$ already defined
\item[\raise3pt\hbox{\circle{.8}}] $K'$-types with $t_{p',q_1}$ yet to define
\endgraf
\end{legend}}}
\caption[]{}\label{KtypesPropagationL3}
\end{diagram}
\end{proof}

\begin{proof}[{Proof of Theorem~\ref{thm:ComplexMultiplicities}}]
\begin{enumerate}
\item This statement is contained in Lemmas~\ref{lem:CplxDiagonalSequences}, \ref{lem:CplxDiagonalExtensionGeneric} and \ref{lem:CplxDiagonalExtensionSingular}.
\item Composing with embeddings and quotient maps most of the multiplicity statements can be reduced to statements about the (non-)existence of intertwining operators $T:(\pi_r)_\HC\to(\tau_{r'})_\HC$ for particular $r$ and $r'$ such that the numbers $t_{(\alpha_1,\alpha_2),(\alpha_1',\alpha_2')}$ vanish in certain regions. These statements can be checked using the techniques used in Lemmas~\ref{lem:CplxDiagonalSequences}, \ref{lem:CplxDiagonalExtensionGeneric} and \ref{lem:CplxDiagonalExtensionSingular}. This does not work if either $\calV=\calT_\pm(i)$ or $\calW=\calT'_\pm(j)$. We therefore show the multiplicity statements for $m(\calT_+(i),\calT_+(j))$ in detail, using Remark~\ref{rem:IntertwinersBetweenCompositionFactors}. Similar considerations can then be applied to the remaining cases.\par
Let first $\calV=\calT_+(i)$ and $\calW=\calT'_+(j)$. Then, due to Remark~\ref{rem:IntertwinersBetweenCompositionFactors}, an intertwining operator $\calT_+(i)\to\calT_+(j)$ is given by an operator
$$ T:\calF_+(i)\to\bigoplus_{\alpha_1'=j+1}^\infty\bigoplus_{\alpha_2'=0}^j\calE'(\alpha_1',\alpha_2'), \qquad T|_{\calE(\alpha;\alpha')}=t_{\alpha,\alpha'}\cdot R_{\alpha,\alpha'}, $$
such that $T|_{\calF(i)}=0$, and the numbers $t_{\alpha,\alpha'}$ solve the relations \eqref{eq:ComplexRel1}--\eqref{eq:ComplexRel4} whenever the two terms $t_{(\beta_1,\beta_2),(\beta_1',\beta_2')}$ on the right hand sides of \eqref{eq:ComplexRel1}--\eqref{eq:ComplexRel4} satisfy $\beta_1'>j$, $\beta_2'\leq j$ (i.e. the two upper resp. lower vertices of the corresponding triangles are contained in the region $\{(\beta_1',\beta_2'):\beta_1'>j,\beta_2'\leq j\}$).\\
Assume first that $j>i$. Then for any fixed $q_2\leq j$ and $p'=p-q_2$ we are looking for numbers $t_{p',q_1}$ which vanish if either $q_1\leq j$ (i.e. $\alpha_1'\leq j$, the region below the horizontal line in Diagram~\ref{KtypesPropagationCplxTplus1}) or $p'-q_1>2(i-q_2)$ (i.e. $\alpha_2>i$, the region below the diagonal line going into the upper right corner in Diagram~\ref{KtypesPropagationCplxTplus1}). As indicated in Diagram~\ref{KtypesPropagationCplxTplus1}, relation \eqref{eq:ComplexRelReducedAgain1} can be used along the diagonal to obtain $t_{q_1,q_1}=0$ for $q_1>j$. Then using \eqref{eq:ComplexRelReducedAgain2} yields $t_{p',q_1}=0$ for all $p',q_1$, so that $m(\calT_+(i),\calT_+'(j))=0$.
\begin{diagram}
\setlength{\unitlength}{4pt}
\begin{picture}(49,38)
\thicklines
\put(0,0){\vector(1,0){45}}
\put(0,0){\vector(0,1){33}}
\put(27,-2){\line(-1,1){29}}
\put(13,-2){\line(1,1){32.5}}
\put(-2,17.5){\line(1,0){47.5}}
\multiput(-0.9,-0.7)(10,0){5}{$\times$}
\multiput(4.1,4.3)(10,0){5}{$\times$}
\multiput(9.1,9.3)(10,0){4}{$\times$}
\multiput(14.1,14.3)(10,0){4}{$\times$}
\multiput(20,20)(10,0){2}{\circle{.8}}
\put(39.1,19.7){$\times$}
\multiput(25,25)(10,0){2}{\circle{.8}}
\put(44.1,24.7){$\times$}
\multiput(30,30)(10,0){2}{\circle{.8}}
{\color{blue}
\put(10,20){\line(1,-1){5}}
\put(10,20){\line(1,0){10}}
\put(20,20){\line(-1,-1){5}}
\put(21,21){\vector(1,1){3}}}
\put(44,1){$p'$}
\put(1,31){$q_1$}
\put(2.5,-3.5){$2(i-q_2)$}
\put(27.5,-3.5){$2i+2$}
\put(-3,14.5){$j$}
\put(-7.5,19.5){$j+1$}
\put(18.5,21){$0$}
\end{picture}
\vspace{20pt}
\subcaption{\vbox{\hsize25pc
\begin{legend}
\item[\raise3pt\hbox{\circle{.8}}] $K'$-types with $t_{p',q_1}$ to be determined
\item[\hspace{1.858cm}\hbox{$\times$}] $K'$-types with $t_{p',q_1}=0$ by formal reasons
\endgraf
\end{legend}}}
\caption[]{}\label{KtypesPropagationCplxTplus1}
\end{diagram}
Next assume $j\leq i$. Then for fixed $q_2\leq j$ and $p'=p-q_2$ we have to find numbers as indicated in Diagram~\ref{KtypesPropagationCplxTplus2}. Here the relations \eqref{eq:ComplexRelReducedAgain1} and \eqref{eq:ComplexRelReducedAgain2} don't force any of the numbers in the region $\{(p',q_1):p'-q_1\leq2(i-q_2),p'+q_1>2i\}$ to vanish and hence the choice of one $t_{p',q_1}$ determines the remaining numbers. We note that in this case $t_{p',q_1}=0$ for $p'+q_1\leq2i$ and $q_1>j$ as desired.
\begin{diagram}
\setlength{\unitlength}{4pt}
\begin{picture}(49,38)
\thicklines
\put(0,0){\vector(1,0){45}}
\put(0,0){\vector(0,1){33}}
\put(37,-2){\line(-1,1){32}}
\put(13,-2){\line(1,1){32.5}}
\put(-2,12.5){\line(1,0){47.5}}
\multiput(-0.9,-0.7)(10,0){5}{$\times$}
\multiput(4.1,4.3)(10,0){5}{$\times$}
\multiput(9.1,9.3)(10,0){4}{$\times$}
\multiput(15,15)(10,0){2}{\circle{.8}}
\multiput(34.1,14.3)(10,0){2}{$\times$}
\multiput(20,20)(10,0){2}{\circle{.8}}
\put(39.1,19.7){$\times$}
\multiput(25,25)(10,0){2}{\circle{.8}}
\put(44.1,24.7){$\times$}
\multiput(30,30)(10,0){2}{\circle{.8}}
\put(44,1){$p'$}
\put(1,31){$q_1$}
\put(2.5,-3.5){$2(i-q_2)$}
\put(37.5,-3.5){$2i+2$}
\put(-3,9.5){$j$}
\put(-7.5,14.5){$j+1$}
\put(13.5,16){$0$}
\put(23.5,16){$*$}
\put(18.5,21){$*$}
\put(28.5,21){$*$}
\put(23.5,26){$*$}
\put(33.5,26){$*$}
\put(28.5,31){$*$}
\put(38.5,31){$*$}
\end{picture}
\vspace{20pt}
\subcaption{\vbox{\hsize25pc
\begin{legend}
\item[\raise3pt\hbox{\circle{.8}}] $K'$-types with $t_{p',q_1}$ to be determined
\item[\hspace{1.858cm}\hbox{$\times$}] $K'$-types with $t_{p',q_1}=0$ by formal reasons
\endgraf
\end{legend}}}
\caption[]{}\label{KtypesPropagationCplxTplus2}
\end{diagram}
Similarly, if we fix $q_1>j$ and let $p'=p-q_1$ we are in the situation of Diagram~\ref{KtypesPropagationCplxTplus3}. More precisely, we need to find numbers $t_{p',q_2}$ satisfying the relations \ref{eq:ComplexRelReducedAgain1} and \ref{eq:ComplexRelReducedAgain2} (with $q_1$ and $q_2$ interchanged) in the region $\{(p',q_2):q_2\leq j,p'+q_2\leq2i\}$ such that $t_{p',q_2}=0$ for $p'-q_2\leq2i$. Again the relations do not force any number in the non-trivial region to vanish (indicated by stars in Diagram~\ref{KtypesPropagationCplxTplus3}). Within this region, the choice of one of the numbers uniquely determines the rest. Together with the previous observation for the case of $q_2\leq j$ fixed we obtain $m(\calT_+(i),\calT_+(j))=1$.\qedhere
\begin{diagram}
\setlength{\unitlength}{4pt}
\begin{picture}(49,38)
\thicklines
\put(0,0){\vector(1,0){45}}
\put(0,0){\vector(0,1){33}}
\put(37,-2){\line(-1,1){32}}
\put(3,-2){\line(1,1){32.5}}
\put(-2,7.5){\line(1,0){47.5}}
\multiput(0,0)(10,0){4}{\circle{.8}}
\put(39.1,-0.7){$\times$}
\multiput(5,5)(10,0){3}{\circle{.8}}
\multiput(34.1,4.3)(10,0){2}{$\times$}
\multiput(9.1,9.3)(10,0){4}{$\times$}
\multiput(14.1,14.3)(10,0){4}{$\times$}
\multiput(19.1,19.7)(10,0){3}{$\times$}
\multiput(24.1,24.7)(10,0){3}{$\times$}
\multiput(29.1,29.7)(10,0){2}{$\times$}
\put(44,1){$p'$}
\put(1,31){$q_2$}
\put(2.5,-3.5){$2(i-q_2)$}
\put(37.5,-3.5){$2i+2$}
\put(-3,9.5){$j$}
\put(-7.5,14.5){$j+1$}
\put(-1.5,1){$0$}
\put(8.5,1){$*$}
\put(18.5,1){$*$}
\put(28.5,1){$*$}
\put(3.5,6){$0$}
\put(13.5,6){$*$}
\put(23.5,6){$*$}
\end{picture}
\vspace{20pt}
\subcaption{\vbox{\hsize25pc
\begin{legend}
\item[\raise3pt\hbox{\circle{.8}}] $K'$-types with $t_{p',q_1}$ to be determined
\item[\hspace{1.858cm}\hbox{$\times$}] $K'$-types with $t_{p',q_1}=0$ by formal reasons
\endgraf
\end{legend}}}
\caption[]{}\label{KtypesPropagationCplxTplus3}
\end{diagram}
\end{enumerate}
\end{proof}

\subsection{Explicit formula for the spectral function}

As in Section~\ref{sec:RealSpectralFunction} we also find the generic solution to the relations \eqref{eq:ComplexRel1}--\eqref{eq:ComplexRel4} as a meromorphic function in $r,r'\in\CC$.

\begin{proposition}\label{prop:ComplexExplicitEigenvalues}
For $\alpha_1,\alpha_2\in\NN$ and $0\leq\alpha_1'\leq\alpha_1$, $0\leq\alpha_2'\leq\alpha_2$ with $\alpha_1-\alpha_2=\alpha_1'-\alpha_2'$ the numbers
\begin{multline*}
 t_{(\alpha_1,\alpha_2),(\alpha_1',\alpha_2')}(r,r') = \sum_{k=0}^\infty \frac{2^k\Gamma(\frac{\alpha_1+\alpha_2-\alpha_1'-\alpha_2'+2}{2})\Gamma(\frac{\alpha_1+\alpha_2+\alpha_1'+\alpha_2'}{2}+n-1+k)}{k!^2\Gamma(\frac{\alpha_1+\alpha_2-\alpha_1'-\alpha_2'+2}{2}-k)\Gamma(\frac{\alpha_1+\alpha_2+\alpha_1'+\alpha_2'}{2}+n-1)}\\
 \times\frac{\Gamma(\frac{r+n}{2})^2\Gamma(\frac{r'+n-1}{2}+\alpha_1')\Gamma(\frac{r'+n-1}{2}+\alpha_2')\Gamma(\frac{r'-r+1}{2})\Gamma(\frac{r'+r+1}{2}+k)}{\Gamma(\frac{r'+n-1}{2})^2\Gamma(\frac{r+n}{2}+\alpha_1'+k)\Gamma(\frac{r+n}{2}+\alpha_2'+k)\Gamma(\frac{r'+r+1}{2})\Gamma(\frac{r'-r+1}{2}-k)}
\end{multline*}
are rational functions in $r$ and $r'$ satisfying the relations \eqref{eq:ComplexRel1}--\eqref{eq:ComplexRel4}. They are normalized to $t_{(0,0),(0,0)}\equiv1$.
\end{proposition}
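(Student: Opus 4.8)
The plan is to follow the same strategy as in the proof of Proposition~\ref{prop:RealExplicitEigenvalues}, using the reduced relations and exploiting the symmetry between the two index pairs. First I would pass to the reduced relations \eqref{eq:ComplexRelReducedAgain1} and \eqref{eq:ComplexRelReducedAgain2} (and their counterparts with $q_1$ and $q_2$ interchanged), since by the reparametrization $p=\alpha_1+\alpha_2$, $q_1=\alpha_1'$, $q_2=\alpha_2'$ the four relations \eqref{eq:ComplexRel1}--\eqref{eq:ComplexRel4} split into two families, each holding a parameter fixed. Note that the proposed formula is visibly symmetric under $\alpha_1'\leftrightarrow\alpha_2'$ (equivalently $q_1\leftrightarrow q_2$), so it suffices to verify the pair \eqref{eq:ComplexRel1}, \eqref{eq:ComplexRel2}; the other pair then follows by symmetry. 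As in the real case, one checks first that the sum is finite: since $\alpha_1-\alpha_2=\alpha_1'-\alpha_2'$ forces $\alpha_1+\alpha_2-\alpha_1'-\alpha_2'\in2\NN$, the factor $1/\Gamma(\tfrac{\alpha_1+\alpha_2-\alpha_1'-\alpha_2'+2}{2}-k)$ vanishes for all large $k$, and each summand is manifestly a rational function of $r,r'$.

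Next I would isolate the ``building block.'' A short computation should show that for each fixed $k\in\NN$ the expression
$$ \frac{\Gamma(\tfrac{\alpha_1+\alpha_2-\alpha_1'-\alpha_2'+2}{2})\Gamma(\tfrac{\alpha_1+\alpha_2+\alpha_1'+\alpha_2'}{2}+n-1+k)\Gamma(\tfrac{r'+n-1}{2}+\alpha_1')\Gamma(\tfrac{r'+n-1}{2}+\alpha_2')}{\Gamma(\tfrac{\alpha_1+\alpha_2-\alpha_1'-\alpha_2'+2}{2}-k)\Gamma(\tfrac{\alpha_1+\alpha_2+\alpha_1'+\alpha_2'}{2}+n-1)\Gamma(\tfrac{r+n}{2}+\alpha_1'+k)\Gamma(\tfrac{r+n}{2}+\alpha_2'+k)} $$
solves \eqref{eq:ComplexRel1} identically (as a function of the $\alpha$'s, with $r,r'$ and $k$ as parameters) — this is the analogue of the observation in the real proof that the $k$-th $\Gamma$-quotient solves \eqref{eq:RealRel1}. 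Then I would make the ansatz $t_{(\alpha_1,\alpha_2),(\alpha_1',\alpha_2')}=\sum_k b_k\cdot(\text{block}_k)$ with $b_k=b_k(r,r')$ independent of the $\alpha$'s, plug into \eqref{eq:ComplexRel2}, shift the summation index $k\mapsto k-1$ in one of the two terms coming from the right-hand side, and collect coefficients of a common $\Gamma$-quotient. This should produce a two-term recurrence of the shape $k^2\,b_k = (\text{linear in }r,r')\cdot(\text{linear in }r,r')\,b_{k-1}$; comparing with the proposed formula, the recurrence should be
$$ 2k^2\, b_k = (r'+r+2k-1)(r'-r-2k+1)\, b_{k-1}, $$
whose solution is $b_k=c\cdot\dfrac{2^k\Gamma(\tfrac{r'+r+1}{2}+k)}{k!\,\Gamma(\tfrac{r'-r+1}{2}-k)}$ with $c=c(r,r')$ independent of $k$. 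Finally the normalization $t_{(0,0),(0,0)}\equiv1$ pins down $c=\dfrac{\Gamma(\tfrac{r+n}{2})^2\Gamma(\tfrac{r'-r+1}{2})}{\Gamma(\tfrac{r'+n-1}{2})^2\Gamma(\tfrac{r'+r+1}{2})}$, and absorbing $c$ into the summand recovers exactly the stated formula (the extra $1/k!$ coming from $b_k$ combining with the $1/k!$ already present in the block to give the $1/k!^2$ in the proposition).

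The main obstacle is purely computational: verifying that the building block solves \eqref{eq:ComplexRel1} and that substituting the ansatz into \eqref{eq:ComplexRel2} collapses to the claimed recurrence requires manipulating ratios of $\Gamma$-functions whose arguments are shifted by $\pm1$, $\pm\tfrac12$, and by $2k$, and then massaging the polynomial prefactors into a single bracket that factors as a product of two linear terms minus the $k(k-1)$-term — just as in the bracket displayed in the proof of Proposition~\ref{prop:RealExplicitEigenvalues}. There is no conceptual difficulty beyond bookkeeping, but the algebra is more involved than the orthogonal case because there are now two ``radial'' indices $\alpha_1',\alpha_2'$ and the denominator carries the product $\Gamma(\tfrac{r+n}{2}+\alpha_1'+k)\Gamma(\tfrac{r+n}{2}+\alpha_2'+k)$; one must check the cancellations using $\alpha_1-\alpha_2=\alpha_1'-\alpha_2'$ at the right moments. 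Since all four relations \eqref{eq:ComplexRel1}--\eqref{eq:ComplexRel4} are needed, I would finish by invoking the $q_1\leftrightarrow q_2$ symmetry of the formula to reduce \eqref{eq:ComplexRel3}, \eqref{eq:ComplexRel4} to \eqref{eq:ComplexRel1}, \eqref{eq:ComplexRel2}.
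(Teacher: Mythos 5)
Your proposal follows essentially the same route as the paper's proof: reparametrize to $(p,q_1,q_2)$, reduce to the two relations with $q_2$ fixed, exhibit for each $k$ a $\Gamma$-quotient building block solving the first relation, determine $b_k$ from the second relation via a two-term recurrence, normalize at $(0,0),(0,0)$, and dispose of the remaining two relations by the $q_1\leftrightarrow q_2$ symmetry (the only cosmetic difference being that your block already carries the $\alpha_2'$-dependent $\Gamma$-factors, whereas the paper absorbs them into a $q_2$-dependent $b_k$). One bookkeeping slip: the solution of your (correct) recurrence $2k^2\,b_k=(r'+r+2k-1)(r'-r-2k+1)\,b_{k-1}$ is $b_k=c\cdot 2^k\Gamma(\tfrac{r'+r+1}{2}+k)/\bigl(k!^2\,\Gamma(\tfrac{r'-r+1}{2}-k)\bigr)$, i.e.\ the $k!^2$ arises already here rather than from a $1/k!$ in the block (your block contains none), and with this correction the summand matches the proposition exactly.
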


\begin{proof}
The proof is similar to the proof of Proposition~\ref{prop:RealExplicitEigenvalues} and we omit some of the details. For simplicity we use the reparametrization $(p,q_1,q_2)$ instead of $(\alpha_1,\alpha_2)$ and $(\alpha_1',\alpha_2')$. Fix $q_2$ and let $p'=p-q_2$, then it is easy to see that for every $k\in\NN$ the expression
$$ \frac{\Gamma(\frac{p'-q_1+2}{2})\Gamma(\frac{p'+q_1}{2}+q_2+n-1+k)\Gamma(\frac{r'+2q_1+n-1}{2})}{\Gamma(\frac{p'-q_1+2}{2}-k)\Gamma(\frac{p'+q_1}{2}+q_2+n-1)\Gamma(\frac{r+2q_1+n}{2}+k)} $$
satisfies \eqref{eq:ComplexRelReducedAgain1}. Further, the series
$$ \sum_{k=0}^\infty b_k\frac{\Gamma(\frac{p'-q_1+2}{2})\Gamma(\frac{p'+q_1}{2}+q_2+n-1+k)\Gamma(\frac{r'+2q_1+n-1}{2})}{\Gamma(\frac{p'-q_1+2}{2}-k)\Gamma(\frac{p'+q_1}{2}+q_2+n-1)\Gamma(\frac{r+2q_1+n}{2}+k)} $$
satisfies \eqref{eq:ComplexRelReducedAgain2} if and only if
$$ b_k = c\frac{2^k\Gamma(\frac{r'+r+1}{2}+k)}{k!^2\Gamma(\frac{r+2q_2+n}{2}+k)\Gamma(\frac{r'-r+1}{2}-k)} $$
for some constant $c=c(r,r',q_2)$ which does not depend on $p'$, $q_1$ and $k$. Plugging in $p'=p-q_2$, using the symmetry of the relations \eqref{eq:ComplexRelReduced1}--\eqref{eq:ComplexRelReduced4} in $q_1$ and $q_2$, and normalizing to $t_{0,0,0}\equiv0$ yields
\begin{multline}
 t_{p,q_1,q_2}(r,r') = \sum_{k=0}^\infty \frac{2^k\Gamma(\frac{p-q_1-q_2+2}{2})\Gamma(\frac{p+q_1+q_2}{2}+n-1+k)}{k!^2\Gamma(\frac{p-q_1-q_2+2}{2}-k)\Gamma(\frac{p+q_1+q_2}{2}+n-1)}\\
 \times\frac{\Gamma(\frac{r+n}{2})^2\Gamma(\frac{r'+n-1}{2}+q_1)\Gamma(\frac{r'+n-1}{2}+q_2)\Gamma(\frac{r'-r+1}{2})\Gamma(\frac{r'+r+1}{2}+k)}{\Gamma(\frac{r'+n-1}{2})^2\Gamma(\frac{r+n}{2}+q_1+k)\Gamma(\frac{r+n}{2}+q_2+k)\Gamma(\frac{r'+r+1}{2})\Gamma(\frac{r'-r+1}{2}-k)}\label{eq:ExplicitFormulaComplexReduced}
\end{multline}
Reparametrizing $p,q_1,q_2$ to $\alpha_1,\alpha_2,\alpha_1',\alpha_2'$ shows the claimed formula. Rewriting \eqref{eq:ExplicitFormulaComplexReduced} as
\begin{equation}
 t_{p,q_1,q_2}(r,r') = \sum_{k=0}^{\frac{p-q_1-q_2}{2}} \frac{2^k(-\frac{p-q_1-q_2}{2})_k(\frac{p+q_1+q_2}{2}+n-1)_k(\frac{r'+n-1}{2})_{q_1}(\frac{r'+n-1}{2})_{q_2}(\frac{r-r'+1}{2})_k(\frac{r'+r+1}{2})_k}{k!^2(\frac{r+n}{2})_{q_1+k}(\frac{r+n}{2})_{q_2+k}}\label{eq:ExplicitFormulaComplexReducedRational}
\end{equation}
further shows that this is a rational function in $r$ and $r'$.
\end{proof}

Also the next two results are proven along the same lines as Corollary~\ref{cor:RealRenormalizations} and Theorem~\ref{thm:RealExplicitBasis}.

\begin{corollary}\label{cor:ComplexRenormalizations}
\begin{enumerate}
\item The renormalized numbers
$$ t_{(\alpha_1,\alpha_2),(\alpha_1',\alpha_2')}^{(1)}(r,r') = \frac{1}{\Gamma(\frac{r+\rho}{2})^2}t_{(\alpha_1,\alpha_2),(\alpha_1',\alpha_2')}(r,r') $$
are holomorphic in $(r,r')\in\CC^2$ for all $(\alpha_1,\alpha_2)$, $(\alpha_1',\alpha_2')$. Further, $t_{(\alpha_1,\alpha_2),(\alpha_1',\alpha_2')}^{(1)}(r,r')=0$ for all $(\alpha_1,\alpha_2)$, $(\alpha_1',\alpha_2')$ if and only if $(r,r')\in L$.
\item Fix $r'=-\rho'-2j$, $j\in\NN$, then the renormalized numbers
$$ t_{(\alpha_1,\alpha_2),(\alpha_1',\alpha_2')}^{(2)}(r,r') = \frac{\Gamma(\frac{(r+\rho)-(r'+\rho')}{2})}{\Gamma(\frac{r+\rho}{2})^2}t_{(\alpha_1,\alpha_2),(\alpha_1',\alpha_2')}(r,r') $$
are holomorphic in $r\in\CC$ for all $(\alpha_1,\alpha_2)$, $(\alpha_1',\alpha_2')$. We have $t_{(\alpha_1,\alpha_2),(\alpha_1',\alpha_2')}^{(2)}(r,r')\equiv0$ whenever $\alpha_1'>j$ or $\alpha_2'>j$. Further, for every $r\in\CC$ there exist $(\alpha_1,\alpha_2)$, $(\alpha_1',\alpha_2')$ with $t_{(\alpha_1,\alpha_2),(\alpha_1',\alpha_2')}^{(2)}(r,r')\neq0$.
\item Fix $N\in\NN$ and let $r'+\rho'=r+\rho+2N$, then the renormalized numbers
$$ t_{(\alpha_1,\alpha_2),(\alpha_1',\alpha_2')}^{(3)}(r,r') = \frac{\Gamma(\frac{r'+\rho'}{2})^2}{\Gamma(\frac{r+\rho}{2})^2}t_{(\alpha_1,\alpha_2),(\alpha_1',\alpha_2')}(r,r') $$
are holomorphic in $r\in\CC$ for all $(\alpha_1,\alpha_2)$, $(\alpha_1',\alpha_2')$. Further, for every $r\in\CC$ there exists $\alpha_0\in\NN$ such that $t_{(\alpha_1,\alpha_2),(\alpha_1,\alpha_2)}^{(3)}(r,r')\neq0$ for $\alpha_1,\alpha_2\geq\alpha_0$.
\end{enumerate}
\end{corollary}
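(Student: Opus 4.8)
The plan is to follow the proof of Corollary~\ref{cor:RealRenormalizations} almost verbatim, working with the rational closed form \eqref{eq:ExplicitFormulaComplexReducedRational} of the spectral function in the reduced variables $p=\alpha_1+\alpha_2$, $q_1=\alpha_1'$, $q_2=\alpha_2'$ (recall $\rho=n$, $\rho'=n-1$, so $\frac{r+\rho}{2}=\frac{r+n}{2}$, $\frac{r'+\rho'}{2}=\frac{r'+n-1}{2}$). In all three parts the proof of holomorphy rests on the same mechanism: after multiplying $t_{p,q_1,q_2}(r,r')$ by the indicated renormalization factor, the factor $\Gamma(\frac{r+n}{2})^{-2}$ together with the Pochhammer denominators $(\frac{r+n}{2})_{q_1+k}(\frac{r+n}{2})_{q_2+k}$ combine into the entire functions $\Gamma(\frac{r+n}{2}+q_1+k)^{-1}\Gamma(\frac{r+n}{2}+q_2+k)^{-1}$ via $\frac1{\Gamma(z)(z)_m}=\frac1{\Gamma(z+m)}$, while any surviving $\Gamma$-quotient collapses to a genuine Pochhammer polynomial via $\frac{\Gamma(z+a)}{\Gamma(z+b)}=(z+b)_{a-b}$ (integers $b\le a$). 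For $t^{(1)}$ nothing further is needed, so $t^{(1)}_{p,q_1,q_2}$ is a finite sum of entire functions of $(r,r')$; for $t^{(2)}$, fixing $r'=-\rho'-2j$ gives $\frac{r'+n-1}{2}=-j$ and $\frac{r-r'+1}{2}=\frac{r+n}{2}+j$, the extra factor $\Gamma(\frac{r+n}{2}+j)$ turns $(\frac{r+n}{2}+j)_k$ into $\Gamma(\frac{r+n}{2}+j+k)$, and $\frac{\Gamma(\frac{r+n}{2}+j+k)}{\Gamma(\frac{r+n}{2}+q_1+k)}=(\frac{r+n}{2}+q_1+k)_{j-q_1}$ whenever $q_1\le j$ (and the prefactor $(-j)_{q_1}(-j)_{q_2}$ kills the remaining cases), so $t^{(2)}_{p,q_1,q_2}$ is a finite sum of entire functions of $r$; for $t^{(3)}$, with $r'+\rho'=r+\rho+2N$ one has $\frac{r-r'+1}{2}=-N$ (truncating the sum at $k\le N$), $\frac{r'+r+1}{2}=r+1+N$, and $\frac{\Gamma(\frac{r'+n-1}{2}+q_i)}{\Gamma(\frac{r+n}{2}+q_i+k)}=(\frac{r+n}{2}+q_i+k)_{N-k}$, so $t^{(3)}_{p,q_1,q_2}$ is in fact a polynomial in $r$, the complex counterpart of \eqref{eq:EigenvaluesJuhlOperators}. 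The vanishing $t^{(2)}_{(\alpha_1,\alpha_2),(\alpha_1',\alpha_2')}\equiv0$ for $\alpha_1'>j$ or $\alpha_2'>j$ is immediate from the prefactor $(-j)_{\alpha_1'}(-j)_{\alpha_2'}$.

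For the (non)vanishing statements I would repeatedly exploit that on the diagonal $\alpha_1=\alpha_1'$, $\alpha_2=\alpha_2'$, i.e.\ $p=q_1+q_2$, the sum \eqref{eq:ExplicitFormulaComplexReducedRational} collapses to its $k=0$ term since $(-\frac{p-q_1-q_2}{2})_k=(0)_k$. Thus $t^{(1)}_{(\alpha_1,\alpha_2),(\alpha_1,\alpha_2)}=\frac{(\frac{r'+n-1}{2})_{\alpha_1}(\frac{r'+n-1}{2})_{\alpha_2}}{\Gamma(\frac{r+n}{2}+\alpha_1)\Gamma(\frac{r+n}{2}+\alpha_2)}$. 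If $t^{(1)}$ vanishes for all $\alpha$, then $t^{(1)}_{(0,0),(0,0)}=\Gamma(\frac{r+n}{2})^{-2}=0$ forces $r=-\rho-2i$ for some $i\in\NN$, after which vanishing of $t^{(1)}_{(\alpha,\alpha),(\alpha,\alpha)}=(\frac{r'+n-1}{2})_\alpha^2\,\Gamma(\alpha-i)^{-2}$ for all $\alpha>i$ forces $\frac{r'+n-1}{2}\in\{0,-1,\dots,-i\}$, i.e.\ $r'=-\rho'-2j$ with $0\le j\le i$; hence $(r,r')\in L$. Conversely, for $(r,r')=(-\rho-2i,-\rho'-2j)\in L$ one checks that every summand of $t^{(1)}_{p,q_1,q_2}$ contains a vanishing factor: a nonzero summand would require $q_1,q_2\le j$ (from $(-j)_{q_1}(-j)_{q_2}$), $k\le i-j$ (from $(j-i)_k$), and $q_i+k\ge i+1$ (from $\Gamma(q_i+k-i)^{-1}$), forcing $k\ge i+1-j>i-j$, a contradiction. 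This settles part~(1).

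Finally, for parts~(2) and~(3) one must show that $t^{(2)}(r,r')$ (resp.\ $t^{(3)}(r,r')$) is not identically zero in $\alpha$ for any admissible value of the free parameter. For $t^{(3)}$ the diagonal gives $t^{(3)}_{(\alpha_1,\alpha_2),(\alpha_1,\alpha_2)}=(\frac{r+n}{2}+\alpha_1)_N(\frac{r+n}{2}+\alpha_2)_N$, which is nonzero as soon as $\alpha_1,\alpha_2$ avoid the finite set $\{-\tfrac{r+n}{2},\dots,-\tfrac{r+n}{2}-N+1\}$, so choosing $\alpha_0$ past it gives the claim. For $t^{(2)}$, the diagonal evaluation $t^{(2)}_{(\alpha,\alpha),(\alpha,\alpha)}=(-j)_\alpha^2\,\frac{\Gamma(\frac{r+n}{2}+j)}{\Gamma(\frac{r+n}{2}+\alpha)^2}$ is already nonzero unless $\frac{r+n}{2}\in-\NN$, so it remains to treat $r=-\rho-2i$; vanishing of the diagonal then forces $j\le i$ (else $t^{(2)}_{(i+1,i+1),(i+1,i+1)}=(-j)_{i+1}^2\,\Gamma(j-i)\neq0$), and for such $r$ one evaluates off the diagonal at $(\alpha_1,\alpha_2)=(i+1,i+1)$, $(\alpha_1',\alpha_2')=(j,j)$, where $p-q_1-q_2=2(i-j+1)$: in the polynomialized form all terms with index $k\le i-j$ die through the factor $\Gamma(j+k-i)^{-1}$, leaving the single term $k=i-j+1$, whose factors $(-j)_j^2$, $(-(i-j+1))_{i-j+1}$, $(i+j+n)_{i-j+1}$, $(-(n-1)-i-j)_{i-j+1}$ and $\Gamma(1)^{-1}$ are all nonzero (the third because all its factors are positive, the fourth because its $i-j+1$ consecutive integer factors run from $-(n-1)-i-j$ up to $-(n-1)-2j$, all $\le-1$ since $n\ge2$); hence $t^{(2)}_{(i+1,i+1),(j,j)}(r,r')\neq0$. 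This last bookkeeping — tracking exactly which $\Gamma$-factors degenerate to poles or zeros at the points of the degenerate set $L$, whose shape differs from $\Leven$ — is the only genuinely new point compared with the real case and I expect it to be the main obstacle; once the corollary is in place, the complex analogue of Theorem~\ref{thm:RealExplicitBasis} follows from Theorem~\ref{thm:ComplexMultiplicities} exactly as in Section~\ref{sec:RealSpectralFunction}.
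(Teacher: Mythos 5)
Your proposal is correct and follows essentially the same route as the paper, which proves Corollary~\ref{cor:ComplexRenormalizations} "along the same lines as Corollary~\ref{cor:RealRenormalizations}": you polynomialize the $\Gamma$-quotients in \eqref{eq:ExplicitFormulaComplexReducedRational} to get entire expressions, use the collapse to the $k=0$ term on the diagonal $p=q_1+q_2$, and handle the degenerate set $L$ by the evaluations at $(\alpha_1,\alpha_2)=(i+1,i+1)$ with $(\alpha_1',\alpha_2')=(i+1,i+1)$ resp. $(j,j)$, all of which check out (e.g.\ the single surviving summand $k=i-j+1$ in $t^{(2)}_{(i+1,i+1),(j,j)}$ and the factor $(-(n-1)-i-j)_{i-j+1}\neq0$). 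The only cosmetic point is that in part~(2) the phrase "already nonzero unless $\frac{r+n}{2}\in-\NN$" should be anchored at a specific index such as $(\alpha_1,\alpha_2)=(\alpha_1',\alpha_2')=(0,0)$, as your own formula makes clear.
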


\begin{theorem}\label{thm:ComplexExplicitBasis}
For $i=1,2,3$ we let $T^{(i)}(r,r')$ be the intertwining operators $(\pi_r)_\HC\to(\tau_{r'})_\HC$ corresponding to the numbers $t^{(i)}_{(\alpha_1,\alpha_2),(\alpha_1',\alpha_2')}(r,r')$ in Corollary~\ref{cor:ComplexRenormalizations}. Then the operator $T^{(1)}(r,r')$ is defined for $(r,r')\in\CC^2$, the operator $T^{(2)}(r,r')$ is defined for $r'\in-\rho'-2\NN$ and the operator $T^{(3)}(r,r')$ is defined for $(r+\rho)-(r'+\rho')\in-2\NN$. We have
$$ \Hom_{(\frakg',K')}((\pi_r)_\HC|_{(\frakg',K')},(\tau_{r'})_\HC) = \begin{cases}\CC T^{(1)}(r,r') & \mbox{for $(r,r')\in\CC^2\setminus L$,}\\\CC T^{(2)}(r,r')\oplus\CC T^{(3)}(r,r') & \mbox{for $(r,r')\in L$.}\end{cases} $$
\end{theorem}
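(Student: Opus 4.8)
The plan is to mimic the orthogonal case (Theorem~\ref{thm:RealExplicitBasis}): combine the explicit rational solution of Proposition~\ref{prop:ComplexExplicitEigenvalues}, the renormalization statements in Corollary~\ref{cor:ComplexRenormalizations}, the characterization of intertwiners in Theorem~\ref{thm:ComplexCharacterizationIntertwiners}, and the multiplicity count in Theorem~\ref{thm:ComplexMultiplicities}~(1).

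First I would record that each family $(t^{(i)}_{(\alpha_1,\alpha_2),(\alpha_1',\alpha_2')}(r,r'))$ differs from the rational solution of Proposition~\ref{prop:ComplexExplicitEigenvalues} by a factor independent of $(\alpha_1,\alpha_2),(\alpha_1',\alpha_2')$, hence still solves the relations \eqref{eq:ComplexRel1}--\eqref{eq:ComplexRel4}; by Theorem~\ref{thm:ComplexCharacterizationIntertwiners} it therefore defines an intertwining operator $T^{(i)}(r,r'):(\pi_r)_\HC\to(\tau_{r'})_\HC$ on the locus where the renormalized numbers are finite. The stated domains are read off from Corollary~\ref{cor:ComplexRenormalizations}: $T^{(1)}$ is holomorphic on all of $\CC^2$, $T^{(2)}$ is holomorphic in $r$ along each line $r'=-\rho'-2j$, and $T^{(3)}$ is holomorphic in $r$ along each line $r'+\rho'=r+\rho+2N$. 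It remains only to note that if $(r,r')\in L$, say $r=-\rho-2i$, $r'=-\rho'-2j$ with $0\le j\le i$, then $r'\in-\rho'-2\NN$ and $(r+\rho)-(r'+\rho')=-2(i-j)\in-2\NN$, so both $T^{(2)}(r,r')$ and $T^{(3)}(r,r')$ are defined at every point of $L$.

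For $(r,r')\in\CC^2\setminus L$, Theorem~\ref{thm:ComplexMultiplicities}~(1) gives $\dim\Hom_{(\frakg',K')}((\pi_r)_\HC|_{(\frakg',K')},(\tau_{r'})_\HC)=1$, while Corollary~\ref{cor:ComplexRenormalizations}~(1) shows $T^{(1)}(r,r')\neq0$ precisely off $L$; hence the Hom space equals $\CC T^{(1)}(r,r')$. For $(r,r')\in L$ the same theorem gives dimension $2$, so it suffices to show $T^{(2)}(r,r')$ and $T^{(3)}(r,r')$ are nonzero and linearly independent. Nonvanishing of each is exactly parts (2) and (3) of Corollary~\ref{cor:ComplexRenormalizations}. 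For independence I would exploit the contrasting support patterns: by Corollary~\ref{cor:ComplexRenormalizations}~(2) the numbers $t^{(2)}_{(\alpha_1,\alpha_2),(\alpha_1',\alpha_2')}$ vanish identically once $\alpha_1'>j$ or $\alpha_2'>j$, so $T^{(2)}(r,r')$ takes values in the finite-dimensional submodule $\calF'(j)=\bigoplus_{\alpha_1',\alpha_2'\le j}\calE'(\alpha_1',\alpha_2')$ of $(\tau_{r'})_\HC$; by contrast, Corollary~\ref{cor:ComplexRenormalizations}~(3) and the explicit formula \eqref{eq:ExplicitFormulaComplexReducedRational} give $t^{(3)}_{(\alpha_1,\alpha_2),(\alpha_1,\alpha_2)}(r,r')\neq0$ for all sufficiently large $\alpha_1=\alpha_2$, so $T^{(3)}(r,r')$ is nonzero on $K'$-types of arbitrarily large highest weight and hence does not land in $\calF'(j)$. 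Therefore any relation $aT^{(2)}(r,r')+bT^{(3)}(r,r')=0$ forces $b=0$ by looking at the diagonal $K'$-types with $\alpha_1=\alpha_2$ large, and then $a=0$. Thus $T^{(2)}(r,r')$ and $T^{(3)}(r,r')$ span a two-dimensional subspace, which by the multiplicity count is the whole Hom space.

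The substantive work is really upstream, in Corollary~\ref{cor:ComplexRenormalizations}: checking that the three prescribed prefactors turn the meromorphic spectral function into holomorphic families with exactly the asserted zero loci, and in particular identifying $L$ as the common zero set of $T^{(1)}$ — this is where the $j\le i$ condition enters through cancellations of Pochhammer and Gamma factors in \eqref{eq:ExplicitFormulaComplexReducedRational}. Within the present proof, the one point that needs a little care is making the independence argument rigorous: one should phrase "$T^{(2)}(r,r')$ maps into $\calF'(j)$" via the decomposition $\calW=\calW'\oplus\calW''$ of Remark~\ref{rem:IntertwinersBetweenCompositionFactors}, so that the vanishing region of $t^{(2)}$ and the diagonal nonvanishing of $t^{(3)}$ are genuinely incompatible; both are immediate from the explicit formulas, so no new estimate is required.
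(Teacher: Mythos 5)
Your proposal is correct and follows exactly the route the paper intends: the paper itself only remarks that Theorem~\ref{thm:ComplexExplicitBasis} is ``proven along the same lines as'' the orthogonal case, i.e.\ the renormalized families of Corollary~\ref{cor:ComplexRenormalizations} solve the relations of Theorem~\ref{thm:ComplexCharacterizationIntertwiners} (by holomorphic continuation along the relevant lines), and the dimension count of Theorem~\ref{thm:ComplexMultiplicities}~(1) together with the vanishing/non-vanishing statements pins down the spanning sets. Your linear-independence argument for $(r,r')\in L$ via the contrasting supports of $t^{(2)}$ (zero once $\alpha_1'>j$ or $\alpha_2'>j$) and $t^{(3)}$ (nonzero on diagonal $K'$-types with large indices) is precisely the intended mechanism, so no gap remains.
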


\begin{remark}\label{rem:RenormalizationForSubquotientsCplx}
We remark that also every intertwining operator between subquotients $\calV=\calF(i),\calT_\pm(i),\calT(i)$ and $\calW=\calF'(j),\calT'_\pm(j),\calT'(j)$ can be obtained from the holomorphic family $T^{(i)}(r,r')$ by restricting and renormalizing. More precisely, if $\calV$ is a quotient of $(\pi_r)_\HC$ and $\calW$ is a subrepresentation of $(\tau_{r'})_\HC$ then any intertwining operator $T:\calV\to\calW$ gives rise to an intertwining operator $(\pi_r)_\HC\to(\tau_{r'})_\HC$ and is hence of the form $T^{(i)}(r,r')$ for some $i=1,2,3$. This constructs all except the intertwiners $\calT_\pm(i)\to\calT'_\pm(j)$ for $0\leq j\leq i$. These can be obtained from $T^{(1)}(r,r')$ as follows:\par
We first construct an intertwining operator $T^+:\calF_+(i)\to(\tau_{r'})_\HC$ for $r'=-\rho'-2j$ such that $T^+(\calF_+(i))\subseteq\calF_+(j)$. Since $\calF_+(i)$ consists of all $K$-type $\calE(\alpha_1,\alpha_2)$ with $\alpha_2\leq i$ it is given by a sequence $(t^+_{(\alpha_1,\alpha_2),(\alpha_1',\alpha_2')})_{\alpha_2\leq i}$. Reparametrizing to $p,q_1,q_2$ this means that we have to find a sequence $(t^+_{p,q_1,q_2})_{p-q_1+q_2\leq2i}$ satisfying the necessary relations. Let $r'+\rho'=r+\rho+2N$ with $N=i-j\in\NN$ and define
$$ t^+_{p,q_1,q_2}(r,r') := \frac{\Gamma(\frac{r'+\rho'}{2})}{\Gamma(\frac{r+\rho}{2})}t_{p,q_1,q_2}(r,r'), \qquad p-q_1+q_2\leq2i. $$
Then by \eqref{eq:ExplicitFormulaComplexReducedRational} we have
\begin{align*}
 & t^+_{p,q_1,q_2}(r,r')\\
 ={}& \sum_k \frac{2^k(-\frac{p-q_1-q_2}{2})_k(\frac{p+q_1+q_2}{2}+n-1)_k(\frac{r+n}{2}+k+q_1)_{N-k}(\frac{r+n}{2}+N)_{q_2}(-N)_k(r+N+1)_k}{k!^2(\frac{r+n}{2})_{q_2+k}}
\end{align*}
In the sum all terms for $k>\frac{p-q_1-q_2}{2}$ vanish, so that $k\leq\frac{p-q_1-q_2}{2}=\frac{p-q_1+q_2}{2}-q_2\leq i-q_2$. This implies that the denominator does not vanish at $r=-\rho-2i$. Therefore $t^+_{p,q_1,q_2}(r,r')$ is holomorphic in $r=-\rho-2i$ and evaluation there yields
\begin{multline*}
 t^+_{p,q_1,q_2} = t^+_{p,q_1,q_2}(-\rho-2i,-\rho'-2j)\\
 = \sum_{k=0}^{i-j} \frac{2^k(-\frac{p-q_1-q_2}{2})_k(\frac{p+q_1+q_2}{2}+n-1)_k(k+q_1-i)_{i-j-k}(-j)_{q_2}(j-i)_k(1-n-i-j)_k}{k!^2(-i)_{q_2+k}}.
\end{multline*}
The sequence $t^+_{p,q_1,q_2}$ clearly satisfies the necessary relations since it is simply a renormalization of the sequence $t_{p,q_1,q_2}$, and hence it defines an intertwining operator $T^+:\calF_+(i)\to(\tau_{r'})_\HC$. We note that for $q_2>j$ the term $(-j)_{q_2}$ vanishes so that $t^+_{p,q_1,q_2}=0$. Therefore $T^+(\calF_+(i))\subseteq\calF'_+(j)$. Composing with the quotient map $\calF'_+(j)\to\calT'_+(j)$ yields an intertwiner $T^+:\calF_+(i)\to\calT'_+(j)$. We claim that this intertwiner vanishes on $\calF(i)$ and hence factorizes through $\calT_+(i)$. In fact, for $\alpha_2=\frac{p+q_1-q_2}{2}\leq i$ and $q_1>j$ we have $\frac{p-q_1-q_2}{2}\leq i-q_1$ so that we may take the sum over all $k\leq i-q_1$. But then $q_1+k-i\leq0$ and therefore $(q_1+k-i)_{i-j-k}=0$ whence $t^+_{p,q_1,q_2}=0$. This implies that $T^+:\calF_+(i)\to\calT'_+(j)$ factorizes to an intertwiner $T^+:\calT_+(i)\to\calT_+(j)$. To finally see that this intertwiner is non-trivial we note that for all $q_1>i$, $q_2\leq j$ and $p=q_1+q_2$ we have
$$ t^+_{p,q_1,q_2} = \frac{(q_1-i)_{i-j}(-j)_{q_2}}{(-i)_{q_2}}\neq0. $$
\end{remark}

\begin{remark}
The operators $T^{(1)}(r,r')$ are related to the meromorphic family of singular integral operators constructed in \cite{MOO13}. Further, the family $T^{(3)}$ is (up to a constant) equal to the differential restriction operators on the Heisenberg group constructed in \cite{MOZ16}. They can be viewed as a generalization of Juhl's conformally invariant operators (cf. Remark~\ref{rem:JuhlOperators}). It would be interesting to carry out a detailed investigation of all operators $T^{(i)}(r,r')$, $i=1,2,3$, in the non-compact picture as in \cite{KS13}.
\end{remark}

As in the real case, we can prove automatic continuity using the full classification in Theorem~\ref{thm:ComplexExplicitBasis} in terms of the holomorphic family $T^{(1)}(r,r')$. Note that the corresponding holomorphic family of intertwining operators in the smooth category was also constructed in \cite{MOO13}.

\begin{corollary}\label{cor:HCtoInftyCplx}
For $(G,G')=(\upU(1,n),\upU(1,n-1))$ the natural injective map
\begin{equation*}
 \Hom_{G'}(\pi|_{G'},\tau)\to\Hom_{(\frakg',K')}(\pi_\HC|_{(\frakg',K')},\tau_\HC)
\end{equation*}
is an isomorphism for all spherical principal series $\pi$ of $G$ and $\tau$ of $G'$ and their subquotients.
\end{corollary}

\newpage

\appendix

\section{Orthogonal polynomials}\label{app:OrthPoly}

\subsection{Gegenbauer polynomials}\label{app:Gegenbauer}

The classical Gegenbauer polynomials $C_n^\lambda(z)$ can be defined by (see \cite[equation 10.9~(18)]{EMOT81})
$$ C_n^\lambda(z) = \sum_{m=0}^{\lfloor\frac{n}{2}\rfloor} \frac{(-1)^m(\lambda)_{n-m}}{m!(n-2m)!}(2z)^{n-2m}. $$
They obviously satisfy the parity condition (see \cite[equation 10.9~(16)]{EMOT81})
\begin{equation}
 C_n^\lambda(-z) = (-1)^nC_n^\lambda(z).\label{eq:GegenbauerParity}
\end{equation}
The special value at $z=0$ can be written as
\begin{equation}
 C_n^\lambda(0) = \frac{2^n\sqrt{\pi}\Gamma(\lambda+\frac{n}{2})}{n!\Gamma(\frac{1-n}{2})\Gamma(\lambda)} \stackrel{(n=2k)}{=} \frac{(-1)^k\Gamma(\lambda+k)}{k!\Gamma(\lambda)}.\label{eq:GegenbauerValue0}
\end{equation}

\subsection{Jacobi polynomials}\label{app:Jacobi}

The classical Jacobi polynomials $P_n^{(\alpha,\beta)}(z)$ can be defined by (see \cite[equation 10.8~(12)]{EMOT81})
$$ P_n^{(\alpha,\beta)}(z) = 2^{-n}\sum_{m=0}^{n} {n+\alpha\choose m}{n+\beta\choose n-m}(x-1)^{n-m}(x+1)^m. $$
The special value at $z=1$ is given by
\begin{equation}
 P_n^{(\alpha,\beta)}(1) = {n+\alpha\choose n}.\label{eq:JacobiValue1}
\end{equation}

\section{Spherical harmonics}\label{app:SphericalHarmonics}

\subsection{Real spherical harmonics}\label{app:RealSphericalHarmonics}

Let $\calH^\alpha(\RR^n)$ denote the space of harmonic homogeneous polynomials of degree $\alpha$ on $\RR^n$. Endowed with the natural action of $\upO(n)$ the space $\calH^\alpha(\RR^n)$ is an irreducible representation. It is unitary with respect to the norm on $\calH^\alpha(\RR^n)$ given by
$$ \|\phi\|_{L^2(S^{n-1})}^2 = \int_{S^{n-1}} |\phi(x)|^2 dx, $$
where $dx$ denotes the Euclidean measure on $S^{n-1}$. Upon restriction to the subgroup $\upO(n-1)$ the representation $\calH^\alpha(\RR^n)$ decomposes into
\begin{equation}
 \calH^\alpha(\RR^n) \simeq \bigoplus_{0\leq\alpha'\leq\alpha} \calH^{\alpha'}(\RR^{n-1}).\label{eq:BranchingRealSphericalHarmonics}
\end{equation}
Explicit $\upO(n-1)$-equivariant embeddings of the direct summands are given by (see \cite[Fact 7.5.1]{KM11})
\begin{equation}
 I^n_{\alpha'\to\alpha}:\calH^{\alpha'}(\RR^{n-1}) \to \calH^\alpha(\RR^n), \quad I^n_{\alpha'\to\alpha}(\phi)(x',x_n)=\phi(x')C_{\alpha-\alpha'}^{\frac{n-2}{2}+\alpha'}(x_n),\label{eq:ExplicitBranchingRealSphericalHarmonics}
\end{equation}
where $x=(x',x_n)\in S^{n-1}$. The following Plancherel formula holds for $\phi\in\calH^{\alpha'}(\RR^{n-1})$ (see \cite[Fact 7.5.1~(3)]{KM11}, note the different normalization of the Gegenbauer polynomials):
\begin{equation}
 \|I_{\alpha'\to\alpha}^n(\phi)\|_{L^2(S^{n-1})}^2 = \frac{2^{3-n-2\alpha'}\pi\Gamma(n-2+\alpha+\alpha')}{(\alpha-\alpha')!(\alpha+\frac{n-2}{2})\Gamma(\alpha'+\frac{n-2}{2})^2}\|\phi\|_{L^2(S^{n-2})}^2.\label{eq:PlancherelEmbeddingReal}
\end{equation}

For $\phi\in\calH^\alpha(\RR^n)$ we have
\begin{equation}
 x_j\phi = \phi_j^++|x|^2\phi_j^-\label{eq:DecompositionOfMultRealSphericalHarmonics}
\end{equation}
with $\phi_j^\pm\in\calH^{\alpha\pm1}(\RR^n)$ given by
$$ \phi_j^+ = x_j\phi-\frac{|x|^2}{n+2\alpha-2}\frac{\partial\phi}{\partial x_j}, \qquad \phi_j^- = \frac{1}{n+2\alpha-2}\frac{\partial\phi}{\partial x_j}. $$

\subsection{Complex spherical harmonics}\label{app:CplxSphericalHarmonics}

Identifying $\RR^{2n}\simeq\CC^n$ we embed $\upU(n)$ into $\upO(2n)$. Then the restriction of the irreducible representation $\calH^\alpha(\RR^{2n})$ of $\upO(2n)$ to the subgroup $\upU(n)$ decomposes into
$$ \calH^\alpha(\RR^{2n}) = \bigoplus_{\alpha_1+\alpha_2=\alpha} \calH^{\alpha_1,\alpha_2}(\CC^n), $$
where $\calH^{\alpha_1,\alpha_2}(\CC^n)$ denotes the space of harmonic polynomials on $\CC^n$ which are holomorphic of degree $\alpha_1$ and antiholomorphic of degree $\alpha_2$. Endowed with the natural action of $\upU(n)$ the space $\calH^{\alpha_1,\alpha_2}(\CC^n)$ is an irreducible representation. It is unitary with respect to the norm $\|\cdot\|_{L^2(S^{2n-1})}$ where we view $S^{2n-1}$ as the unit sphere in $\CC^n$. Upon restriction to the subgroup $\upU(n-1)$ the representation $\calH^{\alpha_1,\alpha_2}(\CC^n)$ decomposes into
\begin{equation}
 \calH^{\alpha_1,\alpha_2}(\CC^n) = \bigoplus_{\substack{0\leq\alpha_1'\leq\alpha_1\\0\leq\alpha_2'\leq\alpha_2}} \calH^{\alpha_1',\alpha_2'}(\CC^{n-1}).\label{eq:BranchingComplexSphericalHarmonics}
\end{equation}
Explicit $\upU(n-1)$-equivariant embeddings
$$ I^n_{(\alpha_1',\alpha_2')\to(\alpha_1,\alpha_2)}:\calH^{\alpha_1',\alpha_2'}(\CC^{n-1}) \to \calH^{\alpha_1,\alpha_2}(\CC^n) $$
are given by
\begin{multline}
 I^n_{(\alpha_1',\alpha_2')\to(\alpha_1,\alpha_2)}(\phi)(z',z_n) =\\ \phi(z')\begin{cases}z_n^{(\alpha_1-\alpha_2)-(\alpha_1'-\alpha_2')}P_{\alpha_2-\alpha_2'}^{((\alpha_1-\alpha_2)-(\alpha_1'-\alpha_2'),\alpha_1'+\alpha_2'+n-2)}(1-2|z_n|^2)\\\hspace{6.5cm}\mbox{for $\alpha_1-\alpha_2\geq\alpha_1'-\alpha_2'$,}\\\overline{z}_n^{(\alpha_1'-\alpha_2')-(\alpha_1-\alpha_2)}P_{\alpha_1-\alpha_1'}^{((\alpha_1'-\alpha_2')-(\alpha_1-\alpha_2),\alpha_1'+\alpha_2'+n-2)}(1-2|z_n|^2)\\\hspace{6.5cm}\mbox{for $\alpha_1-\alpha_2\leq\alpha_1'-\alpha_2'$,}\end{cases}\label{eq:ExplicitBranchingComplexSphericalHarmonics}
\end{multline}
where $z=(z',z_n)\in S^{2n-1}$. 
For $\phi\in\calH^{\alpha_1,\alpha_2}(\CC^n)$ we have
\begin{equation}
 z_j\phi = \phi_j^{+,\hol}+|z|^2\phi_j^{-,\ahol}, \qquad \overline{z}_j\phi = \phi_j^{+,\ahol}+|z|^2\phi_j^{-,\hol},\label{eq:DecompositionOfMultComplexSphericalHarmonics}
\end{equation}
with $\phi_j^{\pm,\hol}\in\calH^{\alpha_1\pm1,\alpha_2}(\CC^n)$ and $\phi_j^{\pm,\ahol}\in\calH^{\alpha_1,\alpha_2\pm1}(\CC^n)$ given by
\begin{align*}
 \phi_j^{+,\hol} &= z_j\phi-\frac{|z|^2}{\alpha_1+\alpha_2+n-1}\frac{\partial\phi}{\partial\overline{z}_j}, & \phi_j^{-,\hol} &= \frac{1}{\alpha_1+\alpha_2+n-1}\frac{\partial\phi}{\partial z_j},\\
 \phi_j^{+,\ahol} &= \overline{z}_j\phi-\frac{|z|^2}{\alpha_1+\alpha_2+n-1}\frac{\partial\phi}{\partial z_j}, & \phi_j^{-,\ahol} &= \frac{1}{\alpha_1+\alpha_2+n-1}\frac{\partial\phi}{\partial\overline{z}_j}.
\end{align*}


\begin{thebibliography}{10}

\bibitem{BOO96}
Thomas Branson, Gestur {\'O}lafsson, and Bent {\O}rsted, \emph{Spectrum
  generating operators and intertwining operators for representations induced
  from a maximal parabolic subgroup}, J. Funct. Anal. \textbf{135} (1996),
  no.~1, 163--205.

\bibitem{EMOT81}
Arthur Erd{\'e}lyi, Wilhelm Magnus, Fritz Oberhettinger, and Francesco~G.
  Tricomi, \emph{Higher transcendental functions. {V}ol. {II}}, Robert E.
  Krieger Publishing Co. Inc., Melbourne, Fla., 1981, Based on notes left by
  Harry Bateman, Reprint of the 1953 original.

\bibitem{Juh09}
Andreas Juhl, \emph{Families of conformally covariant differential operators,
  {$Q$}-curvature and holography}, Progress in Mathematics, vol. 275,
  Birkh\"auser Verlag, Basel, 2009.

\bibitem{Kob14}
Toshiyuki Kobayashi, \emph{Shintani functions, real spherical manifolds, and
  symmetry breaking operators}, Developments and retrospectives in {L}ie
  theory, Dev. Math., vol.~37, Springer, Cham, 2014, pp.~127--159.

\bibitem{Kob15}
\bysame, \emph{A program for branching problems in the representation theory of
  real reductive groups}, Representations of reductive groups, Progr. Math.,
  vol. 312, Birkh\"auser/Springer, Cham, 2015, pp.~277--322.

\bibitem{KM11}
Toshiyuki Kobayashi and Gen Mano, \emph{The {S}chr\"odinger model for the
  minimal representation of the indefinite orthogonal group {${\rm O}(p,q)$}},
  Mem. Amer. Math. Soc. \textbf{213} (2011), no.~1000.

\bibitem{KS13}
Toshiyuki Kobayashi and Birgit Speh, \emph{Symmetry breaking for
  representations of rank one orthogonal groups}, Mem. Amer. Math. Soc.
  \textbf{238} (2015), no.~1126.

\bibitem{Lok01}
Hung~Yean Loke, \emph{Trilinear forms of {$\mathfrak{gl}_2$}}, Pacific J. Math.
  \textbf{197} (2001), no.~1, 119--144.

\bibitem{MO13}
Jan M\"{o}llers and Bent {\O}rsted, \emph{Estimates for the restriction of
  automorphic forms on hyperbolic manifolds to compact geodesic cycles}, Int.
  Math. Res. Not. IMRN (2017), no.~11, 3209--3236.

\bibitem{MOO13}
Jan M\"{o}llers, Bent {\O}rsted, and Yoshiki Oshima, \emph{Knapp--{S}tein type
  intertwining operators for symmetric pairs}, Adv. Math. \textbf{294} (2016),
  256--306.

\bibitem{MOZ16}
Jan M{\"o}llers, Bent {\O}rsted, and Genkai Zhang, \emph{Invariant
  {D}ifferential {O}perators on {H}-{T}ype {G}roups and {D}iscrete {C}omponents
  in {R}estrictions of {C}omplementary {S}eries of {R}ank {O}ne {S}emisimple
  {G}roups}, J. Geom. Anal. \textbf{26} (2016), no.~1, 118--142.

\bibitem{MOZ15}
\bysame, \emph{On boundary value problems for some conformally invariant
  differential operators}, Comm. Partial Differential Equations \textbf{41}
  (2016), no.~4, 609--643.

\bibitem{MO12}
Jan M{\"o}llers and Yoshiki Oshima, \emph{Restriction of most degenerate
  representations of {$O(1,N)$} with respect to symmetric pairs}, J. Math. Sci.
  Univ. Tokyo \textbf{22} (2015), no.~1, 279--338.

\bibitem{OV04}
Bent {\O}rsted and Jorge Vargas, \emph{Restriction of square integrable
  representations: discrete spectrum}, Duke Math. J. \textbf{123} (2004),
  no.~3, 609--633.

\bibitem{SV11}
Birgit Speh and T.~N. Venkataramana, \emph{Discrete components of some
  complementary series}, Forum Math. \textbf{23} (2011), no.~6, 1159--1187.

\bibitem{SZ12}
Binyong Sun and Chen-Bo Zhu, \emph{Multiplicity one theorems: the {A}rchimedean
  case}, Ann. of Math. (2) \textbf{175} (2012), no.~1, 23--44.

\bibitem{Zha15}
Genkai Zhang, \emph{Discrete components in restriction of unitary
  representations of rank one semisimple {L}ie groups}, J. Funct. Anal.
  \textbf{269} (2015), no.~12, 3689--3713.

\end{thebibliography}

\providecommand{\bysame}{\leavevmode\hbox to3em{\hrulefill}\thinspace}

\end{document}